\tikzstyle directed=[postaction={decorate,decoration={markings,
    mark=at position #1 with {\arrow{>}}}}]
\tikzstyle rdirected=[postaction={decorate,decoration={markings,
    mark=at position #1 with {\arrow{<}}}}]
\tikzset{anchorbase/.style={baseline={([yshift=-0.5ex]current bounding box.center)}},
arrowinthemiddle/.style={postaction=decorate,decoration={markings,mark=at position 0.5 with {\arrow{>}}}},
arrowinthemiddlerev/.style={postaction=decorate,decoration={markings,mark=at position 0.5 with {\arrow{<}}}},
cross line/.style={preaction={draw=white,line width=4pt,-}},
int/.style={very thick},
zero/.style={thin,dotted},
uno/.style={thin},
aboverotated/.style={above,rotate=60,anchor=west},
belowrotated/.style={below,rotate=60,anchor=east}}
\newcommand{\midarrow}{node[midway,sloped,allow upside down] {\hspace{0.05cm}\tikz[baseline=0] \draw[->] (0,0) -- +(.001,0);}}
\newcommand{\midarrowrev}{node[midway,sloped,allow upside down] {\hspace{0.05cm}\tikz[baseline=0] \draw[-<] (0,0) -- +(.001,0);}}
\newcommand{\nendarrow}{node[near end,sloped,allow upside down] {\hspace{0.05cm}\tikz[baseline=0] \draw[->] (0,0) -- +(.001,0);}}
\newcommand{\nendarrowrev}{node[near end,sloped,allow upside down] {\hspace{0.05cm}\tikz[baseline=0] \draw[-<] (0,0) -- +(.001,0);}}
\newcommand{\startarrow}{node[at start,sloped,allow upside down] {\hspace{0.05cm}\tikz[baseline=0] \draw[->] (0,0) -- +(.001,0);}}
\newcommand{\startarrowrev}{node[at start,sloped,allow upside down] {\hspace{0.05cm}\tikz[baseline=0] \draw[-<] (0,0) -- +(.001,0);}}
\newcommand{\nstartarrow}{node[near start,sloped,allow upside down] {\hspace{0.05cm}\tikz[baseline=0] \draw[->] (0,0) -- +(.001,0);}}
\newcommand{\nstartarrowrev}{node[near start,sloped,allow upside down] {\hspace{0.05cm}\tikz[baseline=0] \draw[-<] (0,0) -- +(.001,0);}}
\newcommand{\drawmiddlee}[2][]{
     \draw[arrowinthemiddlerev] (#2) ++(0,+0.1) .. controls ++(0.5,-0.2) and ++(-0.5,-0.2)  .. node[above] {\(#1\)} ++(1,0);
} 
\newcommand{\drawmiddlef}[2][]{
     \draw[arrowinthemiddle] (#2) ++(0,-0.1) .. controls ++(0.5,0.2) and ++(-0.5,0.2)  .. node[above] {\(#1\)} ++(1,0);
}
\numberwithin{equation}{section}
\let\ams@starttoc\@starttoc
\let\@starttoc\ams@starttoc
\patchcmd{\@starttoc}{\makeatletter}{\makeatletter\parskip\z@}{}{}
\newtheoremstyle{myplain} {6pt plus 6pt minus 2pt}
{6pt plus 6pt minus 2pt}
{\itshape}
{}
{\bfseries}
{.}
{.5em}
{}
\theoremstyle{myplain}
\newtheorem{theorem}{Theorem}[section]
\newtheorem*{theorem*}{Theorem}
\newtheorem{lemma}[theorem]{Lemma}
\newtheorem{prop}[theorem]{Proposition}
\newtheorem{corollary}[theorem]{Corollary}
\newtheorem{maintheorem}{Main Theorem}
\newtheoremstyle{mydefinition} {6pt plus 6pt minus 2pt}
{6pt plus 6pt minus 2pt}
{\itshape}
{}
{\bfseries}
{.}
{.5em}
{}
\theoremstyle{mydefinition}
\newtheorem{definition}[theorem]{Definition}
\newtheoremstyle{myexample} {6pt plus 6pt minus 2pt}
{6pt plus 6pt minus 2pt}
{}
{}
{\scshape}
{.}
{.5em}
{}
\theoremstyle{myexample}
\newtheorem{example}[theorem]{Example}
\newtheoremstyle{myremark} {6pt plus 6pt minus 2pt}
{6pt plus 6pt minus 2pt}
{}
{}
{\scshape}
{.}
{.5em}
{}
\theoremstyle{myremark}
\newtheorem{remark}[theorem]{Remark}
\theoremstyle:=mydefinition,myremark,myplain,myexample\do{%
        \expandafter\g@addto@macro\csname th@\theoremstyle\endcsname{%
            \addtolength\thm@preskip\parskip
            }%
        }
\DeclareSymbolFontAlphabet{\mathbb}{AMSb}
\DeclareSymbolFontAlphabet{\mathbbol}{bbold}
\DeclareMathAlphabet{\mathpzc}{OT1}{pzc}{m}{it}
\DeclareSymbolFont{usualmathcal}{OMS}{cmsy}{m}{n}
\DeclareSymbolFontAlphabet{\mathucal}{usualmathcal}
\newcommand{\N}{\mathbb{N}}
\newcommand{\Z}{\mathbb{Z}}
\newcommand{\C}{\mathbb{C}}
\newcommand{\gl}{\mathfrak{gl}}
\newcommand{\catO}{\mathcal{O}}
\newcommand{\suchthat}{\mid} 
\newcommand{\mapto}{\rightarrow}
\newcommand{\longmapto}{\longrightarrow}
\newcommand{\surto}{\twoheadrightarrow}
\newcommand{\into}{\hookrightarrow}
\DeclareMathOperator{\Hom}{Hom}
\DeclareMathOperator{\End}{End}
\newcommand{\id}{\mathrm{id}}
\newcommand{\abs}[1]{\left|#1\right|}
\renewcommand{\epsilon}{\varepsilon}
\renewcommand{\phi}{\varphi}
\newcommand{\down}{{\mathord\vee}}
\newcommand{\calB}{{\mathcal{B}}}
\newcommand{\calG}{{\mathcal{G}}}
\newcommand{\calH}{{\mathcal{H}}}
\newcommand{\calI}{{\mathcal{I}}}
\newcommand{\calM}{{\mathcal{M}}}
\newcommand{\calR}{{\mathcal{R}}}
\newcommand{\calQ}{{\mathcal{Q}}}
\newcommand{\sfE}{{\mathsf{E}}}
\newcommand{\sfF}{{\mathsf{F}}}
\newcommand{\sfP}{{\mathsf{P}}}
\newcommand{\sfT}{{\mathsf{T}}}
\newcommand{\sfi}{{\mathsf{i}}}
\newcommand{\sfp}{{\mathsf{p}}}
\newcommand{\sfs}{{\mathsf{s}}}
\newcommand{\frakh}{{\mathfrak{h}}}
\newcommand{\bolda}{{\boldsymbol{a}}}
\newcommand{\boldb}{{\boldsymbol{b}}}
\newcommand{\boldc}{{\boldsymbol{c}}}
\newcommand{\boldd}{{\boldsymbol{d}}}
\newcommand{\boldeta}{{\boldsymbol{\eta}}}
\newcommand{\acts}{\mathop{\,\;\raisebox{1.7ex}{\rotatebox{-90}{$\circlearrowright$}}\;\,}}
\newcommand{\actsb}{\mathop{\,\;\raisebox{0ex}{\rotatebox{90}{$\circlearrowleft$}}\;\,}}
\newcommand{\checkR}{\check{R}}
\newcommand{\Spider}{\mathsf{Sp}}
\newcommand{\sSpider}{\mathsf{Sp}_{\mathrm{sym}}}
\newcommand{\idem}{\boldsymbol{1}}
\newcommand{\onel}{\boldsymbol{1}_{\lambda}}
\newcommand{\onell}[1]{\boldsymbol{1}_{#1}}
\newcommand{\bigwedgeq}{\textstyle \bigwedge_q}
\DeclareMathOperator{\bigsymm}{S}
\newcommand{\bigsymmq}{\bigsymm_q}
\DeclareMathOperator{\bigT}{T}
\newcommand{\wedgeq}{\mathbin{\wedge_q}}
\newcommand{\fieldk}{{\mathbbm{k}}}
\newcommand{\ucalH}{{\mathucal H}}
\newcommand{\RT}{{\mathcal{RT}}}
\newcommand{\Br}{\mathrm{Br}}
\newcommand{\uBr}{\mathsf{Br}}
\newcommand{\catH}{\mathsf{H}}
\newcommand{\quantumq}{{\boldsymbol{\mathrm{q}}}}
\newcommand{\qbin}[2]{\genfrac{[}{]}{0pt}{}{#1}{#2}}
\newcommand{\ev}{\mathrm{ev}}
\newcommand{\coev}{\mathrm{coev}}
\newcommand{\counit}{\boldsymbol{\mathrm{u}}}
\newcommand{\U}{\dot{{\bf U}}}
\newcommand{\slm}{\mathfrak{sl}_m}
\newcommand{\Links}{{\mathsf{Links}}}
\newcommand{\catTangles}{{\mathsf{Tangles}}}
\newcommand{\lab}{{\ell\!\textit{a\hspace{-1pt}b}}}
\newcommand{\catRep}{{\mathsf{Rep}}}
\newcommand{\lattice}{\sfP}
\title[Mixed skew Howe duality and link invariants]{Mixed quantum skew Howe duality and link invariants of type $A$}
\author{Hoel Queffelec}
\address{Mathematical Sciences Institute, Australian National University, J.D. 27 Union Lane, Acton ACT 2601, Australia}
\email{hoel.queffelec@anu.edu.au}
\author{Antonio Sartori}
\address{Mathematisches Institut, Albert-Ludwigs-Universität Freiburg,
Eckerstraße 1, 79104 Freiburg im Breisgau, Germany}
\email{antonio.sartori@math.uni-freiburg.de}
\tikzset{smallnodes/.style={every node/.style={font=\footnotesize}}}
\begin{document}

\begin{abstract}
We define a ribbon category \(\Spider(\beta)\), depending on a parameter \(\beta\), which encompasses Cautis, Kamnitzer and Morrison's spider category, and describes for \(\beta=m-n\) the monoidal category of representations of \(U_q(\gl_{m|n})\) generated by exterior powers of the vector representation and their duals. We identify this category \(\Spider(\beta)\) with a direct limit of quotients of a dual idempotented quantum group \(\U_q(\gl_{r+s})\), proving a mixed version of skew Howe duality in which exterior powers and their duals appear at the same time. We show that the category \(\Spider(\beta)\) gives a unified natural setting for defining the colored \(\gl_{m|n}\) link invariant (for \(\beta=m-n\)) and the colored HOMFLY-PT polynomial (for \(\beta\) generic).
\end{abstract}

\maketitle

\setcounter{tocdepth}{1}
\tableofcontents

\section{Introduction}
\label{sec:introduction}

In 1985 Jones \cite{Jones} defined a remarkable new polynomial invariant of links.
A few years later, Reshetikhin and Turaev \cite{MR1036112} reinterpreted the Jones polynomial and generalized it by defining a whole class of quantum link invariants. Their work, based on the use of quantum groups and built upon the introduction of  ribbon categories, opened the way to a world of  connections between knot theory and representation theory. In particular, they defined quantum link invariants attached to finite dimensional representations of semisimple Lie algebras, recovering the Jones polynomial for \(\mathfrak{sl}_2\).
Although the Alexander polynomial \cite{MR1501429} was not originally covered by Reshetikhin and Turaev's process, it has been well-known to experts that it is possible to recover it using the super quantum group \(U_q(\gl_{1|1})\) \cite{MR2255851} (see also \cite{SarAlexander} for a more accessible explanation and for additional references). (An alternative and earlier construction uses the quantum group \(U_q(\mathfrak{sl}_2)\) for \(q\) a root of unity \cite{MurakamiAlexander}, but we will not pursue this approach.)
In particular, both the Jones and the Alexander polynomials fit together inside the family of quantum link invariants associated to the Lie superalgebras \(\gl_{m|n}\). We shall mention another link invariant, the HOMFLY-PT polynomial \cite{Homfly,PT}, which in a certain sense could be interpreted as a limit version for \(m \rightarrow \infty\) of the \(\gl_m\) link invariants, but does not  fit \emph{a priori}  in the setting of quantum link invariants.

A major new step in knot theory was the idea of \emph{categorification}. Khovanov's seminal work \cite{Kh1,Kh2,Kh6} lifting the Jones polynomial to a homology theory, initiated an effort to categorify the polynomial knot invariants, which produced an \(\mathfrak{sl}_3\)  \cite{Kh5} and then an \(\mathfrak{sl}_m\) link homology  \cite{KhR} via matrix factorizations, as well as a triply graded link homology theory \cite{MR2339573}, based on Soergel bimodules, categorifying the HOMFLY-PT polynomial. In a parallel and connected way, a categorification program for quantum groups and their representations started and prospered in the last fifteen years, using tools from representation theory and geometry. Milestones were the categorifications using the BGG category \(\catO\), initiated in \cite{MR1714141,MR2305608} and the definition of the Khovanov-Lauda-Rouquier algebra \cite{KL,MR2763732,MR2628852,Rou2}. We point out here that, although a categorification of the Alexander polynomial does exist \cite{MR2372850}, it comes from a completely different theory (symplectic geometry tools and in particular Heegaard-Floer homology), and a representation theoretical interpretation of it is still missing. Conjecturally, there should be a whole family of \(\gl_{m|n}\) link homologies which should encompass all previously mentioned categorifications (see also \cite{2013arXiv1304.3481G}).

In a remarkable paper, Cautis, Kamnitzer and Morrison \cite{CKM} showed how the classical skew Howe duality \cite{MR986027,MR1321638} for the pair \((\gl_m,\gl_k)\) can be used to interpret the braiding of \(U_q(\gl_m)\)--representations as the action of a dual quantum group \(U_q(\gl_k)\). In particular, they describe the combinatorics of intertwiners of tensor products of exterior powers of the natural representation \(\C_q^m\) of \(U_q(\gl_m)\) using a spider category, which is equivalent to the direct limit  for \(k \rightarrow \infty\) of some quotients of the idempotented versions of a dual quantum group \(U_q(\gl_k)\). This approach seems to give the right combinatorial setting for categorification \cite{LQR1,MY,TubbenhauerSln,QR}, and has been recently  used to give uniqueness results for link homology theories \cite{2015arXiv150206011M}.

The work of Cautis, Kamnitzer and Morrison actually really applies to braids. Indeed, in the process of assigning an intertwiner of representations to an oriented tangle (and, in particular, a knot or link), one needs to have both the standard representation \(\C_q^{m}\) and its dual at his disposal. The fact that Cautis, Kamnitzer and Morrison's result yields \(\mathfrak{gl}_m\) link invariants (and can be used for their categorification) is based on the identification of the exterior powers 
\begin{equation}
\textstyle \bigwedge^a \C^m \cong \big(\bigwedge^{m-a} \C^m\big)^* \quad\text{as \(\mathfrak{sl}_m\)--representations}.\label{eq:197}
\end{equation}
With the aim of studying \(\gl_{m|n}\) link invariants and possibly their categorification, this has two main limitations.

The first one concerns its extension to the super case, which is needed to define \(\smash{\gl_{m|n}}\) link invariants (and, in particular, the Alexander polynomial).
Here we denote by \(\gl_{m|n}\) the general Lie superalgebra and by \(\smash{\C_q^{m|n}}\) its standard representation. A super version of skew Howe duality does hold for the pair \((\gl_{m|n},\gl_k)\) and can be used, in a similar way as in \cite{CKM}, to describe intertwiners of tensor products of exterior powers of \(\C_q^{m|n}\). This approach can even be used to categorify  these representations, see \cite{2013arXiv1305.6162S}.  However, since in the case \(n \neq 0\) no exterior power of the standard representation of \(\gl_{m|n}\) is isomorphic to its dual, the dual of the standard representation does not appear in skew Howe duality. Indeed, there is a qualitative distinction: the representation appearing in skew Howe duality is always semisimple, while in general mixed tensor products of the vector representation together with its dual are not. 

The second limitation appears when one wants to inspect the HOMFLY-PT polynomial. In this case, roughly speaking, one would like to take a limit for \(m \rightarrow \infty\). The identification \eqref{eq:197} above becomes then meaningless and again the dual of the standard representation disappears from the picture.

\emph{The goal of this paper} is to overcome these two limitations. \emph{Our main result} is an extension of skew Howe duality to a more general picture where both exterior powers of the vector representation \emph{and of its dual} appear at the same time, and where one can make sense of the limit \(m \rightarrow \infty\) by taking a generic value of the parameter \(\beta\).

An overview of the categories and functors that we will consider is presented in figure~\ref{fig:overview}. The top row of figure~\ref{fig:overview} is the generalization of \cite{CKM} to the super case of \(\smash{\gl_{m|n}}\), and describes intertwiners of exterior powers of the natural representation \(\smash{\C_q^{m|n}}\). The second row is our extension to exterior powers and their duals. The third row will serve as auxiliary tool. We will describe some of the categories and functors involved in the following.

\begin{figure}
  \centering
  \begin{tikzpicture}[baseline=(current bounding box.center)]
  \matrix (m) [matrix of math nodes, row sep=3em, column
  sep=10.5em, text height=1.5ex, text depth=0.25ex] {
    \catH & \Spider^+ & \catRep^+_{m|n} \\
    \uBr(\beta) & \Spider(\beta) & \catRep_{m|n} \\
    \catTangles_q & \Spider & \\};
  \node(A) at ($(m-1-2) + (1.5,-0.8) $)  {$\Spider^+_{\leq m}$};
  \node(B) at ($(m-2-2) + (1.5,-0.8) $)  {$\Spider(\beta)_{\leq m}$};
  \node(C) at ($(m-3-1) + (2.5,0) $)  {$\catTangles^\lab_q$};

  \path[right hook->] (A) edge (B);

  \path[->] (m-1-1) edge node[above] {$ \calI^+$} (m-1-2);
  \path[->>] (m-1-2) edge node[above] {$\calG^+_{m|n}$} (m-1-3);
  \path[->] (m-2-1) edge node[above] {$\calI_\beta $} (m-2-2);
  \draw[cross line,->>] (m-2-2) -- node[xshift=0.5cm,above] {$\calG_{m|n}$} (m-2-3);
  \draw[->>] (m-3-2) .. controls ++(0:2) and ++ (-120:2) .. (m-2-3)  node[midway,below] {$\calG_{m|n}$} ;

  \path[right hook->] (m-1-1) edge  (m-2-1);
  \path[right hook->] (m-1-2) edge node[left] {\(\sfi\)} (m-2-2);
  \path[right hook->] (m-1-3) edge  (m-2-3);
  \path[->>] (m-3-1) edge (m-2-1);
  \path[->>] (m-3-2) edge node[left] {\(\sfp\)} (m-2-2);
  \draw[->>] (C) .. controls ++(-30:3) and ++(-90:3) ..  (m-2-3) node[midway,below] {$\RT_{m|n}$};
  \draw[right hook->] (m-3-1) edge (C);
  \draw[->] (C) edge node[yshift=0.8ex,left=0.5ex] {\(\calQ_\beta\)} (m-2-2);

  \path[->>] (m-1-2) edge (A);
  \path[->>] (m-2-2) edge (B);
  \path[->>] (A) edge (m-1-3);
  \path[->>] (B) edge (m-2-3);
\end{tikzpicture}
   \caption{Overview of the categories and functors involved in our construction. (The third column makes sense only for \(\beta=m-n\).)}
  \label{fig:overview}
\end{figure}
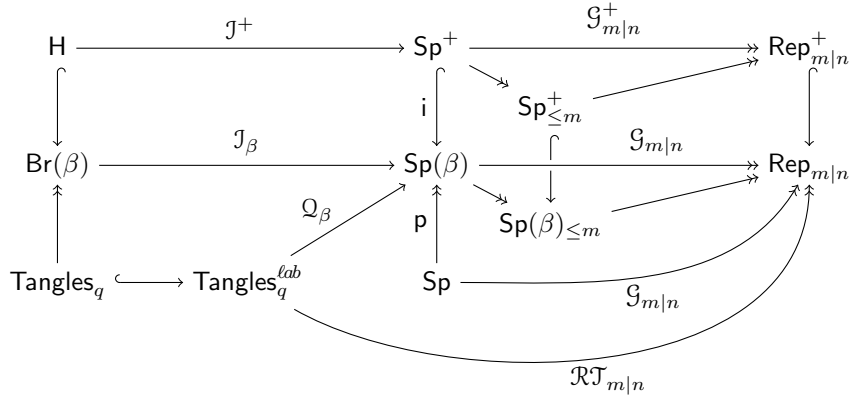

\subsection*{Graphical calculus for duals}

Let \(m,n\) be two non-negative integers. We will always set  \(d=m-n\). We denote by \(\catRep_{m|n}^+\) (respectively, \(\catRep_{m|n}\)) the monoidal category of \(\smash{U_q(\mathfrak{gl}_{m|n})}\)--representations generated by exterior powers of the natural representation \(\smash{\C_q^{m|n}}\) (respectively, exterior powers of  \(\smash{\C_q^{m|n}}\) and  their duals). The first goal of our paper is to obtain a unified graphical calculus for \(\catRep_{m|n}\). This extends in two directions the work of Cautis, Kamnitzer and Morrison, who described the category \(\catRep_{m|0}^+\) (and with the trick \eqref{eq:197} actually described \(\catRep_{m|0}\), if restricting to \(\mathfrak{sl}_m\)).

The graphical calculus of \cite{CKM} is given by a spider category (introduced in rank 2 already in \cite{MR1403861}) describing \(\catRep_{m|0}^+\). The extension to the super case (replacing \(\gl_m\) by \(\gl_{m|n}\)), which we will develop in all details (see section~\ref{sec:super-skew-howe}), is relatively easy. Similarly to \cite{CKM} we can define a  monoidal category \(\Spider^+\) together with a full functor \(\smash{\calG^+_{m|n}\colon \Spider^+ \mapto \catRep_{m|n}^+}\) (theorem~\ref{thm:2}). The category \(\Spider^+\) is some sort of universal tool, built in order to describe all categories \(\catRep_{m|n}^+\) at the same time. It is an interesting task to find  all relations in \(\smash{\catRep_{m|n}^+}\), i.e.\ to describe a monoidal ideal \(\calI_{m|n}\) of \(\Spider^+\) such that \(\calG^+_{m|n}\) descends to a fully faithful functor  \(\Spider^+ / \calI_{m|n} \mapto \catRep^+_{m|n}\). This is very easy for \(n=0\) (one just kills objects of \(\Spider^+\) labeled by integers greater than \(m\), whence the notation \(\Spider^+_{\leq m} = \Spider^+ / \calI_{m|0})\)) and it is possible also in general, although the relations are not so nice (see \cite{Grant} for the case of \(\smash{\gl_{1|1}}\)).
An interesting fact is that we can  lift the braiding of all the categories \(\smash{\catRep_{m|n}^+}\) directly to \(\smash{\Spider^+}\) in a uniform way, turning it into a braided category (proposition~\ref{prop:5}).

Now, with the goal of studying link invariants and their categorification, we are interested in understanding the full category \(\catRep_{m|n}\), and not only its ``positive part'' \(\smash{\catRep_{m|n}^+}\). But, as we already explained, in the case \(n \neq 0\) the trick \eqref{eq:197} does not work anymore. Hence we define a rigid category \(\Spider\)  by  adding duals to \(\smash{\Spider^+}\), and we show that  the functor \(\smash{\calG_{m|n}^+}\)  extends to a full functor \(\smash{\calG_{m|n} \colon\Spider\mapto \catRep_{m|n}}\) (proposition~\ref{prop:7}).
The category \(\Spider\), however, is too big: its endomorphism spaces are infinite-dimensional and the  functor \(\calG_{m|n}\), hence, cannot be faithful. Moreover, there is no map from  the category of oriented framed tangles to \(\Spider\) which could give link invariants.  Hence we introduce a quotient \(\Spider(\beta)\) of \(\Spider\), depending on a parameter \(\beta\) which can be either a generic variable (corresponding to the second variable of the HOMFLY-PT polynomial) or an integer. Our first main result is then a complete graphical description of the category \(\catRep_{m|n}\):
\begin{maintheorem}[See theorem~\ref{thm:6} and proposition~\ref{lem:16}]
  For \(\beta=d=m-n\)
   we get an induced functor
  \(\calG_{m|n}\colon \Spider(d) \mapto \catRep_{m|n}\)
   which is still full. Moreover, this descends to a fully faithful functor \(\Spider(d)/(\calI_{m|n}) \mapto \catRep_{m|n}\). (Here \((\calI_{m|n})\) denotes the monoidal ideal generated by the image of \(\calI_{m|n}\) in \(\Spider(d)\)).
\end{maintheorem}
Similarly as before, the ribbon structure of the representation categories \(\catRep_{m|n}\) lifts in a unified way to \(\Spider(\beta)\), which is hence a ribbon category (theorem~\ref{thm:9}).

\subsection*{Mixed skew-Howe duality and the doubled Schur algebra}

In \cite{CKM} the main tool yielding the description of the category \(\catRep^+_{m}\) is skew Howe duality, which gives two commuting actions 
  \begin{equation}
    \label{eq:199}
    U_q(\gl_m) \acts  \bigwedgeq^{N} (\C_q^{m} \otimes \C_q^r) \actsb U_q(\gl_{r})
  \end{equation}
producing a description of \(U_q(\gl_m)\) intertwiners by elements of \(U_q(\gl_r)\), and \textit{vice versa}. In particular, this provides an identification of the category \(\Spider^+\) with a the direct limit for \(r \rightarrow \infty\) of a quotient of the idempotented version of  \( U_q(\gl_r)\). Furthermore, the action of the braid group on the \(U_q(\gl_m)\)--modules is identified as the quantum Weyl group action generated by Lusztig's symmetries \cite{Lus4,KamnTin,CKL,CKM}. 

Surprisingly, it is possible to generalize skew Howe duality and get the dual \(\big(\C_q^{m|n}\big)^*\), together with its exterior powers, into the picture. In our setting, we proceed the other way around with respect to \cite{CKM}, and we deduce such generalization from our graphical calculus:
\begin{maintheorem}[See theorem~\ref{thm:7} and corollary~\ref{cor:4}]\label{thm:11}
On the representation
\begin{equation}
\label{eq:200}
 \bigoplus_{\mathclap{\qquad \substack{a_1+\dotsb+a_r-a_{r+1}\\-\dotsb-a_{r+s}-(m-n)s = N}}} \quad \bigwedgeq^{a_1} \C_q^{m|n} \otimes \dotsb \otimes \bigwedgeq^{a_r}\C_q^{m|n} \otimes \bigwedgeq^{a_{r+1}} \big(\C_q^{m|n}\big)^* \otimes \dotsb \otimes \bigwedgeq^{a_{r+s}} \big(\C_q^{m|n}\big)^*
\end{equation}
there is a naturally defined \(\smash{U_q(\gl_{m|n})}\)--equivariant action of the quantum group \(U_q(\gl_{r+s})\) and of its idempotented version \(\U_q(\gl_{r+s})\). The latter action generates the full centralizer of the \(\smash{U_q(\gl_{m|n})}\)--action. This induces an equivalence of categories between \(\Spider(d)\) and the  direct limit of a quotient of \(\U_q(\gl_{r+s})\) for \(r,s \rightarrow \infty\).
\end{maintheorem}
Such a quotient was already introduced by the two authors in \cite{QS} with the name of \emph{doubled Schur algebra}. For an illustrative picture see figure~\ref{fig:exampleMixedHowe} and example~\ref{ex:1} on page~\pageref{ex:1}.

To the best of our knowledge, the dual actions on \eqref{eq:200} are new, also in the non-quantized setting. A related duality in the non-quantized setting was analyzed in \cite[section~3]{MR2037723}, where the roles of \(\gl_{m|n}\) and \(\gl_{r+s}\) are swapped. However, the duality from \cite{MR2037723} is somehow easier, since the representation involved is semisimple, while \eqref{eq:200} is in general not semisimple.
Moreover, it is not clear how to quantize the results from \cite[section~3]{MR2037723}; indeed, it is not even evident how to define the dual action in the quantized setting.
We point out that it should be possible to generalize our methods in order to obtain dual actions of \(U_q(\gl_{m|n})\) and \(U_q(\gl_{r+s|r'+s'})\) on a bigger space than \eqref{eq:200} (which would contain also symmetric powers). For \(q=1\), this would include the duality of \cite{MR2037723} as a special case.

\subsection*{Link invariants, categorification and the parameter \(\beta\)}

Let us denote by \(\catTangles^\lab\) the category of oriented framed tangles whose strands are labeled by positive numbers. For all \(\beta\) we have a  functor \(\calQ_{\beta}\colon\catTangles^\lab \mapto \Spider(\beta)\), which 
for \(\beta=d\),
composed with the action of \(\Spider(d)\) on \(\catRep_{m|n}\), gives the Reshetikhin-Turaev invariant (proposition~\ref{prop:9}). This allows us to define inside \(\Spider(d)\) the \(\gl_{m|n}\) link invariant labeled by exterior powers of the vector representation.
The study of the link invariants is only briefly outlined in this paper, but the results we obtain here are central in the second part of our first paper \cite{QS}. The reader interested in the consequences of our approach in the study of link invariants should refer to \cite{QS}.

We stress that this could also be of great interest for \emph{categorification}. Indeed, after \cite{CKM}, the categorification of \(\gl_m\) link invariants can be interpreted as a categorification of the functor \(\catTangles^\lab \mapto \Spider^+_{\leq m}\), i.e.\ as a 2-functor from the 2-category of tangles to a 2-categorical lift of \(\Spider^+_{\leq m}\) (cf.\ \cite{LQR1,QR}). What such a lift should be becomes clear after \(\Spider^+_{\leq m}\) is identified by \cite{CKM} with a sum of quotients of idempotented version of quantum groups: the natural target of such 2-functor is then some version of the KLR 2-category \cite{MR2628852,Rou2}.

Similarly, in our case, the categorification of \(\smash{\gl_{m|n}}\) link invariants should be a categorification of the functor \(\catTangles_q^\lab \mapto \Spider(d)\). We believe that the identification of \(\Spider(d)\) with the direct limit of doubled Schur algebras can point in the right direction for lifting \(\Spider(d)\) to a 2-category and hence constructing a categorification of  \(\gl_{m|n}\) link invariants via categorification of such doubled Schur algebras.

Moreover, we point out that the definition of the category \(\Spider(\beta)\) depends on a parameter \(\beta\), and the action of \(\Spider(\beta)\) on \(U_q(\gl_{m|n})\) representation comes when \(\beta\) is specialized to \(d=m-n\). Nevertheless, our construction of \(\Spider(\beta)\) makes sense also for a generic value of \(\beta\). In this case, the natural functor \(\calQ_\beta \colon \catTangles^\lab \mapto \Spider(\beta)\) can be used to define the HOMFLY-PT polynomial \cite{Homfly,PT}. A categorification of this functor, via a similar strategy as in the case \(\beta=d\) explained above, could be useful for better understanding the triply graded link homology \cite{MR2339573} and its relation with \(\gl_m\) and more generally \(\gl_{m|n}\) link homology (see also \cite{2013arXiv1304.3481G} for some conjectural properties).

\subsection*{Structure of the paper}
\label{sec:structure-paper}

In section~\ref{sec:overview} we will fix some notations and recall the definition of the quantized oriented Brauer algebra. In section~\ref{sec:general-linear-lie} we will collect some basic facts on the quantum enveloping superalgebra \(U_q(\gl_{m|n})\) and its representations. Section~\ref{sec:super-skew-howe} will be devoted to the proof of the quantized super version of classical skew Howe duality. In section~\ref{sec:graphical-version} we will recall the definition of the braided category \(\Spider^+\) from \cite{CKM} and we will construct its action on \(\smash{\catRep^+_{m|n}}\). In the following section~\ref{sec:graph-calc-duals} we will define our main object, the  category \(\Spider(\beta)\), by adding duals to \(\Spider^+\). We will prove that this category acts fully on \(\catRep_{m|n}\) for \(\beta=m-n\) and we will construct its ribbon structure. In section~\ref{sec:dbleSchur} we will give the quantum group interpretation of \(\Spider(\beta)\), proving our main result. Finally, in section~\ref{sec:link-invariants} we describe some applications to link invariants.

\subsubsection*{Acknowledgments}

We would like to thank Christian Blanchet, Aaron Lauda, Tony Licata, Scott Morrison, David Rose, Catharina Stroppel, Daniel Tubbenhauer, Emmanuel Wagner and Weiqiang Wang for useful discussions and comments. 

\section{The category of tangles}
\label{sec:overview}

\subsection{The parameter \(\beta\)}
\label{sec:conventions}

All our diagrammatic categories will be defined over a field \(\fieldk\) containing the complex numbers \(\C\) and two elements \(q\) and \(q^\beta\) which are not roots of the unity. The first main case we are interested in is \(\fieldk=\C(q,q^\beta)\), i.e.\ a transcendental extension of \(\C(q)\) by a formal variable \(q^\beta\). In this case we say that \(\beta\) is \emph{generic}. The second main example is when \(\beta\) is some integer \(d\) and \(\fieldk=\C(q)\). We will sometimes write \(\C_q\) instead of \(\C(q)\) for shortness.

For \(x \in \Z \beta + \Z\) and \(k \in \N\) we define
\begin{align}
  \label{eq:195}
  [x] & = \frac{q^x - q^{-x}}{q - q^{-1}},\\
  \qbin{x}{k} & = \frac{[x][x-1]\dotsb [x-k+1]}{[k][k-1]\dotsb [1]}.
\end{align}
The following identities hold for all \(x,y \in \Z \beta + \Z\) and \(k \in \N\), and will be used often through the paper:
\begin{gather}
  [x][y+1] - [x+1][y] = [x-y],\label{eq:3}\\
  [x][y] - [x-1][y-1] = [x+y-1],\label{eq:4} \\
  [x+y]=q^y [x] + q^{-x}[y]= q^{-y}[x] + q^{x}[y], \label{eq:5}\\
  \qbin{x+1}{k}=q^{x+1-k}\qbin{x}{k-1}+q^{-k}\qbin{x}{k}=q^{k}\qbin{x}{k}+q^{-x-1+k}\qbin{x}{k-1}, \label{eq:6}\\
\sum_{\ell=0}^k (-q)^\ell\qbin{x-\ell}{k}\qbin{k}{\ell}=q^{-k x + k^2}.
\end{gather}

\subsection{Tangles}
\label{sec:tangles}

 Let \(\catTangles\) be the monoidal category 
of oriented framed tangles. Its objects are sequences of \(\{1,1^*\}\) and its morphisms are oriented framed tangles modulo isotopy. The category \(\catTangles\) can be described in terms of generators and relations, see for example \cite{MR1881401}. Let also \(\catTangles_q\) be the additive closure of its \(\fieldk\)--linear version, with morphisms being \(\fieldk\)--vector spaces
\begin{equation}
  \label{eq:2}
  \catTangles_{q}(\boldeta,\boldeta') = \operatorname{span}_{\fieldk} \catTangles(\boldeta,\boldeta')
\end{equation}
and where as objects we have formal (finite) direct sums of objects of \(\catTangles\).

We let \(\catTangles^\lab\) be the category consisting of oriented framed tangles whose connected components are labeled by positive integers. The objects of \(\catTangles^\lab\) are sequences of \(\{a,a^* \suchthat a \in \Z_{>0}\}\). Similarly as before, we denote by \(\catTangles^\lab_q\) its additive \(\fieldk\)--linear version. There are  obvious inclusions \(\catTangles \into \catTangles^\lab\) and \(\catTangles_q \into \catTangles_q^\lab\) given by labeling all strands by \(1\).

We will regard all these tangle categories  as monoidal categories, with monoidal unity being the empty sequence.

\subsection{The quantized oriented Brauer category}
\label{sec:quant-wall-brau}

We recall the definition of the quantized oriented Brauer algebra/category, following \cite{PhdWeimer} and \cite{MR3181742} (see also \cite{Blanchet_Hecke,GP_Hecke} for a related approach).

\begin{definition}\label{def:2}
  The \emph{quantized oriented Brauer category} \(\uBr(\beta)\) is the quotient of \(\catTangles_q\) modulo the following relations
\begin{subequations}
  \begin{align}
    \begin{tikzpicture}[smallnodes,anchorbase,xscale=0.7,yscale=0.5]
   \draw[uno] (0,0)  -- ++(0,0.3) \midarrow .. controls ++(0,0.7) and ++(0,-0.7) .. ++(1,1.4) -- ++(0,0.3)  \midarrow ;
   \draw[uno,cross line] (1,0)  -- ++(0,0.3) \midarrow .. controls ++(0,0.7) and ++(0,-0.7) .. ++(-1,1.4) -- ++(0,0.3)  \midarrow ;
    \end{tikzpicture} \; - \;
     \begin{tikzpicture}[smallnodes,anchorbase,xscale=0.7,yscale=0.5]
   \draw[uno] (1,0)  -- ++(0,0.3) \midarrow .. controls ++(0,0.7) and ++(0,-0.7) .. ++(-1,1.4) -- ++(0,0.3)  \midarrow ;
   \draw[uno,cross line] (0,0)  -- ++(0,0.3) \midarrow .. controls ++(0,0.7) and ++(0,-0.7) .. ++(1,1.4) -- ++(0,0.3)  \midarrow ;
    \end{tikzpicture} \; & = (q-q^{-1}) \;
    \begin{tikzpicture}[smallnodes,anchorbase,yscale=0.5,xscale=0.7]
   \draw[uno] (1,0) .. controls ++(0.25,0.5) and ++(0.25,-0.5)  .. ++(0,2) \midarrow ;
   \draw[uno] (2,0) .. controls ++(-0.25,0.5) and ++(-0.25,-0.5)  .. ++(0,2) \midarrow  ;
    \end{tikzpicture}\;,  \qquad&
        \begin{tikzpicture}[smallnodes,anchorbase,scale=0.7]
      \draw[uno] (0,0) arc (0:360:0.5cm) \midarrowrev;
    \end{tikzpicture} \; =\;
        \begin{tikzpicture}[smallnodes,anchorbase,scale=0.7]
      \draw[uno] (0,0) arc (0:360:0.5cm) \midarrow;
    \end{tikzpicture} \; &= [\beta], \label{eq:52}\\
    \begin{tikzpicture}[smallnodes,anchorbase,xscale=0.7,yscale=0.5]
   \draw[uno] (0.8,1)  .. controls ++(0,-0.3) and ++(0.3,0) .. ++(-0.3,-0.6)  .. controls ++(-0.5,0) and ++(0,-0.7) .. ++(-0.5,1.6) \nendarrow  ;
   \draw[uno, cross line] (0.8,1)  .. controls ++(0,0.3) and ++(0.3,0) .. ++(-0.3,0.6) \startarrowrev .. controls ++(-0.5,0) and ++(0,0.7) .. ++(-0.5,-1.6) \nendarrowrev;
    \end{tikzpicture} \; = \;
    \begin{tikzpicture}[smallnodes,anchorbase,xscale=0.7,yscale=0.5]
   \draw[uno, cross line] (-0.8,1)  .. controls ++(0,0.3) and ++(-0.3,0) .. ++(0.3,0.6) \startarrowrev .. controls ++(0.5,0) and ++(0,0.7) .. ++(0.5,-1.6) \nendarrowrev ;
   \draw[uno, cross line] (-0.8,1)  .. controls ++(0,-0.3) and ++(-0.3,0) .. ++(0.3,-0.6)  .. controls ++(0.5,0) and ++(0,-0.7) .. ++(0.5,1.6) \nendarrow  ;
    \end{tikzpicture} \;& = q^{-\beta} \;
    \begin{tikzpicture}[smallnodes,anchorbase,xscale=0.7,yscale=0.5]
      \draw[uno] (0,0)  -- ++(0,2) \midarrow ;
    \end{tikzpicture}\;, &
    \begin{tikzpicture}[smallnodes,anchorbase,xscale=0.7,yscale=0.5]
   \draw[uno, cross line] (0.8,1)  .. controls ++(0,0.3) and ++(0.3,0) .. ++(-0.3,0.6) \startarrowrev .. controls ++(-0.5,0) and ++(0,0.7) .. ++(-0.5,-1.6) \nendarrowrev ;
   \draw[uno, cross line] (0.8,1)  .. controls ++(0,-0.3) and ++(0.3,0) .. ++(-0.3,-0.6)  .. controls ++(-0.5,0) and ++(0,-0.7) .. ++(-0.5,1.6) \nendarrow  ;
    \end{tikzpicture} \; = \;
    \begin{tikzpicture}[smallnodes,anchorbase,xscale=0.7,yscale=0.5]
   \draw[uno] (-0.8,1)  .. controls ++(0,-0.3) and ++(-0.3,0) .. ++(0.3,-0.6)  .. controls ++(0.5,0) and ++(0,-0.7) .. ++(0.5,1.6) \nendarrow  ;
   \draw[uno, cross line] (-0.8,1)  .. controls ++(0,0.3) and ++(-0.3,0) .. ++(0.3,0.6) \startarrowrev .. controls ++(0.5,0) and ++(0,0.7) .. ++(0.5,-1.6) \nendarrowrev;
    \end{tikzpicture} \; & = q^{+\beta} \;
    \begin{tikzpicture}[smallnodes,anchorbase,xscale=0.7,yscale=0.5]
      \draw[uno] (0,0)  -- ++(0,2) \midarrow ;
    \end{tikzpicture}\;.\label{eq:54}
  \end{align}
\end{subequations}
\end{definition}

\begin{definition}
  \label{def:3}
  If \(\boldeta \in \{1,1^*\}^N\) is a sequence of orientations, the \emph{quantized oriented Brauer algebra} \(\Br_\boldeta(\beta)\) is the endomorphism algebra \(\End_{\uBr(\beta)}(\boldeta)\).
\end{definition}

For \(r,s\geq 0\) let
\begin{equation}
  \label{eq:166}
  \boldeta_{r,s} = (\underbrace{1,\dotsc,1}_r,\underbrace{1^*,\dotsc,1^*}_s).
\end{equation}
Then \(\Br_{\boldeta_{r,s}}(\beta)\) is the \emph{walled Brauer algebra} with parameter \(\beta\).

It is shown in \cite[Lemma~1.4]{MR3181742} and in \cite{PhdWeimer} that \(\Br_\boldeta(\beta)\) is free of rank \(k!\), where \(k\) is the length of \(\boldeta\).

We recall that for \(r\geq 0\) the \emph{Hecke algebra} \(\ucalH_r\) is the \(\C(q)\) algebra on generators \(H_i\) for \(i=1,\dotsc,r-1\) with relations 
  \begin{subequations}
    \label{eq:53}
    \begin{align}
      H_i H_j &= H_j H_i && \text{if } \vert i-j\vert>2,\label{catO:eq:7} \\
      H_i H_{i+1} H_i  &= H_{i+1} H_i H_{i+1}, &&
      H_i^2=(q^{-1}-q)H_i+1,\label{catO:eq:9}
    \end{align}
  \end{subequations}
see \cite{MR560412}, \cite{MR1445511}. It follows that if \(\boldeta = 1^N\) then \(\Br_\boldeta(\beta)\), which does not depend on \(\beta\), is isomorphic to \(\ucalH_N\). We denote by \(\catH\) the full subcategory of \(\uBr(\beta)\) monoidally generated by \(1\), which  does not depend on \(\beta\).

\section{The general linear Lie superalgebra}
\label{sec:general-linear-lie}

We will denote by \(m\) and \(n\) two non-negative integers, and we will always suppose that either \(m\) or \(n\) is non-zero. We set also \(d=m-n\). In the following, as usual, \emph{super} means \(\Z/2\Z\)--graded.

\subsection{The Lie superalgebra \texorpdfstring{$\gl_{m|n}$}{gl(m|n)}}
\label{sec:lie-super-gl_mn}

Let \(I=I_{m|n}\) denote the set \(\{1, \dotsc , m+n\}\). We define a  parity function
\begin{equation}
  \label{eq:7}
  \abs{i} =
  \begin{cases}
    0 & \text{if } i \leq m,\\
    1 & \text{if } i > m.
  \end{cases}
\end{equation}
We let also \(I^0=I \setminus \{m+n\}\).

Let \(\C^{m|n}\) be the vector superspace on homogeneous basis \(x_i\) for \(i \in I_{m|n}\), with \(\Z/2\Z\)--degree \(\abs{x_i}=\abs{i}\). The \emph{general linear Lie superalgebra} \(\smash{\gl_{m|n}}\) is the endomorphism space of \(\C^{m|n}\) equipped with the supercommutator \([y,z]=yz-(-1)^{\abs{y}\abs{z}}zy\). Let $\frakh \subset \gl_{m|n}$ be the Cartan subalgebra consisting of
all diagonal matrices, and let  \(\{h_i \suchthat i \in I\}\) be its standard basis.  Let $\{\epsilon_i \suchthat i \in I\}$ be
the corresponding dual basis of $\frakh^*$.
We define a non-degenerate
symmetric bilinear form on $\frakh^*$ by setting on the basis
\begin{equation}
  \label{alex:eq:47}
  (\epsilon_i,\epsilon_j) = (-1)^{\abs{i}}\delta_{ij},
\end{equation}
where as usual \(\delta_{ij}\) denotes the Kronecker delta.
We denote by \(\sfP\) the \emph{weight lattice} and by \(\sfP^*\) its dual: they are the free \(\Z\)--modules spanned by the \(\epsilon_i\)'s and the \(h_i\)'s, respectively.

The roots of \(\gl_{m|n}\) are \(\{\epsilon_i - \epsilon_j \suchthat i,j \in I\}\). A root is said to be \emph{even} (respectively, \emph{odd}) if the corresponding root vector is. We fix the  standard triangular decomposition, with  positive roots  \(\{\epsilon_i - \epsilon_j \suchthat i < j\}\) and  simple roots  \(\{ \alpha_i = \epsilon_i - \epsilon_{i+1} \suchthat i \in I^0\}\). 
We define  the simple coroots
\begin{equation}
\alpha_i^\down = (-1)^{\abs{i}} h_i - (-1)^{\abs{i+1}} h_{i+1} \qquad \text{for all }i \in I^0.\label{eq:8}
\end{equation}

We let \(\rho_0\) denote half the sum of the even positive roots and \(\rho_1\) denote half the sum of the odd positive roots, and we set \(\rho=\rho_0 - \rho_1\). Explicitly we have
\begin{multline}
  \label{eq:9}
  2\rho = (d-1)\epsilon_1 + (d-3) \epsilon_2 + \dotsb + (d-2m+1) \epsilon_m \\+ (d+2n-1)\epsilon_{m+1} + (d+2n-3) \epsilon_{m+2}+  \dotsb   + (d+1) \epsilon_{m+n}.
\end{multline}

\subsection{The quantum enveloping superalgebra}
\label{alex:sec:quant-envel-super-1}

The \emph{quantum enveloping superalgebra}
$U_q=U_q(\gl_{m|n})$ is defined to be the unital superalgebra
over $\C (q)$ with generators $E_i$, $F_i$, \(\quantumq^h\)  for $i \in I, h
\in \sfP^*$
subject to the following relations: 
\vspace{\abovedisplayskip}
\begin{subequations}
\begingroup
\setlength{\belowdisplayskip}{\jot}
\setlength{\belowdisplayshortskip}{\jot} 
\setlength{\abovedisplayskip}{0pt}
\setlength{\abovedisplayshortskip}{0pt}
  \begin{align}
    \quantumq^0&=1, & \quantumq^h \quantumq^{h'} & = \quantumq^{h+h'},\label{eq:12}\\
    \quantumq^h E_i &=q^{\langle h, \alpha_i \rangle} E_i \quantumq^h, & \quantumq^h F_i&=q^{-\langle h,\alpha_i \rangle}F_i\quantumq^h,
\label{eq:13}
\end{align}
\begin{align}(-1)^{\abs{i}} E_iF_i - (-1)^{\abs{i+1}} F_iE_i =  \frac{K_i-K_i^{-1}}{q-q^{-1}},\label{eq:14}
\end{align}
\begin{align}
E_m^2 & =F_m^2=0,\label{eq:15}\\
E_iE_j=E_jE_i &\text{ and } F_iF_j = F_jF_i &&\text{if } \abs{i-j}\geq 2,\label{eq:16}\\
E_iF_j&=F_jE_i &&\text{if } i \neq j,\label{eq:17}
\end{align}
\begin{align}
E_i^2 E_{i+1} - [2] E_i E_{i+1} E_i + E_{i+1} E_i^2 &= 0 &\text{ for } i,i+1 \neq m, \label{eq:18}\\
E_{i+1}^2 E_{i} - [2] E_{i+1} E_{i} E_{i+1} + E_{i} E_{i+1}^2& = 0 &\text{ for } i,i+1 \neq m,\label{eq:19}\\
F_i^2 F_{i+1} - [2] F_i F_{i+1} F_i + F_{i+1} F_i^2 &= 0 &\text{ for } i,i+1 \neq m,\label{eq:20}\\
F_{i+1}^2 F_{i} - [2] F_{i+1} F_{i} F_{i+1} + F_{i} F_{i+1}^2 &= 0 &\text{ for } i,i+1 \neq m,\label{eq:21}
\end{align}
\begin{gather}
\begin{multlined}
  E_m E_{m-1} E_m E_{m+1} + E_{m-1} E_m E_{m+1} E_m + E_m
  E_{m+1} E_m E_{m-1} \\+ E_{m+1} E_m E_{m-1} E_m - [2] E_m
  E_{m-1} E_{m+1} E_m = 0,
\end{multlined}
\label{eq:22}\\
\begin{multlined}
  F_m F_{m-1} F_m F_{m+1} + F_{m-1} F_m F_{m+1} F_m + F_m
  F_{m+1} F_m F_{m-1} \\+ F_{m+1} F_m F_{m-1} F_m - [2] F_m
  F_{m-1} F_{m+1} F_m = 0,
\end{multlined}
\label{eq:23}
\end{gather}
\endgroup
\end{subequations}
\vspace{\belowdisplayskip}
where \(K_i = q^{\alpha_i^{\down}}\).

We define a \emph{comultiplication} $\Delta\colon U_q \mapto U_q \otimes U_q$, a \emph{counit} $\counit\colon U_q \mapto \C(q)$ and an \emph{antipode} $S\colon U_q \mapto U_q$ by setting on the generators
\begin{equation}
\begin{aligned}
  \Delta(E_i)&= E_i \otimes K_i^{-1}+1 \otimes E_i, & \Delta(F_i)&=F_i \otimes 1 + K_i \otimes F_i,\\
  S(E_i)&=-E_iK_i, & S(F_i)&=- K_i^{-1}F_i,\\
  \Delta(\quantumq^h)&=\quantumq^h \otimes \quantumq^h, &   S(\quantumq^h)&=\quantumq^{-h},\\
  \counit(E_i)& =\counit(F_i)=0, & \counit(\quantumq^h)&=1,
\end{aligned}\label{eq:24}
\end{equation}
and extending $\Delta$ and $\counit$ to superalgebra homomorphisms and $S$ to a superalgebra anti-homomorphism.
As well-known (and easy to check),  the maps \(\Delta\), \(\counit\) and \(S\) turn \(U_q(\gl_{m|n})\) into a Hopf superalgebra.

\subsection{Representations}
\label{sec:representations}

We let \(\C_q=\C(q)\) be the trivial representation of \(\smash{U_q(\gl_{m|n})}\). We denote by \(V=\C^{m|n}_q\) its natural (vector) representation. It is a \(\C(q)\)--vector superspace of homogeneous basis 
\(x_i\) for \(i \in I\), with grading  given by \(\abs{x_i}=\abs{i}\). (Note that by a slight abuse of notation we also denoted by \(x_i\) the  basis vectors of \(\C^{m|n}\).) The action of \(U_q(\gl_{m|n})\) is determined by
\begin{equation}
\label{eq:26}
  E_i x_j = \delta_{i+1,j} x_i, \qquad  F_i x_j=\delta_{i,j} x_{i+1}, \qquad \quantumq^h x_i =q^{\langle h,\epsilon_i \rangle} x_i.
\end{equation}

A weight \(\lambda \in \sfP\) is said to be \emph{dominant} if it is a dominant weight for \(\gl_m \oplus \gl_n \subset \gl_{m|n}\). Given a dominant weight \(\lambda \in \sfP\), one can define the quantum Kac module \(K_q(\lambda)\) with highest weight \(\lambda\) by parabolic induction.
 If \(\lambda\) is typical (that is, if \((\lambda + \rho, \alpha)\neq 0\) for all odd roots \(\alpha\)), then \(K_q(\lambda)\) is simple, and we denote it by  \(L_{q,{m|n}}(\lambda)\) or also simply by \(L_q(\lambda)\) (see \cite[Section~2]{MR3159079}). If \(\lambda\) is not typical, one can define \(L_q(\lambda)\) as the unique simple quotient of \(K_q(\lambda)\). Anyway, we will only need the typical case.

We recall that a partition of \(N\) is a  sequence of positive integers \(\uplambda=(\uplambda_1,\uplambda_2,\dotsc,\uplambda_\ell)\)  for some \(\ell \geq 0\) with \(\uplambda_1 \geq \uplambda_2 \geq \dotsb \geq \uplambda_\ell\) such that \( \abs{\uplambda} = \uplambda_1 + \dotsb + \uplambda_\ell = N\). To a partition we associate a Young diagram, which we also denote by \(\uplambda\), as in the following picture:
\begin{equation}
  \begin{tikzpicture}[scale=0.4,every node/.style={font=\footnotesize},anchorbase]
    \draw (0,0) rectangle (5,-1);
    \draw (0,-1) rectangle (4,-2);
    \draw (0,-3.5) rectangle (2,-4.5);
    \node at (1,-2.5) {$\vdots$};
    \node at (2.5,-0.5) {$\uplambda_1$};
    \node at (2,-1.5) {$\uplambda_2$};
    \node at (1,-4) {$\uplambda_\ell$};
  \end{tikzpicture}
\end{equation}
The transposed \(\uplambda^T\) of a partition \(\uplambda\) is obtained by taking the symmetric Young diagram around the diagonal. In formulas, \(\uplambda^T_i = \#\{h \suchthat \uplambda_h\geq i \}\). 

A partition is said to be a \((m,n)\)--hook if \(\uplambda_i \leq n\) for all \(i > m\), or equivalently if its Young diagram fits into the union of two orthogonal strips of width \(m\) and \(n\), as in the following picture:
\begin{equation}
  \label{eq:28}
  \begin{tikzpicture}[scale=0.4,every node/.style={font=\footnotesize},anchorbase]
    \draw[dashed] (0,0) -- ++(7,0);
    \draw[dashed] (3,-3) -- ++(4,0);
    \draw[<->] (6.5,0) -- node[fill=white] {$m$} ++(0,-3);
    \draw[dashed] (3,-3) -- ++(0,-2);
    \draw[dashed] (0,0) -- ++(0,-5);
    \draw[<->] (0,-4.5) -- node[fill=white] {$n$} ++(3,0); 
    \draw (0,0) rectangle (5,-1);
    \draw (0,-1) rectangle (4,-2);
    \draw (0,-2) rectangle (2,-3);
    \draw (0,-3) rectangle (1,-4);
  \end{tikzpicture}
\end{equation}
Let \(H_{m|n}\) denote the set of all \((m,n)\)--hook partitions. Let \(\uplambda=(\uplambda_1,\dotsc,\uplambda_\ell) \in H_{m|n}\) and write \(\uplambda\) as concatenation of two partitions \(\uplambda'=(\uplambda_1,\dotsc,\uplambda_m)\) and \(\uplambda^{\prime\prime T}\). Notice that \(\uplambda^{\prime \prime}\) is a partition with at most \(n\) parts. Then the weight
\begin{equation}
  \label{eq:29}
  \lambda= \uplambda'_1 \epsilon_1 + \dotsb+ \uplambda'_m \epsilon_m + \uplambda''_1 \epsilon_{m+1} + \dotsb + \uplambda''_{n} \epsilon_{m+n}
\end{equation}
is typical and we let \(L_q(\uplambda) = L_q(\lambda)\). As a convention, if \(\uplambda\) is a partition which is not in \(H_{m|n}\), then we just set \(L_{q,m|n}(\uplambda)=0\).

\subsection{Ribbon structure}
\label{sec:ribbon-structure-1}

As well-known, the \(\hbar\)--version \(U_\hbar(\gl_{m|n})\) of the quantum enveloping algebra of \(\gl_{m|n}\) can be endowed with a ribbon structure, and this allows to turn the category of finite-dimensional \(U_q(\gl_{m|n})\)--representations into a ribbon category. Actually, in the literature it is more common to find the ribbon structure of \(U_\hbar (\mathfrak{sl}_{m|n})\), see 
for example \cite{MR2171805}, \cite{MR2640994}. For choosing a universal \(R\)--matrix for \(\gl_{m|n}\) one has an additional degree of freedom, due to the presence of a nontrivial center. This allows to obtain slightly nicer formulas. We explain our conventions in this subsection. As a reference for ribbon algebras and \(\hbar\)--versions of the quantum enveloping algebras, see for example \cite{MR1300632} or \cite{MR1321145}.

Let \(R'\) be the universal \(R\)--matrix for \(\mathfrak{sl}_{m|n}\) from \cite{MR2640994}. As \(R\)--matrix for \(\gl_{m|n}\) we take \(\smash{R=q^{\frac{1}{2} I \otimes I} (R')^{-1}}\), where \(I = h_1+\dotsb+h_{m+n} \in \sfP^*\) is the identity matrix. Using the ribbon structure of \(U_\hbar(\mathfrak{sl}_{m|n})\) it is straightforward to check that \(R\) defines a braided structure on \(U_\hbar(\gl_{m|n})\). Let also \(v'\) be the ribbon element from \cite{MR2640994}. Then by setting \(\smash{v = q^{\frac{1}{2} I^2} v'} \) we get a ribbon element for our braided structure of \(U_\hbar(\gl_{m|n})\), and hence a ribbon structure of the latter.

\subsection{Braiding and twist}
\label{sec:r-matrix}

On the simple finite-dimensional representation \(L_q(\lambda)\) the twist operator acts as
\begin{equation}
  \label{eq:35}
  \theta_{L_q(\lambda)}(w) = q^{-(\lambda,\lambda+2\rho)} w,
\end{equation}
see also \cite[Lemma~3.1]{MR2640994}.\footnote{Notice that in \cite{MR2640994} \((\cdot,\cdot)\) denotes the bilinear form of the (dual of) the Cartan of \(\mathfrak{sl}_{m|n}\), while in \eqref{eq:35} we denoted by \((\cdot,\cdot)\)  the bilinear form of the (dual of) the Cartan of \(\smash{\gl_{m|n}}\). The fact that we get apparently the same formula is due to our rescaling of the universal \(R\)--matrix, see above.} As for  the braiding, we will only need the explicit action on tensor powers of the vector representation. According to our conventions, this is given by
\begin{equation}
  \label{eq:30}
  \check R_{V,V} (x_a \otimes x_b) =
  \begin{cases}
    q^{-1} x_a \otimes x_a & \text{if } a = b \leq m,\\
    (-1)^{\abs{a}\abs{b}} x_b \otimes x_a & \text{if } a < b,\\
    (-1)^{\abs{a}\abs{b}} x_b \otimes x_a + (q^{-1}- q) x_a \otimes x_b & \text{if } a > b,\\
    -q x_a \otimes x_a & \text{if } a = b > m,
  \end{cases}
\end{equation}
see also \cite{MR2491760}. 

 The map \(\check R_{V,V}\) can be used in a straightforward way to define an action of the Hecke algebra \(\ucalH_r\) (see section~\ref{sec:overview}) on \(V^{\otimes r}\). We recall the following result, also known as super Schur-Weyl duality:

 \begin{prop}[\cite{MR2251378}]
   \label{prop:1}
   The actions of \(\ucalH_r\) and \(U_q(\gl_{m|n})\) on \(V^{\otimes r}\) commute with each other and generate each other's centralizer. Moreover, we have the following decomposition as a module over \(U_q(\gl_{m|n}) \otimes \ucalH_r\):
   \begin{equation}
     \label{eq:32}
     V^{\otimes r} \cong \bigoplus_{\substack{\uplambda \in H_{m|n}\\\abs{\uplambda}=r}} L_q(\uplambda) \otimes S(\uplambda)
   \end{equation}
where \(S(\uplambda)\) is the simple \(\ucalH_r\)--module corresponding to the partition \(\uplambda\).
 \end{prop}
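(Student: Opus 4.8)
The plan is to follow the classical pattern of (super) Schur--Weyl duality, taking the known non-quantum statement of Sergeev and Berele--Regev as input and transporting it to generic \(q\) by a flatness and semicontinuity argument; the only genuinely quantum point is to produce the \(\ucalH_r\)--action from the \(R\)--matrix. First I would check that the assignment \(H_i\mapsto \id_V^{\otimes(i-1)}\otimes\checkR_{V,V}\otimes\id_V^{\otimes(r-i-1)}\) defines an algebra homomorphism \(\ucalH_r\to\End_{\C(q)}(V^{\otimes r})\): the far-commutation relations are automatic, the braid relations hold because \(\checkR_{V,V}\) is the braiding of the ribbon category of \(U_q(\gl_{m|n})\)--modules (hence satisfies the graded Yang--Baxter equation), and the quadratic relation follows from \eqref{eq:30}, since on each \(\checkR_{V,V}\)--invariant subspace of \(V\otimes V\) — the lines \(\C(q)\,x_a\otimes x_a\), with eigenvalue \(q^{-1}\) or \(-q\) according to the parity of \(a\), and the planes \(\langle x_a\otimes x_b,x_b\otimes x_a\rangle\) with \(a\neq b\), on which the characteristic polynomial is \(t^2-(q^{-1}-q)t-1\) — one has \((\checkR_{V,V}-q^{-1})(\checkR_{V,V}+q)=0\). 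Since \(\checkR_{V,V}\colon V\otimes V\to V\otimes V\) is a morphism of \(U_q(\gl_{m|n})\)--modules, the image of \(\ucalH_r\) lands in \(\End_{U_q(\gl_{m|n})}(V^{\otimes r})\); this is the asserted commutation.

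Next I would pin down all the relevant dimensions by specialization at \(q=1\). Let \(\mathcal L\subset V^{\otimes r}\) be the \(\C[q,q^{-1}]\)--lattice spanned by the tensors \(x_{i_1}\otimes\dotsb\otimes x_{i_r}\); by \eqref{eq:30} it is stable under the \(\checkR_{V,V}\)--action, and at \(q=1\) the operator \(\checkR_{V,V}\) degenerates to the graded flip, so that \(\mathcal L/(q-1)\mathcal L\cong V_{q=1}^{\otimes r}\) as a module over \(\C\Sigma_r\) with its usual super action; it is likewise stable under a \(\C[q,q^{-1}]\)--form of \(U_q(\gl_{m|n})\) and at \(q=1\) recovers the classical \(\gl_{m|n}\)--action. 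Because the rank of the list of operators \(\rho(h)\), \(h\) in a basis of the integral Hecke algebra, is lower semicontinuous under \(q\to1\), one gets \(\dim_{\C(q)}(\text{image of }\ucalH_r)\geq\dim_\C(\text{image of }\C\Sigma_r\text{ in }\End(V_{q=1}^{\otimes r}))=\sum_{\uplambda\in H_{m|n},\,\abs{\uplambda}=r}(\#\mathrm{SYT}(\uplambda))^2\), the last equality being the classical double centralizer theorem for \((U(\gl_{m|n}),\C\Sigma_r)\). On the other hand, every composition factor of the \(U_q(\gl_{m|n})\)--module \(V^{\otimes r}\) is a simple module \(L_q(\lambda)\) with \(\lambda\) a dominant polynomial weight of degree \(r\); the formal \(\frakh\)--character of \(V^{\otimes r}\) and the characters of these \(L_q(\lambda)\) (computed, for \(\lambda\) typical, by the parabolic induction defining \(K_q(\lambda)\)) are independent of \(q\) and linearly independent, so the composition multiplicities of \(V^{\otimes r}\) equal their classical values, namely \(\#\mathrm{SYT}(\uplambda)\) for \(\uplambda\in H_{m|n}\) with \(\abs{\uplambda}=r\) and \(0\) otherwise; hence \(\dim_{\C(q)}\End_{U_q(\gl_{m|n})}(V^{\otimes r})\leq\sum_{\uplambda}(\#\mathrm{SYT}(\uplambda))^2\), with equality if and only if \(V^{\otimes r}\) is semisimple. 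Sandwiching these two bounds by the inclusion \(\text{image of }\ucalH_r\subseteq\End_{U_q(\gl_{m|n})}(V^{\otimes r})\) forces all three quantities to coincide: the map \(\ucalH_r\to\End_{U_q(\gl_{m|n})}(V^{\otimes r})\) is surjective, and \(V^{\otimes r}\) is semisimple over \(U_q(\gl_{m|n})\) with \(L_q(\uplambda)\) occurring with multiplicity \(\#\mathrm{SYT}(\uplambda)\).

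It then remains to get the other half of the double centralizer and to identify the multiplicity spaces. Writing \(B\) for the image of \(U_q(\gl_{m|n})\) in \(\End_{\C(q)}(V^{\otimes r})\), the module \(V^{\otimes r}\) is a faithful semisimple \(B\)--module, so the classical bicommutant theorem gives \(\End_{\End_B(V^{\otimes r})}(V^{\otimes r})=B\); since \(\End_B(V^{\otimes r})=\End_{U_q(\gl_{m|n})}(V^{\otimes r})\) is the image of \(\ucalH_r\) by the previous paragraph, this reads \(\End_{\ucalH_r}(V^{\otimes r})=B\), i.e.\ \(U_q(\gl_{m|n})\) generates the centralizer of \(\ucalH_r\). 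Semisimplicity of \(U_q(\gl_{m|n})\) on \(V^{\otimes r}\) and of \(\ucalH_r\) then yields a bimodule decomposition \(V^{\otimes r}\cong\bigoplus_\uplambda L_q(\uplambda)\otimes M_\uplambda\) over \(U_q(\gl_{m|n})\otimes\ucalH_r\) with simple \(\ucalH_r\)--multiplicity spaces \(M_\uplambda\) of dimension \(\#\mathrm{SYT}(\uplambda)\); finally \(M_\uplambda\cong S(\uplambda)\) is forced by comparing with the classical picture through \(\mathcal L\), since \(\dim\Hom_{U_q(\gl_{m|n})\otimes\ucalH_r}(L_q(\uplambda)\otimes S(\mu),V^{\otimes r})\) is at most its \(q=1\) value, which equals \(\delta_{\uplambda\mu}\) by classical super Schur--Weyl duality. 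This is exactly \eqref{eq:32}.

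I expect the main obstacle to be the bookkeeping of the \(q\to1\) specialization: choosing genuinely compatible \(\C[q,q^{-1}]\)--forms of \(\ucalH_r\) and of \(U_q(\gl_{m|n})\), together with lattices in \(V^{\otimes r}\), \(L_q(\uplambda)\) and \(S(\uplambda)\) whose reductions modulo \((q-1)\) are the expected classical objects, and checking that \(\checkR_{V,V}\) specializes precisely to the signed permutation action so that lower semicontinuity of rank applies; and one genuinely relies on the nontrivial classical input (Sergeev, Berele--Regev) that \(\C\Sigma_r\) and \(U(\gl_{m|n})\) already form a dual pair on \(V^{\otimes r}\) with the hook-shaped decomposition. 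A more self-contained route would replace that classical input by the theory of crystal (or canonical) bases for the polynomial category of \(U_q(\gl_{m|n})\), which delivers the decomposition \eqref{eq:32} and the semisimplicity of \(V^{\otimes r}\) directly.
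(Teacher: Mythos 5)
The paper does not prove this proposition; it simply cites it to Mitsuhashi [MR2251378], so there is no ``paper proof'' to compare against, and your specialization-at-\(q=1\) strategy is the standard route for this kind of statement (and is broadly what the cited reference does). Your first paragraph is correct as it stands and matches the paper's conventions: the quadratic relation \(H_i^2=(q^{-1}-q)H_i+1\) from \eqref{catO:eq:9} is exactly the minimal polynomial \((t-q^{-1})(t+q)\) of \(\checkR_{V,V}\) read off from \eqref{eq:30}, and the braid relation is the hexagon axiom. The character argument in paragraph two is also sound, since the relevant \(L_q(\uplambda)\) are typical (as the paper notes after \eqref{eq:29}) so their characters are \(q\)-independent, and linear independence of highest weights handles the possibility of atypical constituents.

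The one genuinely non-elementary step that you assert without justification is ``\(\dim\End_{U_q}(V^{\otimes r})=\sum_\uplambda m_\uplambda^2\) if and only if \(V^{\otimes r}\) is semisimple.'' The inequality \(\leq\) does follow from upper semicontinuity of \(\dim\Hom\) along a degeneration to the associated graded, but the converse is a theorem (a proper degeneration strictly increases \(\dim\End\), due to Riedtmann and Zwara), and it is stated over algebraically closed ground fields, so one would have to extend scalars to \(\overline{\C(q)}\) and descend. You should either cite this explicitly or, better, reorder the sandwich so as to sidestep it: since \(\ucalH_r\) is semisimple for generic \(q\), \(V^{\otimes r}\) is automatically semisimple over \(\ucalH_r\), hence \(V^{\otimes r}\cong\bigoplus_\mu S(\mu)\otimes W_\mu\) with \(W_\mu=\Hom_{\ucalH_r}(S(\mu),V^{\otimes r})\) and \(\End_{\ucalH_r}(V^{\otimes r})\cong\prod_\mu\End_{\C(q)}(W_\mu)\), and the double commutant theorem already applies on the Hecke side. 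Now run the very same two-sided dimension estimate for the image of \(U_q(\gl_{m|n})\) inside \(\End_{\ucalH_r}(V^{\otimes r})\) (lower semicontinuity for the image, upper semicontinuity for the \(\Hom\) space, with the classical values matching by Sergeev and Berele--Regev); once that image is all of \(\prod_\mu\End(W_\mu)\), each nonzero \(W_\mu\) is automatically a simple \(U_q(\gl_{m|n})\)-module, semisimplicity of \(V^{\otimes r}\) over \(U_q(\gl_{m|n})\otimes\ucalH_r\) follows, and the other half of the double centralizer is free. The final identification \(W_\uplambda\cong L_q(\uplambda)\) then goes exactly as in your last paragraph.
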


We stress that it follows in particular that \(V^{\otimes r}\) is semisimple.

\subsection{Braided symmetric and exterior powers}
\label{sec:braid-symm-exter}

Let \(W\) be any finite dimensional representation of \(U_q(\gl_{m|n})\), and let \(\bigT W= \bigoplus_{\ell \geq 0} \bigotimes^\ell W\) denote its tensor algebra.    We recall from \cite{MR2386232} that  one can define the braided exterior power \(\bigwedgeq^\bullet W\) of \(W\) as the quotient of \(\bigT W\) modulo the ideal generated by the \(\checkR_{W,W}\)--eigenvectors of \(\bigotimes^2 W\) with positive eigenvalues. Analogously,  one can define the braided symmetric power \(\bigsymmq^\bullet W\) of \(W\) as the quotient of \(\bigT W\) modulo the ideal generated by the \(\checkR_{W,W}\)--eigenvectors of \(\bigotimes^2 W\) with negative eigenvalues. Here, the positive and negative eigenvalues of \(\checkR_{W,W}\) are \(\{q^i \suchthat i \in \Z\}\) and \(\{-q^i \suchthat i \in \Z\}\), respectively.

The braided exterior and symmetric algebras are naturally \(\Z\)--graded (note that this is an additional grading to the \(\Z/2\Z\)--grading, which in this case we will call \(\Z/2\Z\)--parity to avoid confusion). Moreover, their graded dimensions are smaller or equal to the graded dimensions of the usual exterior and symmetric algebras. If the equality holds, we say that \(W\) is \emph{flat}.

\begin{remark}\label{rem:9}
It is easy to see that if we forget the \(\Z/2\Z\)--parity, then we have an isomorphism between \(\bigwedgeq^\bullet (W \langle 1 \rangle)\) and \(\bigsymmq^\bullet W\), where \(\langle 1 \rangle\) denotes the shift by one in the \(\Z/2\Z\)--parity.
For \(p \in \Z/2\Z\) and \(j \in \Z_{\geq 0}\) let \((\bigsymmq^j W)_{p}\) denote the component of \(\bigsymmq^\bullet W\) with \(\Z/2\Z\)--parity \(p\) and \(\Z\)--degree \(j\), so that  \(\bigsymmq^\bullet (W) = \bigoplus_{p \in \Z_2, j \in \Z_{\geq 0}} (\bigsymmq^j W)_{p}\).
If we define \(\mathbb{P} \bigsymmq^\bullet W\) to have graded components \((\mathbb{P} \bigsymmq^\bullet W)_{p,j} = (\bigsymmq^j W)_{p+j}\) then we have an honest graded isomorphism \(\bigwedgeq^\bullet (W \langle 1 \rangle) \cong \mathbb{P} \bigsymmq^\bullet W\) which also preserves the \(\Z/2\Z\)--parity.
\end{remark}

In the next subsection, we will analyze further braided exterior powers of \(V\).

\subsection{Exterior powers}
\label{sec:exterior-powers}

We define \(I\) to be the vector subspace of \(\bigotimes^2 V\) spanned by
\begin{equation}
  \label{eq:36}
  \begin{aligned}
   & x_a \otimes x_a &&  \text{for } a \leq m,\\
   & x_a \otimes x_b + (-1)^{\abs{a}\abs{b}} q^{-1} x_b \otimes x_a && \text{for } a < b.
  \end{aligned}
\end{equation}
Notice that the braiding \(\check R_{V,V}\) has minimal polynomial \((t-q^{-1})(t+q)\) and by construction \(I\) is the eigenspace with eigenvalue \(q^{-1}\). In particular, \(I\) is a \(U_q(\gl_{m|n})\)--subrepresentation of \(\smash{\bigotimes^2 V}\).

Let \(\bigT V= \bigoplus_{\ell \geq 0} \bigotimes^\ell V\) be the tensor algebra of \(V\). We define  \(\bigwedgeq^\bullet V = \bigT V / \langle I \rangle\): it is the free algebra on generators \(x_a\) with multiplication \(\wedge_q\) modulo the relations
\begin{equation}
  \label{eq:38}
  \begin{aligned}
    x_a \wedgeq x_a &= 0 &&\text{for } a \leq m,\\
    x_a \wedgeq x_b &= -(-1)^{\abs{a}\abs{b}} q^{-1} x_b \wedgeq x_a &&\text{for } a<b.
  \end{aligned}
\end{equation}
Since \(\bigT V\) is a \(U_q(\gl_{m|n})\)--representation and \(\langle  I\rangle\) is an invariant subspace, it follows that \(\bigwedgeq^\bullet V\) is naturally a \(U_q(\gl_{m|n})\)--representation, and it decomposes as \(\bigoplus_{i=0}^\infty \bigwedgeq^i V\). It follows from proposition~\ref{prop:1}, or it is easy to check explicitly, that each \(\bigwedgeq^i V\) is irreducible, and
\begin{equation}
  \label{eq:39}
  \bigwedgeq^i V \cong
  \begin{cases}
    L_q(\epsilon_1+\dotsb+\epsilon_i) & \text{if } i \leq m,\\
    L_q(\epsilon_1+\dotsb+\epsilon_m+(i-m)\epsilon_{m+1}) & \text{if } i > m \text{ and } n \geq 1,\\
    0 & \text{if } i > m \text{ and } n=0.
  \end{cases}
\end{equation}
We will often write \(\bigwedgeq^i \) instead of \(\bigwedgeq^i V\) for shortness.

\begin{definition}\label{def:1}
   We define \(\smash{\catRep^+_{m|n}}\)
  to be the full additive subcategory of representations of
  \(U_q(\gl_{m|n})\)
  monoidally generated by the \(\bigwedgeq^i V\)'s
  for \(i \geq 0\).
  We define moreover \(\catRep_{m|n}\)
  to be the full additive subcategory of representations of
  \(\smash[b]{U_q(\gl_{m|n})}\)
  monoidally generated by the \(\bigwedgeq^i V\)'s
  for \(i \geq 0\) and by their duals.
\end{definition}

\subsection{Reshetikhin-Turaev invariant}
\label{sec:ribbon-category}

Since the category of \(U_q(\gl_{m|n})\)--representations is ribbon, given a tangle diagram, we can label its strands by finite dimensional \(U_q(\gl_{m|n})\)--representations and associate to it some equivariant map which is an invariant of the given tangle (see \cite{MR1036112,MR1881401} for the details).

We will not actually need the full ribbon calculus of this category; it will be enough for our purposes to restrict to labelings of the strands by representations \(\bigwedgeq^a V\) for \(a \geq 1\). In particular, we will just label our strands by positive integers, and to a strand labeled by \(a\) we associate the representation \(\bigwedgeq^a V\). In this way we get the Reshetikhin-Turaev functor \(\smash{\RT_{m|n} \colon \catTangles^\lab \mapto \catRep_{m|n}}\).

We will sometimes use the same picture for an element of \(\catTangles^\lab_q\) and for its image under the functor \(\RT_{m|n}\). This should cause no confusion.
For example, the braiding of \(V \otimes V\) and its inverse will be represented by
\begingroup
\begin{equation}
  \label{eq:40}
\tikzset{frecciona/.style={double,gray,->,double distance=0.5mm},every picture/.style={yscale=0.5}}
      \begin{tikzpicture}[smallnodes,anchorbase]
\node[gray,below] at (1,0) {\(V\)};
\node[gray,below] at (0,0) {\(V\)};
\node[gray,below] at (0.5,0) {\(\vphantom{V}\otimes\)};
\node[gray,above] at (1,2) {\(V\)};
\node[gray,above] at (0,2) {\(V\)};
\node[gray,above] at (0.5,2) {\(\vphantom{V}\otimes\)};
\draw[frecciona] (0.5,0) -- ++(0,2);
   \draw[uno,cross line] (1,0)  -- ++(0,0.3) \midarrow .. controls ++(0,0.7) and ++(0,-0.7) .. ++(-1,1.4)  node[left] {$1$} -- ++(0,0.3)  \midarrow;
   \draw[uno,cross line] (0,0)  -- ++(0,0.3) \midarrow .. controls ++(0,0.7) and ++(0,-0.7) .. ++(1,1.4)  node[right] {$1$}  -- ++(0,0.3)  \midarrow;
    \end{tikzpicture} \qquad \text{and} \qquad
    \begin{tikzpicture}[smallnodes,anchorbase]
\node[gray,below] at (1,0) {\(V\)};
\node[gray,below] at (0,0) {\(V\)};
\node[gray,below] at (0.5,0) {\(\vphantom{V}\otimes\)};
\node[gray,above] at (1,2) {\(V\)};
\node[gray,above] at (0,2) {\(V\)};
\node[gray,above] at (0.5,2) {\(\vphantom{V}\otimes\)};
\draw[frecciona] (0.5,0) -- ++(0,2);
   \draw[uno,cross line] (0,0)  -- ++(0,0.3)  \midarrow .. controls ++(0,0.7) and ++(0,-0.7) .. ++(1,1.4) node[right] {$1$} -- ++(0,0.3)  \midarrow;
   \draw[uno,cross line] (1,0)  -- ++(0,0.3) \midarrow .. controls ++(0,0.7) and ++(0,-0.7)    .. ++(-1,1.4) node[left] {$1$} -- ++(0,0.3)  \midarrow;
    \end{tikzpicture}
\end{equation}
\endgroup
respectively.
We also have evaluation and coevaluation maps, which for \(V\) and \(V^*\) are graphically represented by
\begin{equation}
\label{eq:41}
\tikzset{frecciona/.style={double,gray,->,double distance=0.5mm}}
  \begin{tikzpicture}[smallnodes,anchorbase]
\node[gray,below] at (1,0) {\(V^*\)};
\node[gray,below] at (0,0) {\(V\)};
\node[gray,below] at (0.5,0) {\(\vphantom{V^*}\otimes\)};
\node[gray,above] at (0.5,0.7) {\(\C_q\)};
\draw[frecciona] (0.5,0) -- ++(0,0.7);
    \draw[uno,cross line] (0,0) .. controls ++(0,+0.5) and ++(0,+0.5) .. ++(1,0)  \nendarrow node[near end,above right] {$1$} ;
  \end{tikzpicture}\qquad
  \begin{tikzpicture}[smallnodes,anchorbase]
\node[gray,below] at (1,0) {\(V\)};
\node[gray,below] at (0,0) {\(V^*\)};
\node[gray,below] at (0.5,0) {\(\vphantom{V^*}\otimes\)};
\node[gray,above] at (0.5,0.7) {\(\C_q\)};
\draw[frecciona] (0.5,0) -- ++(0,0.7);
    \draw[uno,cross line] (0,0) .. controls ++(0,+0.5) and ++(0,+0.5) .. ++(1,0)  \nstartarrowrev node[near start,above left] {$1$} ;
  \end{tikzpicture}\qquad
  \begin{tikzpicture}[smallnodes,anchorbase]
\node[gray,above] at (1,1) {\(V\)};
\node[gray,above] at (0,1) {\(V^*\)};
\node[gray,above] at (0.5,1) {\(\vphantom{V^*}\otimes\)};
\node[gray,below] at (0.5,0.3) {\(\C_q\)};
\draw[frecciona] (0.5,0.3) -- ++(0,0.7);
    \draw[uno,cross line] (0,1) .. controls ++(0,-0.5) and ++(0,-0.5) .. ++(1,0)  \nendarrow node[near end,below right] {$1$} ;
  \end{tikzpicture} \qquad 
  \begin{tikzpicture}[smallnodes,anchorbase]
\node[gray,above] at (1,1) {\(V^*\)};
\node[gray,above] at (0,1) {\(V\)};
\node[gray,above] at (0.5,1) {\(\vphantom{V^*}\otimes\)};
\node[gray,below] at (0.5,0.3) {\(\C_q\)};
\draw[frecciona] (0.5,0.3) -- ++(0,0.7);
    \draw[uno,cross line] (0,1) .. controls ++(0,-0.5) and ++(0,-0.5) .. ++(1,0)  \nstartarrowrev node[near start,below left] {$1$} ;
  \end{tikzpicture}
\end{equation}

We recall the following relations in \(\catRep_{m|n}\), which are a consequence of \eqref{eq:35} (cf.\ also \cite[Lemma~3.1]{MR2640994}):
\begingroup
\tikzset{every picture/.style={yscale=0.7}}
  \begin{equation}\label{eq:42}
    \begin{tikzpicture}[smallnodes,anchorbase]
   \draw[uno] (0.8,1)  .. controls ++(0,-0.3) and ++(0.3,0) .. ++(-0.3,-0.6)  .. controls ++(-0.5,0) and ++(0,-0.7) .. ++(-0.5,1.6) \nendarrow node[yshift=-0.1cm,right] {$1$} ;
   \draw[uno, cross line] (0.8,1)  .. controls ++(0,0.3) and ++(0.3,0) .. ++(-0.3,0.6) \startarrowrev .. controls ++(-0.5,0) and ++(0,0.7) .. ++(-0.5,-1.6) \nendarrowrev;
    \end{tikzpicture} \; = q^{-d} \;
    \begin{tikzpicture}[smallnodes,anchorbase]
      \draw[uno] (0,0) 
      -- ++(0,2) \midarrow node[yshift=-0.1cm,right] {$1$};
    \end{tikzpicture}
\qquad \text{and} \qquad
    \begin{tikzpicture}[smallnodes,anchorbase]
   \draw[uno, cross line] (0.8,1)  .. controls ++(0,0.3) and ++(0.3,0) .. ++(-0.3,0.6) \startarrowrev .. controls ++(-0.5,0) and ++(0,0.7) .. ++(-0.5,-1.6) \nendarrowrev;
   \draw[uno, cross line] (0.8,1)  .. controls ++(0,-0.3) and ++(0.3,0) .. ++(-0.3,-0.6)  .. controls ++(-0.5,0) and ++(0,-0.7) .. ++(-0.5,1.6) \nendarrow node[yshift=-0.1cm,right] {$1$} ;
    \end{tikzpicture} \; = q^{+d} \;
    \begin{tikzpicture}[smallnodes,anchorbase]
      \draw[uno] (0,0) 
 -- ++(0,2) \midarrow node[yshift=-0.1cm,right] {${1}$};
    \end{tikzpicture},
  \end{equation}
and more generally
  \begin{equation}\label{eq:43}
    \begin{tikzpicture}[smallnodes,anchorbase]
   \draw[int] (0.8,1)  .. controls ++(0,-0.3) and ++(0.3,0) .. ++(-0.3,-0.6)  .. controls ++(-0.5,0) and ++(0,-0.7) .. ++(-0.5,1.6) \nendarrow node[yshift=-0.1cm,right] {${a}$} ;
   \draw[int, cross line] (0.8,1)  .. controls ++(0,0.3) and ++(0.3,0) .. ++(-0.3,0.6) \startarrowrev .. controls ++(-0.5,0) and ++(0,0.7) .. ++(-0.5,-1.6) \nendarrowrev;
    \end{tikzpicture} \; = q^{-ad+a(a-1)} \;
    \begin{tikzpicture}[smallnodes,anchorbase]
      \draw[int] (0,0) 
      -- ++(0,2) \midarrow node[yshift=-0.1cm,right] {${a}$};
    \end{tikzpicture}
\qquad \text{and} \qquad
    \begin{tikzpicture}[smallnodes,anchorbase]
   \draw[int, cross line] (0.8,1)  .. controls ++(0,0.3) and ++(0.3,0) .. ++(-0.3,0.6) \startarrowrev .. controls ++(-0.5,0) and ++(0,0.7) .. ++(-0.5,-1.6) \nendarrowrev ;
   \draw[int, cross line] (0.8,1)  .. controls ++(0,-0.3) and ++(0.3,0) .. ++(-0.3,-0.6)  .. controls ++(-0.5,0) and ++(0,-0.7) .. ++(-0.5,1.6) \nendarrow node[yshift=-0.1cm,right] {${a}$} ;
    \end{tikzpicture} \; = q^{ad-a(a-1)} \;
    \begin{tikzpicture}[smallnodes,anchorbase]
      \draw[int] (0,0) 
      -- ++(0,2) \midarrow node[yshift=-0.1cm,right] {${a}$};
    \end{tikzpicture}.
  \end{equation}
\endgroup
Moreover, we have
\begin{equation}
  \label{eq:44}
    \begin{tikzpicture}[smallnodes,anchorbase]
      \draw[uno] (0,0) arc (0:360:0.5cm) node[midway,left]  {$1$};
      \draw [uno] (0,-.01) -- (0,.01) \midarrowrev;
    \end{tikzpicture} \; = [d] ,
\end{equation}
where \(d=m-n\) is the superdimension of \(\C^{m|n}\). We stress that these are not equalities in \(\catTangles_q\), but only in \(\catRep_{m|n}\). 

\section{Super skew Howe duality}
\label{sec:super-skew-howe}

We consider now simultaneously two quantum superalgebras \(\smash{U_q(\gl_{m|n})}\) and \(\smash{U_q(\gl_{k|l})}\) for some \(m,n,k,l \geq 0\). We let \(\smash{V_1=\C_q^{m|n}}\) and \(\smash{V_2=\C_q^{k|l}}\) be their vector representations, with bases \(\{x_a \suchthat 1 \leq a \leq m+n\}\) and \(\{y_b \suchthat 1 \leq b \leq k+l\}\) respectively. 

We will consider the representation \(V_1 \otimes V_2\) of \(U_q(\gl_{m|n}) \otimes U_q(\gl_{k|l})\), whose standard basis we denote by \(z_{ab} = x_a \otimes y_b\). Our first goal is to define exterior powers of this representation, generalizing \cite{MR2386232}. Notice that the tensor product algebra \(U_q(\gl_{m|n}) \otimes U_q(\gl_{k|l})\) is the quantum enveloping superalgebra of \(\gl_{m|n} \oplus \gl_{k|l}\). It follows by general theory that its representation category is braided, and its universal \(R\)--matrix is the tensor product of the two \(R\)--matrices of \(U_q(\gl_{m|n})\) and \(U_q(\gl_{k|l})\).

Let  \(\sigma_{23} \colon V_1 \otimes V_1 \otimes V_2 \otimes V_2 \mapto V_1 \otimes V_2 \otimes V_1 \otimes V_2\) be the superpermutation which swaps the two middle tensor factors.
The operator \(\calR = \sigma_{23} \circ (\checkR_{V_1,V_1} \otimes \checkR_{V_2,V_2}) \circ \sigma_{23}^{-1}\) is then the braiding on \(V_1 \otimes V_2\). (Although this follows by general statements, it is  straightforward to see directly that it satisfies the braid relation.)

Since the minimal polynomial of both \(\checkR_{V_1,V_1}\) and of \(\checkR_{V_2,V_2}\) is \((t-q^{-1})(t+q)\), the minimal polynomial of \(\calR\) is \((t-q^{-2})(t-q^2)(t+1)\).  
We set \(I\) to be  the sum of the eigenspaces of \(\calR\) with eigenvalues \(q^{-2}\) and \(q^2\) (i.e.\ with \emph{positive} eigenvalues). It is easy to see that \(I\) is spanned by
\begin{equation}
\label{eq:56}
  \begin{aligned}
   & z_{ab} \otimes z_{ab} &&  \text{for } \abs{a}=\abs{b},\\
   & z_{ac} \otimes z_{bc} + (-1)^{\abs{a}\abs{b}} q^{-1} z_{bc} \otimes z_{ac}  &&  \text{for } a < b,\abs{c}=0,\\
   & z_{ac} \otimes z_{ad} + (-1)^{\abs{c}\abs{d}} q^{-1} z_{ad} \otimes z_{ac}  &&  \text{for } \abs{a}=0, c<d,\\
   & \begin{multlined}[b]z_{ac} \otimes z_{bd} + (-1)^{\abs{a}\abs{b}} q^{-1} z_{bc} \otimes z_{ad} \\ + (-1)^{\abs{c}\abs{d}} q^{-1} z_{ad} \otimes z_{bc} + (-1)^{\abs{a}\abs{b}+\abs{c}\abs{d}}q^{-2} z_{bd} \otimes z_{ac} \end{multlined}  &&  \text{for } a < b,c<d,\\
   & z_{ac} \otimes z_{ad} - (-1)^{\abs{c}\abs{d}} q z_{ad} \otimes z_{ac}  &&  \text{for } \abs{a}=1, c<d,\\
   & z_{ac} \otimes z_{bc} - (-1)^{\abs{a}\abs{b}} q z_{bc} \otimes z_{ac}  &&  \text{for } a < b,\abs{c}=1,\\
   & \begin{multlined}[b]z_{ac} \otimes z_{bd} - (-1)^{\abs{a}\abs{b}} q z_{bc} \otimes z_{ad} \\ - (-1)^{\abs{c}\abs{d}} q z_{ad} \otimes z_{bc} + (-1)^{\abs{a}\abs{b}+\abs{c}\abs{d}}q^{2} z_{bd} \otimes z_{ac} \end{multlined}  &&  \text{for } a < b,c<d.
  \end{aligned}
\end{equation}

We let \(\bigwedgeq^\bullet(V_1 \otimes V_2) = \bigT(V_1 \otimes V_2)/\langle I \rangle\):
by definition, this is the quotient of the free algebra on the set \(\{z_{ab}\}\)  with multiplication \(\wedge_q\) modulo the relations
\begin{equation}
  \label{eq:57}
  \begin{aligned}
   & z_{ab} \wedgeq z_{ab} =0 &&  \text{for } \abs{a}=\abs{b},\\
   & z_{ac} \wedgeq z_{bc} = - (-1)^{\abs{a}\abs{b}} q^{-1} z_{bc} \wedgeq z_{ac}  &&  \text{for } a < b,\abs{c}=0,\\
   & z_{ac} \wedgeq z_{ad} = - (-1)^{\abs{c}\abs{d}} q^{-1} z_{ad} \wedgeq z_{ac}  &&  \text{for } \abs{a}=0, c<d,\\
   & z_{ac} \wedgeq z_{bd} = - (-1)^{\abs{a}\abs{b}+\abs{c}\abs{d}} z_{bd} \wedgeq z_{ac} && \text{for } a<b, c<d,\\
   &  \begin{multlined}[b] z_{ac} \wedgeq z_{bd} = - (-1)^{\abs{a}\abs{b}+\abs{c}\abs{d}} z_{bd} \wedgeq z_{ac} \\ + (-1)^{\abs{a}\abs{b}} (q^{-1}-q) z_{bc} \wedgeq z_{ad} \end{multlined} && \text{for } a>b, c<d,\\
   & z_{ac} \wedgeq z_{ad} = (-1)^{\abs{c}\abs{d}} q z_{ad} \wedgeq z_{ac} && \text{for } \abs{a}=1, c<d,\\
   & z_{ac} \wedgeq z_{bc} = (-1)^{\abs{a}\abs{b}} q z_{bc} \wedgeq z_{ac}  &&  \text{for } a < b,\abs{c}=1.\\
  \end{aligned}
\end{equation}

The tensor algebra \(\bigT(V_1 \otimes V_2)\) is  a \(\big(U_q(\gl_{m|n}) \otimes U_q(\gl_{k|l})\big)\)--module, and the ideal \(\langle I\rangle\) is by construction an invariant subspace. In particular, \(\bigwedgeq^\bullet (V_1 \otimes V_2)\) is also a \(\big(U_q(\gl_{m|n}) \otimes U_q(\gl_{k|l})\big)\)--module, and it decomposes as \(\bigoplus_{N=0}^\infty \bigwedgeq^N (V_1 \otimes V_2)\).

We want to show that \(V_1 \otimes V_2\) is flat, that is, the dimension of \(\bigwedgeq^N (V_1 \otimes V_2)\) coincides with the dimension of the usual exterior power \(\bigwedge^N (V_1 \otimes V_2)\). Unfortunately, it is not easy to see this from the presentation~\eqref{eq:57}. Hence we will give now an alternative construction of \(\bigwedgeq^\bullet (V_1 \otimes V_2)\). This will come only with an action of \(U_q(\gl_{m|n})\), and not of \(U_q(\gl_{k|l})\). However, the flatness will be obvious. 

Let \(A,B\) be any locally finite \(U_q(\gl_{m|n})\)--module algebras with multiplications \(\mu_A\) and \(\mu_B\), respectively. We recall from \cite[Section~2]{MR2753673} that  one can define a \(U_q(\gl_{m|n})\)--module algebra structure on \(A \otimes B\) with multiplication
\begin{equation}
  \label{eq:11}
  \mu_{A \otimes B} = (\mu_A \otimes \mu_B) (\id_A \otimes \checkR^{-1}_{A,B} \otimes \id_B),
\end{equation}
This is well-defined (\cite[Theorem~2.3]{MR2753673}), the construction can be iterated with a third module algebra \(C\) and is associative (\cite[Lemma~2.5]{MR2753673}).

We apply this construction iteratively to the algebras \(\bigsymmq^\bullet (V_1\langle 1 \rangle)\) and \(\bigsymmq^\bullet (V_1)\), where \(\langle 1 \rangle\) denotes a shift by one in the \(\Z/2\Z\)--parity, and we define in this way a \(U_q(\gl_{m|n})\)--module algebra structure on
\begin{equation}
A_{k|l} = \big(\bigsymmq^\bullet (V_1\langle 1 \rangle) \big)^{\otimes k} \otimes \big( \bigsymmq^\bullet (V_1) \big)^{\otimes l},\label{eq:25}
\end{equation}
where \(\langle 1 \rangle\) denotes a shift by one in the \(\Z/2\Z\)--degree. As in \cite[Theorem~2.8]{MR2753673}, it follows that \(A_{k|l}\) is a flat deformation of the symmetric algebra \(\bigsymm \big((V_1\langle 1 \rangle)^{\oplus k} \oplus V_1^{\oplus l} )\), i.e.\ of the exterior algebra \(\bigwedge \big( V_1 ^{\oplus k} \oplus (V_1\langle 1 \rangle)^{\oplus l}\big)\). This means that the graded dimensions of \(A_{k|l}\) and of \(\bigwedge \big( V_1 ^{\oplus k} \oplus (V_1\langle 1 \rangle)^{\oplus l}\big)\) coincide.

For all \(a=1,\dots,m+n\) and \(i=1,\dots,k+l\) let \(X_{ai} = 1 \otimes \dots \otimes 1 \otimes v_a \otimes 1 \otimes \dots \otimes 1 \in A_{k|l}\), the entry \(v_a\) being at position \(i\). It is easy to see that the \(X_{ai}\)'s generate \(A_{k|l}\) as an algebra (this follows for example from~\cite[Lemma~2.9]{MR2753673}), and by construction they are subject to the relations
\begin{equation}
  \label{eq:27}
  X_{aj} X_{bi} = -(-1)^{\abs{i}\abs{j}} \sum_{a',b'=1}^{m+n} \checkR^{-1}_{a'a,b'b} X_{a'i}X_{b'j} \qquad \text{for all } i < j.
\end{equation}
Here \(\checkR^{-1}_{a'a,b'b}\) denotes the matrix coefficient of the inverse of the braiding \(\checkR\), and the sign \(- (-1)^{\abs{i}\abs{j}}\) appears since we shifted the first \(k\) copies of \(\bigsymmq^\bullet V_1\) by one in the \(\Z/2\Z\)--degree.

\begin{lemma}
  \label{lem:5}
  The map \(z_{ai} \mapsto X_{ai}\) defines an algebra isomorphism
  \(\bigwedgeq^\bullet (V_1 \otimes V_2) \rightarrow  A_{k|l} \).
  For all \(N \geq 0\), a basis of \(\bigwedgeq^N (V_1 \otimes V_2)\) is given by the elements
  \begin{equation}
    \label{eq:59}
   \left\{ z_{a_1 b_1} \wedgeq \dotsb \wedgeq z_{a_N b_N} \,\left|\,
     \begin{aligned}
       & (a_1,b_1) \leq (a_2,b_2) \leq \dotsb \leq (a_N,b_N)\\
       &\text{in the anti-lexicographic order, and}\\ 
       & \text{if \((a_i,b_i) = (a_{i+1},b_{i+1})\) then \(\abs{z_{a_i b_i}}=1\)}
     \end{aligned}\right.\right\}.
  \end{equation}
\end{lemma}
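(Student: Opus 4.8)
The plan is to establish the isomorphism $\bigwedgeq^\bullet(V_1\otimes V_2) \to A_{k|l}$ by comparing presentations, and then derive the basis statement from the known flatness of $A_{k|l}$. First I would check that the assignment $z_{ai}\mapsto X_{ai}$ respects the defining relations~\eqref{eq:57} of $\bigwedgeq^\bullet(V_1\otimes V_2)$: this amounts to expanding the right-hand side of~\eqref{eq:27} using the explicit matrix coefficients of $\checkR^{-1}_{V_1,V_1}$ read off from~\eqref{eq:30}, case by case according to the parities $\abs{a},\abs{b}$ and the orders of $a,b$ and $i,j$. Since~\eqref{eq:30} has only four cases, relation~\eqref{eq:27} unwinds into exactly the seven families listed in~\eqref{eq:57} (with the shift-induced sign $-(-1)^{\abs{i}\abs{j}}$ accounting for the $\Z/2\Z$-parity bookkeeping between the first $k$ and last $l$ tensor factors). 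This gives a well-defined algebra homomorphism $\Phi\colon \bigwedgeq^\bullet(V_1\otimes V_2)\to A_{k|l}$, which is surjective because the $X_{ai}$ generate $A_{k|l}$.

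Next I would produce a spanning set for $\bigwedgeq^\bullet(V_1\otimes V_2)$: using the rewriting rules~\eqref{eq:57} one can always reorder any monomial $z_{a_1b_1}\wedgeq\dotsb\wedgeq z_{a_Nb_N}$ into anti-lexicographically non-decreasing order, killing any monomial with a repeated even-parity generator, so that the elements in~\eqref{eq:59} span $\bigwedgeq^N(V_1\otimes V_2)$. To conclude that they form a \emph{basis}, it suffices to show linear independence, and here I would use $\Phi$ together with flatness: the number of monomials in~\eqref{eq:59} of total degree $N$ equals the graded dimension of $\bigwedge\big(V_1^{\oplus k}\oplus(V_1\langle 1\rangle)^{\oplus l}\big)$ in degree $N$ (one counts non-decreasing words with the parity-one-repetition rule, which is exactly the Poincaré series of a mixed exterior/symmetric algebra on $(m+n)(k+l)$ generators of the appropriate parities), which by the cited result \cite[Theorem~2.8]{MR2753673} equals $\dim (A_{k|l})_N$. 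Thus the surjection $\Phi$ from a space spanned by at most $\dim(A_{k|l})_N$ elements onto $(A_{k|l})_N$ forces the spanning set to be a basis and $\Phi$ to be an isomorphism.

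The main obstacle will be the relation check in the first step: relation~\eqref{eq:27} is stated uniformly for all $i<j$ with a single sign, whereas~\eqref{eq:57} presents the relations in seven distinct shapes depending on whether $\abs{c}=0$ or $\abs{c}=1$ (for the shift), whether $a=b$ or $a<b$ or $a>b$, and similarly for the second index. Organizing the case analysis so that the $\checkR^{-1}$ coefficients from~\eqref{eq:30}, the parity signs from the shift $\langle 1\rangle$ on the first $k$ factors, and the Koszul signs $(-1)^{\abs{a}\abs{b}}$, $(-1)^{\abs{c}\abs{d}}$ all combine correctly into each line of~\eqref{eq:57} is the delicate bookkeeping; in particular the mixed cases $a<b, c<d$ with $\abs{a}\abs{b}$ or $\abs{c}\abs{d}$ nonzero, and the cases where one of $i,j$ lies among the first $k$ indices and the other among the last $l$, require care. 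Once this is in place, the dimension count and the deduction that $\Phi$ is an isomorphism are formal, exactly paralleling \cite[Lemma~2.9]{MR2753673} and the flatness argument already invoked for $A_{k|l}$.
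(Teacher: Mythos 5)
Your proposal follows essentially the same route as the paper's proof: verify that the $X_{ai}$ satisfy the relations~\eqref{eq:57} (so the map is well-defined and surjective), show that the monomials~\eqref{eq:59} span $\bigwedgeq^N(V_1\otimes V_2)$ by rewriting, and conclude both the basis claim and injectivity from the dimension count against the flatness of $A_{k|l}$. The only small shortcut the paper takes that you might note: the first two families of relations in~\eqref{eq:57} are immediate because they already hold in $\bigwedgeq^\bullet(V_1)$, so the case-by-case expansion of~\eqref{eq:27} is only needed for the mixed-index families.
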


We point out that this isomorphism does not preserve the \(\Z/2\Z\)--parity. However, one gets a parity-preserving isomorphism  \(\bigwedgeq^\bullet (V_1 \otimes V_2) \rightarrow \mathbb{P} A_{k|l}\), see remark~\ref{rem:9}.

\begin{proof}
  First, let us notice that the relations~\eqref{eq:57} hold for the \(X_{ai}\)'s, and hence the map \(z_{ai} \mapsto X_{ai}\) is well-defined. Indeed, the first two relations hold simply because they hold in \(\bigwedgeq^\bullet (V_1)\), while the other relations are directly obtained from~\eqref{eq:27} (being very careful with signs). 

It is a straightforward computation using the relations~\eqref{eq:57} to show that~\eqref{eq:59} span \(\bigwedgeq^N (V_1 \otimes V_2)\). Since the \(X_{ai}\)'s generate \(A_{k|l}\) as an algebra, the map \(z_{ai} \mapsto X_{ai}\) is surjective and the images of the elements~\eqref{eq:59} span \(A_{k|l}\). On the other side, one checks directly that the cardinality of the set~\eqref{eq:59} coincides with the dimension of \(\bigwedge^N \big( V_1 ^{\oplus k} \oplus (V_1\langle 1 \rangle)^{\oplus l}\big)\), which by construction is the same as the dimension of the graded part of \(A_{k|l}\) of degree \(N\). It follows that the images of the elements~\eqref{eq:59} must be linearly independent and the map \(z_{ai} \mapsto X_{ai}\) is an isomorphism of algebras.
\end{proof}

The following result is the quantum version of super skew Howe duality. The corresponding non-quantized result can be found in \cite[Chapter~5]{MR3012224}. In the non-super case, the result has already been proven in \cite[Theorem~4.2.2]{CKM}.
A generalization to the super case, but only from one side, already appeared in \cite[Theorem~2.2]{MR2510063}.

\begin{theorem}[Quantized super skew Howe duality]
  \label{thm:1}
 For all \(N>0\) the two actions
  \begin{equation}
    \label{eq:60}
    U_q(\gl_{m|n}) \acts \bigwedgeq^N (\C^{m|n}_q \otimes \C^{k|l}_q) \actsb U_q(\gl_{k|l})
  \end{equation}
  commute and generate each other's centralizer. Moreover, we have
  \begin{equation}
    \label{eq:61}
    \bigwedgeq^N (\C^{m|n}_q \otimes \C^{k|l}_q) \cong \displaystyle\bigoplus_{\uplambda \in H(m|n) \cap H(l|k)} L_{q,{m|n}}(\uplambda) \otimes L_{q, {k|l}}( \uplambda^T)
  \end{equation}
  as \(U_q(\gl_{m|n}) \otimes U_q(\gl_{k|l})\)--modules.
\end{theorem}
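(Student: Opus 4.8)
The plan is to deduce both assertions from the explicit monomial basis of Lemma~\ref{lem:5}, via a character comparison with the classical super skew Howe duality, together with a semisimplicity argument and Jacobson density. That the two actions commute is immediate: \(\bigwedgeq^N(\C_q^{m|n}\otimes\C_q^{k|l})\) is a module over the Hopf superalgebra \(U_q(\gl_{m|n})\otimes U_q(\gl_{k|l})\) and the two subalgebras act through the two tensor factors. Since \(U_q(\gl_{m|n})\) acts trivially on the second factor, \(V_1\otimes V_2\) is a direct sum of \(\dim V_2\) copies of \(V_1\) as a \(U_q(\gl_{m|n})\)--module, so \((V_1\otimes V_2)^{\otimes N}\) is, over \(U_q(\gl_{m|n})\), a direct sum of copies of \(V_1^{\otimes N}\), hence semisimple by Proposition~\ref{prop:1}. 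As \(\bigwedgeq^N(V_1\otimes V_2)\) is a quotient of \((V_1\otimes V_2)^{\otimes N}\), it is semisimple over \(U_q(\gl_{m|n})\), and, by the symmetric argument, over \(U_q(\gl_{k|l})\).

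\emph{Character comparison.} Each element \(z_{a_1b_1}\wedgeq\dotsb\wedgeq z_{a_Nb_N}\) of the basis~\eqref{eq:59} is a simultaneous weight vector, of \(U_q(\gl_{m|n})\)--weight \(\epsilon_{a_1}+\dotsb+\epsilon_{a_N}\) and \(U_q(\gl_{k|l})\)--weight \(\epsilon_{b_1}+\dotsb+\epsilon_{b_N}\) (the parity shift recorded in Remark~\ref{rem:9} does not affect the weights). On the other hand, the indexing set of~\eqref{eq:59} is precisely the one describing a homogeneous basis of the classical super exterior power \(\bigwedge^N(\C^{m|n}\otimes\C^{k|l})\), with the same weights; indeed the defining relations~\eqref{eq:57} are a \(q\)--deformation of the relations of the classical super exterior algebra, exterior in the even variables and polynomial in the odd ones. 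Hence the bigraded \(U_q(\gl_{m|n})\otimes U_q(\gl_{k|l})\)--character of \(\bigwedgeq^N(V_1\otimes V_2)\) equals that of the classical \(\bigwedge^N(\C^{m|n}\otimes\C^{k|l})\).

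\emph{The decomposition~\eqref{eq:61}.} Decompose the semisimple \(U_q(\gl_{m|n})\)--module \(\bigwedgeq^N(V_1\otimes V_2)\) into isotypic components \(L_{q,m|n}(\uplambda)\otimes M_{\uplambda}\); each multiplicity space \(M_{\uplambda}\) inherits a \(U_q(\gl_{k|l})\)--action and is semisimple over \(U_q(\gl_{k|l})\), because the corresponding isotypic component is a \(U_q(\gl_{k|l})\)--submodule of the \(U_q(\gl_{k|l})\)--semisimple module \(\bigwedgeq^N(V_1\otimes V_2)\). Comparing the character computed above with the classical super Cauchy identity \cite[Chapter~5]{MR3012224}, \(\bigwedge^N(\C^{m|n}\otimes\C^{k|l})\cong\bigoplus_{\uplambda\in H(m|n)\cap H(l|k)} L_{m|n}(\uplambda)\otimes L_{k|l}(\uplambda^T)\), and using that the characters of the typical simples \(L_{q,m|n}(\uplambda)\) equal those of their classical analogues and are linearly independent, we read off that \(\uplambda\) must range over \(H(m|n)\cap H(l|k)\) and that \(M_{\uplambda}\) has the same character as \(L_{q,k|l}(\uplambda^T)\). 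A semisimple module with the character of a simple module is isomorphic to it, so \(M_{\uplambda}\cong L_{q,k|l}(\uplambda^T)\), which is~\eqref{eq:61}.

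\emph{Double centralizer.} By~\eqref{eq:61} the simple \(U_q(\gl_{m|n})\)--constituents \(L_{q,m|n}(\uplambda)\) are pairwise non-isomorphic, so the Jacobson density theorem identifies the image of \(U_q(\gl_{m|n})\) in \(\End_{\fieldk}\bigl(\bigwedgeq^N(V_1\otimes V_2)\bigr)\) with \(\prod_{\uplambda}\End_{\fieldk}(L_{q,m|n}(\uplambda))\otimes\id\), whose centralizer is \(\prod_{\uplambda}\id\otimes\End_{\fieldk}(L_{q,k|l}(\uplambda^T))\); the same density argument, applied now to \(U_q(\gl_{k|l})\), shows that the image of \(U_q(\gl_{k|l})\) is exactly this centralizer, and symmetrically with the roles of the two algebras exchanged. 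Thus the two actions generate each other's centralizer. The one step requiring genuine care is the character comparison: matching the basis~\eqref{eq:59} with the classical super exterior algebra, keeping track of the parity bookkeeping of Remark~\ref{rem:9}, and checking that the relevant quantum characters agree with the classical ones so that \cite[Chapter~5]{MR3012224} can be invoked. Once this dictionary is in place, semisimplicity and density make the remainder formal.
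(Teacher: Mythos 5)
Your proof is correct and follows essentially the same route as the paper: establish commutativity, deduce semisimplicity from the fact that the quantum exterior power is a quotient of a semisimple tensor power, compare characters with the classical super exterior power via the explicit monomial basis of Lemma~\ref{lem:5}, transfer the classical decomposition from \cite[Chapter~5]{MR3012224}, and conclude the double centralizer property from the multiplicity-one decomposition. You fill in a few details the paper leaves implicit — decomposing over one factor first and deducing semisimplicity of the multiplicity spaces from the other, and spelling out the Jacobson density step — but the underlying argument is the same.
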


\begin{proof}
  It is clear that the two actions in \eqref{eq:60} commute with each other. It is then enough to prove the decomposition \eqref{eq:61}, which implies the first assertion. We remark that in the non-quantized case, the analogous  of \eqref{eq:61} holds (see \cite[Theorem~5.18 and Remark~5.20]{MR3012224}).

The space \(W=\bigwedgeq^N(\C^{m|n}_q \otimes  \C^{k|l}_q)\), being a quotient of  \(\bigotimes^N(\C^{m|n}_q \otimes \C^{k|l}_q)\) which is a semisimple \(\big(U_q(\gl_{m|n}) \otimes U_q(\gl_{k|l})\big)\)--module by proposition~\ref{prop:1}, is semisimple. Moreover it must decompose as \(\bigoplus_{\uplambda \in H_{m|n},\upmu \in H_{k|l}} \big(L_{q,m|n}(\uplambda) \otimes L_{q,k|l}(\upmu)\big)^{\bigoplus \kappa_{\uplambda,\upmu}}\). As in finite-dimensional representation theory of semisimple Lie algebras, it is very easy to argue that the multiplicities \(\kappa_{\uplambda,\upmu}\) are uniquely determined by the dimensions of the weight spaces of \(W\), that is by the character of \(W\). The character of \(W\) coincides with the character of \(\bigwedge^N(\C^{m|n} \otimes \C^{k|l})\), hence the multiplicities \(\kappa_{\uplambda,\upmu}\) are the same as in the non-quantized case, and \eqref{eq:61} follows.
\end{proof}

\begin{prop}
  \label{prop:2}
  As a \(U_q(\gl_{m|n})\)--representation, the module \eqref{eq:60} decomposes as
  \begin{equation}
\bigwedgeq^N(\C_q^{m|n} \otimes \C_q^{k|l}) \cong \displaystyle \bigoplus_{\substack{\bolda \in \mathrm{Comp}(N_1,k), \\ \boldb \in \mathrm{Comp}(N_2,l), \\ N_1+N_2=N}} \bigwedgeq^{\bolda} \C_q^{m|n} \otimes \bigwedgeq^{\boldb} (\C_q^{m|n}\langle 1 \rangle) ,\label{eq:62}
\end{equation}
where \(\mathrm{Comp}(M,h) = \{\bolda = (a_1,\dots,a_h)   \suchthat a_i \in \Z_{\geq 0}, \, a_1+\dots+a_k=M\}\) denotes the set of all compositions of \(M\) with \(h\) parts,  
and for \(\bolda=(a_1,\dotsc,a_k)\) set
  \begin{equation}
    \label{eq:63}
        \bigwedgeq^{\bolda} \C_q^{m|n} = \bigwedgeq^{a_1}\C_q^{m|n} \otimes \cdots \otimes \bigwedgeq^{a_k} \C_q^{m|n}.
  \end{equation}
  When  considered as a \(U_q(\gl_k)\)--module,  \eqref{eq:62} is the weight space decomposition, and \(\bigwedgeq^\bolda \C_q^{m|n} \otimes \bigwedgeq^\boldb ( \C_q^{m|n} \langle 1 \rangle)\) is the weight space of weight \((\bolda,\boldb)=a_1 \epsilon_1 + \dotsb + a_k \epsilon_k +b_1 \epsilon_{k+1} + \dotsb + b_l \epsilon_{k+l}\).
\end{prop}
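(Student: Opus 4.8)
The plan is to read off the decomposition from the explicit algebra isomorphism of Lemma~\ref{lem:5} together with Remark~\ref{rem:9}. First I would record that the isomorphism \(\bigwedgeq^\bullet(V_1 \otimes V_2) \cong A_{k|l}\) of Lemma~\ref{lem:5} is \(U_q(\gl_{m|n})\)-equivariant and identifies the \(\Z\)-grading of \(\bigwedgeq^\bullet(V_1\otimes V_2)\), in which each generator \(z_{ab}\) sits in degree \(1\), with the \(\Z\)-grading of \(A_{k|l}\) in which each \(X_{ai}\) has degree \(1\). Since \(X_{ai}\) lies in the \(i\)-th tensor factor of \(A_{k|l} = \bigl(\bigsymmq^\bullet(V_1\langle 1\rangle)\bigr)^{\otimes k} \otimes \bigl(\bigsymmq^\bullet(V_1)\bigr)^{\otimes l}\), this grading refines to the multigrading by the degree in each of the \(k+l\) tensor factors, the total degree being their sum. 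Here it matters that, although the product on \(A_{k|l}\) is the twisted one of~\eqref{eq:11}, the twisting affects only the multiplication and not the underlying \(U_q(\gl_{m|n})\)-module structure, which is the ordinary tensor product one; hence, as a \(U_q(\gl_{m|n})\)-module, the multidegree-\((\bolda,\boldb)\) component of \(A_{k|l}\) is simply \(\bigsymmq^{a_1}(V_1\langle 1\rangle)\otimes\cdots\otimes\bigsymmq^{a_k}(V_1\langle 1\rangle)\otimes\bigsymmq^{b_1}(V_1)\otimes\cdots\otimes\bigsymmq^{b_l}(V_1)\).

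Next, extracting the degree-\(N\) part yields the direct sum of these components over \(\bolda\in\mathrm{Comp}(N_1,k)\) and \(\boldb\in\mathrm{Comp}(N_2,l)\) with \(N_1+N_2=N\), which already gives the index set of~\eqref{eq:62}. It then remains to rewrite each tensor factor using exterior powers of \(V_1\): applying Remark~\ref{rem:9} once with \(W=V_1\langle 1\rangle\) (so that \(W\langle 1\rangle=V_1\)) and once with \(W=V_1\) produces \(U_q(\gl_{m|n})\)-module isomorphisms \(\bigsymmq^{j}(V_1\langle 1\rangle)\cong\bigwedgeq^{j}V_1\) and \(\bigsymmq^{j}(V_1)\cong\bigwedgeq^{j}(V_1\langle 1\rangle)\) which preserve the \(\Z\)-degree \(j\) (the only discrepancy being the \(\Z/2\Z\)-parity shift encoded by the functor \(\mathbb{P}\) of Remark~\ref{rem:9}). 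Substituting these factorwise turns the multidegree-\((\bolda,\boldb)\) component into \(\bigwedgeq^{\bolda}\C_q^{m|n}\otimes\bigwedgeq^{\boldb}(\C_q^{m|n}\langle 1\rangle)\) in the notation of~\eqref{eq:63}, which is exactly~\eqref{eq:62}.

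For the final assertion I would compute the \(\gl_{k|l}\)-weights directly: by~\eqref{eq:26}, applied to the \(U_q(\gl_{k|l})\)-action on \(V_2=\C_q^{k|l}\), the vector \(z_{ab}=x_a\otimes y_b\) has weight \(\epsilon_b\), so a monomial \(z_{a_1b_1}\wedgeq\cdots\wedgeq z_{a_Nb_N}\) has weight \(\sum_{j}\epsilon_{b_j}\). Under the isomorphism of Lemma~\ref{lem:5} the multidegree-\((\bolda,\boldb)\) component is spanned by the monomials having exactly \(a_i\) factors whose second index is \(i\) (for \(i\le k\)) and \(b_j\) factors whose second index is \(k+j\) (for \(j\le l\)), i.e.\ precisely the monomials of weight \(a_1\epsilon_1+\cdots+a_k\epsilon_k+b_1\epsilon_{k+1}+\cdots+b_l\epsilon_{k+l}\); as these weights are pairwise distinct and their components exhaust \(\bigwedgeq^N(V_1\otimes V_2)\), \eqref{eq:62} is its \(\gl_{k|l}\)-weight space decomposition. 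The only point demanding genuine care in this argument is the bookkeeping of the \(\langle 1\rangle\) parity shifts, i.e.\ making sure that the first \(k\) tensor factors of \(A_{k|l}\) contribute exterior powers of \(V_1\) while the last \(l\) contribute exterior powers of \(V_1\langle 1\rangle\); I expect this (mild) bookkeeping to be the main obstacle, everything else being formal.
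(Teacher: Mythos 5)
Your argument is correct and takes exactly the route the paper has in mind: the paper's proof of this proposition is a two-sentence appeal to the isomorphism of Lemma~\ref{lem:5} (reading off the multidegree decomposition of \(A_{k|l}\) and the \(\gl_{k|l}\)-weights), and your proposal supplies precisely the details that the paper labels ``straightforward,'' including the correct factorwise application of Remark~\ref{rem:9} to translate \(\bigsymmq^{j}(V_1\langle 1\rangle)\) into \(\bigwedgeq^{j}V_1\) and \(\bigsymmq^{j}V_1\) into \(\bigwedgeq^{j}(V_1\langle 1\rangle)\), and the verification that the multidegree of a monomial coincides with its dual weight.
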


We recall that \(\bigwedgeq^\boldb (\C_q^{m|n} \langle 1 \rangle)\) is just the symmetric braided algebra, but with shifted \(\Z/2\Z\)--parity, see Remark~\ref{rem:9}.

\begin{proof}
  The isomorphism~\eqref{eq:62} is just the isomorphism of \(\bigwedgeq^N(\C_q^{m|n} \otimes \C_q^k)\) with the graded part of \(A_{k|l}\) of degree \(N\). It is straightforward to check that this isomorphism is \(U_q(\gl_{m|n})\)--equivariant, and that the last assertion on the weight spaces holds.
\end{proof}

From now on we  set \(l=0\) and consider the case of \(V_2=\C_q^k\).
We will mainly be interested in the action of \(U_q(\gl_k)\) on the \(U_q(\gl_{m|n})\)--representation \(\smash{\bigoplus_{\bolda} \bigwedge_q^\bolda \C_q^{m|n}}\). Hence from now on the symbols \(E_i,F_i\) will denote the generators of \(U_q(\gl_k)\), and will act on \(\bigoplus_{\bolda} \bigwedge_q^\bolda \C_q^{m|n}\).

 We identify the weight lattice \(\sfP\) of \(\gl_k\) with \(\Z^k\). We can then reinterpret  skew Howe duality by saying that for all \(\bolda \in \N^k\) we have morphisms 
 \begin{align}
   \label{eq:65}
   E_i \colon \bigwedgeq^\bolda \C_q^{m|n} & \longmapto \bigwedgeq^{\bolda + \alpha_i} \C_q^{m|n},\\
   F_i \colon \bigwedgeq^\bolda \C_q^{m|n}& \longmapto \bigwedgeq^{\bolda-\alpha_i} \C_q^{m|n},\\
   \quantumq^h \colon \bigwedgeq^\bolda \C_q^{m|n} & \longmapto \bigwedgeq^{\bolda} \C_q^{m|n},
 \end{align}
satisfying the defining relations of \(U_q(\gl_k)\), where we set \(\bigwedgeq^\bolda \C_q^{m|n}=0\) in case \(a_i <0\) for some \(i=1,\dotsb,k\). By the last claim of proposition~\ref{prop:2}, the element \(\quantumq^h\) acts on \(\bigwedgeq^\bolda \C_q^{m|n}\) by \(q^{\langle h, \bolda\rangle}\).

We notice that
the action given by skew Howe duality is local, in the following sense:

\begin{prop}
  \label{prop:3}
  Fix \(k \geq 2\), let \(\bolda=(a_1,\dotsc,a_k)\) and consider \(\bigwedgeq^\bolda \C_q^{m|n}\). Then we have
  \begin{equation}
    \label{eq:66}
    E_i = \id^{\otimes (i-1)} \otimes E \otimes \id^{\otimes(k-i-1)} 
  \end{equation}
  as morphisms \(\bigwedgeq^\bolda \C_q^{m|n} \mapsto \bigwedgeq^{\bolda + \alpha_i}\C_q^{m|n}\), where
  \begin{equation}
E \colon \bigwedgeq^{a_i}\C_q^{m|n} \otimes \bigwedgeq^{a_{i+1}}\C_q^{m|n} \mapto \bigwedgeq^{a_i+1}\C_q^{m|n} \otimes \bigwedgeq^{a_{i+1}-1}\C_q^{m|n}\label{eq:67}
\end{equation}
is the generator \(E_1 \) of \(U_q(\gl_2)\). The analogous statement holds for the \(F_i\)'s.
\end{prop}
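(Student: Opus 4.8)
The plan is to compute the action of the generator $E_i$ directly from the Hopf-algebraic structure and to read off that it touches only two adjacent tensor factors; the case of the $F_i$'s will be entirely analogous, while for $\quantumq^h$ the claim is immediate from Proposition~\ref{prop:2}, which records that $\quantumq^h$ acts on $\bigwedgeq^\bolda \C_q^{m|n}$ by the scalar $q^{\langle h, \bolda\rangle}$.

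First I would recall the origin of the $U_q(\gl_k)$-action on $\bigwedgeq^\bullet(V_1 \otimes V_2)$: the algebra $U_q(\gl_k)$ acts on $V_2 = \C_q^k$ through its vector representation, hence on $V_1 \otimes V_2$ through $\id_{V_1} \otimes (\blank)$, hence on the tensor algebra $\bigT(V_1 \otimes V_2)$ through the iterated comultiplication, and this descends to $\bigwedgeq^\bullet(V_1 \otimes V_2)$ because the defining ideal is invariant. Since $\Delta(E_i) = E_i \otimes K_i^{-1} + 1 \otimes E_i$ by \eqref{eq:24} and $K_i^{-1}$ is grouplike, the induced operator on a product of pure tensors is the twisted derivation
\begin{multline*}
  E_i(z_{c_1 b_1} \wedgeq \dotsb \wedgeq z_{c_N b_N}) = \sum_{p=1}^N z_{c_1 b_1} \wedgeq \dotsb \wedgeq z_{c_{p-1} b_{p-1}} \\ \wedgeq E_i(z_{c_p b_p}) \wedgeq K_i^{-1}(z_{c_{p+1} b_{p+1}}) \wedgeq \dotsb \wedgeq K_i^{-1}(z_{c_N b_N}).
\end{multline*}
From \eqref{eq:26} applied to $V_2$ one reads off $E_i(z_{cb}) = \delta_{b,i+1} z_{ci}$ and $K_i^{-1}(z_{cb}) = q^{\delta_{b,i+1} - \delta_{b,i}} z_{cb}$; the point to exploit is that $E_i$ annihilates every pure tensor whose $V_2$-index is different from $i+1$, and $K_i^{-1}$ is the identity on every pure tensor whose $V_2$-index is different from both $i$ and $i+1$.

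Next I would bring in Lemma~\ref{lem:5} and Proposition~\ref{prop:2}: under the identification of $\bigwedgeq^{a_1}\C_q^{m|n} \otimes \dotsb \otimes \bigwedgeq^{a_k}\C_q^{m|n}$ with the weight-$\bolda$ subspace of $\bigwedgeq^\bullet(V_1 \otimes V_2)$, the $j$-th tensor factor is spanned by monomials in the pure tensors $z_{cj}$ with $V_2$-index $j$, and in the sorted basis of Lemma~\ref{lem:5} the $V_2$-indices along a monomial are weakly increasing. Evaluating the derivation formula on such a sorted monomial, only the positions $p$ with $b_p = i+1$ contribute; each of them replaces $z_{c_p,\,i+1}$ by $z_{c_p,\,i}$, and the attached $K_i^{-1}$-twists merely rescale the pure tensors of $V_2$-index $i+1$ lying to the right of $p$ — everything of $V_2$-index outside $\{i, i+1\}$ passes through untouched, and nothing of $V_2$-index $i$ lies to the right of $p$. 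Re-expanding the result in the sorted basis amounts to sliding the new $z_{c_p,\,i}$ leftwards into its block using the relations \eqref{eq:57}, and those moves again only involve pure tensors of $V_2$-index $i$ or $i+1$. This shows $E_i$ preserves $\bigwedgeq^{a_1}\C_q^{m|n} \otimes \dotsb \otimes \bigwedgeq^{a_k}\C_q^{m|n}$ and modifies only the $i$-th and $(i+1)$-th factors, so $E_i = \id^{\otimes(i-1)} \otimes E \otimes \id^{\otimes(k-i-1)}$ for a well-defined $E \colon \bigwedgeq^{a_i}\C_q^{m|n} \otimes \bigwedgeq^{a_{i+1}}\C_q^{m|n} \to \bigwedgeq^{a_i+1}\C_q^{m|n} \otimes \bigwedgeq^{a_{i+1}-1}\C_q^{m|n}$.

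Finally I would identify $E$ with the generator $E_1$ of $U_q(\gl_2)$: the formula just obtained for the action on the $i$-th and $(i+1)$-th factors is, verbatim, the derivation formula in the case $k = 2$, $\bolda = (a_i, a_{i+1})$, with the two $V_2$-indices playing the role of the two blocks — that is, it is the operator $E_1$ of $U_q(\gl_2)$ acting through skew Howe duality on $\bigwedgeq^{a_i}\C_q^{m|n} \otimes \bigwedgeq^{a_{i+1}}\C_q^{m|n}$. The $F_i$'s are treated in exactly the same way, using $\Delta(F_i) = F_i \otimes 1 + K_i \otimes F_i$ in place of $\Delta(E_i)$. The one mildly delicate point — and the only real obstacle — is the bookkeeping of the signs and powers of $q$ generated when a monomial is rewritten in the sorted basis of Lemma~\ref{lem:5} after one $z_{c_p,\,i+1}$ has been turned into $z_{c_p,\,i}$; but since this bookkeeping is literally the same for general $k$ as for $k = 2$, it never has to be performed, being absorbed into the comparison with the $k = 2$ case.
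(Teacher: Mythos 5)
Your proof is correct and fills in the details the paper leaves implicit (the paper's own proof is the single word ``straightforward''). The natural route is exactly the one you take: iterate the coproduct to a twisted derivation, use \eqref{eq:26} to see that on $V_2$ the operator $E_i$ is supported on the single index $i+1$ and $K_i^{-1}$ is trivial off $\{i,i+1\}$, observe that re-sorting in the basis of Lemma~\ref{lem:5} is local to $V_2$-indices $i$ and $i+1$ via the relations \eqref{eq:57}, and conclude by matching the resulting formula against the $k=2$ case.
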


\begin{proof}
  This is straightforward.
\end{proof}

Moreover, we state the following easy result:

\begin{lemma}
  \label{lem:2}
  Let \(k=2\), and consider skew Howe duality applied to \(\bigwedgeq^N(\C_q^{m|n} \otimes \C_q^2)\). Then 
  \begin{align}
E_1^{(N)} &\colon \bigwedgeq^0(\C_q^{m|n}) \otimes \bigwedgeq^N(\C_q^{m|n}) \mapto \bigwedgeq^N(\C_q^{m|n}) \otimes \bigwedgeq^0(\C_q^{m|n})\quad \text{and}\\
F_1^{(N)} & \colon \bigwedgeq^N(\C_q^{m|n}) \otimes \bigwedgeq^0(\C_q^{m|n}) \mapto \bigwedgeq^0(\C_q^{m|n}) \otimes \bigwedgeq^N(\C_q^{m|n})
  \end{align}
 are the identity map after the natural identification of both spaces with \(\bigwedgeq^N(\C_q^{m|n})\).
\end{lemma}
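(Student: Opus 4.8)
The plan is to reduce the statement to one explicit evaluation by first exploiting a rigidity argument. If $N>m$ and $n=0$ then $\bigwedgeq^N\C_q^{m|n}=0$ by \eqref{eq:39} and there is nothing to prove; otherwise $\bigwedgeq^N\C_q^{m|n}$ is an irreducible $U_q(\gl_{m|n})$--module, again by \eqref{eq:39}. By Proposition~\ref{prop:2}, inside $W=\bigwedgeq^N(\C_q^{m|n}\otimes\C_q^2)$ the spaces $\bigwedgeq^0(\C_q^{m|n})\otimes\bigwedgeq^N(\C_q^{m|n})=\bigwedgeq^{(0,N)}\C_q^{m|n}$ and $\bigwedgeq^N(\C_q^{m|n})\otimes\bigwedgeq^0(\C_q^{m|n})=\bigwedgeq^{(N,0)}\C_q^{m|n}$ are the bottom and the top $U_q(\gl_2)$--weight spaces, and $E_1^{(N)}$, $F_1^{(N)}$ send one to the other. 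By Theorem~\ref{thm:1} the $U_q(\gl_{m|n})$-- and $U_q(\gl_2)$--actions on $W$ commute, so these divided powers are $U_q(\gl_{m|n})$--equivariant; after the natural identifications $\bigwedgeq^0\otimes\bigwedgeq^N\cong\bigwedgeq^N\cong\bigwedgeq^N\otimes\bigwedgeq^0$ (using $\bigwedgeq^0\C_q^{m|n}=\C_q\cdot\mathbf 1$), Schur's lemma gives $E_1^{(N)}=c_E\cdot\id$ and $F_1^{(N)}=c_F\cdot\id$ for some scalars $c_E,c_F\in\C(q)$, and it remains to show $c_E=c_F=1$.

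Next I would observe that $c_Ec_F=1$. By \eqref{eq:61}, the only $U_q(\gl_{m|n})\otimes U_q(\gl_2)$--summand of $W$ that attains the extreme $\gl_2$--weights $(N,0)$ and $(0,N)$ is $L_{q,m|n}((1^N))\otimes L_{q,2|0}((N))$; hence every vector of $\bigwedgeq^{(N,0)}\C_q^{m|n}$ has the form $w\otimes v_+$ with $w\in\bigwedgeq^N\C_q^{m|n}$ and $v_+$ a fixed highest weight vector of $L_{q,2|0}((N))$, and $E_1$, $F_1$ act on it through the second tensor factor only. Since $E_1^{(N)}F_1^{(N)}v_+=v_+$ in $L_{q,2|0}((N))$ by the standard divided-power identities of $U_q(\gl_2)$ (the vector $v_+$ sits at the top of a single $U_q(\gl_2)$--string of length $N+1$), we get $E_1^{(N)}F_1^{(N)}=\id$ on $\bigwedgeq^{(N,0)}\C_q^{m|n}$, hence $c_Ec_F=1$.

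Finally — and this is the only step requiring a genuine computation — I would evaluate $F_1^{(N)}$ on one vector. Using Lemma~\ref{lem:5} to identify $W$ with $A_{2|0}$, the subspace $\bigwedgeq^{(N,0)}\C_q^{m|n}$ is spanned by the monomials in the generators $X_{a,1}$, the subspace $\bigwedgeq^{(0,N)}\C_q^{m|n}$ by the monomials in the generators $X_{a,2}$, and the natural identifications with $\bigwedgeq^N\C_q^{m|n}$ are $X_{a_1,i}\wedgeq\dotsb\wedgeq X_{a_N,i}\leftrightarrow x_{a_1}\wedgeq\dotsb\wedgeq x_{a_N}$. Taking the highest weight vector $w=x_1\wedgeq\dotsb\wedgeq x_N\in\bigwedgeq^N\C_q^{m|n}$ (for $N\le m$; for $N>m$ and $n\ge1$ replace it by $x_1\wedgeq\dotsb\wedgeq x_m$ followed by $N-m$ copies of $x_{m+1}$), realized as $X_{1,1}\wedgeq\dotsb\wedgeq X_{N,1}$, I would apply $F_1^{(N)}=F_1^N/[N]!$, where $F_1$ is the skew-Howe polarization operator on $A_{2|0}$ that moves one generator from the first colour to the second, described explicitly by Proposition~\ref{prop:2} together with the relations \eqref{eq:27}. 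Since the target $\bigwedgeq^{(0,N)}\C_q^{m|n}$ involves only the second colour, no twisted multiplication intervenes there, and the claim reduces to checking $F_1^N(X_{1,1}\wedgeq\dotsb\wedgeq X_{N,1})=[N]!\,X_{1,2}\wedgeq\dotsb\wedgeq X_{N,2}$, that is $F_1^{(N)}(w\otimes\mathbf 1)=\mathbf 1\otimes w$; this forces $c_F=1$, and then $c_E=1$ by the relation $c_Ec_F=1$. (Equivalently, one evaluates $E_1^{(N)}$ directly on $X_{1,2}\wedgeq\dotsb\wedgeq X_{N,2}$ by the same method.) I expect the bookkeeping of signs and $q$--powers in this polarization computation, and in particular the cancellation against $[N]!$, to be the main obstacle, although it is entirely routine; everything preceding it is formal.
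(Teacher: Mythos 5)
The paper gives no proof of this lemma (it is introduced as an ``easy result''), so there is nothing in the paper to compare against; I assess your proposal on its own. Your three-step strategy is sound and does prove the statement: the Schur-lemma reduction to two scalars (step 1, using irreducibility of \(\bigwedgeq^N\C_q^{m|n}\) from \eqref{eq:39}, with the degenerate case \(N>m\), \(n=0\) correctly set aside), the observation \(c_Ec_F=1\) via the \((N+1)\)-dimensional \(U_q(\gl_2)\)-string (step 2), and a one-vector evaluation (step 3). Two remarks. First, in step 2 you do not actually need the full decomposition \eqref{eq:61}: Proposition~\ref{prop:2} already shows \((N,0)\) is the extremal \(\gl_2\)-weight, so every vector there is a \(\gl_2\)-highest weight vector whose cyclic span is finite-dimensional, hence the simple \(V(N)\), on which \(E^{(N)}F^{(N)}v_+=v_+\). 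Second, step 3 is the only step carrying genuine content and you leave it as a claim; it is indeed routine and gives exactly \([N]!\), but your aside that ``no twisted multiplication intervenes there'' understates what must be checked when \(N>m\), since then the highest weight vector repeats the odd entry \(x_{m+1}\) and the relations \eqref{eq:57} for \(\abs{a}=1\) enter the intermediate stages of \(F_1^N\). A cleaner route that avoids this: the scalar \(c_F\) is preserved by the \(U_q(\gl_2)\)-equivariant inclusion \(\bigwedgeq^N(\C_q^{m|n}\otimes\C_q^2)\into\bigwedgeq^N(\C_q^{m'|n}\otimes\C_q^2)\) for \(m'\geq m\), so you may take \(m'\geq N\) and a vector \(z_{1,1}\wedgeq\dotsb\wedgeq z_{N,1}\) with distinct even entries; then the relevant \(\gl_{m'|n}\)-weight space of \(W\) is plainly \((\C_q^2)^{\otimes N}\) as a \(U_q(\gl_2)\)-module, and the check reduces to the standard identity \(F^N(y_1^{\otimes N})=[N]!\,y_2^{\otimes N}\) (the powers of the \(K_1\)-eigenvalues sum to \(q^{-\binom{N}{2}}[N]!_{q^2}=[N]!\)).
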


\section{The category \texorpdfstring{$\Spider^+$}{of Spiders}}
\label{sec:graphical-version}

Following \cite{CKM}, we introduce the category \(\Spider^+\) which gives a graphical interpretation of skew Howe duality.

\subsection{The category \texorpdfstring{$\widetilde \Spider^+$}{Sp+}}
\label{sec:categ-widet-spid}

First, we are going to define a monoidal category \(\widetilde\Spider^+\). Objects of \(\widetilde \Spider^+\) are sequences of non-negative integers. The tensor product is given by concatenation of sequences. Morphisms are monoidally generated by identities and by the two elements
\begin{align}
  \label{eq:68}
  \sfE^{(r)}=\sfE^{(r)}\idem_{a \otimes b} \colon & a \otimes b \longmapto a+r \otimes b-r\\
  \sfF^{(r)} = \sfF^{(r)} \idem_{a \otimes b} \colon & a \otimes b \longmapto a-r \otimes b+r\label{eq:69}
\end{align}
for all \(a,b \in \Z_{\geq 0}\) and \(1 \leq r\leq b\) such that \(a+r,b-r \in \Z_{\geq 0}\) or \(a-r,b+r \in \Z_{\geq 0}\), respectively. By abuse of notation, we omit sometimes to write \(\idem_{a \otimes b}\) on the right of \(\sfE^{(r)}\) and \(\sfF^{(r)}\).

Let us denote by \(\varnothing\) the empty sequence, which is the unity of the monoidal structure. We also want to have an isomorphism \(0 \cong \varnothing\) which allows us to identify \(0\)  with the monoidal unity. We will implicitly use this isomorphism to let \(0\)'s appear and disappear.
We impose moreover
that \(\sfE^{(r)}\idem_{0 \otimes r} \colon r \cong 0 \otimes r \mapto r \otimes 0 \cong r\) 
and  \(\sfF^{(r)}\idem_{r \otimes 0}
\colon r \cong r \otimes 0 \mapto 0 \otimes r \cong r\) are the identity morphisms. 

 Graphically, we represent the identity morphism \(a \mapto a\) and the morphisms \eqref{eq:68} and \eqref{eq:69} by
 \begin{equation}\label{eq:70}
\id_{a} = 
\begin{tikzpicture}[int,smallnodes,anchorbase,xscale=1,yscale=1]
   \draw (0,0) node[below] {\(a\)} -- (0,1) node[above] {\(a\)} \midarrow;
 \end{tikzpicture}\;, \qquad
 E^{(r)}\idem_{a \otimes b} = 
\begin{tikzpicture}[int,smallnodes,anchorbase,xscale=1,yscale=1.3]
   \draw (0,0) node[below] {\(a\)} -- (0,1) node[above] {\(a{+}r\)} \nendarrow \nstartarrow;
   \draw (1,0) node[below] {\(b\)} -- (1,1) node[above] {\(b{-}r\)} \nendarrow \nstartarrow;
   \draw (1,0.4) -- node[above] {$r$} (0,0.6) \midarrow;
 \end{tikzpicture}
  \qquad \text{and}\qquad
 F^{(r)} \idem_{a \otimes b} = 
\begin{tikzpicture}[int,smallnodes,anchorbase,xscale=1,yscale=1.3]
   \draw (0,0) node[below] {\(a\)} -- (0,1) node[above] {\(a{-}r\)} \nendarrow \nstartarrow;
   \draw (1,0) node[below] {\(b\)} -- (1,1) node[above] {\(b{+}r\)} \nendarrow \nstartarrow;
   \draw (0,0.4) -- node[above] {$r$} (1,0.6) \midarrow;
 \end{tikzpicture}
\end{equation}
respectively. Notice that we read the pictures always from the bottom to the top. The tensor product of two morphisms is given by horizontal juxtaposition, while composition of morphisms is given by stacking them one on top of the other.

We adopt the following graphical convention: we use thick strands (as in \eqref{eq:70}) when they are labeled by an integer \(a > 0\), we use thin strands when they are labeled by \(1\), and we use dashed strands when they are labeled by \(0\). In particular, \(\sfE \idem_{a \otimes b}=\sfE^{(1)} \idem_{a \otimes b}\) and \(\sfF \idem_{a \otimes b}=\sfF^{(1)} \idem_{a \otimes b}\) are represented by the diagrams
 \begin{equation}
 \begin{tikzpicture}[smallnodes,anchorbase,xscale=1,yscale=1.3]
   \draw[int] (0,0) node[below] {\(\vphantom{b}a\)} -- (0,1) node[above] {\(\mathclap{a{+}1}\)} \nstartarrow \nendarrow;
   \draw[int] (1,0) node[below] {\(b\)} -- (1,1) node[above] {\(\mathclap{b{-}1}\)} \nstartarrow \nendarrow;
   \draw[uno] (1,0.4) -- node[above] {$1$} (0,0.6) \midarrow;
 \end{tikzpicture}
 \qquad \text{and} \qquad
\begin{tikzpicture}[smallnodes,anchorbase,xscale=1,yscale=1.3]
   \draw[int] (0,0) node[below] {\(\vphantom{b}a\)} -- (0,1) node[above] {\(\mathclap{a{-}1}\)} \nstartarrow \nendarrow;
   \draw[int] (1,0) node[below] {\(b\)} -- (1,1) node[above] {\(\mathclap{b{+}1}\)} \nstartarrow \nendarrow;
   \draw[uno] (0,0.4) -- node[above]{$1$} (1,0.6) \midarrow;
 \end{tikzpicture}\;.\label{eq:71}
\end{equation}

We enlarge the category \(\widetilde \Spider^+\) by taking as new homomorphism spaces the \(\fieldk\)--vector spaces spanned by the old ones, and by taking then the additive envelope of the resulting category. In this way \(\widetilde \Spider^+\) becomes an additive \(\fieldk\)--linear category. For brevity in the following, when we will write \(\bolda \in \widetilde \Spider^+\), we will mean a sequence of non-negative integers (although a generic object of \(\widetilde \Spider^+\) is a formal direct sum of such sequences). We will use a similar convention for the categories \(\Spider^+\), \(\Spider\) and \(\Spider(\beta)\) which will be introduced later on.

\subsection{The category \texorpdfstring{$\Spider^+$}{Sp+}}
\label{sec:category-spider+}

We define a category \(\Spider^+\) by imposing the graphical versions of the relations of the generators of \(U_q(\gl_k)\):

\begin{definition}[\cite{CKM}]
  \label{def:5}
  We define the category \(\Spider^+\) to be the quotient of the category \(\widetilde \Spider^+\) modulo the following relations:
\begin{subequations}
\allowdisplaybreaks
\begin{itemize}
\item Far away elements commute:
\begingroup
\tikzset{every picture/.style={yscale=0.8,xscale=0.8}}
\begin{gather}
  \begin{tikzpicture}[int,smallnodes,anchorbase]
   \draw (0,0) node[below] {$\mathclap{a}\vphantom{b}$} -- ++(0,2) node[above] {$\mathclap{a{-}r}$} \midarrow;
   \draw (1,0) node[below] {$\mathclap{b}\vphantom{b}$} -- ++(0,2) node[above] {$\mathclap{b{+}r}$} \midarrow;
   \node at (1.5,1) {$\cdots$};
   \draw (2,0) node[below] {$\mathclap{c}\vphantom{b}$} -- ++(0,2) node[above] {$\mathclap{c{-}s}$} \midarrow;
   \draw (3,0) node[below] {$\mathclap{d}\vphantom{b}$}-- ++(0,2) node[above] {$\mathclap{d{+}s}$} \midarrow;
   \draw (0,0.4) -- node[above] {$r$} ++(1,0.2) \midarrow;
   \draw (2,1.4) -- node[above] {$s$} ++(1,0.2) \midarrow;
 \end{tikzpicture} \; =\;
  \begin{tikzpicture}[int,smallnodes,anchorbase]
   \draw (0,0) node[below] {$\mathclap{a}\vphantom{b}$} -- ++(0,2) node[above] {$\mathclap{a{-}r}$} \midarrow;
   \draw (1,0) node[below] {$\mathclap{b}\vphantom{b}$} -- ++(0,2) node[above] {$\mathclap{b{+}r}$} \midarrow;
   \node at (1.5,1) {$\cdots$};
   \draw (2,0) node[below] {$\mathclap{c}\vphantom{b}$} -- ++(0,2) node[above] {$\mathclap{c{-}s}$} \midarrow;
   \draw (3,0) node[below] {$\mathclap{d}\vphantom{b}$}-- ++(0,2) node[above] {$\mathclap{d{+}s}$} \midarrow;
   \draw (0,1.4) -- node[above] {$r$} ++(1,0.2) \midarrow;
   \draw (2,0.4) -- node[above] {$s$} ++(1,0.2) \midarrow;
  \end{tikzpicture}\;,\label{eq:72}\\
  \begin{tikzpicture}[int,smallnodes,anchorbase]
   \draw (0,0) node[below] {$\mathclap{a}\vphantom{b}$} -- ++(0,2) node[above] {$\mathclap{a{+}r}$} \midarrow;
   \draw (1,0) node[below] {$\mathclap{b}\vphantom{b}$} -- ++(0,2) node[above] {$\mathclap{b{-}r}$} \midarrow;
   \node at (1.5,1) {$\cdots$};
   \draw (2,0) node[below] {$\mathclap{c}\vphantom{b}$} -- ++(0,2) node[above] {$\mathclap{c{+}s}$} \midarrow;
   \draw (3,0) node[below] {$\mathclap{d}\vphantom{b}$}-- ++(0,2) node[above] {$\mathclap{d{-}s}$} \midarrow;
   \draw (1,0.4) -- node[above] {$r$} ++(-1,0.2) \midarrow;
   \draw (3,1.4) -- node[above] {$s$} ++(-1,0.2) \midarrow;
 \end{tikzpicture} \; =\;
  \begin{tikzpicture}[int,smallnodes,anchorbase]
   \draw (0,0) node[below] {$\mathclap{a}\vphantom{b}$} -- ++(0,2) node[above] {$\mathclap{a{+}r}$} \midarrow;
   \draw (1,0) node[below] {$\mathclap{b}\vphantom{b}$} -- ++(0,2) node[above] {$\mathclap{b{-}r}$} \midarrow;
   \node at (1.5,1) {$\cdots$};
   \draw (2,0) node[below] {$\mathclap{c}\vphantom{b}$} -- ++(0,2) node[above] {$\mathclap{c{+}s}$} \midarrow;
   \draw (3,0) node[below] {$\mathclap{d}\vphantom{b}$}-- ++(0,2) node[above] {$\mathclap{d{-}s}$} \midarrow;
   \draw (1,1.4) -- node[above] {$r$} ++(-1,0.2) \midarrow;
   \draw (3,0.4) -- node[above] {$s$} ++(-1,0.2) \midarrow;
 \end{tikzpicture}\;.
\end{gather}
\endgroup
\item The morphisms \(\sfE^{(r)}\) and \(\sfF^{(r)}\) commute also when they are near to each other:
\begingroup
\tikzset{every picture/.style={yscale=0.8,xscale=0.9}}
\begin{equation}
\begin{tikzpicture}[int,smallnodes,anchorbase]
   \draw (0,0) node[below] {$\mathclap{a}\vphantom{b}$} -- ++(0,2) node[above] {$\mathclap{a{-}r}$} \midarrow;
   \draw (1,0) node[below] {$\mathclap{b}\vphantom{b}$} -- ++(0,2) node[above] {$\mathclap{b{+}r{+}s}$} \midarrow;
   \draw (2,0) node[below] {$\mathclap{c}\vphantom{b}$}-- ++(0,2) node[above] {$\mathclap{c{-}s}$} \midarrow;
   \draw (0,0.4) -- node[above] {$r$} (1,0.6) \midarrow;
   \draw (2,1.4) -- node[above] {$s$} (1,1.6) \midarrow;
 \end{tikzpicture} \; =\;
  \begin{tikzpicture}[int,smallnodes,anchorbase]
   \draw (0,0) node[below] {$\mathclap{a}\vphantom{b}$} -- ++(0,2) node[above] {$\mathclap{a{-}r}$} \midarrow;
   \draw (1,0) node[below] {$\mathclap{b}\vphantom{b}$} -- ++(0,2) node[above] {$\mathclap{b{+}r{+}s}$} \midarrow;
   \draw (2,0) node[below] {$\mathclap{c}\vphantom{b}$}-- ++(0,2) node[above] {$\mathclap{c{-}s}$} \midarrow;
   \draw (0,1.4) -- node[above] {$r$} (1,1.6) \midarrow;
   \draw (2,0.4) -- node[above] {$s$} (1,0.6) \midarrow;
 \end{tikzpicture}
\quad\text{and}\quad
  \begin{tikzpicture}[int,smallnodes,anchorbase]
   \draw (0,0) node[below] {$\mathclap{a}\vphantom{b}$} -- ++(0,2) node[above] {$\mathclap{a{+}r}$} \midarrow;
   \draw (1,0) node[below] {$\mathclap{b}\vphantom{b}$} -- ++(0,2) node[above] {$\mathclap{b{-}r{-}s}$} \midarrow;
   \draw (2,0) node[below] {$\mathclap{c}\vphantom{b}$}-- ++(0,2) node[above] {$\mathclap{c{+}s}$} \midarrow;
   \draw (1,0.4) -- node[above] {$r$} (0,0.6) \midarrow;
   \draw (1,1.4) -- node[above] {$s$} (2,1.6) \midarrow;
 \end{tikzpicture}  = 
  \begin{tikzpicture}[int,smallnodes,anchorbase]
   \draw (0,0) node[below] {$\mathclap{a}\vphantom{b}$} -- ++(0,2) node[above] {$\mathclap{a{+}r}$} \midarrow;
   \draw (1,0) node[below] {$\mathclap{b}\vphantom{b}$} -- ++(0,2) node[above] {$\mathclap{b{-}r{-}s}$} \midarrow;
   \draw (2,0) node[below] {$\mathclap{c}\vphantom{b}$}-- ++(0,2) node[above] {$\mathclap{c{+}s}$} \midarrow;
   \draw (1,1.4) -- node[above] {$r$} (0,1.6) \midarrow;
   \draw (1,0.4) -- node[above] {$s$} (2,0.6) \midarrow;
 \end{tikzpicture}\;.\label{eq:74}
\end{equation}
\item Divided powers multiply as usual:
\begin{equation}
   \begin{tikzpicture}[int,smallnodes,anchorbase,xscale=1.3]
   \draw (0,0) node[below] {$\mathclap{a}\vphantom{b}$} -- ++(0,2) node[above] {$\mathclap{a{-}r{-}s}$} \midarrow;
   \draw (1,0) node[below] {$\mathclap{b}\vphantom{b}$} -- ++(0,2) node[above] {$\mathclap{b{+}r{+}s}$} \midarrow;
   \draw (0,0.4) -- node[above] {$r$} (1,0.6) \midarrow;
   \draw (0,1.4) -- node[above] {$s$} (1,1.6) \midarrow;
 \end{tikzpicture}  \;= \qbin{r+s}{r}\;
  \begin{tikzpicture}[int,smallnodes,anchorbase,xscale=1.3]
   \draw (0,0) node[below] {$\mathclap{a}\vphantom{b}$} -- ++(0,2) node[above] {$\mathclap{a{-}r{-}s}$} \nstartarrow \nendarrow;
   \draw (1,0) node[below] {$\mathclap{b}\vphantom{b}$} -- ++(0,2) node[above] {$\mathclap{b{+}r{+}s}$} \nstartarrow \nendarrow;
   \draw (0,0.9) -- node[above] {$r+s$} (1,1.1) \midarrow;
 \end{tikzpicture}
\quad\text{and}\quad
   \begin{tikzpicture}[int,smallnodes,anchorbase,xscale=1.3]
   \draw (0,0) node[below] {$\mathclap{a}\vphantom{b}$} -- ++(0,2) node[above] {$\mathclap{a{+}r{+}s}$} \midarrow;
   \draw (1,0) node[below] {$\mathclap{b}\vphantom{b}$} -- ++(0,2) node[above] {$\mathclap{b{-}r{-}s}$} \midarrow;
   \draw (1,0.4) -- node[above] {$r$} (0,0.6) \midarrow;
   \draw (1,1.4) -- node[above] {$s$} (0,1.6) \midarrow;
 \end{tikzpicture}  \;= \qbin{r+s}{r}\;
  \begin{tikzpicture}[int,smallnodes,anchorbase,xscale=1.3]
   \draw (0,0) node[below] {$\mathclap{a}\vphantom{b}$} -- ++(0,2) node[above] {$\mathclap{a{+}r{+}s}$} \nstartarrow \nendarrow;
   \draw (1,0) node[below] {$\mathclap{b}\vphantom{b}$} -- ++(0,2) node[above] {$\mathclap{b{-}r{-}s}$} \nstartarrow \nendarrow;
   \draw (1,0.9) -- node[above] {$r+s$} (0,1.1) \midarrow;
 \end{tikzpicture}\;.\label{eq:76}
\end{equation}
\endgroup
\item The commuting relation between \(\sfE\) and \(\sfF\) is
\begingroup
\tikzset{every picture/.style={yscale=0.8,xscale=0.8}}
\begin{gather}
\label{eq:77}
   \begin{tikzpicture}[smallnodes,anchorbase]
   \draw[int] (0,0) node[below] {$\mathclap{a}\vphantom{b}$} -- ++(0,2) node[above] {$\mathclap{a}$} \midarrow;
   \draw[int] (1,0) node[below] {$\mathclap{b}\vphantom{b}$}-- ++(0,2) node[above] {$\mathclap{b}$} \midarrow;
   \draw[uno] (0,0.4) -- node[above] {$1$} (1,0.6) \midarrow;
   \draw[uno] (1,1.4) -- node[above] {$1$} (0,1.6) \midarrow;
 \end{tikzpicture} \;-\;
   \begin{tikzpicture}[smallnodes,anchorbase]
   \draw[int] (0,0) node[below] {$\mathclap{a}\vphantom{b}$} -- ++(0,2) node[above] {$\mathclap{a}$} \midarrow;
   \draw[int] (1,0) node[below] {$\mathclap{b}\vphantom{b}$}-- ++(0,2) node[above] {$\mathclap{b}$} \midarrow;
   \draw[uno] (1,0.4) -- node[above] {$1$} (0,0.6) \midarrow;
   \draw[uno] (0,1.4) -- node[above] {$1$} (1,1.6) \midarrow;
 \end{tikzpicture} \;  = [a-b]\;
   \begin{tikzpicture}[smallnodes,anchorbase,xscale=0.7]
   \draw[int] (0,0) node[below] {$\mathclap{a}\vphantom{b}$} -- ++(0,2) node[above] {$\mathclap{a}$} \midarrow;
   \draw[int] (1,0) node[below] {$\mathclap{b}\vphantom{b}$}-- ++(0,2) node[above] {$\mathclap{b}$} \midarrow;
 \end{tikzpicture}\;.
\end{gather}
\item And finally, we have the Serre relations:
\begin{gather}
  \begin{tikzpicture}[smallnodes,anchorbase]
   \draw[int] (0,0) node[below] {$\mathclap{a}\vphantom{b}$} -- ++(0,2) node[above] {$\mathclap{a{+}2}$} \midarrow;
   \draw[int] (1,0) node[below] {$\mathclap{b}\vphantom{b}$} -- ++(0,2) node[above] {$\mathclap{b{-}1}$} \midarrow;
   \draw[int] (2,0) node[below] {$\mathclap{c}\vphantom{b}$}-- ++(0,2) node[above] {$\mathclap{c{-}1}$} \midarrow;
   \draw[uno] (1,0.6) -- node[above] {$1$} ++(1,-0.2) \midarrowrev;
   \draw[int] (0,1.6) -- node[above] {$2$} ++(1,-0.2) \midarrowrev;
 \end{tikzpicture} \;-\;
  \begin{tikzpicture}[smallnodes,anchorbase]
   \draw[int] (0,0) node[below] {$\mathclap{a}\vphantom{b}$} -- ++(0,2) node[above] {$\mathclap{a{+}2}$} \midarrow;
   \draw[int] (1,0) node[below] {$\mathclap{b}\vphantom{b}$} -- ++(0,1) \nstartarrow \nendarrow -- ++(0,1)  node[above] {$\mathclap{b{-}1}$}  \nendarrow;
   \draw[int] (2,0) node[below] {$\mathclap{c}\vphantom{b}$}-- ++(0,2) node[above] {$\mathclap{c{-}1}$} \nstartarrow \nendarrow;
   \draw[uno] (0,0.6) -- node[above] {$1$} ++(1,-0.2) \midarrowrev;
   \draw[uno] (1,1.1) -- node[above] {$1$} ++(1,-0.2) \midarrowrev;
   \draw[uno] (0,1.6) -- node[above] {$1$} ++(1,-0.2) \midarrowrev;
 \end{tikzpicture}\; +\;
  \begin{tikzpicture}[smallnodes,anchorbase]
   \draw[int] (0,0) node[below] {$\mathclap{a}\vphantom{b}$} -- ++(0,2) node[above] {$\mathclap{a{+}2}$} \midarrow;
   \draw[int] (1,0) node[below] {$\mathclap{b}\vphantom{b}$} -- ++(0,2) node[above] {$\mathclap{b{-}1}$} \midarrow;
   \draw[int] (2,0) node[below] {$\mathclap{c}\vphantom{b}$}-- ++(0,2) node[above] {$\mathclap{c{-}1}$} \midarrow;
   \draw[int] (0,0.6) -- node[above] {$2$} ++(1,-0.2) \midarrowrev;
   \draw[uno] (1,1.6) -- node[above] {$1$} ++(1,-0.2) \midarrowrev;
 \end{tikzpicture} \; = 0,\\
  \begin{tikzpicture}[smallnodes,anchorbase]
   \draw[int] (0,0) node[below] {$\mathclap{a}\vphantom{b}$} -- ++(0,2) node[above] {$\mathclap{a{+}1}$} \midarrow;
   \draw[int] (1,0) node[below] {$\mathclap{b}\vphantom{b}$} -- ++(0,2) node[above] {$\mathclap{b{+}1}$} \midarrow;
   \draw[int] (2,0) node[below] {$\mathclap{c}\vphantom{b}$}-- ++(0,2) node[above] {$\mathclap{c{-}2}$} \midarrow;
   \draw[uno] (0,0.6) -- node[above] {$1$} ++(1,-0.2) \midarrowrev;
   \draw[int] (1,1.6) -- node[above] {$2$} ++(1,-0.2) \midarrowrev;
 \end{tikzpicture} \;-\;
  \begin{tikzpicture}[smallnodes,anchorbase]
   \draw[int] (0,0) node[below] {$\mathclap{a}\vphantom{b}$} -- ++(0,2) node[above] {$\mathclap{a{+}1}$} \nstartarrow \nendarrow;
   \draw[int] (1,0) node[below] {$\mathclap{b}\vphantom{b}$} -- ++(0,1) \nstartarrow  -- ++(0,1)  node[above] {$\mathclap{b{+}1}$}  \nstartarrow \nendarrow;
   \draw[int] (2,0) node[below] {$\mathclap{c}\vphantom{b}$}-- ++(0,2) node[above] {$\mathclap{c{-}2}$} \midarrow;
   \draw[uno] (1,0.6) -- node[above] {$1$} ++(1,-0.2) \midarrowrev;
   \draw[uno] (0,1.1) -- node[above] {$1$} ++(1,-0.2) \midarrowrev;
   \draw[uno] (1,1.6) -- node[above] {$1$} ++(1,-0.2) \midarrowrev;
 \end{tikzpicture}\; +\;
  \begin{tikzpicture}[smallnodes,anchorbase]
   \draw[int] (0,0) node[below] {$\mathclap{a}\vphantom{b}$} -- ++(0,2) node[above] {$\mathclap{a{+}1}$} \midarrow;
   \draw[int] (1,0) node[below] {$\mathclap{b}\vphantom{b}$} -- ++(0,2) node[above] {$\mathclap{b{+}1}$} \midarrow;
   \draw[int] (2,0) node[below] {$\mathclap{c}\vphantom{b}$}-- ++(0,2) node[above] {$\mathclap{c{-}2}$} \midarrow;
   \draw[int] (1,0.6) -- node[above] {$2$} ++(1,-0.2) \midarrowrev;
   \draw[uno] (0,1.6) -- node[above] {$1$} ++(1,-0.2) \midarrowrev;
 \end{tikzpicture} \; = 0, \label{eq:190}\\
  \begin{tikzpicture}[smallnodes,anchorbase]
   \draw[int] (0,0) node[below] {$\mathclap{a}\vphantom{b}$} -- ++(0,2) node[above] {$\mathclap{a{-}2}$} \midarrow;
   \draw[int] (1,0) node[below] {$\mathclap{b}\vphantom{b}$} -- ++(0,2) node[above] {$\mathclap{b{+}1}$} \midarrow;
   \draw[int] (2,0) node[below] {$\mathclap{c}\vphantom{b}$}-- ++(0,2) node[above] {$\mathclap{c{+}1}$} \midarrow;
   \draw[uno] (1,0.4) -- node[above] {$1$} ++(1,0.2) \midarrow;
   \draw[int] (0,1.4) -- node[above] {$2$} ++(1,0.2) \midarrow;
 \end{tikzpicture} \;-\;
  \begin{tikzpicture}[smallnodes,anchorbase]
   \draw[int] (0,0) node[below] {$\mathclap{a}\vphantom{b}$} -- ++(0,2) node[above] {$\mathclap{a{-}2}$} \midarrow;
   \draw[int] (1,0) node[below] {$\mathclap{b}\vphantom{b}$} -- ++(0,1) \nstartarrow -- ++(0,1)  node[above] {$\mathclap{b{+}1}$} \nstartarrow \nendarrow;
   \draw[int] (2,0) node[below] {$\mathclap{c}\vphantom{b}$}-- ++(0,2) node[above] {$\mathclap{c{+}1}$} \nstartarrow \nendarrow;
   \draw[uno] (0,0.4) -- node[above] {$1$} ++(1,0.2) \midarrow;
   \draw[uno] (1,0.9) -- node[above] {$1$} ++(1,0.2) \midarrow;
   \draw[uno] (0,1.4) -- node[above] {$1$} ++(1,0.2) \midarrow;
 \end{tikzpicture}\; +\;
  \begin{tikzpicture}[smallnodes,anchorbase]
   \draw[int] (0,0) node[below] {$\mathclap{a}\vphantom{b}$} -- ++(0,2) node[above] {$\mathclap{a{-}2}$} \midarrow;
   \draw[int] (1,0) node[below] {$\mathclap{b}\vphantom{b}$} -- ++(0,2) node[above] {$\mathclap{b{+}1}$} \midarrow;
   \draw[int] (2,0) node[below] {$\mathclap{c}\vphantom{b}$}-- ++(0,2) node[above] {$\mathclap{c{+}1}$} \midarrow;
   \draw[int] (0,0.4) -- node[above] {$2$} (1,0.6) \midarrow;
   \draw[uno] (1,1.4) -- node[above] {$1$} (2,1.6) \midarrow;
 \end{tikzpicture} \; = 0, \label{eq:189}\\
  \begin{tikzpicture}[smallnodes,anchorbase]
   \draw[int] (0,0) node[below] {$\mathclap{a}\vphantom{b}$} -- ++(0,2) node[above] {$\mathclap{a{-}1}$} \midarrow;
   \draw[int] (1,0) node[below] {$\mathclap{b}\vphantom{b}$} -- ++(0,2) node[above] {$\mathclap{b{-}1}$} \midarrow;
   \draw[int] (2,0) node[below] {$\mathclap{c}\vphantom{b}$}-- ++(0,2) node[above] {$\mathclap{c{+}2}$} \midarrow;
   \draw[uno] (0,0.4) -- node[above] {$1$} ++(1,0.2) \midarrow;
   \draw[int] (1,1.4) -- node[above] {$2$} ++(1,0.2) \midarrow;
 \end{tikzpicture} \;-\;
  \begin{tikzpicture}[smallnodes,anchorbase]
   \draw[int] (0,0) node[below] {$\mathclap{a}\vphantom{b}$} -- ++(0,2) node[above] {$\mathclap{a{-}1}$} \nstartarrow \nendarrow;
   \draw[int] (1,0) node[below] {$\mathclap{b}\vphantom{b}$} -- ++(0,1) \nstartarrow \nendarrow -- ++(0,1)  node[above] {$\mathclap{b{-}1}$} \nendarrow;
   \draw[int] (2,0) node[below] {$\mathclap{c}\vphantom{b}$}-- ++(0,2) node[above] {$\mathclap{c{+}2}$} \midarrow;
   \draw[uno] (1,0.4) -- node[above] {$1$} ++(1,0.2) \midarrow;
   \draw[uno] (0,0.9) -- node[above] {$1$} ++(1,0.2) \midarrow;
   \draw[uno] (1,1.4) -- node[above] {$1$} ++(1,0.2) \midarrow;
 \end{tikzpicture}\; +\;
  \begin{tikzpicture}[smallnodes,anchorbase]
   \draw[int] (0,0) node[below] {$\mathclap{a}\vphantom{b}$} -- ++(0,2) node[above] {$\mathclap{a{-}1}$} \midarrow;
   \draw[int] (1,0) node[below] {$\mathclap{b}\vphantom{b}$} -- ++(0,2) node[above] {$\mathclap{b{-}1}$} \midarrow;
   \draw[int] (2,0) node[below] {$\mathclap{c}\vphantom{b}$}-- ++(0,2) node[above] {$\mathclap{c{+}2}$} \midarrow;
   \draw[int] (1,0.4) -- node[above] {$2$} (2,0.6) \midarrow;
   \draw[uno] (0,1.4) -- node[above] {$1$} (1,1.6) \midarrow;
 \end{tikzpicture} \; = 0.\label{eq:78}
\end{gather}
\endgroup
\end{itemize}
\end{subequations}
\end{definition}

We define now a monoidal functor \(\smash{\calG^+_{m|n}} \colon \widetilde \Spider^+ \mapto \smash{\catRep^+_{m|n}}\). On objects, \(\smash{\calG^+_{m|n}}\) is given by \(\calG^+_{m|n}(a) = \bigwedgeq^a \C_q^{m|n}\), while on the morphisms \eqref{eq:68} and \eqref{eq:69} we set
\begin{align}
\calG^+_{m|n}\big(\sfE^{(r)}\big) &= E^{(r)} \colon \bigwedgeq^a \C_q^{m|n}\otimes \bigwedgeq^b\C_q^{m|n} \longrightarrow \bigwedgeq^{a+r}\C_q^{m|n} \otimes \bigwedgeq^{b-r}\C_q^{m|n}, \\
\calG^+_{m|n}\big(\sfF^{(r)}\big) & = F^{(r)} \colon\bigwedgeq^a\C_q^{m|n} \otimes \bigwedgeq^b\C_q^{m|n} \longrightarrow \bigwedgeq^{a-r}\C_q^{m|n} \otimes \bigwedgeq^{b+r}\C_q^{m|n},\label{eq:79}
\end{align}
 where \(E\) and \(F\), the generators of \(U_q(\gl_2)\), act on \(\bigoplus_{a+b=N} \bigwedge^a \C_q^{m|n} \otimes \bigwedge^b \C_q^{m|n}\) by skew Howe duality. It follows from lemma~\ref{lem:2} that the functor is well-defined.

We get then automatically as a consequence of theorem~\ref{thm:1}:

\begin{theorem}[See also \cite{CKM}]
  \label{thm:2}
  The functor \(\calG^+_{m|n}\) descends to a full functor \(\calG^+_{m|n} \colon \Spider^+ \mapto \catRep^+_{m|n}\).
\end{theorem}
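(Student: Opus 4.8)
The plan is to obtain both halves of the statement — that $\calG^+_{m|n}$ descends to the quotient $\Spider^+$, and that the resulting functor is full — as a formal consequence of quantized super skew Howe duality (Theorem~\ref{thm:1}), combined with the fact that the skew Howe action is local (Propositions~\ref{prop:2} and~\ref{prop:3}) and with the zero-weight normalizations of Lemma~\ref{lem:2}.

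For the descent, I would first note that every generating morphism of $\widetilde\Spider^+$ preserves both the length and the total sum of a sequence, so that, using $0\cong\varnothing$ to pad two sequences of total sum $N$ to a common length $k$, the functor sends a morphism $\bolda\to\boldb$ to the operator by which a suitable element of $U_q(\gl_k)$ — acting on $\bigwedgeq^N(\C^{m|n}_q\otimes\C^k_q)$ via skew Howe duality — carries the $\bolda$-weight space to the $\boldb$-weight space (Proposition~\ref{prop:2}). Each defining relation of $\Spider^+$ in Definition~\ref{def:5} is local, involving only two or three adjacent strands, and, after the translation provided by Proposition~\ref{prop:3}, is exactly one of the standard relations of $U_q(\gl_k)$ written for the divided powers $E^{(r)}_i, F^{(r)}_i$: far-apart commutation, the commutation $E_{i+1}F_i=F_iE_{i+1}$, the multiplication rule $E^{(r)}_iE^{(s)}_i=\qbin{r+s}{r}E^{(r+s)}_i$, the $\sfE$–$\sfF$ commutator producing $[a-b]$ on the weight space, and the Serre relations. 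Since skew Howe duality furnishes a genuine action of $U_q(\gl_k)$ on $\bigwedgeq^N(\C^{m|n}_q\otimes\C^k_q)$ (recorded just after Proposition~\ref{prop:2}, with Lemma~\ref{lem:2} covering the conventions that make $\sfE^{(r)}\idem_{0\otimes r}$ and $\sfF^{(r)}\idem_{r\otimes 0}$ the identity), all of these relations are mapped to zero, so $\calG^+_{m|n}$ factors through $\Spider^+$. I expect the only tedious point to be this strand-by-strand matching, together with checking the reductions via $0\cong\varnothing$ and Proposition~\ref{prop:3} are compatible; no new idea is required.

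For fullness, I would fix the total sum $N$, noting that $\Hom$-spaces between objects of different total sum vanish on both sides (on the representation side because $\quantumq^I$, with $I=h_1+\dots+h_{m+n}$, acts by the scalar $q^{|\bolda|}$ on $\bigwedgeq^{\bolda}\C^{m|n}_q$ by \eqref{eq:26}), then pad $\bolda,\boldb$ to a common length $k$ and identify $\bigwedgeq^{\bolda}\C^{m|n}_q$, $\bigwedgeq^{\boldb}\C^{m|n}_q$ with the $\bolda$- and $\boldb$-weight spaces of $W_k=\bigwedgeq^N(\C^{m|n}_q\otimes\C^k_q)$ via Proposition~\ref{prop:2}. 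A $U_q(\gl_{m|n})$-morphism between them then lies in $\idem_{\boldb}\,Z\,\idem_{\bolda}$, where $Z=\End_{U_q(\gl_{m|n})}(W_k)$, and by Theorem~\ref{thm:1} the algebra $Z$ is precisely the image of $U_q(\gl_k)$ acting via skew Howe duality. It then remains to observe that any element of $\idem_{\boldb}\,U_q(\gl_k)\,\idem_{\bolda}$ acts on $W_k$ as a $\fieldk$-linear combination of composites of the operators assigned to $\sfE=\sfE^{(1)}$ and $\sfF=\sfF^{(1)}$ (the Cartan part acting by scalars on each weight space), since $U_q(\gl_k)$ is spanned by monomials in the $E_i$, $F_i$, $\quantumq^h$; hence every morphism of $\catRep^+_{m|n}$ is the image of a morphism of $\Spider^+$ under $\calG^+_{m|n}$. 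As the wording of the statement suggests, the substance is entirely in Theorem~\ref{thm:1}; the only things to be careful about are the normalizations at the weight zero (Lemma~\ref{lem:2}) and the reduction to fixed length and total sum via $0\cong\varnothing$.
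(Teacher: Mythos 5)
Your proposal is correct and takes the same route as the paper, which presents Theorem~\ref{thm:2} as an "automatic consequence of Theorem~\ref{thm:1}" without further elaboration. You have simply unpacked what "automatic" means: the relations of $\Spider^+$ are exactly the divided-power relations of $U_q(\gl_k)$ read locally via Propositions~\ref{prop:2} and~\ref{prop:3}, so descent follows from the existence of the skew Howe action, while fullness follows from the centralizer statement in Theorem~\ref{thm:1} after padding to a common length and fixing the total sum; the detail worth keeping in mind (which you implicitly use by writing elements of $U_q(\gl_k)$ as monomials acting on $W_k$) is that monomials passing through weights outside $\Z_{\geq 0}^k$ act by zero and can be discarded, so what remains are honest diagrams in $\Spider^+$.
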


In the non-super case there is actually an easy way to turn \(\smash{\calG^+_{m|n}}\) into an equivalence of categories by passing to a quotient of \(\Spider^+\), as proven in \cite{CKM} (and as also follows by the presentation of the Schur algebra given in \cite{MR1920169}):

\begin{definition}
  \label{def:6}
  We define \(\Spider^+_{\leq m}\) to be the quotient of the category \(\Spider^+\) modulo all objects \(a > m\) (formally, we quotient out all morphisms that factor through a tensor product of objects with at least one tensor factor greater than \(m\)).
\end{definition}

Then:

\begin{theorem}[{See \cite[Theorem~4.4.1]{CKM}}]
  \label{thm:3}
  In case \(n=0\), the functor \(\smash{\calG^+_{m|n}}\) descends to an equivalence of categories \(\calG^+_{m|0} \colon \Spider^+_{\leq m} \mapto  \catRep^+_{m}\).
\end{theorem}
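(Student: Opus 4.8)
The functor \(\calG^+_{m|0}\) is already full by theorem~\ref{thm:2}, so it remains only to check three things: that it factors through \(\Spider^+_{\leq m}\), that the induced functor is essentially surjective, and that it is faithful. The first two are formal. For \(n=0\) formula~\eqref{eq:39} gives \(\bigwedgeq^a \C_q^m = 0\) whenever \(a>m\); hence \(\calG^+_{m|0}\) sends every object having a tensor factor \(>m\) to the zero object and therefore annihilates every morphism factoring through such an object, so by definition~\ref{def:6} it descends to \(\Spider^+_{\leq m}\). Essential surjectivity holds because, by definition~\ref{def:1}, \(\catRep^+_{m}\) is the additive monoidal closure of the objects \(\bigwedgeq^a\C_q^m = \calG^+_{m|0}(a)\) for \(a\ge 0\), and these all lie in the image.

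The substance of the theorem is faithfulness. Since fullness is already known, it suffices to prove that for all objects \(\bolda,\boldb\) of \(\Spider^+_{\leq m}\) the surjection
\[
  \calG^+_{m|0}\colon \Spider^+_{\leq m}(\bolda,\boldb)\twoheadrightarrow \Hom_{U_q(\gl_m)}\big(\bigwedgeq^{\bolda}\C_q^m,\bigwedgeq^{\boldb}\C_q^m\big)
\]
is an isomorphism, and for this it is enough to check the dimension inequality \(\dim_{\fieldk}\Spider^+_{\leq m}(\bolda,\boldb)\le \dim_{\fieldk}\Hom_{U_q(\gl_m)}(\bigwedgeq^{\bolda}\C_q^m,\bigwedgeq^{\boldb}\C_q^m)\). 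Both sides vanish unless \(\abs{\bolda}=\abs{\boldb}=:N\), since the generators \(\sfE^{(r)},\sfF^{(r)}\) preserve the total weight; so fix such \(\bolda,\boldb\) and choose \(k\) at least as large as \(N\). On the representation side, skew Howe duality (theorem~\ref{thm:1} with \(n=l=0\), applied to \(\bigwedgeq^N(\C_q^m\otimes\C_q^k)\), together with the weight-space decomposition from proposition~\ref{prop:2}) shows that the centralizer of the \(U_q(\gl_m)\)--action on \(\bigoplus_{\abs{\bolda}=N}\bigwedgeq^{\bolda}\C_q^m\) is precisely the image \(S\) of \(\dot U_q(\glk)\); since for \(n=0\) this module is semisimple and its character is classical (cf.\ proposition~\ref{prop:1}), \(S\) is the corresponding generalized \(q\)--Schur algebra and \(\Hom_{U_q(\gl_m)}(\bigwedgeq^{\bolda}\C_q^m,\bigwedgeq^{\boldb}\C_q^m)=\onell{\boldb}\,S\,\onell{\bolda}\). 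On the diagrammatic side, the relations imposed in definition~\ref{def:5} are exactly the relations of the idempotented quantum group, so \(\Spider^+(\bolda,\boldb)\cong\onell{\boldb}\,\dot U_q(\glk)\,\onell{\bolda}\), and passing to \(\Spider^+_{\leq m}\) imposes precisely the extra relations cutting the weights down to \(\{0,\dotsc,m\}\), whence \(\Spider^+_{\leq m}(\bolda,\boldb)\cong\onell{\boldb}\,S\,\onell{\bolda}\). The two sides therefore have equal dimension and \(\calG^+_{m|0}\) is faithful.

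The only non-formal input, and the main obstacle, is the identification in the last step of \(\Spider^+\) (resp.\ \(\Spider^+_{\leq m}\)) with the idempotented quantum group \(\dot U_q(\glk)\) (resp.\ the generalized \(q\)--Schur algebra) — equivalently, the statement that the relations of definition~\ref{def:5} together with the truncation at \(m\) form a \emph{complete} set of relations for the endomorphism algebras of \(\catRep^+_m\). This rests on a presentation theorem: one may invoke the presentation of the \(q\)--Schur algebra of \cite{MR1920169} (a Beilinson--Lusztig--MacPherson-type presentation), or, alternatively, exhibit directly a spanning set of \(\Spider^+_{\leq m}(\bolda,\boldb)\) by ladder diagrams and match its cardinality with \(\dim\onell{\boldb}S\onell{\bolda}\) via semistandard tableaux. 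In any case the statement in the case \(n=0\) is \cite[Theorem~4.4.1]{CKM}, and it is enough to observe that our \(\Spider^+\) coincides with the spider category used there; we therefore simply quote that result.
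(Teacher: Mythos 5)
Your proposal is correct and in the end coincides with the paper's, since the paper gives no proof of its own: it simply quotes \cite[Theorem~4.4.1]{CKM}, pointing out (as you also note) that the presentation of the $q$--Schur algebra from \cite{MR1920169} gives an alternative route. Your preliminary reductions — descent to the quotient using \(\bigwedgeq^a\C_q^m=0\) for \(a>m\), essential surjectivity, and collapsing the substance to a dimension comparison against the generalized $q$--Schur algebra via skew Howe duality — are a sound and useful unwinding of what the citation contains, but the crucial completeness-of-relations step is still, as you acknowledge in your final paragraph, imported from \cite{CKM} (or \cite{MR1920169}), exactly as in the paper.
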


For a sequence \(\bolda = (a_1,\dotsc, a_r) \in \Spider^+\) we let  \(\abs{\bolda}=a_1+\dotsb+a_r\).
Notice that, by construction, \(\Hom_{\Spider^+}(\bolda,\boldb) \neq 0\) if and only if \(\abs{\bolda}=\abs{\boldb}\). Hence we have: 

\begin{lemma}
  \label{lem:3}
  Let \(\bolda,\boldb \in \Spider^+\), and let \(m\geq \abs{\bolda}\). Then the quotient functor \(\Spider^+ \mapto \Spider^+_{\leq m}\) induces an isomorphism \(\Hom_{\Spider^+}(\bolda,\boldb) \cong \Hom_{\Spider^+_{\leq m}} (\bolda, \boldb)\).
\end{lemma}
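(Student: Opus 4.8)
The plan is to reduce the statement to the single observation, already recorded just before the lemma, that every morphism of $\Spider^+$ preserves the total degree $\abs{\cdot}$ of sequences. Indeed, the generating morphisms $\sfE^{(r)}$ and $\sfF^{(r)}$ of $\widetilde\Spider^+$ preserve $\abs{\cdot}$, as does the identification $0\cong\varnothing$; tensor product, composition and the passage to the additive $\fieldk$-linear envelope preserve this feature, and all the defining relations of $\Spider^+$ are between fixed pairs of objects. Hence $\Hom_{\Spider^+}(\boldc,\boldd)=0$ whenever $\abs{\boldc}\neq\abs{\boldd}$, and in particular every nonzero morphism of $\Spider^+$ has source and target of the same total degree.

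Next I would unwind the two maps being compared. The quotient functor $\pi\colon\Spider^+\mapto\Spider^+_{\leq m}$ is full, so the induced map $\Hom_{\Spider^+}(\bolda,\boldb)\mapto\Hom_{\Spider^+_{\leq m}}(\bolda,\boldb)$ is surjective, and by the very definition of $\Spider^+_{\leq m}$ (Definition~\ref{def:6}) its kernel is the intersection of $\Hom_{\Spider^+}(\bolda,\boldb)$ with the monoidal ideal $\calI$ spanned by all morphisms that can be written as a (finite sum of) composite(s) $\bolda\mapto\boldc\mapto\boldb$ factoring through an object $\boldc$ having at least one tensor factor $>m$.

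The heart of the matter is then that such factorisations are vacuous once $m\geq\abs{\bolda}=\abs{\boldb}$: if a sequence $\boldc$ has some entry $c_j>m$, then $\abs{\boldc}=\sum_i c_i\geq c_j>m\geq\abs{\bolda}$, so by the first paragraph $\Hom_{\Spider^+}(\bolda,\boldc)=0$, and any composite through such a $\boldc$ is therefore zero; a sum of such composites is still zero, so $\calI\cap\Hom_{\Spider^+}(\bolda,\boldb)=0$ and $\pi$ restricts to an isomorphism on this $\Hom$-space. I do not anticipate a genuine obstacle here; the only points demanding a little care are the bookkeeping around the monoidal ideal (a general element of $\calI$ is a sum of composites through possibly different ``bad'' objects, each individually zero by the degree count) and the reduction, within the additive envelope, to the case of a single sequence $\boldc$. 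As a consistency check in the special case $n=0$: the functor $\calG^+_{m|0}$ annihilates $\calI$ and so factors as $\Spider^+\mapto\Spider^+_{\leq m}\xrightarrow{\ \sim\ }\catRep^+_m$ by Theorem~\ref{thm:3}, which is compatible with the isomorphism just proved, although the direct degree argument is cleaner and applies uniformly in $m$ and $n$.
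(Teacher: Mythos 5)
Your proposal is correct and takes essentially the same route as the paper, which states the lemma as an immediate consequence of the degree-preservation observation ($\Hom_{\Spider^+}(\bolda,\boldb)\neq 0$ only if $\abs{\bolda}=\abs{\boldb}$) recorded just before it. Your writeup simply spells out the fullness-plus-trivial-kernel reasoning that the paper leaves implicit.
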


We recall also the following lemma:

\begin{lemma}
  \label{lem:4}
  Let \(\bolda \in \Spider^+\) and let \(N=\abs{\bolda}\). There are a monomorphism \(\iota_\bolda \colon \bolda \into 1^{\otimes N}\) and an epimorphism \(\pi_\bolda \colon 1^{\otimes N} \surto \bolda\) with \(\pi_\bolda \circ \iota_\bolda = \id_{\bolda}\).
\end{lemma}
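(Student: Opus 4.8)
The plan is to build $\iota_\bolda$ and $\pi_\bolda$ explicitly out of the generators $\sfE^{(1)}$ and $\sfF^{(1)}$, first for a one-term sequence and then for an arbitrary sequence by tensoring. For $\bolda=(a)$ a single non-negative integer I would define $\pi_a\colon 1^{\otimes a}\to a$ and $\iota_a\colon a\to 1^{\otimes a}$ recursively by $\pi_0=\iota_0=\id_\varnothing$ (under the identification $0\cong\varnothing$), $\pi_1=\iota_1=\id_1$, and, for $a\geq 2$,
\[
  \pi_a=\sfE^{(1)}\idem_{(a-1)\otimes 1}\circ(\pi_{a-1}\otimes\id_1),\qquad
  \iota_a=(\iota_{a-1}\otimes\id_1)\circ\sfF^{(1)}\idem_{a\otimes 0},
\]
where on the right $a\cong a\otimes 0$ and $\sfF^{(1)}\idem_{a\otimes 0}\colon a\otimes 0\to(a-1)\otimes 1$. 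Thus $\iota_a$ unzips a thick strand into $a$ thin ones one strand at a time, and $\pi_a$ is its vertical mirror image; both are manifestly morphisms of $\widetilde\Spider^+$, hence of $\Spider^+$.

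The heart of the matter is the identity $\pi_a\circ\iota_a=[a]!\,\id_a$ in $\Spider^+$, where $[a]!=[a][a-1]\dotsb[1]$, which I would establish by induction on $a$ (the cases $a\leq 1$ being trivial). By the interchange law and the inductive hypothesis,
\begin{align*}
  \pi_a\circ\iota_a
  &=\sfE^{(1)}\idem_{(a-1)\otimes 1}\circ\big((\pi_{a-1}\circ\iota_{a-1})\otimes\id_1\big)\circ\sfF^{(1)}\idem_{a\otimes 0}\\
  &=[a-1]!\;\sfE^{(1)}\idem_{(a-1)\otimes 1}\circ\sfF^{(1)}\idem_{a\otimes 0},
\end{align*}
so it suffices to prove $\sfE^{(1)}\idem_{(a-1)\otimes 1}\circ\sfF^{(1)}\idem_{a\otimes 0}=[a]\,\id_a$. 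This is the specialization of the $\sfE$--$\sfF$ relation~\eqref{eq:77} to the pair of labels $(a,0)$: the summand of \eqref{eq:77} in which $\sfE^{(1)}$ is applied first vanishes, because $\sfE^{(1)}\idem_{a\otimes 0}$ does not exist (it would produce a strand with a negative label), leaving exactly $\sfE^{(1)}\idem_{(a-1)\otimes 1}\circ\sfF^{(1)}\idem_{a\otimes 0}=[a-0]\,\id_a$. Since $q$ is not a root of unity, $[a]!\neq 0$, and replacing $\iota_a$ by $([a]!)^{-1}\iota_a$ we may assume $\pi_a\circ\iota_a=\id_a$.

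For a general sequence $\bolda=(a_1,\dotsc,a_r)$ with $N=\abs{\bolda}$ I would then put $\iota_\bolda=\iota_{a_1}\otimes\dotsb\otimes\iota_{a_r}$ and $\pi_\bolda=\pi_{a_1}\otimes\dotsb\otimes\pi_{a_r}$; here one uses that tensoring in $\Spider^+$ is concatenation of sequences, so that $1^{\otimes a_1}\otimes\dotsb\otimes 1^{\otimes a_r}=1^{\otimes N}$ and $a_1\otimes\dotsb\otimes a_r=\bolda$ on the nose. By the interchange law, $\pi_\bolda\circ\iota_\bolda=\bigotimes_{i=1}^{r}(\pi_{a_i}\circ\iota_{a_i})=\bigotimes_{i=1}^{r}\id_{a_i}=\id_\bolda$. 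Finally, a morphism with a left inverse is a monomorphism and one with a right inverse is an epimorphism, so $\iota_\bolda$ and $\pi_\bolda$ have the required properties.

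The only delicate point is the scalar bookkeeping in the second paragraph, and specifically the edge case $(a,0)$ of relation~\eqref{eq:77}: working against a $0$-labeled strand is exactly what makes one of the two terms of \eqref{eq:77} drop out, so the Serre relations never enter and the recursion closes with the clean value $[a]!$. If one prefers to bypass the explicit scalar, one can instead observe that $\End_{\Spider^+}(a)$ is one-dimensional by Lemma~\ref{lem:3} and Theorem~\ref{thm:3}, so that $\pi_a\circ\iota_a=c_a\,\id_a$ automatically, and that $c_a\neq 0$ since $\calG^+_{m|0}(\iota_a)$ and $\calG^+_{m|0}(\pi_a)$ are nonzero for $m\geq a$; the computation above is shorter and self-contained.
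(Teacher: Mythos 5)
Your proof is correct and takes essentially the same route as the paper's: reduce to a single object $a$, build $\iota_a$ and $\pi_a$ by peeling off thin strands one at a time, and compute the composite against a $0$-labeled strand using~\eqref{eq:77}. The only differences are cosmetic — the paper writes $\iota_a$, $\pi_a$ as composites of divided powers $\sfF^{(r)}_i$, $\sfE^{(r)}_i$, while you use repeated $\sfF^{(1)}$, $\sfE^{(1)}$ in a recursion — and you are more explicit about the resulting scalar $[a]!$ and the need to rescale, which the paper's one-line conclusion leaves implicit.
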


\begin{proof}
  It is sufficient to prove the claim for an object \(a \in \Z_{\geq 0}\). Let 
  \begin{equation}
\iota_a = \sfF^{(1)}_{a-1} \dotsm \sfF^{(a-2)}_2 \sfF^{(a-1)}_1 \colon a \cong a \otimes 0^{\otimes a-1} \mapto 1^{\otimes a}\label{eq:202}
\end{equation}
 and 
 \begin{equation}
\pi_a = \sfE_1^{(a-1)} \dotsm \sfE^{(r)}_{a-2} \sfE^{(1)}_{a-1} \colon 1^{\otimes a} \mapto a \otimes 0^{\otimes a-1} \cong a.\label{eq:114}
\end{equation}
It follows from the relation~\eqref{eq:77} that \(\pi_a \circ \iota_a \) is a multiple of the identity, whence the claim.
\end{proof}

As a consequence, for all objects \(\bolda,\boldb \in \Spider^+\) with \(\abs{\bolda}=\abs{\boldb}=N\) we can factor the identity of \(\Hom_{\Spider^+}(\bolda,\boldb)\) as
\begin{equation}
  \label{eq:81}
  \Hom_{\Spider^+}(\bolda,\boldb) \into \Hom_{\Spider^+} (1^{\otimes N}, 1^{\otimes N}) \surto \Hom_{\Spider^+} (\bolda,\boldb).
\end{equation}

\subsection{The functor \texorpdfstring{$\calI^+$}{I+}}
\label{sec:functor-cali+}

Set in \(\Spider^+\)
\begingroup
\tikzset{every picture/.style={scale=0.5}}
  \begin{align}
\label{eq:82}
   c_{1,1} &= (q^{-1}-\sfF \sfE)\idem_{1 \otimes 1}
    =  q^{-1} \;
    \begin{tikzpicture}[smallnodes,anchorbase]
   \draw[uno] (1,0) node[below] {$\mathclap{1}$} -- ++(0,2) \midarrow  node[above] {$\mathclap{1}$} ;
   \draw[uno] (2,0) node[below] {$\mathclap{1}$} -- ++(0,2) \midarrow  node[above] {$\mathclap{1}$} ;
    \end{tikzpicture} \;  -\;
    \begin{tikzpicture}[smallnodes,anchorbase]
   \draw[uno] (0,0) node[below]  {$\mathclap{1}$} -- ++(0,0.7) \midarrow ++(0,0.6) -- ++(0,0.7) \midarrow node[above] {$\mathclap{1}$} ;
   \draw[uno] (1,0) node[below]  {$\mathclap{1}$} -- ++(0,0.5) \midarrow -- ++(-1,0.2) \midarrow;
   \draw[uno] (0,1.3) -- ++(1,0.2) \midarrow -- ++(0,0.5) \midarrow node[above] {$\mathclap{1}$};
   \draw[zero] (1,0.5) -- ++(0,1);
   \draw[int] (0,0.7) -- ++(0,0.6) \midarrow;
    \end{tikzpicture}\;,\\
   c_{1,1}^{-1} &= (q-\sfF\sfE)\idem_{1 \otimes 1}
    = q \;
    \begin{tikzpicture}[smallnodes,anchorbase]
   \draw[uno] (1,0) node[below] {$\mathclap{1}$} -- ++(0,2) \midarrow  node[above] {$\mathclap{1}$} ;
   \draw[uno] (2,0) node[below] {$\mathclap{1}$} -- ++(0,2) \midarrow  node[above] {$\mathclap{1}$} ;
    \end{tikzpicture} \;  -\;
    \begin{tikzpicture}[smallnodes,anchorbase]
   \draw[uno] (0,0) node[below]  {$\mathclap{1}$} -- ++(0,0.7) \midarrow ++(0,0.6) -- ++(0,0.7) \midarrow node[above] {$\mathclap{1}$} ;
   \draw[uno] (1,0) node[below]  {$\mathclap{1}$} -- ++(0,0.5) \midarrow -- ++(-1,0.2) \midarrow;
   \draw[uno] (0,1.3) -- ++(1,0.2) \midarrow -- ++(0,0.5) \midarrow node[above] {$\mathclap{1}$};
   \draw[zero] (1,0.5) -- ++(0,1);
   \draw[int] (0,0.7) -- ++(0,0.6) \midarrow;
    \end{tikzpicture}\;.\label{eq:193}
  \end{align}
\endgroup
The notation is motivated by the fact (easy to check) that \(c_{1,1}^{-1}\)  is the inverse of \(c_{1,1}\).

We define a monoidal functor \(\calI^+ \colon \catH \mapto \Spider^+\). On objects, we set \(\calI^+(1) = 1\). On morphisms, we set
\begingroup
\tikzset{every picture/.style={scale=0.5}}
  \begin{equation}
    \label{eq:203}
    \calI^+ \left(\;\begin{tikzpicture}[smallnodes,anchorbase]
   \draw[uno] (1,0) node[below] {$\mathclap{1}$} -- ++(0,0.3) \midarrow .. controls ++(0,0.7) and ++(0,-0.7) .. ++(-1,1.4) -- ++(0,0.3)  \midarrow node[above] {$\mathclap{1}$} ;
   \draw[uno,cross line] (0,0) node[below] {$\mathclap{1}$} -- ++(0,0.3) \midarrow .. controls ++(0,0.7) and ++(0,-0.7) .. ++(1,1.4) -- ++(0,0.3)  \midarrow node[above] {$\mathclap{1}$} ;
    \end{tikzpicture}\;\right) =  c_{1,1}
    \quad \text{and} \quad
    \calI^+\left(\;    \begin{tikzpicture}[smallnodes,anchorbase]
   \draw[uno] (0,0) node[below] {$\mathclap{1}$} -- ++(0,0.3) \midarrow .. controls ++(0,0.7) and ++(0,-0.7) .. ++(1,1.4) -- ++(0,0.3)  \midarrow node[above] {$\mathclap{1}$} ;
   \draw[uno,cross line] (1,0) node[below] {$\mathclap{1}$} -- ++(0,0.3) \midarrow .. controls ++(0,0.7) and ++(0,-0.7) .. ++(-1,1.4) -- ++(0,0.3)  \midarrow node[above] {$\mathclap{1}$} ;
    \end{tikzpicture} \;\right) = c_{1,1}^{-1}.
  \end{equation}
\endgroup

It is easy to check that \(\calI^+\) is well-defined, i.e.\ that \(c_{1,1}\) satisfies the defining relations \eqref{eq:53} of the Hecke algebra. We will use the pictures in \eqref{eq:203} to denote the morphisms \(c_{1,1}\) and \(c_{1,1}^{-1}\) of \(\Spider^+\).

\begin{remark}
  Alternatively, one can deduce from
  lemma~\ref{lem:6} below that for each fixed \(m \geq 0\)
  the functor \(\calI^+\),
  restricted to the full subcategory \(\calH_m\)
  and composed with the quotient functor
  \(\Spider^+ \mapto \Spider^+_{\leq m}\),
  is equal to the composition \((\calG^+_{m|0})^{-1} \circ \RT_{m|0}\),
  where \((\calG^+_{m|0})^{-1}\)
  is an essential inverse to the equivalence of categories
  \(\calG^+_{m|0} \colon \Spider^+_{\leq m} \mapto \catRep^+_m\),
  and hence \(\calI^+\) is well-defined.
\end{remark}

\begin{lemma}
  \label{lem:6}
  The composition \(\calG^+_{m|n} \circ \calI^+\) gives the Reshetikhin-Turaev invariant \(\RT_{m|n}\), that is we have
\begingroup
\tikzset{every picture/.style={yscale=0.5,xscale=0.5}}
  \begin{equation}
    \label{eq:86}
\calG^+_{m|n}\left(    \begin{tikzpicture}[smallnodes,anchorbase]
   \draw[uno] (1,0) node[below]  {$\mathclap{1}$} -- ++(0,0.7) \midarrow ++(0,0.6) -- ++(0,0.7) \midarrow node[above] {$\mathclap{1}$} ;
   \draw[uno] (0,0) node[below]  {$\mathclap{1}$} -- ++(0,0.5) \midarrow -- ++(1,0.2) \midarrow;
   \draw[uno] (1,1.3) -- ++(-1,0.2) \midarrow -- ++(0,0.5) \midarrow node[above] {$\mathclap{1}$};
   \draw[zero] (0,0.5) -- ++(0,1);
   \draw[int] (1,0.7) -- ++(0,0.6) \midarrow;
    \end{tikzpicture}\right) \; = - \;
   \RT_{m|n}\left( \begin{tikzpicture}[smallnodes,anchorbase]
   \draw[uno] (1,0) node[below] {$\mathclap{1}$} -- ++(0,0.3) \midarrow .. controls ++(0,0.7) and ++(0,-0.7) .. ++(-1,1.4) -- ++(0,0.3)  \midarrow node[above] {$\mathclap{1}$} ;
   \draw[uno,cross line] (0,0) node[below] {$\mathclap{1}$} -- ++(0,0.3) \midarrow .. controls ++(0,0.7) and ++(0,-0.7) .. ++(1,1.4) -- ++(0,0.3)  \midarrow node[above] {$\mathclap{1}$} ;
    \end{tikzpicture} \right)\; + q^{-1} \;
   \RT_{m|n}\left( \begin{tikzpicture}[smallnodes,anchorbase]
   \draw[uno] (1,0) node[below] {$\mathclap{1}$} -- ++(0,2) \midarrow  node[above] {$\mathclap{1}$} ;
   \draw[uno] (2,0) node[below] {$\mathclap{1}$} -- ++(0,2) \midarrow  node[above] {$\mathclap{1}$} ;
    \end{tikzpicture}\right) 
  \end{equation}
\endgroup
as morphisms in \(\catRep^+_{m|n}\).
\end{lemma}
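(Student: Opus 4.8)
The plan is to compute both sides of \eqref{eq:86} as $U_q(\gl_{m|n})$-equivariant endomorphisms of $V\otimes V=\C_q^{m|n}\otimes\C_q^{m|n}$ and check that they coincide. The first step is to recognize the zig-zag diagram on the left of \eqref{eq:86}: read from bottom to top, its lower thin strand is an $\sfF^{(1)}$ and its upper one an $\sfE^{(1)}$, so it represents $\sfE\sfF\idem_{1\otimes 1}\colon 1\otimes 1\to 1\otimes 1$. By the commuting relation \eqref{eq:77} with $a=b=1$ (where $[a-b]=[0]=0$) this equals $\sfF\sfE\idem_{1\otimes 1}$ in $\Spider^+$, the morphism appearing in the definition \eqref{eq:82} of $c_{1,1}$. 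Since $\RT_{m|n}$ sends the positive crossing to the braiding $\checkR_{V,V}$ (see \eqref{eq:40}) and two parallel $1$-strands to $\id_{V\otimes V}$, proving \eqref{eq:86} is equivalent to establishing the operator identity $\calG^+_{m|n}(\sfF\sfE\idem_{1\otimes 1})=q^{-1}\id_{V\otimes V}-\checkR_{V,V}$ with $\checkR_{V,V}$ as in \eqref{eq:30}.

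For that identity I would argue directly. By definition of $\calG^+_{m|n}$ on morphisms (see \eqref{eq:79}), $\calG^+_{m|n}(\sfE^{(1)}\idem_{1\otimes 1})$ and $\calG^+_{m|n}(\sfF^{(1)}\idem_{2\otimes 0})$ are the Chevalley generators $E_1$ and $F_1$ of $U_q(\gl_2)$ acting by quantum skew Howe duality on $\bigwedgeq^2(\C_q^{m|n}\otimes\C_q^2)$, restricted to the weight spaces in question; by Lemma~\ref{lem:5} the weight-$(1,1)$ space is identified with $V\otimes V$ via $x_a\otimes x_b\leftrightarrow z_{a1}\wedgeq z_{b2}$. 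As $U_q(\gl_2)$ acts only on the $\C_q^2$ tensor factor, where by \eqref{eq:26} one has $E_1 z_{c2}=z_{c1}$, $F_1 z_{c1}=z_{c2}$, $E_1 z_{c1}=F_1 z_{c2}=0$, and $K_1$ scales $z_{c1}$ by $q$ and $z_{c2}$ by $q^{-1}$, the coproduct formulas \eqref{eq:24} give $E_1(z_{a1}\wedgeq z_{b2})=z_{a1}\wedgeq z_{b1}$ and then $F_1(z_{a1}\wedgeq z_{b1})=z_{a2}\wedgeq z_{b1}+q\,z_{a1}\wedgeq z_{b2}$. Finally I would rewrite $z_{a2}\wedgeq z_{b1}$ in the monomial basis of Lemma~\ref{lem:5} using the defining relations \eqref{eq:57}, distinguishing the cases $a<b$, $a>b$, and $a=b$ (the last split according to the parity $\abs{a}$); comparing the outcome with \eqref{eq:30} in each case yields $\calG^+_{m|n}(\sfF\sfE\idem_{1\otimes 1})(x_a\otimes x_b)=q^{-1}\,x_a\otimes x_b-\checkR_{V,V}(x_a\otimes x_b)$, which proves \eqref{eq:86}. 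As a corollary, combining this with \eqref{eq:82} and \eqref{eq:193} shows that $\calG^+_{m|n}\circ\calI^+$ sends the positive (resp.\ negative) crossing to $\checkR_{V,V}$ (resp.\ its inverse), so $\calG^+_{m|n}\circ\calI^+=\RT_{m|n}$ on $\catH$, which is monoidally generated by the elementary crossings.

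The only genuine work is the last computation, which is routine; the pitfalls to watch are the parity signs and the powers of $q$ that arise both when passing between the presentation \eqref{eq:57} and the monomial basis of Lemma~\ref{lem:5} and when matching with the sign conventions of \eqref{eq:30}. Equivariance shortens it: $\checkR_{V,V}$ has only the eigenvalues $q^{-1}$ and $-q$, so $V\otimes V$ has at most two irreducible constituents, and it suffices to verify the identity on a single vector supporting each — for instance on $x_1\otimes x_1$ and on $x_1\otimes x_2$ when $m+n\geq 2$, the case $m+n\leq 1$ being immediate.
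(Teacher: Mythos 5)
Your proof is correct and follows essentially the paper's argument: identify the zig-zag as $\sfE\sfF\idem_{1\otimes 1}$, compute its action on $z_{a1}\wedgeq z_{b2}$ via the skew Howe $U_q(\gl_2)$-action to get $z_{a2}\wedgeq z_{b1}+q\,z_{a1}\wedgeq z_{b2}$, and match against \eqref{eq:30} case by case. The only differences are cosmetic — you first trade $\sfE\sfF$ for $\sfF\sfE$ via \eqref{eq:77} (so your intermediate step is $F_1(z_{a1}\wedgeq z_{b1})$ rather than the paper's $E_1(z_{a2}\wedgeq z_{b2})$) and you sketch an optional equivariance shortcut for the final case check; the weight-space identification should be credited to Proposition~\ref{prop:2} (i.e.\ \eqref{eq:62}) rather than Lemma~\ref{lem:5}, but that is a minor slip.
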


\begin{proof}
 Let \(x_a \otimes x_b \in \C^{m|n}_q \otimes \C^{m|n}_q\) be a standard basis vector. Under the isomorphism \eqref{eq:62} it corresponds to \(z_{a1} \wedgeq z_{b2} \in \bigwedgeq^2 (\C_q^{m|n} \otimes \C_q^k)\). We can now compute
\begin{equation}
  \label{eq:87}
  EF(z_{a1} \wedgeq z_{b2}) = E (z_{a2} \wedgeq z_{b2}) = z_{a2}\wedgeq z_{b1} + q z_{a1} \wedgeq z_{b2}.
\end{equation}
We have now four cases:
\begin{itemize}
\item if \(a=b\) and \(\abs{a}=0\), then \(z_{a2} \wedgeq z_{a1}=-q z_{a1} \wedgeq z_{a2}\) and hence \eqref{eq:87} gives \(0\);
\item if \(a=b\) and  \(\abs{a}=1\), then \(z_{a2} \wedgeq z_{a1}=q^{-1} z_{a1} \wedgeq z_{a2}\) and hence \eqref{eq:87} gives \([2] z_{a1} \wedgeq z_{b2}\);
\item if \(a<b\), then \(z_{a2} \wedgeq z_{b1} =- (-1)^{\abs{a}\abs{b}} z_{b1} \wedgeq z_{a2} + (q^{-1} -q) z_{a1} \wedgeq z_{b2}\) and hence \eqref{eq:87} is equal to \(-(-1)^{\abs{a}\abs{b}} z_{b1} \wedgeq z_{a2} + q^{-1} z_{a1} \wedgeq z_{b2}\);
\item if \(a>b\), then \(z_{a2} \wedgeq z_{b1} = -(-1)^{\abs{a}\abs{b}} z_{b1} \wedgeq z_{a2}\) and hence \eqref{eq:87} is equal to \(-(-1)^{\abs{a}\abs{b}} z_{b1} \wedgeq z_{a2} + q z_{a1} \wedgeq z_{b2}\).
\end{itemize}
An immediate comparison with the formulas \eqref{eq:30} for the \(R\)--matrix proves the claim.
\end{proof}

\begin{prop}
  \label{prop:4}
  The functor \(\calI^+\) is fully faithful.
\end{prop}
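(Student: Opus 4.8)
The plan is to reduce the statement to a comparison of endomorphism algebras, and then to identify both sides with the Hecke algebra using skew Howe duality. First I would observe that the objects of \(\catH\) are the sequences \(1^{\otimes N}\), \(N \geq 0\), with \(\End_{\catH}(1^{\otimes N}) = \Br_{1^N}(\beta) \cong \ucalH_N\), while \(\Hom_{\catH}(1^{\otimes N}, 1^{\otimes M}) = 0\) for \(N \neq M\): every oriented framed tangle between two sequences of upward‑oriented points consists of upward strands together with closed components (the latter being scalars), so it can only relate \(1^{\otimes N}\) to itself. On the target side, \(\Hom_{\Spider^+}(1^{\otimes N}, 1^{\otimes M}) = 0\) unless \(N = M\), since all generators \(\sfE^{(r)}, \sfF^{(r)}\) preserve the total sum of the labels. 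Hence \(\calI^+\) will be fully faithful as soon as, for every \(N\), the induced map \(\ucalH_N = \End_{\catH}(1^{\otimes N}) \to \End_{\Spider^+}(1^{\otimes N})\) is an isomorphism.

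To prove the latter, I would fix \(N\), set \(m = N\), and build an isomorphism \(\beta \colon \End_{\Spider^+}(1^{\otimes N}) \xrightarrow{\ \sim\ } \ucalH_N\) as a composite of three isomorphisms: first \(\End_{\Spider^+}(1^{\otimes N}) \cong \End_{\Spider^+_{\leq N}}(1^{\otimes N})\), from lemma~\ref{lem:3} applied with \(\abs{1^{\otimes N}} = N \leq m\); then \(\End_{\Spider^+_{\leq N}}(1^{\otimes N}) \cong \End_{U_q(\gl_N)}(V^{\otimes N})\), coming from the equivalence of categories \(\calG^+_{N|0}\) of theorem~\ref{thm:3}; and finally the inverse of the Schur--Weyl map \(\ucalH_N \to \End_{U_q(\gl_N)}(V^{\otimes N})\), which by proposition~\ref{prop:1} with \(n = 0\), \(r = N\) is an isomorphism — surjective since the two actions generate each other's centralizers, and injective since by \eqref{eq:32} every simple \(\ucalH_N\)-module \(S(\uplambda)\) with \(\uplambda \vdash N\) occurs in \(V^{\otimes N}\) (this is where taking \(m = N\) matters: every partition of \(N\) is then an \((N,0)\)-hook). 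It then remains to check \(\beta \circ \calI^+ = \id_{\ucalH_N}\). Since \(\catH\) is monoidally generated by the object \(1\) and the two crossings, and \(\calI^+\), \(\calG^+_{N|0}\) are monoidal, lemma~\ref{lem:6} — combined with \(c_{1,1} = q^{-1}\idem_{1\otimes1} - \sfF\sfE\idem_{1\otimes1}\), which gives \(\calG^+_{N|0}(c_{1,1}) = \RT_{N|0}(\text{positive crossing})\) — shows that \(\calG^+_{N|0} \circ \calI^+ = \RT_{N|0}\) as monoidal functors on \(\catH\). Restricted to \(\End_{\catH}(1^{\otimes N}) = \ucalH_N\), this functor is exactly the Hecke action \(H_i \mapsto \checkR_{V,V}\) in positions \(i, i+1\), i.e.\ the map inverted in the last step defining \(\beta\); hence \(\beta \circ \calI^+ = \id\), and since \(\beta\) is invertible, \(\calI^+\) is an isomorphism on \(\End(1^{\otimes N})\), as wanted.

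\textbf{The main obstacle} will be the bookkeeping in the identity \(\beta \circ \calI^+ = \id\): one has to chase conventions carefully so that lemma~\ref{lem:6} really upgrades to the equality of monoidal functors \(\calG^+_{m|0} \circ \calI^+ = \RT_{m|0}\) on \(\catH\) (this uses the monoidal generation of \(\catH\) and the precise normalization built into \(c_{1,1}\)), and so that the Hecke action through which \(\RT_{m|0}\) factors on \(V^{\otimes N}\) is literally the one appearing in proposition~\ref{prop:1}. A secondary but essential point is to take \(m \geq N\) (e.g.\ \(m = N\)), since for smaller \(m\) Schur--Weyl duality only yields an injection, not an isomorphism.
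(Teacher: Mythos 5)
Your proposal is correct and takes essentially the same approach as the paper: reduce to endomorphisms of \(1^{\otimes N}\), choose \(m \geq N\) with \(n=0\), and compare \(\End_{\Spider^+}(1^{\otimes N})\) with \(\End_{U_q(\gl_m)}(V^{\otimes N})\) via theorem~\ref{thm:3} and lemma~\ref{lem:3} on one side and Schur--Weyl duality on the other. You spell out more explicitly than the paper why \(\calG^+_{m|0}\circ\calI^+\) coincides with the Schur--Weyl action (via lemma~\ref{lem:6} and monoidal generation of \(\catH\)), a step the paper's proof leaves implicit under the phrase ``by Schur--Weyl duality''.
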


\begin{proof}
  We must show that for all \(r \geq 0\) the functor \(\calI^+\) induces an isomorphism \(\ucalH_r \cong \End_{\Spider^+}(1^{\otimes r})\). Choose \(m\geq r\) and \(n=0\). Then, by Schur-Weyl duality, the composition \(\calG^+_{m|0} \circ \calI^+\) induces an isomorphism \(\ucalH_r \cong \End_{\catRep^+}\big((\C_q^m )^{\otimes r}\big)\). By theorem~\ref{thm:3} together with lemma~\ref{lem:3}, on the other side, \(\calG^+_{m|0}\) induces an isomorphism \(\End_{\Spider^+}(1^{\otimes r}) \cong \End_{\catRep^+}\big((\C_q^m)^{\otimes r}\big)\), and the claim follows.
\end{proof}

We recall the following immediate consequence:

\begin{lemma}\label{lem:7}
  The homomorphism space \(\End_{\Spider^+}(1^{\otimes N})\) is monoidally generated by \(c_{1,1}\).
\end{lemma}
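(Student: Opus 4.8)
The plan is to deduce the statement immediately from the full faithfulness of $\calI^+$ established in Proposition~\ref{prop:4}, together with the fact that the Hecke algebra is generated by its elementary crossings. Recall that $\End_{\catH}(1^{\otimes N}) = \Br_{1^N}(\beta) \cong \ucalH_N$, and that, directly from the presentation~\eqref{eq:53}, the algebra $\ucalH_N$ is generated by the elements $H_i = \id^{\otimes(i-1)} \otimes H \otimes \id^{\otimes(N-1-i)}$ for $1 \leq i \leq N-1$, where $H \in \End_{\catH}(1 \otimes 1)$ is the elementary crossing. (Because of the quadratic relation in~\eqref{eq:53} each $H_i$ is already invertible, with $H_i^{-1} = H_i + (q-q^{-1})\id$, so one never needs to adjoin the inverse crossing separately.)

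Next I would invoke Proposition~\ref{prop:4}: since $\calI^+$ is fully faithful, it induces an algebra isomorphism $\ucalH_N = \End_{\catH}(1^{\otimes N}) \cong \End_{\Spider^+}(1^{\otimes N})$. As $\calI^+$ is monoidal and sends $H$ to $c_{1,1}$ by~\eqref{eq:203}, it sends $H_i$ to $\id^{\otimes(i-1)} \otimes c_{1,1} \otimes \id^{\otimes(N-1-i)}$. Hence $\End_{\Spider^+}(1^{\otimes N})$ is generated as a $\fieldk$-algebra — that is, under composition and $\fieldk$-linear combinations — by these $N-1$ morphisms, each of which is a monoidal word in $c_{1,1}$ and identity morphisms. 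Therefore every endomorphism of $1^{\otimes N}$ is obtained from $c_{1,1}$ by tensoring with identities, composing, and taking $\fieldk$-linear combinations, which is precisely the assertion that $\End_{\Spider^+}(1^{\otimes N})$ is monoidally generated by $c_{1,1}$.

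I do not expect any real obstacle: all the content sits in Proposition~\ref{prop:4}. The only points needing a little care are formal ones — that $\calI^+$ is genuinely a monoidal functor, so that it does intertwine the operation of inserting a morphism into the $i$-th tensor slot on both sides, and that, since $\Spider^+$ is an additive $\fieldk$-linear category, ``monoidally generated'' is to be understood as allowing $\fieldk$-linear combinations (this is necessary: already for $N = 2$ the space $\End_{\Spider^+}(1 \otimes 1)$ is two-dimensional, while the monoidal words in $c_{1,1}$ and identities form only a proper subset of it). Everything else is immediate.
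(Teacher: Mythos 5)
Your proof is correct and is exactly the argument the paper has in mind: the paper presents this lemma as an ``immediate consequence'' of Proposition~\ref{prop:4}, and what you have written is precisely the unpacking of that immediacy (full faithfulness of the monoidal functor $\calI^+$ transports the generating set $\{H_i\}$ of $\ucalH_N$ to the set $\{\id^{\otimes(i-1)}\otimes c_{1,1}\otimes\id^{\otimes(N-1-i)}\}$). Your closing remark about reading ``monoidally generated'' so as to include $\fieldk$-linear combinations is also the right interpretation for a $\fieldk$-linear category and worth making explicit.
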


\subsection{Braided structure}
\label{sec:braided-structure}

As a reference for the definition of a braided category, we refer to \cite[\textsection{}1.2]{MR1292673}, from where we also copy the notation.

The category \(\Spider^+\) is a sort of inverse limit of the categories \(\Spider^+_{\leq m}\). Each one of the categories \(\Spider^+_{\leq m}\), being equivalent to \(\catRep^+_m\), is braided. This allows to induce a braided structure on \(\Spider^+\):

\begin{prop}
  \label{prop:5}
  The category \(\Spider^+\) is a braided category, with braiding on \(1 \otimes 1\)  given by \(c_{1,1}\).
\end{prop}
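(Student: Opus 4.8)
The plan is to build the braiding on all of $\Spider^+$ out of the braidings on the quotients $\Spider^+_{\leq m}$, using the fact that morphism spaces in $\Spider^+$ stabilize. Concretely, given objects $\bolda, \boldb \in \Spider^+$, I would pick any $m \geq \abs{\bolda} + \abs{\boldb}$ (so that $m$ exceeds the total through any strand of a morphism built from $\bolda \otimes \boldb$ and $\boldb \otimes \bolda$) and define the braiding morphism $c_{\bolda,\boldb} \colon \bolda \otimes \boldb \to \boldb \otimes \bolda$ to be the preimage under the isomorphism of Lemma~\ref{lem:3} of the braiding morphism in $\Spider^+_{\leq m} \simeq \catRep^+_m$. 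The first step is to check this is independent of the choice of $m$: if $m' \geq m$, the braiding in $\catRep^+_{m'}$ restricts along the obvious functor $\catRep^+_m \to \catRep^+_{m'}$ compatibly with naturality (the braiding on $\bigwedgeq^a \C^{m}_q \otimes \bigwedgeq^b \C^m_q$ is given by the $R$-matrix, whose formula~\eqref{eq:30} is manifestly the same for $\C^m$ sitting inside $\C^{m'}$), so the two preimages in $\Spider^+$ agree by Lemma~\ref{lem:3}. This also identifies $c_{1,1}$ in the sense of~\eqref{eq:82} with the braiding on $1 \otimes 1$, since under $\calG^+_{m|0}$ it goes to the $R$-matrix braiding by Lemma~\ref{lem:6} (the extra sign and the $q^{-1} \id$ term assemble $c_{1,1}$ from $\RT_{m|0}$ of the crossing precisely so that $\calG^+_{m|0}(c_{1,1}) = \checkR$).

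The second step is to verify the axioms of a braided category from~\cite[\textsection 1.2]{MR1292673}: naturality of $c_{-,-}$ in both arguments, and the two hexagon identities $c_{\bolda, \boldb \otimes \boldc} = (\id \otimes c_{\bolda,\boldc})(c_{\bolda,\boldb} \otimes \id)$ and its mirror. Each of these is an equation between two morphisms in some $\Hom_{\Spider^+}(\bolda \otimes \boldb \otimes \boldc, \ldots)$; choosing $m$ larger than the total weight involved, both sides map isomorphically (via Lemma~\ref{lem:3}) to the corresponding morphisms in $\Spider^+_{\leq m} \simeq \catRep^+_m$, where the identity holds because $\catRep^+_m$ is a braided category. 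For naturality, one further notes that it suffices to check naturality against a generating set of morphisms, e.g. the $\sfE^{(r)}$ and $\sfF^{(r)}$, and again this reduces to naturality in $\catRep^+_m$ once $m$ is large enough to see the relevant objects faithfully.

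The one genuine subtlety — and the step I expect to be the main obstacle — is making the "pick $m$ large" argument uniform enough that the resulting collection $\{c_{\bolda,\boldb}\}$ is a single coherent natural transformation rather than a family of ad hoc choices. The point is that for a fixed diagram (naturality square, hexagon) there is a single weight bound, so one $m$ works for the whole equation; but one must be slightly careful that the morphism assigned to $c_{\bolda,\boldb}$ does not depend on where $\bolda \otimes \boldb$ appears inside a larger tensor product, i.e.\ that $c_{\bolda,\boldb} \otimes \id_{\boldc}$ computed in $\Spider^+$ really is the image of the braiding in $\Spider^+_{\leq m}$ for $m \geq \abs{\bolda}+\abs{\boldb}+\abs{\boldc}$. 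This is exactly the content of compatibility of the isomorphisms in Lemma~\ref{lem:3} with the monoidal structure (tensoring on the right by $\boldc$ with $\abs{\boldc}$ extra weight only changes the bound $m$, not the morphism), together with Lemma~\ref{lem:7} to the effect that everything is controlled by $c_{1,1}$. Once this bookkeeping is in place, all the braided-category axioms are inherited formally from the equivalences $\Spider^+_{\leq m} \simeq \catRep^+_m$ of Theorem~\ref{thm:3}, and there is no further computation to do.
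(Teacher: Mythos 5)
Your proposal is correct and takes essentially the same route as the paper's primary proof: define $c_{\bolda,\boldb}$ by pulling the braiding of $\Spider^+_{\leq m}\simeq\catRep^+_m$ back along the isomorphism of Lemma~\ref{lem:3} for $m\geq \abs{\bolda}+\abs{\boldb}$, check independence of $m$, and inherit naturality and the hexagon axioms from $\catRep^+_m$. The only local variation is in the independence step: you argue that the $R$-matrix formula~\eqref{eq:30} is stable under enlarging $m$, whereas the paper instead invokes~\eqref{eq:82} together with Lemma~\ref{lem:8} (the braiding is determined by $c_{1,1}$, which is the same element of $\Spider^+$ for every $m$); both arguments are valid, and your phrase ``the obvious functor $\catRep^+_m\to\catRep^+_{m'}$'' should be read loosely, as there is no monoidal functor between module categories over distinct $\gl_m$'s — what you really mean, that the $R$-matrix coefficients on the relevant weight vectors agree, is correct.
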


\begin{proof}
  Given objects \(\bolda,\boldb \in \Spider^+\), choose \(m \geq \abs{\bolda}+\abs{\boldb} \). In the category \(\Spider^+_{\leq m}\), which is braided since it is equivalent to \(\catRep^+_{m}\), there is a braiding morphism, which  via the isomorphism given by lemma~\ref{lem:3} gives an element \(c_{\bolda,\boldb} \in \Hom_{\Spider^+}(\bolda,\boldb)\). This element does not depend on the choice of \(m\) (this is a consequence of the naturality of the construction, and can be proved using \eqref{eq:82} and lemma~\ref{lem:8} below). It follows that the elements \(c_{\bolda,\boldb}\) define a braided structure (the axioms hold since they hold in \(\Spider^+_{\leq m}\) for all big enough \(m\)).
\end{proof}

Actually, we can also give a direct proof, which does not use the braided structure of \(\catRep^+_{m}\) (and we could even recover the braided structure of \(\catRep^+_m\) from it).

\begin{proof}[Alternative proof]
  We  define the braiding on \(1 \otimes 1\) using \eqref{eq:82}. By lemma~\ref{lem:8} below, there is a unique possible way to extend it to a candidate braiding on the whole category, using \eqref{eq:90}. We need then to check that such extension is natural, i.e.\ that for \(\bolda,\bolda',\boldb,\boldb' \in \Spider^+\) and \(f\colon \bolda \mapto\bolda'\), \(g \colon \boldb \mapto \boldb'\) we have
  \begin{equation}
(f \otimes g)c_{\bolda,\boldb}=c_{\bolda',\boldb'}(f \otimes g).\label{eq:88}
\end{equation}
 We can suppose \(\abs{\bolda}=\abs{\bolda'}=M\) and \(\abs{\boldb}=\abs{\boldb'}=N\) (otherwise the claim is trivial). 
We have then
\begin{equation}
  \label{eq:89}
  c_{\bolda',\boldb'}(f \otimes g) = (\pi_{\bolda'} \otimes \pi_{\boldb'}) c_{1^{\otimes M},1^{\otimes N}} (\iota_{\bolda'} \otimes \iota_{\boldb'}) (f \otimes g) (\pi_\bolda \otimes \pi_\boldb)(\iota_\bolda \otimes \iota_\boldb).
\end{equation}
If we set \(\tilde f = \iota_{\bolda'} f \pi_\bolda\) and \(\tilde g = \iota_{\boldb'} g \pi_\boldb\) and we suppose that we can prove  naturality for \(c_{1^{\otimes M},1^{\otimes N}}\), then we get \(c_{1^{\otimes M},1^{\otimes N}} (\tilde f \otimes \tilde g)=(\tilde f \otimes \tilde g) c_{1^{\otimes M},1^{\otimes N}}\) and \eqref{eq:88} follows.

So we reduced the problem to the case \(\bolda=\bolda'=1^{\otimes M}\) and \(\boldb=\boldb'=1^{\otimes N}\). By lemma~\ref{lem:7}, we can then assume that \(f\) and \(g\) are just \(c_{1,1}\) tensored with identities, and the claim follows then immediately from the braid relation for \(c_{1,1}\).
\end{proof}

We recall the following observation, which we used in the proof of the previous proposition.

\begin{lemma}[{See also \cite[\textsection{}6.2]{CKM}}]\label{lem:8}
  A braided structure on \(\Spider^+\) is uniquely determined by the braiding of \(1 \otimes 1\). In other words, suppose we have two braided structures on \(\Spider^+\), with braidings \(c_{\bolda,\boldb}\) and \(c'_{\bolda,\boldb}\) respectively. If \(c_{1,1}=c_{1,1}'\) then the two braided structures coincide.
\end{lemma}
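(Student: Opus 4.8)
The plan is to reduce the uniqueness to the single generator $c_{1,1}$ in two stages: first forcing the braiding of the tensor powers $1^{\otimes M}\otimes 1^{\otimes N}$ by means of the hexagon axioms, and then forcing the braiding of an arbitrary pair of objects by means of the retract statement of Lemma~\ref{lem:4}. Since $\Spider^+$ is $\fieldk$--linear and additive, and a braiding is determined on all objects by its restriction to the generating sequences of non-negative integers, it suffices to prove $c_{\bolda,\boldb}=c'_{\bolda,\boldb}$ for all such sequences $\bolda,\boldb$.

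For the first stage I would observe that $c_{1^{\otimes M},1^{\otimes N}}$ is already determined by $c_{1,1}$. Iterating the hexagon identity $c_{1,\,1\otimes 1^{\otimes(N-1)}}=(\id_1\otimes c_{1,1^{\otimes(N-1)}})\circ(c_{1,1}\otimes\id_{1^{\otimes(N-1)}})$ over $N$ expresses $c_{1,1^{\otimes N}}$ as an explicit composite of morphisms of the form $\id_{1^{\otimes j}}\otimes c_{1,1}\otimes\id_{1^{\otimes(N-1-j)}}$; iterating the second hexagon $c_{1^{\otimes(M-1)}\otimes 1,\,1^{\otimes N}}=(c_{1^{\otimes(M-1)},1^{\otimes N}}\otimes\id_1)\circ(\id_{1^{\otimes(M-1)}}\otimes c_{1,1^{\otimes N}})$ over $M$ then expresses $c_{1^{\otimes M},1^{\otimes N}}$ as a composite of such maps tensored with identities. (Here $1^{\otimes M}\otimes 1^{\otimes N}=1^{\otimes(M+N)}=1^{\otimes N}\otimes 1^{\otimes M}$ as objects, so $c_{1^{\otimes M},1^{\otimes N}}$ is an endomorphism of $1^{\otimes(M+N)}$, and all associativity and unit constraints occurring in the hexagons are trivial.) Since both $c$ and $c'$ obey the hexagon axioms and $c_{1,1}=c'_{1,1}$, this yields $c_{1^{\otimes M},1^{\otimes N}}=c'_{1^{\otimes M},1^{\otimes N}}$ for all $M,N\geq 0$.

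For the second stage, given arbitrary sequences $\bolda,\boldb$ with $M=\abs{\bolda}$ and $N=\abs{\boldb}$, I would invoke Lemma~\ref{lem:4} to obtain morphisms $\iota_\bolda\colon\bolda\into 1^{\otimes M}$, $\pi_\bolda\colon 1^{\otimes M}\surto\bolda$, $\iota_\boldb\colon\boldb\into 1^{\otimes N}$, $\pi_\boldb\colon 1^{\otimes N}\surto\boldb$ satisfying $\pi_\bolda\iota_\bolda=\id_\bolda$ and $\pi_\boldb\iota_\boldb=\id_\boldb$, all of them morphisms of $\Spider^+$ that make no reference to any braided structure. Naturality of the braiding $c$ applied to $\iota_\bolda$ and $\iota_\boldb$ gives $(\iota_\boldb\otimes\iota_\bolda)\circ c_{\bolda,\boldb}=c_{1^{\otimes M},1^{\otimes N}}\circ(\iota_\bolda\otimes\iota_\boldb)$; composing on the left with $\pi_\boldb\otimes\pi_\bolda$ and using $(\pi_\boldb\otimes\pi_\bolda)\circ(\iota_\boldb\otimes\iota_\bolda)=\id_{\boldb\otimes\bolda}$ produces
\[
  c_{\bolda,\boldb}=(\pi_\boldb\otimes\pi_\bolda)\circ c_{1^{\otimes M},1^{\otimes N}}\circ(\iota_\bolda\otimes\iota_\boldb),
\]
and the very same derivation — using only the hexagon axioms, naturality, and the fixed retract maps — gives the identical formula for $c'$. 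Combined with the first stage this yields $c_{\bolda,\boldb}=c'_{\bolda,\boldb}$, as required. I do not expect a genuine obstacle here: the argument is entirely the retract trick of Lemma~\ref{lem:4} together with the hexagon axioms, and the only points requiring any care are the explicit iterated-hexagon bookkeeping in the first stage and the routine identification of any vanishing entries of $\bolda$ or $\boldb$ with the monoidal unit $\varnothing$.
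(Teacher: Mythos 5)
Your proof is correct and follows essentially the same route as the paper's own argument: appeal to the hexagon axioms (compatibility with the monoidal structure) to pin down $c_{1^{\otimes M},1^{\otimes N}}$ from $c_{1,1}$, then use the retract morphisms $\iota,\pi$ of Lemma~\ref{lem:4} together with naturality to transfer this to arbitrary objects, exactly as in equation \eqref{eq:90}. The only difference is presentational — you spell out the iterated hexagon bookkeeping and handle general sequences $\bolda,\boldb$ directly, while the paper reduces first to single integers $a,b$ and cites Turaev for the hexagon compatibility — but the underlying argument is identical.
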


\begin{proof}
  We need to show that \(c_{\bolda,\boldb} = c'_{\bolda,\boldb}\) for all objects \(\bolda,\boldb \in \Spider^+\). Since the braiding is compatible with the monoidal structure (see \cite[(1.2.b) and (1.2.c)]{MR1292673}), it is sufficient to prove that \(c_{a,b}=c'_{a,b}\) for all \(a,b \in \Z_{\geq 0}\). Using naturality \cite[(1.2.d)]{MR1292673} we get
  \begin{multline}
    \label{eq:90}
    c_{a,b} = c_{a,b} (\pi_a \otimes \pi_b)(\iota_a \otimes \iota_b) = (\pi_a \otimes \pi_b) c_{1^{\otimes N}, 1^{\otimes N'}} (\iota_a \otimes \iota_b) \\= (\pi_a \otimes \pi_b) c'_{1^{\otimes N}, 1^{\otimes N'}} (\iota_a \otimes \iota_b) = c'_{a,b} (\pi_a \otimes \pi_b)(\iota_a \otimes \iota_b) = c'_{a,b}
  \end{multline}
  and we are done.
\end{proof}

Up to rescaling, our formula for \(c_{1,1}\) coincides with Lusztig's symmetry \cite[5.2.1]{Lus4} (cf.\ also \cite{Queff_aff}). In \cite[\textsection{}6.1]{CKM} a braided structure on \(\Spider^+\) is constructed using these Lusztig's symmetries. Then by lemma~\ref{lem:8} we can deduce that the braiding defined in proposition~\ref{prop:5} is the same as the one in \cite[\textsection{}6.1]{CKM}, and we have the following explicit formula:
\begin{equation}
c_{a,b}=(-1)^{\delta_{a\geq b}\cdot (a-b)}q^{-a}\sum_{-k+l-m=a-b}(-q)^{-km+l}E^{(k)}F^{(l)}E^{(m)}\idem_{a \otimes b}.
\end{equation}
It can be shown \cite[Lemma 6.1.1]{CKM} that this formula is equivalent to the following ones:
\begin{align}
c_{a,b} =
  \begin{cases}
    q^{-a}\sum_{s\geq 0}(-q)^s E^{(b-a+s)}F^{(s)}\idem_{a \otimes b}  & \text{if } a-b\leq 0, \\
    q^{-b}\sum_{s\geq 0}
    (-q)^sF^{(a-b+s)}E^{(s)}\idem_{a \otimes b} & \text{if }    a-b\geq 0.
  \end{cases}
\end{align}

Lusztig's symmetries  have been largely studied, in particular in \cite{Lus4}. We recall  some formulas from \cite[Section~37.1.3]{Lus4} in  our conventions:

\begin{align} \label{eq:EqW1}
 \begin{tikzpicture} [scale=.5,smallnodes,anchorbase]
\draw [int] (1,0) node[below] {$\vphantom{b}\mathclap{b}$}-- ++(0,1) .. controls ++(0,0.5) and ++(0,-0.5) .. ++(-1,1) -- ++(0,0.1)  \nendarrow;
\draw [int, cross line] (0,0) node[below] {$\vphantom{b}\mathclap{a}$}  -- ++(0,1) .. controls ++(0,0.5) and ++(0,-0.5) .. ++(1,1) -- ++(0,0.1)  \nendarrow;
\draw [uno] (0,0.5) -- ++ (1,0.2) \midarrow;
 \end{tikzpicture}
\;& =  (-1)^{1+\delta_{a=b+1}}q^{a-b-1} \;
 \begin{tikzpicture} [scale=.5,smallnodes,anchorbase]
\draw [int] (1,0) node[below] {$\vphantom{b}\mathclap{b}$}-- ++(0,0.1) .. controls ++(0,0.5) and ++(0,-0.5) .. ++(-1,1) -- ++(0,0.9) -- ++(0,0.1)  \nendarrow;
\draw [int, cross line] (0,0) node[below] {$\vphantom{b}\mathclap{a}$}  -- ++(0,0.1) .. controls ++(0,0.5) and ++(0,-0.5) .. ++(1,1) -- ++(0,0.9) -- ++(0,0.1)  \nendarrow;
\draw [uno] (1,1.5) -- ++ (-1,0.2) \midarrow;
 \end{tikzpicture}\;,
&\;\;
 \begin{tikzpicture} [scale=.5,smallnodes,anchorbase]
\draw [int] (1,0) node[below] {$\vphantom{b}\mathclap{b}$}-- ++(0,1) .. controls ++(0,0.5) and ++(0,-0.5) .. ++(-1,1) -- ++(0,0.1)  \nendarrow;
\draw [int, cross line] (0,0) node[below] {$\vphantom{b}\mathclap{a}$}  -- ++(0,1) .. controls ++(0,0.5) and ++(0,-0.5) .. ++(1,1) -- ++(0,0.1)  \nendarrow;
\draw [uno] (1,0.5) -- ++ (-1,0.2) \midarrow;
 \end{tikzpicture}
\;& =  (-1)^{1+\delta_{a=b-1}}q^{b-a-1}\;
 \begin{tikzpicture} [scale=.5,smallnodes,anchorbase]
\draw [int] (1,0) node[below] {$\vphantom{b}\mathclap{b}$}-- ++(0,0.1) .. controls ++(0,0.5) and ++(0,-0.5) .. ++(-1,1) -- ++(0,0.9) -- ++(0,0.1)  \nendarrow;
\draw [int, cross line] (0,0) node[below] {$\vphantom{b}\mathclap{a}$}  -- ++(0,0.1) .. controls ++(0,0.5) and ++(0,-0.5) .. ++(1,1) -- ++(0,0.9) -- ++(0,0.1)  \nendarrow;
\draw [uno] (0,1.5) -- ++ (1,0.2) \midarrow;
 \end{tikzpicture}\;,\\
 \label{eq:EqW2}
\begin{tikzpicture} [scale=.5,smallnodes,anchorbase]
\draw [int, cross line] (0,0) node[below] {$\vphantom{b}\mathclap{a}$}  -- ++(0,1) .. controls ++(0,0.5) and ++(0,-0.5) .. ++(1,1) -- ++(0,0.1)  \nendarrow;
\draw [int,cross line] (1,0) node[below] {$\vphantom{b}\mathclap{b}$}-- ++(0,1) .. controls ++(0,0.5) and ++(0,-0.5) .. ++(-1,1) -- ++(0,0.1)  \nendarrow;
\draw [uno] (0,0.5) -- ++ (1,0.2) \midarrow;
 \end{tikzpicture}
 \; &= (-1)^{1+\delta_{a=b+1}}q^{b-a+1}
 \begin{tikzpicture} [scale=.5,smallnodes,anchorbase]
\draw [int] (0,0) node[below] {$\vphantom{b}\mathclap{a}$}  -- ++(0,0.1) .. controls ++(0,0.5) and ++(0,-0.5) .. ++(1,1) -- ++(0,0.9) -- ++(0,0.1)  \nendarrow;
\draw [int,cross line] (1,0) node[below] {$\vphantom{b}\mathclap{b}$}-- ++(0,0.1) .. controls ++(0,0.5) and ++(0,-0.5) .. ++(-1,1) -- ++(0,0.9) -- ++(0,0.1)  \nendarrow;
\draw [uno] (1,1.5) -- ++ (-1,0.2) \midarrow;
 \end{tikzpicture}\;,
&
 \begin{tikzpicture} [smallnodes,anchorbase,scale=.5]
\draw [int] (0,0) node[below] {$\vphantom{b}\mathclap{a}$}  -- ++(0,1) .. controls ++(0,0.5) and ++(0,-0.5) .. ++(1,1) -- ++(0,0.1)  \nendarrow;
\draw [int, cross line] (1,0) node[below] {$\vphantom{b}\mathclap{b}$}-- ++(0,1) .. controls ++(0,0.5) and ++(0,-0.5) .. ++(-1,1) -- ++(0,0.1)  \nendarrow;
\draw [uno] (1,0.5) -- ++ (-1,0.2) \midarrow;
 \end{tikzpicture}
 \;& =  (-1)^{1+\delta_{b=a+1}}q^{a-b+1}
 \begin{tikzpicture} [scale=.5, smallnodes,anchorbase]
\draw [int] (0,0) node[below] {$\vphantom{b}\mathclap{a}$}  -- ++(0,0.1) .. controls ++(0,0.5) and ++(0,-0.5) .. ++(1,1) -- ++(0,0.9) -- ++(0,0.1)  \nendarrow;
\draw [int, cross line] (1,0) node[below] {$\vphantom{b}\mathclap{b}$}-- ++(0,0.1) .. controls ++(0,0.5) and ++(0,-0.5) .. ++(-1,1) -- ++(0,0.9) -- ++(0,0.1)  \nendarrow;
\draw [uno] (0,1.5) -- ++ (1,0.2) \midarrow;
 \end{tikzpicture}\;.
\end{align}

\section{Graphical calculus for duals}
\label{sec:graph-calc-duals}

\subsection{The category \texorpdfstring{$\Spider$}{Sp}}
\label{sec:category-spider}

We define now a new monoidal category \(\Spider\) by adding to \(\Spider^+\) left and right duals. 
In detail, objects of \(\Spider\) are (formal direct sums of) sequences of integers and the tensor product of objects is given by concatenation of sequences. 
We set \(a^* = -a\) for all \(a \in \Z\).  Morphisms are monoidally generated by the identity morphisms \(\id_a\) for \(a \in \Z\), by the morphisms of \(\Spider^+\) (subject to the relations of \(\Spider^+\)) and by adjunction morphisms turning \(a^*\) into a left and right dual of \(a\) for all \(a \in \Z_{>0}\).

We use orientations to distinguish in our pictures between objects \(a >0\) and their duals. Graphically, we represent the identity morphism \(a^* \mapto a^*\) for \(a > 0\) by
\begin{equation}
  \label{eq:100}
\begin{tikzpicture}[int,smallnodes,anchorbase,xscale=1.3,yscale=1]
   \draw (0,0) node[below] {\(a^*\)} -- (0,1) node[above] {\(a^*\)} \midarrowrev;
 \end{tikzpicture}
\end{equation}
As usual, we represent the adjunction morphisms for \(a > 0\) by the pictures
\begin{equation}
  \label{eq:101}
  \begin{tikzpicture}[smallnodes,anchorbase]
    \draw[int] (0,0) node[below] {$\vphantom{a^*}a$} .. controls ++(0,+0.5) and ++(0,+0.5) .. ++(1,0)  \nendarrow node[below] {$a^*$} ;
  \end{tikzpicture}\qquad
  \begin{tikzpicture}[smallnodes,anchorbase]
    \draw[int] (0,0) node [below] {$a^*$}.. controls ++(0,+0.5) and ++(0,+0.5) .. ++(1,0)  \nstartarrowrev node[below] {$\vphantom{a^*}a$};
  \end{tikzpicture}\qquad
  \begin{tikzpicture}[smallnodes,anchorbase]
    \draw[int] (0,1) node[above] {$a^*$} .. controls ++(0,-0.5) and ++(0,-0.5) .. ++(1,0)  \nendarrow node[above] {$a$} ;
  \end{tikzpicture} \qquad 
  \begin{tikzpicture}[smallnodes,anchorbase]
    \draw[int] (0,1) node[above] {$a$} .. controls ++(0,-0.5) and ++(0,-0.5) .. ++(1,0)  \nstartarrowrev node[above] {$a^*$} ;
  \end{tikzpicture}
\end{equation}
Saying that they turn \(a^*\) into a left and right dual of \(a\) amounts to imposing  the duality relations
\begingroup
\tikzset{every picture/.style={yscale=0.8,xscale=0.5}}
\begin{equation}
  \label{eq:102}
\begin{tikzpicture}[int,smallnodes,anchorbase]
   \draw (-1,1.5) node[above] {\(\mathclap{a}\)}  -- ++(0,-1) \midarrowrev .. controls ++(0,-0.5) and ++(0,-0.5) .. ++(1,0) \midarrowrev -- ++(0,0.5) \midarrowrev  .. controls ++(0,0.5) and ++(0,0.5) .. ++(1,0) \midarrowrev -- ++(0,-1) \midarrowrev node[below] {\(\mathclap{a}\)} ;
 \end{tikzpicture}\; = \;
 \begin{tikzpicture}[int,smallnodes,anchorbase]
   \draw (3,1.5) node[above] {\(\mathclap{a}\)}  -- ++(0,-1) \midarrowrev .. controls ++(0,-0.5) and ++(0,-0.5) .. ++(-1,0) \midarrowrev -- ++(0,0.5) \midarrowrev  .. controls ++(0,0.5) and ++(0,0.5) .. ++(-1,0) \midarrowrev -- ++(0,-1) \midarrowrev node[below] {\(\mathclap{a}\)};
 \end{tikzpicture}  \; = \;
\begin{tikzpicture}[int,smallnodes,anchorbase,xscale=1.3]
   \draw (0,0) node[below] {\(a\)} -- (0,1.5) node[above] {\(a\)} \midarrow;
 \end{tikzpicture}
\quad \text{and} \quad
\begin{tikzpicture}[int,smallnodes,anchorbase]
   \draw (-1,1.5) node[above] {\(\mathclap{a^*}\)}  -- ++(0,-1) \midarrow .. controls ++(0,-0.5) and ++(0,-0.5) .. ++(1,0) \midarrow -- ++(0,0.5) \midarrow  .. controls ++(0,0.5) and ++(0,0.5) .. ++(1,0) \midarrow -- ++(0,-1) \midarrow node[below] {\(\mathclap{a^*}\)} ;
 \end{tikzpicture}\; = \;
 \begin{tikzpicture}[int,smallnodes,anchorbase]
   \draw (3,1.5) node[above] {\(\mathclap{a^*}\)}  -- ++(0,-1) \midarrow .. controls ++(0,-0.5) and ++(0,-0.5) .. ++(-1,0) \midarrow -- ++(0,0.5) \midarrow  .. controls ++(0,0.5) and ++(0,0.5) .. ++(-1,0) \midarrow -- ++(0,-1) \midarrow node[below] {\(\mathclap{a^*}\)};
 \end{tikzpicture}  \; = \;
\begin{tikzpicture}[int,smallnodes,anchorbase,xscale=1.3]
   \draw (0,0) node[below] {\(a^*\)} -- (0,1.5) node[above] {\(a^*\)} \midarrowrev;
 \end{tikzpicture}
\end{equation}\;.
\endgroup
Notice that every object \(\bolda = (a_1,\dotsc,a_\ell) \in \Spider\) has a left and right dual \(\bolda^* = (a_\ell^*,\dotsc,a_1^*)\), with adjunction morphisms obtained by nesting (graphically) the generating adjunction morphisms \eqref{eq:101}. Hence, by adjunction, we have right and left duals of the generating morphisms \(\sfE^{(r)}\) and \(\sfF^{(r)}\). We impose that these left and right duals coincide. In pictures:
\begingroup
\tikzset{every picture/.style={yscale=0.7,xscale=0.6}}
\begin{equation}
\label{eq:104}
\begin{tikzpicture}[int,smallnodes,anchorbase]
   \draw (-1,1.5) node[aboverotated] {\({(b{-}r)^*}\)}  -- ++(0,-1.5) \midarrow .. controls ++(0,-0.5) and ++(0,-0.5) .. ++(1,0) \midarrow -- ++(0,1) \nstartarrow \nendarrow  .. controls ++(0,1.5) and ++(0,1.5) .. ++(3,0) \midarrow -- ++(0,-1.5) \midarrow node[below] {\(\mathclap{b^*}\)} ;
   \draw (-2,1.5) node[aboverotated] {${(a{+}r)^*}$}  -- ++(0,-1.5) \midarrow .. controls ++(0,-1.5) and ++(0,-1.5) .. ++(3,0) \midarrow -- ++ (0,1) \nstartarrow \nendarrow .. controls ++(0,0.5) and ++(0,0.5) .. ++(1,0) \midarrow -- ++(0,-1.5) node[below] {\(\vphantom{b^*}\mathclap{a^*}\)} \midarrow;
   \draw (1,0.4) -- node[above] {$r$} (0,0.6) \midarrow;
 \end{tikzpicture} \; = \;
\begin{tikzpicture}[int,smallnodes,anchorbase]
   \draw (3,1.5) node[aboverotated] {\({(b{-}r)^*}\)}  -- ++(0,-1.5) \midarrow .. controls ++(0,-1.5) and ++(0,-1.5) .. ++(-3,0) \midarrow -- ++(0,1) \nstartarrow \nendarrow  .. controls ++(0,0.5) and ++(0,0.5) .. ++(-1,0) \midarrow -- ++(0,-1.5) \midarrow node[below] {\(\mathclap{b^*}\)};
   \draw (2,1.5) node[aboverotated] {${(a{+}r)^*}$}  -- ++(0,-1.5) \midarrow .. controls ++(0,-0.5) and ++(0,-0.5) .. ++(-1,0) \midarrow -- ++ (0,1) \nstartarrow \nendarrow .. controls ++(0,1.5) and ++(0,1.5) .. ++(-3,0) \midarrow -- ++(0,-1.5) node[below] {\(\mathclap{a^*}\vphantom{b^*}\)} \midarrow;
   \draw (1,0.4) -- node[above] {$r$} (0,0.6) \midarrow;
 \end{tikzpicture} 
\end{equation}
for \(\sfE^{(r)}\idem_{(b-r,a+r)}\) and
\begin{equation}
\label{eq:105}
\begin{tikzpicture}[int,smallnodes,anchorbase]
   \draw (-1,1.5) node[aboverotated] {\({(b{+}r)^*}\)}  -- ++(0,-1.5) \midarrow .. controls ++(0,-0.5) and ++(0,-0.5) .. ++(1,0) \midarrow -- ++(0,1) \nstartarrow \nendarrow  .. controls ++(0,1.5) and ++(0,1.5) .. ++(3,0) \midarrow -- ++(0,-1.5) \midarrow node[below] {\(\mathclap{b^*}\)} ;
   \draw (-2,1.5) node[aboverotated] {${(a{-}r)^*}$}  -- ++(0,-1.5) \midarrow .. controls ++(0,-1.5) and ++(0,-1.5) .. ++(3,0) \midarrow -- ++ (0,1) \nstartarrow \nendarrow .. controls ++(0,0.5) and ++(0,0.5) .. ++(1,0) \midarrow -- ++(0,-1.5) node[below] {\(\vphantom{b^*}\mathclap{a^*}\)} \midarrow;
   \draw (0,0.4) -- node[above] {$r$} (1,0.6) \midarrow;
 \end{tikzpicture} \; = \;
\begin{tikzpicture}[int,smallnodes,anchorbase]
   \draw (3,1.5) node[aboverotated] {\({(b{+}r)^*}\)}  -- ++(0,-1.5) \midarrow .. controls ++(0,-1.5) and ++(0,-1.5) .. ++(-3,0) \midarrow -- ++(0,1) \nstartarrow \nendarrow  .. controls ++(0,0.5) and ++(0,0.5) .. ++(-1,0) \midarrow -- ++(0,-1.5) \midarrow node[below] {\(\mathclap{b^*}\)};
   \draw (2,1.5) node[aboverotated] {${(a{-}r)^*}$}  -- ++(0,-1.5) \midarrow .. controls ++(0,-0.5) and ++(0,-0.5) .. ++(-1,0) \midarrow -- ++ (0,1) \nstartarrow \nendarrow .. controls ++(0,1.5) and ++(0,1.5) .. ++(-3,0) \midarrow -- ++(0,-1.5) node[below] {\(\mathclap{a^*}\vphantom{b^*}\)} \midarrow;
   \draw (0,0.4) -- node[above] {$r$} (1,0.6) \midarrow;
 \end{tikzpicture} 
\end{equation}
\endgroup
for \(\sfF^{(r)}\idem_{(b+r,a-r)}\). Since \(\sfE^{(r)}\) and \(\sfF^{(r)}\), together with identities and the adjunction morphisms, generate the morphisms of \(\Spider\), it follows that the left and right duals of \emph{every} morphism coincide.

We denote
\begin{equation}
  \label{eq:106}
  \sfE^{(r)} \idem_{a^* \otimes b^*} = \big(\sfF^{(r)}\idem_{a+r \otimes b-r}\big)^* \qquad \text{and} \qquad   \sfF^{(r)} \idem_{a^* \otimes b^*} = \big(\sfE^{(r)}\idem_{a-r \otimes b+r}\big)^* 
\end{equation}
and we use the  pictures
\begingroup 
\tikzset{every picture/.style={xscale=0.7,yscale=0.7}}
\begin{equation}\label{eq:107}
\sfF^{(r)} \idem_{a^* \otimes b^*} = \begin{tikzpicture}[int,smallnodes,baseline={([yshift=-0.5ex]0,0.7)},xscale=1.3,yscale=2]
   \draw (0,0) node[below] {\(\vphantom{b^*}\mathclap{a^*}\)} -- (0,1) node[aboverotated] {\({(a{+}r)^*}\)} \nendarrowrev \nstartarrowrev;
   \draw (1,0) node[below] {\(\mathclap{b^*}\)} -- (1,1) node[aboverotated] {\({(b{-}r)^*}\)} \nendarrowrev \nstartarrowrev;
   \draw (1,0.4) -- node[above] {$r$} (0,0.6) \midarrowrev;
 \end{tikzpicture}
 \qquad \text{and} \qquad
 \sfE^{(r)} \idem_{a^* \otimes b^*} = 
\begin{tikzpicture}[int,smallnodes,baseline={([yshift=-0.5ex]0,0.7)},xscale=1.3,yscale=2]
   \draw (0,0) node[below] {\(\mathclap{a^*}\vphantom{b^*}\)} -- (0,1) node[aboverotated] {\({(a-r)^*}\)} \nendarrowrev \nstartarrowrev;
   \draw (1,0) node[below] {\(\mathclap{b^*}\)} -- (1,1) node[aboverotated] {\({(b{+}r)^*}\)} \nendarrowrev \nstartarrowrev;
   \draw (0,0.4) -- node[above] {$r$} (1,0.6) \midarrowrev;
 \end{tikzpicture}
\end{equation}
\endgroup
for \eqref{eq:104} and \eqref{eq:105}, respectively.

Since the category \(\catRep_{m|n}\) is ribbon, we have

\begin{prop}
  \label{prop:6}
  The functor \(\calG \colon \Spider^+ \mapto \catRep_{m|n}^+\) extends to a functor \(\calG \colon \Spider \mapto \catRep_{m|n}\).
\end{prop}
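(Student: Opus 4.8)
The plan is to define the extension on generators and then verify it respects each defining relation of \(\Spider\). Recall that \(\Spider\) is generated over \(\Spider^+\) by the identity morphisms \(\id_{a^*}\) and by the four adjunction morphisms \eqref{eq:101}, for \(a \in \Z_{>0}\), subject to three families of relations: the relations of \(\Spider^+\); the snake (zig-zag) relations \eqref{eq:102} making \(a^*\) a left and right dual of \(a\); and the relations \eqref{eq:104}--\eqref{eq:105} forcing the left and right duals of the generators \(\sfE^{(r)}\) and \(\sfF^{(r)}\) to coincide. On the target side I would use that, by Definition~\ref{def:1} and \textsection\ref{sec:ribbon-structure-1}, the category \(\catRep_{m|n}\) is generated by the \(\bigwedgeq^a V\) and their duals and, as a full subcategory of the ribbon category of finite-dimensional \(U_q(\gl_{m|n})\)--modules, is itself rigid and pivotal.

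First I would set \(\calG(a^*) := \big(\bigwedgeq^a V\big)^*\) for \(a \in \Z_{>0}\), extend \(\calG^+_{m|n}\) multiplicatively on objects, and on morphisms send the morphisms of \(\Spider^+\) through \(\calG^+_{m|n}\), send each \(\id_{a^*}\) to the identity, and send the four adjunction morphisms \eqref{eq:101} to the evaluation and coevaluation maps of \(\catRep_{m|n}\) (as in \eqref{eq:41} for \(a=1\)), i.e.\ to the duality datum coming from the ribbon/pivotal structure. Since these generate \(\Spider\), this determines a candidate monoidal functor extending \(\calG^+_{m|n}\), and everything reduces to well-definedness.

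For the three families of relations: those of \(\Spider^+\) are respected because \(\calG^+_{m|n}\) is already a functor on \(\Spider^+\) by Theorem~\ref{thm:2}; the snake relations \eqref{eq:102} hold because the chosen evaluation/coevaluation maps genuinely exhibit \(\big(\bigwedgeq^a V\big)^*\) as a two-sided dual of \(\bigwedgeq^a V\) in \(\catRep_{m|n}\); and the relations \eqref{eq:104}--\eqref{eq:105} say exactly that the left mate and the right mate of \(\calG^+_{m|n}(\sfE^{(r)})\) (respectively of \(\calG^+_{m|n}(\sfF^{(r)})\)) coincide, which holds in any pivotal category, \(\catRep_{m|n}\) being pivotal because it is ribbon.

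The step I expect to be the main obstacle is precisely the last one: one must check that the concrete duality datum picked in \(\catRep_{m|n}\) is compatible with the pivotal structure, so that the left and right mates agree on the nose rather than merely up to the canonical isomorphism \(\bigwedgeq^a V \cong \big(\bigwedgeq^a V\big)^{**}\). Because the twist acts by a scalar on each \(\bigwedgeq^a V\) (see \eqref{eq:35}) this double-dual identification is harmless, and the relations \eqref{eq:43}--\eqref{eq:44} already recorded encode the needed compatibility at the level of strands; the remaining check is then a mechanical computation on highest weight vectors, analogous to the one in the proof of Lemma~\ref{lem:6}.
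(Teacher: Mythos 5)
Your proof takes essentially the same route as the paper: define \(\calG\) on the generating adjunction morphisms via the duality data of \(\catRep_{m|n}\), and observe that the relations \eqref{eq:104}--\eqref{eq:105} hold because \(\catRep_{m|n}\) is ribbon (hence pivotal), so left and right mates of morphisms coincide. The final paragraph's worry about the double-dual identification and a "computation on highest weight vectors" is unnecessary — in a ribbon category the coincidence of left and right mates is automatic, which is all the paper invokes — but the core argument is correct and matches the paper's.
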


\begin{proof}
  This is almost straightforward. It is sufficient to map the adjunction morphisms \eqref{eq:101} to the duality morphisms of the ribbon structure of \(\catRep_{m|n}\) under the functor \(\calG\), and to notice that since \(\catRep_{m|n}\) is braided, left and right duals of morphisms agree and hence the relations~\eqref{eq:104} and \eqref{eq:105} are automatically satisfied.
\end{proof}

\begin{prop}
  \label{prop:7}
  The functor \(\calG\colon \Spider \mapto \catRep_{m|n}\) is full.
\end{prop}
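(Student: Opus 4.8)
The plan is to reduce the fullness of $\calG$ to that of $\calG^+_{m|n}$ (theorem~\ref{thm:2}), using only rigidity together with the braided structure already available on $\Spider^+$ (proposition~\ref{prop:5}). First, since $\calG$ is a functor of rigid monoidal categories which by construction (proposition~\ref{prop:6}) sends the generating adjunction morphisms~\eqref{eq:101} of $\Spider$ to the corresponding duality morphisms of the ribbon category $\catRep_{m|n}$, it sends every cup and cap, and every structural isomorphism of the form $(\bolda\otimes\boldb)^*\cong\boldb^*\otimes\bolda^*$, to its counterpart in $\catRep_{m|n}$. Hence for all $\bolda,\boldb\in\Spider$ the usual ``bending'' bijection
\[
\Hom_{\catRep_{m|n}}\bigl(\calG(\bolda),\calG(\boldb)\bigr)\;\cong\;\Hom_{\catRep_{m|n}}\bigl(\calG(\varnothing),\calG(\boldb\otimes\bolda^*)\bigr),
\]
obtained from the evaluation and coevaluation of $\calG(\bolda)$, is implemented on both sides by morphisms in the image of $\calG$, so that $\calG$ is full on the left-hand space iff it is full on the right-hand one. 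Moreover $\boldb\otimes\bolda^*$ is again a sequence of integers (recall $(a^*)^*=a$ in $\Spider$) and ranges over all objects of $\Spider$ as $\bolda,\boldb$ vary. It is therefore enough to show that $\calG$ is full on $\Hom_\Spider(\varnothing,\boldc)$ for every sequence $\boldc$ of nonzero integers.

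Second, I would straighten such a $\boldc$ using the braiding of $\Spider^+$. In the ribbon category $\catRep_{m|n}$, the braiding of $\bigwedgeq^{a}V$ past $(\bigwedgeq^{b}V)^*$, and its inverse, are given by the standard expression in terms of the braiding of $\bigwedgeq^{b}V$ with $\bigwedgeq^{a}V$ and the evaluation/coevaluation morphisms, valid in any braided rigid category; by lemmas~\ref{lem:6} and~\ref{lem:8} the latter braiding is $\calG$ of a morphism of $\Spider^+$, and the remaining pieces are $\calG$ of adjunction morphisms. Exhibiting the corresponding preimages in $\Spider$, all these mixed braidings lie in the image of $\calG$ and are isomorphisms. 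Composing them so as to push every dual entry of $\boldc$ to the right (and using also the structural isomorphisms $(\bolda\otimes\boldb)^*\cong\boldb^*\otimes\bolda^*$) produces an isomorphism $\calG(\boldc)\cong\calG(\boldb)\otimes\calG(\boldd)^*$ in the image of $\calG$, where now $\boldb,\boldd$ are sequences of \emph{positive} integers, hence objects of $\Spider^+$. One final bending gives
\[
\Hom_{\catRep_{m|n}}\bigl(\calG(\varnothing),\calG(\boldb)\otimes\calG(\boldd)^*\bigr)\;\cong\;\Hom_{\catRep_{m|n}}\bigl(\calG(\boldd),\calG(\boldb)\bigr)\;=\;\Hom_{\catRep^+_{m|n}}\bigl(\calG^+_{m|n}(\boldd),\calG^+_{m|n}(\boldb)\bigr),
\]
again through morphisms in the image of $\calG$, the equality because $\catRep^+_{m|n}$ is a full subcategory of $\catRep_{m|n}$ containing $\calG(\boldd)$ and $\calG(\boldb)$. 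By theorem~\ref{thm:2} this last space is spanned by images of morphisms of $\Spider^+$, and chasing back through the chain of $\calG$-imaged isomorphisms shows $\Hom_{\catRep_{m|n}}(\calG(\varnothing),\calG(\boldc))$ is spanned by images of morphisms of $\Spider$. Since $\calG$ is additive, this suffices.

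The step I expect to be the main obstacle is the claim, used in the second paragraph, that the braidings of $\catRep_{m|n}$ involving dual objects lie in the image of $\calG$: one has to write out explicit preimages in $\Spider$ built from the $\Spider^+$-braiding (proposition~\ref{prop:5}) and the adjunction morphisms~\eqref{eq:101}, and verify that $\calG$ carries them to those braidings; the rest is routine manipulation of the rigid structure. An alternative, heavier route would instead reduce, via lemma~\ref{lem:4} and its dual, to objects that are tensor powers of $V$ and $V^*$, invoke quantized super mixed Schur--Weyl duality to realise the relevant Hom spaces through the quantized oriented Brauer category $\uBr(d)$, and then lift each generating crossing (same orientation by $\calI^+(c_{1,1})$, mixed by the drag-past-a-dual formula) together with the cups and caps; this works as well, but needs an external input that is not proved in this paper, whereas the argument above uses only results established here.
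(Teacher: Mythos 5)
Your proposal is correct and takes essentially the same approach as the paper: reduce fullness to that of $\calG^+_{m|n}$ by bending via the duality morphisms (whose images are the duality morphisms of $\catRep_{m|n}$ since $\calG$ is rigid), and sort the mixed sequence into a block of positives against a block of duals using braidings that, as you correctly identify as the one point needing care, lie in the image of $\calG$ because they are obtained by rotating the $\Spider^+$-braidings. The paper organizes this slightly differently — it first treats the special case where $\bolda$ is (positives, duals) and $\boldb$ is (duals, positives), bending diagonally straight into a $\Spider^+$-Hom space, and only then reduces the general case by permuting with braidings — whereas you bend all the way to $\Hom(\varnothing,\boldc)$, permute, and bend back once more; but the ingredients and the key technical claim are identical.
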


\begin{proof}
  Let \(\bolda=(a_1,\dotsc,a_\ell), \boldb=(b_1,\dotsc,b_\kappa)\) be two objects  of \(\Spider\), and suppose without loss of generality that \(a_i,b_j \neq 0\) for all \(i,j\). Suppose first that the following condition is satisfied:
  \begin{itemize}
  \item[(*)] there are indexes \(1 \leq \ell' \leq \ell\) and \(1
    \leq \kappa' \leq \kappa\) such that \(a_i > 0 \) if and only if \(i \leq \ell'\) while
    \(b_j > 0 \) if and only if \(j > \kappa'\).
  \end{itemize}
 Let \(\bolda'=(a_1,\dotsc,a_{\ell'})\) and \(\bolda''=(a_{\ell'+1},\dotsc,a_\ell)\), and \(\boldb'=(b_1,\dotsc,b_{\kappa'})\) and \(\boldb''=(b_{\kappa'+1},\dotsc,b_{\kappa})\). 
Let us define sequences \(\boldc\) and \(\boldd\) of objects of \(\Spider^+\) by 
\(\boldc=(\boldb')^* \otimes \bolda'\) and \(\boldd= \boldb'' \otimes (\bolda'')^*\). 
Then we have an isomorphism
  \begin{equation}\label{eq:111}
    \begin{aligned}
      \Hom_{\catRep_{m|n}}(\bigwedgeq^\bolda , \bigwedgeq^\boldb) & \longmapto \Hom_{\catRep^+_{m|n}}(\bigwedgeq^{\boldc} ,\bigwedgeq^\boldd)\\
      \gamma & \longmapsto \hat \gamma =    (\ev_{\boldb^{\prime *}} \otimes \id_{\boldd}) (\id_{{\boldb^{\prime *}}} \otimes \gamma  \otimes \id_{{{\bolda^{\prime\prime*}}}} ) (\id_{\boldc} \otimes \coev_{\bolda^{\prime \prime}} )
    \end{aligned}
  \end{equation}
whose inverse is given by
  \begin{equation}\label{eq:112}
    \begin{aligned}
      \Hom_{\catRep^+_{m|n}}(\bigwedgeq^\boldc , \bigwedgeq^\boldd) & \longmapto \Hom_{\catRep}(\bigwedgeq^{\bolda} ,\bigwedgeq^\boldb)\\
      \gamma & \longmapsto \tilde \gamma =   (\id_{\boldb} \otimes \ev_{\bolda^{\prime\prime *}} ) (\id_{{\boldb^{\prime}}} \otimes \gamma  \otimes \id_{{{\bolda^{\prime\prime}}}} ) (\coev_{\boldb'} \otimes \id_{\bolda} ),
    \end{aligned}
  \end{equation}
where for simplicity we omitted the symbol \(\bigwedgeq\) in the subscripts.

Let now \(\gamma \colon \bigwedgeq^\bolda \mapto \bigwedgeq^\boldb\) be a morphism in \(\catRep_{m|n}\). Since \(\calG^+_{m|n}\) is full, there is an element \(\phi \in \Hom_{\Spider^+}(\boldc,\boldd)\) such that \(\calG^+_{m|n}(\phi)=\hat \gamma\). Let
  \begin{equation}\label{eq:204}
 \tilde \phi =    (\id_{\boldb} \otimes \ev_{\bolda^{\prime\prime *}} ) (\id_{{\boldb^{\prime}}} \otimes \phi  \otimes \id_{{{\bolda^{\prime\prime}}}} ) (\coev_{\boldb'} \otimes \id_{\bolda} ) \in \Spider,
  \end{equation}
where now \(\ev_{\bolda^{\prime\prime *}}\) and \(\coev_{\boldb'}\) denote the adjunction morphisms in \(\Spider\). Then \(\calG_{m|n} (\tilde \phi) = \tilde{\hat \gamma}=\gamma\), hence \(\gamma\) is in the image of \(\calG_{m|n}\).

Let now \(\bolda\) and \(\boldb\) be general, and let \(\gamma \in \Hom_\catRep(\bigwedgeq^\bolda,\bigwedgeq^\boldb)\). Choose permutations \(w\) and \(z\) such that \(w\bolda\) and \(z \boldb\) satisfy (*). Corresponding to some reduced expressions of \(w\) and \(z\) there are isomorphisms \(\Psi_1 \colon \bigwedgeq^\bolda \mapto \bigwedgeq^{w \bolda}\) and \(\Psi_2 \colon \bigwedgeq^{\boldb} \mapto \bigwedgeq^{z \boldb}\) which are given by the braiding of \(\catRep\). Although the category \(\Spider\) is not braided, it is straightforward to see that these isomorphisms are in the image of \(\calG_{m|n}\), since the braiding morphisms of \(\catRep_{m|n}\) are obtained by taking the duals (graphically ``rotating'') of the braiding morphisms of \(\catRep_{m|n}^+\) (cf.\ also the pictures \eqref{eq:143} and \eqref{eq:145} below). So there exist morphisms \(\widetilde{\Psi_1}\) and \(\widetilde{\Psi_2^{-1}}\) in \(\Spider\) such that \(\calG(\widetilde{\Psi_1})=\Psi_1\) and \(\calG(\widetilde{\Psi_2^{-1}})=\Psi_2^{-1}\). By the first part of the proof, there is a morphism \(\phi\) in \(\Spider\) such that \(\calG(\phi)=\Psi_2 \circ \gamma \circ \Psi_1^{-1}\).  Hence \(\calG(\widetilde{\Psi_2^{-1}}\circ \phi\circ \widetilde{\Psi_1}) = \gamma\), and we are done.
\end{proof}

\begin{remark}
  \label{rem:2}
  Notice that proposition~\ref{prop:7} is a special case of the following more
  general statement. Consider a full strong monoidal functor
  \(\calG\colon \calM^+ \mapto \calB\) from a monoidal
  category \(\calM^+\) to a ribbon category \(\calB\). Suppose that the images of objects of \(\calM^+\) together with their duals in \(\calB\) generate the objects of \(\calB\). Suppose also that
  the category \(\calM^+\) is a subcategory of a rigid
  category \(\calM\) and the functor \(\calG\) extends to a functor
  \(\tilde\calG\colon\calM \mapto \calB\). Then \(\tilde\calG\) is also full.
\end{remark}

We state also the following result, which seems actually to be known, but we have not been able to find a reference in full generality.  The analogous statement in the non-quantized setting can be found in \cite[Theorem~7.8]{MR2955190}.

\begin{prop}[Super mixed quantum Schur-Weyl duality]
  \label{prop:11}
  The Reshetikhin-Turaev functor \(\RT_{m|n}\) induces a full functor \(\uBr(d) \mapto \catRep_{m|n}\). In particular, for any \(r,s \geq 0\)  we have a surjective map
  \begin{equation}
    \label{eq:1}
    \Br_{\boldeta_{r,s}}(d) \longrightarrow \End_{\catRep_{m|n}}\big((\C_q^{m|n})^{\otimes r} \otimes (\C_q^{m|n})^{* \otimes s} \big).
  \end{equation}
  Moreover, if \((m+1)(n+1) > r+s\) then this map is also injective.
\end{prop}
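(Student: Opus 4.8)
The plan is to split the statement into two parts: the factorization $\RT_{m|n}\colon\uBr(d)\mapto\catRep_{m|n}$ together with its fullness, and a dimension count giving injectivity in the stated range. Both rest on the fact that bending strands reduces questions about mixed tensor products of $V=\C_q^{m|n}$ and its dual to ordinary quantum Schur--Weyl duality (Proposition~\ref{prop:1}). First I would check that $\RT_{m|n}$, restricted to the subcategory $\catTangles_q\subseteq\catTangles_q^{\lab}$ of tangles with all strands labeled by $1$, kills the defining relations of $\uBr(d)$ in Definition~\ref{def:2}: the skein relation \eqref{eq:52} holds because by \eqref{eq:30} the braiding $\checkR_{V,V}$ has minimal polynomial $(t-q^{-1})(t+q)$, so $\checkR_{V,V}-\checkR_{V,V}^{-1}=(q^{-1}-q)\,\id$; the closed-loop relation in \eqref{eq:52} is \eqref{eq:44} with $[\beta]=[d]$, independent of the orientation since $\catRep_{m|n}$ is ribbon; and the curl relations \eqref{eq:54} are exactly \eqref{eq:42}. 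Hence $\RT_{m|n}$ induces a functor $\uBr(d)\mapto\catRep_{m|n}$, whose image on morphisms between tensor products of copies of $V$ and $V^*$ is the $\fieldk$-span of all composites of tensor products of braidings of $V$ and $V^*$ and of the evaluation and coevaluation maps of $V$.

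For fullness, fix $\boldeta,\boldeta'$ sequences in $\{1,1^\ast\}$ and let $V^\boldeta$ denote the associated tensor product of copies of $V$ and $V^*$. Using the evaluation and coevaluation morphisms of $V$ (which are images of cup and cap tangles, hence of $\uBr(d)$) one bends every $V^*$-leg of the source into an output and every $V^*$-leg of the target into an input; because $V^{\otimes p}$ and $V^{\otimes p'}$ share no summand for $p\neq p'$ (they have different $\quantumq^I$-eigenvalue), this yields, whenever $\Hom_{\catRep_{m|n}}(V^\boldeta,V^{\boldeta'})\neq 0$, an isomorphism $\Hom_{\catRep_{m|n}}(V^\boldeta,V^{\boldeta'})\cong\End_{\catRep_{m|n}}(V^{\otimes N})$ for the relevant $N$, of the form $\gamma\mapsto u\circ(\id\otimes\gamma\otimes\id)\circ v$ with $u,v$ composites of tensor products of evaluations, coevaluations and braidings of $V$ and $V^*$. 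By Proposition~\ref{prop:1}, $\End_{\catRep_{m|n}}(V^{\otimes N})$ equals the image of $\End_{\catH}(1^{\otimes N})\cong\ucalH_N$, and $\catH$ is a full subcategory of $\uBr(d)$; since $u,v$ are images of tangles, it follows that $\Hom_{\catRep_{m|n}}(V^\boldeta,V^{\boldeta'})$ lies in the image of $\uBr(d)\mapto\catRep_{m|n}$. This proves fullness, and \eqref{eq:1} is the special case $\boldeta=\boldeta'=\boldeta_{r,s}$.

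For injectivity, recall from \cite{MR3181742,PhdWeimer} that $\dim_{\fieldk}\Br_{\boldeta_{r,s}}(d)=(r+s)!$. The bending isomorphism of the previous paragraph, applied now purely inside the ribbon category, gives $\End_{\catRep_{m|n}}\big((\C_q^{m|n})^{\otimes r}\otimes(\C_q^{m|n})^{*\otimes s}\big)\cong\End_{\catRep_{m|n}}(V^{\otimes(r+s)})$ as $\fieldk$-vector spaces. By Proposition~\ref{prop:1}, together with the semisimplicity of $\ucalH_{r+s}$ over $\fieldk$ (valid since $q$ is not a root of unity), $\dim_{\fieldk}\End_{\catRep_{m|n}}(V^{\otimes(r+s)})=\sum_{\uplambda\in H_{m|n},\,|\uplambda|=r+s}(\dim_{\fieldk}S(\uplambda))^2$. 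If $(m+1)(n+1)>r+s$, then every partition $\uplambda$ of $r+s$ is an $(m,n)$-hook, since $\uplambda_{m+1}\geq n+1$ would force $|\uplambda|\geq(m+1)(n+1)$; hence the sum runs over all partitions of $r+s$ and equals $\sum_{\uplambda\vdash r+s}(\dim_{\fieldk}S(\uplambda))^2=\dim_{\fieldk}\ucalH_{r+s}=(r+s)!$. Therefore the surjection \eqref{eq:1} is a map between $\fieldk$-vector spaces of the same finite dimension, hence an isomorphism.

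I expect the only delicate point to be the careful set-up of the bending isomorphisms: verifying that the conjugating morphisms $u$ and $v$ really are images of morphisms of $\uBr(d)$, and keeping track of the numbers of $V$- and $V^*$-strands (and of the parity signs coming from the super setting) so that the identification $\Hom_{\catRep_{m|n}}(V^\boldeta,V^{\boldeta'})\cong\End_{\catRep_{m|n}}(V^{\otimes N})$ is genuinely an isomorphism. This is standard graphical calculus in a ribbon category, and once it is in place both fullness and the dimension count follow mechanically from Proposition~\ref{prop:1} and the elementary observation about hook partitions.
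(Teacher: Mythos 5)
Your proposal is correct and follows essentially the same route as the paper: fullness is obtained by bending duals to the ends (reducing to the fullness of $\catH \to \catRep^+_{m|n}$ from super Schur--Weyl duality, as in the proof of proposition~\ref{prop:7} and remark~\ref{rem:2}), and injectivity follows from the dimension count $\dim \Br_{\boldeta_{r,s}}(d) = (r+s)! = \dim\End_{\catRep_{m|n}}\big((\C_q^{m|n})^{\otimes r+s}\big)$ when every partition of $r+s$ is an $(m,n)$-hook. The only additions beyond the paper's proof are the explicit verification that the defining relations of $\uBr(d)$ are killed and the spelled-out Hecke-algebra dimension formula, both of which the paper takes for granted.
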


\begin{proof}
  The fullness of the functor \( \uBr(d) \mapto \catRep_{m|n}\) follows from the fullness of the functor \(\catH \mapto \catRep^+_{m|n}\) exactly  as in the proof of proposition~\ref{prop:7} (cf.\ also remark~\ref{rem:2}). If \((m+1)(n+1)>r+s\) then it follows that \eqref{eq:1} is faithful by comparing dimensions, since \(\dim \Br_{\boldeta_{r,s}}(d) = (r+s)! = \dim \ucalH_{r+s}\) and
  \begin{equation}
    \label{eq:10}
    \dim \End_{\catRep_{m|n}}\big((\C_q^{m|n})^{\otimes r} \otimes (\C_q^{m|n})^{* \otimes s} \big) = \dim \End_{\catRep_{m|n}}\big((\C_q^{m|n})^{\otimes r+s} \big) = \dim \ucalH_{r+s}.
  \end{equation}
 The last equation follows by super Schur-Weyl duality (proposition~\ref{prop:1}), since all partitions of \(r+s\) are in \(H_{m|n}\) if \((m+1)(n+1)>r+s\).
\end{proof}

We define also the following morphisms:
\begin{align}
   \sfE^{(r)}\idem_{a \otimes b^*} & =\;
        \begin{tikzpicture}[int,smallnodes,baseline={([yshift=-0.5ex]0,1)},xscale=0.5]
   \draw (0,0.25) node[below] {$\vphantom{b^*}\mathclap{a}$} -- ++(0,1.5) node[aboverotated] {$a{+}r$} \nendarrow \nstartarrow;
   \draw (1,0.25) node[below] {$\vphantom{b}\mathclap{b^*}$} -- ++(0,1.5) node[aboverotated] {$(b{+}r)^*$} \nstartarrowrev \nendarrowrev;
     \draw (0,1) ++(0,+0.1) .. controls ++(0.5,-0.2) and ++(-0.5,-0.2)  .. node[above] {\(r\)} ++(1,0) \midarrowrev; 
     \coordinate (left) at (0,0);
     \coordinate (right) at (1,0);
    \coordinate (top) at (current bounding box.north);
    \coordinate (bottom) at (current bounding box.south);
    \pgfresetboundingbox;
    \path [use as bounding box] (left|-bottom) rectangle (right|-top);
    \end{tikzpicture} \; = \; \begin{tikzpicture}[int,smallnodes,baseline={([yshift=-0.5ex]0,1.25)},xscale=0.5]
   \draw (0,0.5) node[below] {$\vphantom{b^*}\mathclap{a}$} -- ++(0,1.5) node[aboverotated] {${a{+}r}$} \midarrow;
   \draw (3,0.5) node[below] {$\vphantom{b^*}\mathclap{b^*}$} -- ++(0,1.5) node[aboverotated] {${(b{+}r)^*}$} \midarrowrev;
     \draw (0,1.5) -- node[above] {$r$} ++(1,-0.3) \midarrowrev -- ++(0,-0.4) \midarrowrev .. controls ++(0.5,-0.2) and ++(-0.5,-0.2)  .. node[above] {\(r\)} ++(1,0) \midarrowrev -- ++(0,0.4) \midarrowrev -- node[above] {$r$} ++(1,0.3) \midarrowrev; 
     \draw[zero] (1,1.2)  -- ++(0,0.8);
     \draw[zero] (2,1.2)  -- ++(0,0.8);
     \draw[zero] (1,0.5)  -- ++(0,0.3);
     \draw[zero] (2,0.5)  -- ++(0,0.3);
     \coordinate (left) at (0,0);
     \coordinate (right) at (3,0);
    \coordinate (top) at (current bounding box.north);
    \coordinate (bottom) at (current bounding box.south);
    \pgfresetboundingbox;
    \path [use as bounding box] (left|-bottom) rectangle (right|-top);
    \end{tikzpicture}\;,
&\qquad
 \sfF^{(r)}\idem_{a \otimes b^*} & = \; 
        \begin{tikzpicture}[int,smallnodes,baseline={([yshift=-0.5ex]0,1)},xscale=0.5]
   \draw (0,0.25) node[below] {$\vphantom{b^*}\mathclap{a}$} -- ++(0,1.5) node[aboverotated] {${a{-}r}$} \nendarrow \nstartarrow;
   \draw (1,0.25) node[below] {$\vphantom{b}\mathclap{b^*}$} -- ++(0,1.5) node[aboverotated] {${(b{-}r)^*}$} \nstartarrowrev \nendarrowrev;
     \draw (0,1) ++(0,-0.1) .. controls ++(0.5,+0.2) and ++(-0.5,+0.2)  .. node[above] {\(r\)} ++(1,0) \midarrow; 
     \coordinate (left) at (0,0);
     \coordinate (right) at (1,0);
    \coordinate (top) at (current bounding box.north);
    \coordinate (bottom) at (current bounding box.south);
    \pgfresetboundingbox;
    \path [use as bounding box] (left|-bottom) rectangle (right|-top);
    \end{tikzpicture} \; = \;
      \begin{tikzpicture}[int,smallnodes,baseline={([yshift=-0.5ex]0,0.75)},xscale=0.5]
   \draw (0,0) node[below] {$\vphantom{b^*}\mathclap{a}$} -- ++(0,1.5) node[aboverotated] {${a{-}r}$} \midarrow;
   \draw (3,0) node[below] {$\vphantom{b^*}\mathclap{b^*}$} -- ++(0,1.5) node[aboverotated] {${(b{-}r)^*}$} \midarrowrev;
     \draw (0,0.5) -- node[above] {$r$} ++(1,0.3) \midarrow -- ++(0,0.4) \midarrow .. controls ++(0.5,0.2) and ++(-0.5,0.2)  .. node[above] {\(r\)} ++(1,0) \midarrow -- ++(0,-0.4) \midarrow -- node[above] {$r$} ++(1,-0.3) \midarrow; 
     \draw[zero] (1,1.2)  -- ++(0,0.3);
     \draw[zero] (2,1.2)  -- ++(0,0.3);
     \draw[zero] (1,0)  -- ++(0,0.8);
     \draw[zero] (2,0)  -- ++(0,0.8);
     \coordinate (left) at (0,0);
     \coordinate (right) at (3,0);
    \coordinate (top) at (current bounding box.north);
    \coordinate (bottom) at (current bounding box.south);
    \pgfresetboundingbox;
    \path [use as bounding box] (left|-bottom) rectangle (right|-top);
    \end{tikzpicture}\;, \label{eq:116}\\
    \sfE^{(r)}\idem_{a^* \otimes b} & =\;
        \begin{tikzpicture}[int,smallnodes,baseline={([yshift=-0.5ex]0,1)},xscale=0.5]
   \draw (0,0.25) node[below] {$\vphantom{b}\mathclap{a^*}$} -- ++(0,1.5) node[aboverotated] {${(a{-}r)^*}$} \nendarrowrev \nstartarrowrev;
   \draw (1,0.25) node[below] {$\vphantom{b}\mathclap{b}$} -- ++(0,1.5) node[aboverotated] {${b-r}$} \nstartarrow \nendarrow;
     \draw (0,1) ++(0,-0.1) .. controls ++(0.5,+0.2) and ++(-0.5,+0.2)  .. node[above] {\(r\)} ++(1,0) \midarrowrev; 
     \coordinate (left) at (0,0);
     \coordinate (right) at (1,0);
    \coordinate (top) at (current bounding box.north);
    \coordinate (bottom) at (current bounding box.south);
    \pgfresetboundingbox;
    \path [use as bounding box] (left|-bottom) rectangle (right|-top);
    \end{tikzpicture} \; = \;
      \begin{tikzpicture}[int,smallnodes,baseline={([yshift=-0.5ex]0,0.75)},xscale=0.5]
   \draw (0,0) node[below] {$\vphantom{b}\mathclap{a^*}$} -- ++(0,1.5) node[aboverotated] {${(a{-}r)^*}$} \midarrowrev;
   \draw (3,0) node[below] {$\vphantom{b^*}\mathclap{b^*}$} -- ++(0,1.5) node[aboverotated] {${b{-}r}$} \midarrow;
     \draw (0,0.5) -- node[above] {$r$} ++(1,0.3) \midarrowrev -- ++(0,0.4) \midarrowrev .. controls ++(0.5,0.2) and ++(-0.5,0.2)  .. node[above] {\(r\)} ++(1,0) \midarrowrev -- ++(0,-0.4) \midarrowrev -- node[above] {$r$} ++(1,-0.3) \midarrowrev; 
     \draw[zero] (1,1.2)  -- ++(0,0.3);
     \draw[zero] (2,1.2)  -- ++(0,0.3);
     \draw[zero] (1,0)  -- ++(0,0.8);
     \draw[zero] (2,0)  -- ++(0,0.8);
     \coordinate (left) at (0,0);
     \coordinate (right) at (3,0);
    \coordinate (top) at (current bounding box.north);
    \coordinate (bottom) at (current bounding box.south);
    \pgfresetboundingbox;
    \path [use as bounding box] (left|-bottom) rectangle (right|-top);
    \end{tikzpicture}\;, & 
    \sfF^{(r)} \idem_{a^* \otimes b}       & =\; \begin{tikzpicture}[int,smallnodes,baseline={([yshift=-0.5ex]0,1)},xscale=0.5]
   \draw (0,0.25) node[below] {$\vphantom{b}\mathclap{a^*}$} -- ++(0,1.5) node[aboverotated] {${(a{+}r)^*}$} \nendarrowrev \nstartarrowrev;
   \draw (1,0.25) node[below] {$\vphantom{b}\mathclap{b}$} -- ++(0,1.5) node[aboverotated] {${b{+}r}$} \nstartarrow \nendarrow;
     \draw (0,1) ++(0,+0.1) .. controls ++(0.5,-0.2) and ++(-0.5,-0.2)  .. node[above] {\(r\)} ++(1,0) \midarrow; 
     \coordinate (left) at (0,0);
     \coordinate (right) at (1,0);
    \coordinate (top) at (current bounding box.north);
    \coordinate (bottom) at (current bounding box.south);
    \pgfresetboundingbox;
    \path [use as bounding box] (left|-bottom) rectangle (right|-top);
    \end{tikzpicture} \; = \; \begin{tikzpicture}[int,smallnodes,baseline={([yshift=-0.5ex]0,1.25)},xscale=0.5]
   \draw (0,0.5) node[below] {$\vphantom{b}\mathclap{a^*}$} -- ++(0,1.5) node[aboverotated] {${(a{+}r)^*}$} \midarrowrev;
   \draw (3,0.5) node[below] {$\vphantom{b^*}\mathclap{b}$} -- ++(0,1.5) node[aboverotated] {${b{+}r}$} \midarrow;
     \draw (0,1.5) -- node[above] {$r$} ++(1,-0.3) \midarrow -- ++(0,-0.4) \midarrow .. controls ++(0.5,-0.2) and ++(-0.5,-0.2)  .. node[above] {\(r\)} ++(1,0) \midarrow -- ++(0,0.4) \midarrow -- node[above] {$r$} ++(1,0.3) \midarrow; 
     \draw[zero] (1,1.2)  -- ++(0,0.8);
     \draw[zero] (2,1.2)  -- ++(0,0.8);
     \draw[zero] (1,0.5)  -- ++(0,0.3);
     \draw[zero] (2,0.5)  -- ++(0,0.3);
     \coordinate (left) at (0,0);
     \coordinate (right) at (3,0);
    \coordinate (top) at (current bounding box.north);
    \coordinate (bottom) at (current bounding box.south);
    \pgfresetboundingbox;
    \path [use as bounding box] (left|-bottom) rectangle (right|-top);
    \end{tikzpicture}\;. \label{eq:118}
\end{align}

\subsection{The category \texorpdfstring{$\Spider(\beta)$}{Sp(b)}}
\label{sec:category-spiderbeta}

As we have seen, via the functor \(\calG_{m|n}\) the category \(\Spider\) gives all morphisms of \(\catRep_{m|n}\). However, \(\Spider\) is somehow too big (any non-trivial homomorphism space is infinite dimensional, since it contains the diagram consisting of a disjoint union of an arbitrary number of bubbles). We define hence a quotient \(\Spider(\beta)\), depending on the choice of our element \(q^\beta\).

\begin{definition}
  \label{def:9}
  We define the category \(\Spider(\beta)\) to be the quotient of the category \(\Spider\) modulo the relations
\begin{subequations}
\label{eq:84}
\begingroup
\allowdisplaybreaks
\tikzset{every picture/.style={scale=0.7}}
  \begin{equation}
    \begin{tikzpicture}[smallnodes,anchorbase]
   \draw[uno] (0,0) node[below]  {$\mathclap{1}$} -- ++(0,0.6) \midarrow;
   \draw[uno] (0,2) node[above]  {$\mathclap{1}$} -- ++(0,-0.6) \midarrowrev .. controls ++(0.5,0.2) and ++(0,+0.5)  .. ++(1,-0.4) \midarrow node[midway,above right] {$1$} .. controls ++(0,-0.5) and ++(0.5,-0.2) .. ++(-1,-0.4) \midarrow;
   \draw[int] (0,0.6) -- ++(0,0.8) \midarrow;
    \end{tikzpicture} \; =  [\beta-1] \;
    \begin{tikzpicture}[uno,smallnodes,anchorbase]
   \draw[uno,arrowinthemiddle] (0,0) node[below] {$\mathclap{1}$} -- ++(0,2) node[above] {$\mathclap{1}$};
    \end{tikzpicture}\;,  \qquad \qquad 
    \begin{tikzpicture}[smallnodes,anchorbase]
   \draw[uno] (0,0) node[below]  {$\mathclap{1}$} -- ++(0,0.6) \midarrow;
   \draw[uno] (0,2) node[above]  {$\mathclap{1}$} -- ++(0,-0.6) \midarrowrev .. controls ++(-0.5,0.2) and ++(0,+0.5)  .. ++(-1,-0.4) \midarrow node[midway,above left] {$1$} .. controls ++(0,-0.5) and ++(-0.5,-0.2) .. ++(1,-0.4) \midarrow;
   \draw[int] (0,0.6) -- ++(0,0.8) \midarrow;
    \end{tikzpicture} \; =  [\beta-1] \;
    \begin{tikzpicture}[uno,smallnodes,anchorbase]
   \draw[uno,arrowinthemiddle] (0,0) node[below] {$\mathclap{1}$} -- ++(0,2) node[above] {$\mathclap{1}$};
    \end{tikzpicture}\;,\label{eq:119}
\end{equation}
and
\begin{align} 
    \begin{tikzpicture}[smallnodes,anchorbase]
      \draw[uno,->] (0,0) arc (0:360:0.5cm)  node[midway,left]  {$1$};
    \end{tikzpicture} \; = \;
    \begin{tikzpicture}[smallnodes,anchorbase]
      \draw[uno,<-] (0,0) arc (0:360:0.5cm) node[midway,left]  {$1$};
    \end{tikzpicture} \; &= [\beta], \label{eq:120}\\
   \begin{tikzpicture}[smallnodes,anchorbase]
   \draw[uno] (0,0) node[below]  {$\vphantom{1^*}\mathclap{1}$} -- ++(0,0.5) \midarrow .. controls ++(0.5,0.2) and ++(-0.5,0.2)  .. ++(1,0) \midarrow -- ++(0,-0.5) \midarrow node[below] {$\mathclap{1^*}$};
   \draw[uno] (0,2) node[above]  {$\vphantom{1^*}\mathclap{1}$} -- ++(0,-0.5) \midarrowrev .. controls ++(0.5,-0.2) and ++(-0.5,-0.2)  .. ++(1,0) \midarrowrev -- ++(0,0.5) \midarrowrev node[above] {$\mathclap{1^*}$};
    \end{tikzpicture} \; -  \;
    \begin{tikzpicture}[smallnodes,anchorbase]
   \draw[uno] (0,0) node[below]  {$\vphantom{1^*}\mathclap{1}$} -- ++(0,0.6) \midarrow .. controls ++(0.5,-0.2) and ++(-0.5,-0.2)  .. ++(1,0) node[below,midway] {$1$} \midarrowrev -- ++(0,-0.6) \midarrow node[below] {$\mathclap{1^*}$};
   \draw[uno] (0,2) node[above]  {$\vphantom{1^*}\mathclap{1}$} -- ++(0,-0.6) \midarrowrev .. controls ++(0.5,0.2) and ++(-0.5,0.2)  .. ++(1,0) node[above,midway] {$1$} \midarrow -- ++(0,0.6) \midarrowrev node[above] {$\mathclap{1^*}$};
   \draw[int] (0,0.6) -- ++(0,0.8) \midarrow;
   \draw[int] (1,0.6) -- ++(0,0.8) \midarrowrev;
    \end{tikzpicture} \; &=  [2-\beta] \;
    \begin{tikzpicture}[uno,smallnodes,anchorbase]
   \draw[arrowinthemiddle] (0,0) node[below] {$\mathclap{1}$} -- ++(0,2) node[above] {$\vphantom{1^*}\mathclap{1}$};
   \draw[arrowinthemiddlerev] (1,0) node[below] {$\mathclap{1}$} -- ++(0,2) node[above] {$\mathclap{1^*}$};
    \end{tikzpicture}\;,\label{eq:121}\\
    \begin{tikzpicture}[smallnodes,anchorbase]
   \draw[uno] (0,0) node[below]  {$\mathclap{1}\vphantom{1^*}$} -- ++(0,0.5) \midarrow .. controls ++(-0.5,0.2) and ++(0.5,0.2)  .. ++(-1,0) \midarrow -- ++(0,-0.5) \midarrow node[below] {$\mathclap{1^*}$};
   \draw[uno] (0,2) node[above]  {$\mathclap{1}\vphantom{1^*}$} -- ++(0,-0.5) \midarrowrev .. controls ++(-0.5,-0.2) and ++(0.5,-0.2)  .. ++(-1,0) \midarrowrev -- ++(0,0.5) \midarrowrev node[above] {$\mathclap{1^*}$};
    \end{tikzpicture} \; -  \;
    \begin{tikzpicture}[smallnodes,anchorbase]
   \draw[uno] (0,0) node[below]  {$\vphantom{1^*}\mathclap{1}$} -- ++(0,0.6) \midarrow .. controls ++(-0.5,-0.2) and ++(0.5,-0.2)  .. ++(-1,0) node[below,midway] {$1$} \midarrowrev -- ++(0,-0.6) \midarrow node[below] {$\mathclap{1^*}$};
   \draw[uno] (0,2) node[above]  {$\vphantom{1^*}\mathclap{1}$} -- ++(0,-0.6) \midarrowrev .. controls ++(-0.5,0.2) and ++(0.5,0.2)  .. ++(-1,0) node[above,midway] {$1$} \midarrow -- ++(0,0.6) \midarrowrev node[above] {$\mathclap{1^*}$};
   \draw[int] (0,0.6) -- ++(0,0.8) \midarrow;
   \draw[int] (-1,0.6) -- ++(0,0.8) \midarrowrev;
    \end{tikzpicture} \; &=  [2-\beta] \;
    \begin{tikzpicture}[uno,smallnodes,anchorbase]
   \draw[arrowinthemiddle] (0,0) node[below] {$\vphantom{1^*}\mathclap{1}$} -- ++(0,2) node[above] {$\vphantom{1^*}\mathclap{1}$};
   \draw[arrowinthemiddlerev] (-1,0) node[below] {$\mathclap{1^*}$} -- ++(0,2) node[above] {$\mathclap{1^*}$};
    \end{tikzpicture}\;.\label{eq:122}
  \end{align}
\endgroup
\end{subequations}
\end{definition}

Recall that we set \(d=m-n\). We have then:

\begin{theorem}
  \label{thm:6}
  The functor \(\calG_{m|n} \colon  \Spider \mapto \catRep_{m|n}\) descends to a full functor 
  \begin{equation}
\calG_{m|n} \colon \Spider(d) \mapto \catRep_{m|n}. \label{eq:205}
\end{equation}
\end{theorem}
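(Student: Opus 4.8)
The plan is to reduce the statement to Proposition~\ref{prop:7} plus a verification of relations. By Proposition~\ref{prop:6} we already have a monoidal functor $\calG_{m|n}\colon\Spider\mapto\catRep_{m|n}$, and by Proposition~\ref{prop:7} it is full. Since $\Spider(d)$ is by definition a quotient of $\Spider$ and the quotient functor $\Spider\surto\Spider(d)$ is full, it suffices to show that $\calG_{m|n}$ factors through $\Spider(d)$; equivalently, that $\calG_{m|n}$ sends the two sides of each of the defining relations \eqref{eq:119}--\eqref{eq:122} (with $\beta=d$) to the same morphism of $\catRep_{m|n}$. Every one of these relations is built entirely from the objects $1$, $1^{*}$ (passing through $2$, $2^{*}$) and the generators $\sfE$, $\sfF$ together with adjunctions, so after applying $\calG_{m|n}$ we are reduced to verifying explicit identities between equivariant maps among tensor products of $V=\C_q^{m|n}$ and $V^{*}$, which can be checked on standard basis vectors.

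For the circle relations \eqref{eq:120}, the image under $\calG_{m|n}$ of a circle labelled $1$ is the quantum trace of $\id_{V}$, i.e.\ the quantum superdimension of $V$, which by \eqref{eq:44} equals $[d]$; the two orientations give the same scalar because in a ribbon category the left and right quantum traces of any endomorphism agree. This is precisely \eqref{eq:120} with $\beta=d$.

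For the remaining relations \eqref{eq:119}, \eqref{eq:121} and \eqref{eq:122}, one expands the occurring morphisms by means of the definitions \eqref{eq:104}--\eqref{eq:118} of the mixed $\sfE^{(r)}$, $\sfF^{(r)}$ and of the adjunctions, transports the resulting diagram through $\calG_{m|n}$, and computes in $\catRep_{m|n}$. Concretely: the $\sfE/\sfF$--part of each diagram is rewritten as a braiding using Lemma~\ref{lem:6} (equivalently, the explicit formula for $c_{1,1}$ and the $R$--matrix \eqref{eq:30}); the commutator relation \eqref{eq:77} of $\Spider^{+}$ produces the expected factors $[a-b]$; the resulting curl is resolved by the twist relations \eqref{eq:42}--\eqref{eq:43}, whose scalar is $q^{\mp d}$ on a strand labelled $1$; and the loop is finally closed off via \eqref{eq:44}. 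A short manipulation with the $q$--number identities \eqref{eq:3} and \eqref{eq:5} of Section~\ref{sec:conventions} then identifies the scalar obtained with $[d-1]$ in the case of \eqref{eq:119} and with $[2-d]$ in the cases of \eqref{eq:121} and \eqref{eq:122}. Alternatively, one may observe that, via Proposition~\ref{prop:11}, the full subcategory of $\catRep_{m|n}$ on $V$ and $V^{*}$ is a quotient of the quantized oriented Brauer category $\uBr(d)$, so that the defining relations \eqref{eq:52} and \eqref{eq:54} of $\uBr(d)$ hold there; the relations \eqref{eq:119}--\eqref{eq:122} then follow once crossings are translated through $c_{1,1}$ as in Lemma~\ref{lem:6} and Proposition~\ref{prop:5}. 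In either case, once $\calG_{m|n}$ is seen to respect \eqref{eq:84}, fullness of the induced functor \eqref{eq:205} is automatic from fullness of $\calG_{m|n}$ on $\Spider$ and of the quotient functor.

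The main obstacle is the last step: the diagrams in \eqref{eq:119}--\eqref{eq:122} are built from the sign-- and $q$--power--laden conventions \eqref{eq:104}--\eqref{eq:118}, and one must keep careful track of the super signs coming from the odd part of $V$ and of the interplay between the $q$--power contributed by the twist \eqref{eq:42} and the one coming from the $R$--matrix \eqref{eq:30}, so that the resulting scalar comes out \emph{exactly} as $[d-1]$, resp.\ $[2-d]$, rather than an off--by--$q^{\pm1}$ variant.
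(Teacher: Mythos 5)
Your proposal is correct and follows essentially the same strategy as the paper's proof: reduce to verifying that $\calG_{m|n}$ respects the defining relations \eqref{eq:84}, handle \eqref{eq:120} via the quantum superdimension \eqref{eq:44}, and verify \eqref{eq:119}, \eqref{eq:121}, \eqref{eq:122} by translating the $\sfF\sfE$-bubbles into braidings via Lemma~\ref{lem:6}, resolving the resulting curls and circles via \eqref{eq:42}--\eqref{eq:44}, and identifying the scalars as $[d-1]$ and $[2-d]$ by quantum-integer identities (e.g.\ \eqref{eq:5}); fullness then passes through the quotient automatically. The only stylistic deviation is your secondary suggestion of deriving these relations from the Brauer relations \eqref{eq:52}, \eqref{eq:54} together with Proposition~\ref{prop:11} and the substitution $c_{1,1}=q^{-1}-\sfF\sfE$, which the paper does not do (it argues in the opposite direction in Proposition~\ref{prop:10}), but this is a clean reorganization of the same underlying computation rather than a new idea. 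Your parenthetical appeal to \eqref{eq:77} plays no role in the paper's verification and is not needed; otherwise the steps and scalars all match.
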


\begin{proof}
  We need to check that relations~\eqref{eq:84} are satisfied by morphisms in \(\catRep_{m|n}\). Relation~\eqref{eq:120} is \eqref{eq:44}. Let us check the first one of \eqref{eq:119} (the second one is analogous). Using \eqref{eq:86} we have in \(\catRep_{m|n}\):
\begingroup
\tikzset{every picture/.style={yscale=0.7}}
  \begin{equation}
    \label{eq:123}
     \begin{tikzpicture}[smallnodes,anchorbase,xscale=0.8]
   \draw[uno] (0,0) node[below]  {$\mathclap{\vphantom{1^*}1}$} -- ++(0,0.6) \midarrow;
   \draw[uno] (0,2) node[above]  {$\mathclap{\vphantom{1^*}1}$} -- ++(0,-0.6) \midarrowrev .. controls ++(0.5,0.2) and ++(0,+0.5)  .. ++(1,-0.4) \midarrow node[midway,above right] {$1$} .. controls ++(0,-0.5) and ++(0.5,-0.2) .. ++(-1,-0.4) \midarrow;
   \draw[int] (0,0.6) -- ++(0,0.8) \midarrow;
    \end{tikzpicture}\;  =  - \;
    \begin{tikzpicture}[smallnodes,anchorbase]
   \draw[uno] (0.8,1)  .. controls ++(0,-0.3) and ++(0.3,0) .. ++(-0.3,-0.6)  .. controls ++(-0.5,0) and ++(0,-0.7) .. ++(-0.5,1.6) \nendarrow node[above] {$\mathclap{\vphantom{1^*}1}$} ;
   \draw[uno, cross line] (0.8,1)  .. controls ++(0,0.3) and ++(0.3,0) .. ++(-0.3,0.6) \startarrowrev .. controls ++(-0.5,0) and ++(0,0.7) .. ++(-0.5,-1.6) \nendarrowrev node[below] {$\mathclap{\vphantom{1^*}1}$};
    \end{tikzpicture} \; +q^{-1} \;
    \begin{tikzpicture}[smallnodes,anchorbase]
      \draw[uno] (0,0) node[below] {$\mathclap{\vphantom{1^*}1}$} -- ++(0,2) \midarrow node[above] {$\mathclap{\vphantom{1^*}1}$};
      \draw[uno] (0.2,1)   .. controls ++(0,-0.3) and ++(-0.3,0) .. ++(0.3,-0.6) .. controls ++(0.3,0) and ++(0,-0.3) .. ++(0.3,0.6) \startarrowrev .. controls ++(0,0.3) and ++(0.3,0) .. ++(-0.3,0.6) node [above left] {$1$} .. controls ++(-0.3,0) and ++(0,0.3) .. ++(-0.3,-0.6) \startarrowrev;
    \end{tikzpicture} \; = -q^{-d} + q^{-1}[d]\;
    \begin{tikzpicture}[smallnodes,anchorbase]
      \draw[uno] (0,0) node[below] {$\mathclap{\vphantom{1^*}1}$} -- ++(0,2) \midarrow node[above] {$\mathclap{\vphantom{1^*}1}$};
    \end{tikzpicture}\; = [d-1]\;
    \begin{tikzpicture}[smallnodes,anchorbase]
      \draw[uno] (0,0) node[below] {$\mathclap{\vphantom{1^*}1}$} -- ++(0,2) \midarrow node[above] {$\mathclap{\vphantom{1^*}1}$};
    \end{tikzpicture}\;.
  \end{equation}
We now check \eqref{eq:121}. Again using \eqref{eq:86} we have in \(\catRep_{m|n}\):
  \begin{multline}
    \label{eq:124}
    \begin{tikzpicture}[smallnodes,anchorbase,xscale=0.8]
   \draw[uno] (0,0) node[below]  {$\mathclap{\vphantom{1^*}1}$} -- ++(0,0.6) \midarrow .. controls ++(0.5,-0.2) and ++(-0.5,-0.2)  .. ++(1,0) \midarrowrev -- ++(0,-0.6) \midarrow node[below] {$\mathclap{1^*}$};
   \draw[uno] (0,2) node[above]  {$\mathclap{\vphantom{1^*}1}$} -- ++(0,-0.6) \midarrowrev .. controls ++(0.5,0.2) and ++(-0.5,0.2)  .. ++(1,0) \midarrow -- ++(0,0.6) \midarrowrev node[above] {$\mathclap{1^*}$};
   \draw[int] (0,0.6) -- ++(0,0.8) \midarrow;
   \draw[int] (1,0.6) -- ++(0,0.8) \midarrowrev;
    \end{tikzpicture}   =  \;
    \begin{tikzpicture}[smallnodes,anchorbase,xscale=0.8]
   \draw[uno, cross line] (0,2) node[above]  {$\mathclap{\vphantom{1^*}1}$} .. controls ++(0,-0.7) and ++(-0.5,0) .. ++(0.5,-1.6) \nstartarrowrev .. controls ++(0.5,0) and ++(0,-0.7) .. ++(0.5,1.6) \nendarrowrev node[above] {$\mathclap{1^*}$} ;
   \draw[uno, cross line] (0,0) node[below]  {$\mathclap{\vphantom{1^*}1}$} .. controls ++(0,0.7) and ++(-0.5,0) .. ++(0.5,1.6) \nstartarrow .. controls ++(0.5,0) and ++(0,0.7) .. ++(0.5,-1.6) \nendarrow node[below] {$\mathclap{1^*}$};
    \end{tikzpicture} \;  -q^{-1} \;
    \begin{tikzpicture}[smallnodes,anchorbase,xscale=0.8]
      \draw[uno] (0,0) node[below] {$\mathclap{\vphantom{1^*}1}$} -- ++(0,2) \midarrow node[above] {$\mathclap{\vphantom{1^*}1}$};
   \draw[uno] (0.2,1)  .. controls ++(0,-0.3) and ++(-0.3,0) .. ++(0.3,-0.6)  .. controls ++(0.5,0) and ++(0,-0.7) .. ++(0.5,1.6) \nendarrowrev node[above] {$\mathclap{1^*}$} ;
   \draw[uno, cross line] (0.2,1)  .. controls ++(0,0.3) and ++(-0.3,0) .. ++(0.3,0.6) \startarrow .. controls ++(0.5,0) and ++(0,0.7) .. ++(0.5,-1.6) \nendarrow node[below] {$\mathclap{1^*}$};
    \end{tikzpicture} \;  -q \;
    \begin{tikzpicture}[smallnodes,anchorbase,xscale=0.8]
      \draw[uno] (1,0) node[below] {$\mathclap{1^*}$} -- ++(0,2) \midarrowrev node[above] {$\mathclap{1^*}$};
   \draw[uno] (0.8,1)  .. controls ++(0,-0.3) and ++(0.3,0) .. ++(-0.3,-0.6)  .. controls ++(-0.5,0) and ++(0,-0.7) .. ++(-0.5,1.6) \nendarrowrev node[above] {$\mathclap{\vphantom{1^*}1}$} ;
   \draw[uno, cross line] (0.8,1)  .. controls ++(0,0.3) and ++(0.3,0) .. ++(-0.3,0.6) \startarrowrev .. controls ++(-0.5,0) and ++(0,0.7) .. ++(-0.5,-1.6) \nendarrow node[below] {$\mathclap{\vphantom{1^*}1}$};
    \end{tikzpicture} \; + \;
    \begin{tikzpicture}[smallnodes,anchorbase]
      \draw[uno] (0,0) node[below] {$\mathclap{\vphantom{1^*}1}$} -- ++(0,2) \midarrow node[above] {$\mathclap{\vphantom{1^*}1}$};
      \draw[uno] (1,0) node[below] {$\mathclap{1^*}$} -- ++(0,2) \midarrowrev node[above] {$\mathclap{1^*}$};
      \draw[uno] (0.2,1)   .. controls ++(0,-0.3) and ++(-0.3,0) .. ++(0.3,-0.6) .. controls ++(0.3,0) and ++(0,-0.3) .. ++(0.3,0.6) \startarrowrev .. controls ++(0,0.3) and ++(0.3,0) .. ++(-0.3,0.6) node [above left] {$1$} .. controls ++(-0.3,0) and ++(0,0.3) .. ++(-0.3,-0.6) \startarrow;
    \end{tikzpicture} \\
    =\;
    \begin{tikzpicture}[smallnodes,anchorbase,xscale=0.6,yscale=0.7]
   \draw[uno, cross line] (0,2) node[above]  {$\mathclap{\vphantom{1^*}1}$} .. controls ++(0,-0.3) and ++(-0.5,0) .. ++(0.5,-0.8) \nstartarrowrev .. controls ++(0.5,0) and ++(0,-0.3) .. ++(0.5,0.8) \nendarrowrev node[above] {$\mathclap{1^*}$} ;
   \draw[uno, cross line] (0,0) node[below]  {$\mathclap{\vphantom{1^*}1}$} .. controls ++(0,0.3) and ++(-0.5,0) .. ++(0.5,0.8) \nstartarrow .. controls ++(0.5,0) and ++(0,0.3) .. ++(0.5,-0.8) \nendarrow node[below] {$\mathclap{1^*}$};
    \end{tikzpicture} \;  -q^{-1+d} \;
    \begin{tikzpicture}[smallnodes,anchorbase,xscale=0.6,yscale=0.7]
      \draw[uno] (0,0) node[below] {$\mathclap{\vphantom{1^*}1}$} -- ++(0,2) \midarrow node[above] {$\mathclap{\vphantom{1^*}1}$};
      \draw[uno] (1,0) node[below] {$\mathclap{1^*}$} -- ++(0,2) \midarrowrev node[above] {$\mathclap{1^*}$};
    \end{tikzpicture} \;  -q^{1-d} \;
    \begin{tikzpicture}[smallnodes,anchorbase,xscale=0.6,yscale=0.7]
      \draw[uno] (0,0) node[below] {$\mathclap{\vphantom{1^*}1}$} -- ++(0,2) \midarrow node[above] {$\mathclap{\vphantom{1^*}1}$};
      \draw[uno] (1,0) node[below] {$\mathclap{1^*}$} -- ++(0,2) \midarrowrev node[above] {$\mathclap{1^*}$};
    \end{tikzpicture} \; + [d] \;
    \begin{tikzpicture}[smallnodes,anchorbase,xscale=0.6,yscale=0.7]
      \draw[uno] (0,0) node[below] {$\mathclap{\vphantom{1^*}1}$} -- ++(0,2) \midarrow node[above] {$\mathclap{\vphantom{1^*}1}$};
      \draw[uno] (1,0) node[below] {$\mathclap{1^*}$} -- ++(0,2) \midarrowrev node[above] {$\mathclap{1^*}$};
    \end{tikzpicture}
  \end{multline}
\endgroup
  and the claim follows since \(-q^{-1+d}-q^{1-d}+[d]=[d-2]\). Analogously one can check \eqref{eq:122}.
\end{proof}

\begin{prop}\label{prop:10}
  The functor \(\calI^+ \colon \catH \mapto \Spider^+\) extends to a fully faithful functor \(\calI_d \colon \uBr(\beta) \mapto \Spider(\beta)\).
\end{prop}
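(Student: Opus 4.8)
The plan is to build $\calI_\beta$ directly on the presentation of $\uBr(\beta)$ and then establish full faithfulness by a Hom-space dimension count that is ultimately reduced to super Schur--Weyl duality; throughout I write $\calI_\beta$ for the functor (so that it restricts to the composite $\catH\xrightarrow{\calI^+}\Spider^+\to\Spider(\beta)$ on the full subcategory $\catH\subset\uBr(\beta)$).

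\textbf{Construction.} Since $\uBr(\beta)$ is by definition the quotient of $\catTangles_q$ by the relations \eqref{eq:52} and \eqref{eq:54}, I would first extend $\calI^+$ to a monoidal functor $\catTangles_q\mapto\Spider(\beta)$ and then check it kills those relations. On objects set $1\mapsto 1$, $1^*\mapsto 1^*$; send the two upward crossings to $c_{1,1}$ and $c_{1,1}^{-1}$ (so that $\calI_\beta$ restricts to $\calI^+$ on $\catH$), the evaluation and coevaluation morphisms to the adjunction morphisms \eqref{eq:101} of $\Spider$, and the remaining mixed crossings to the corresponding rotations of $c_{1,1}^{\pm1}$. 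This is well defined as a functor out of $\catTangles_q$: the zig-zag relations hold because the duality relations \eqref{eq:102} are built into $\Spider$; Reidemeister~II and~III hold because $c_{1,1}^{\pm1}$ are mutually inverse and satisfy the braid relation in $\Spider^+$ (this is exactly what makes $\calI^+$ well defined, cf.\ Proposition~\ref{prop:4}). It then remains to see that \eqref{eq:52} and \eqref{eq:54} become valid in $\Spider(\beta)$: the Hecke quadratic relation already holds in $\Spider^+$, the circle relation is exactly \eqref{eq:120}, and the framed Reidemeister~I relations \eqref{eq:54} follow from the formulas $c_{1,1}=q^{-1}\idem-\sfF\sfE$ and $c_{1,1}^{-1}=q\idem-\sfF\sfE$ of \eqref{eq:82}--\eqref{eq:193} by taking a partial trace over one strand: the partial trace of $\idem_{1\otimes 1}$ is $[\beta]\idem_1$ by \eqref{eq:120}, that of $\sfF\sfE$ is $[\beta-1]\idem_1$ by \eqref{eq:119}, and $q^{-1}[\beta]-[\beta-1]=q^{-\beta}$, $q[\beta]-[\beta-1]=q^{\beta}$.

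\textbf{Fullness.} Next I would show that for all sequences $\boldeta,\boldeta'$ of $\pm1$'s the map $\Hom_{\uBr(\beta)}(\boldeta,\boldeta')\mapto\Hom_{\Spider(\beta)}(\boldeta,\boldeta')$ is onto. Any morphism of $\Spider(\beta)$ between such objects is a composite of tensor products of the generators $\sfE^{(r)},\sfF^{(r)}$ and the adjunction morphisms; using the relations of $\Spider^+$ (which, by Lemma~\ref{lem:7} together with the factorisation \eqref{eq:81}, rewrite every endomorphism of a tensor power of $1$'s through $c_{1,1}$), the duality relations \eqref{eq:104}--\eqref{eq:105}, and the defining relations \eqref{eq:119}--\eqref{eq:122} of $\Spider(\beta)$ (which absorb cup--cap pairs into crossings or scalars), one straightens any such diagram to a $\fieldk$-linear combination of honest oriented tangle diagrams, i.e.\ to the image of $\uBr(\beta)$.

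\textbf{Faithfulness.} Each $\Hom_{\uBr(\beta)}(\boldeta,\boldeta')$ is free of finite rank (from $\Br_\boldeta(\beta)$ free of rank $|\boldeta|!$ plus the bending isomorphisms of the rigid category $\uBr(\beta)$), so by fullness $\dim_\fieldk\Hom_{\Spider(\beta)}(\boldeta,\boldeta')$ is at most this rank, and $\calI_\beta$ is an isomorphism on Hom-spaces once the ``standard'' oriented tangle diagrams are shown to stay linearly independent in $\Spider(\beta)$. For $\beta=d\in\Z$ this follows by composing with $\calG_{m|n}\colon\Spider(d)\mapto\catRep_{m|n}$ (Theorem~\ref{thm:6}): one has $\calG_{m|n}\circ\calI_d=\RT_{m|n}$ (both functors agree on $\catH$ by Lemma~\ref{lem:6} and send cups and caps to the duality morphisms), and by Proposition~\ref{prop:11} the functor $\RT_{m|n}$ is faithful on $\Hom(\boldeta,\boldeta')$ as soon as $(m+1)(n+1)>|\boldeta|+|\boldeta'|$; choosing $m,n$ with $m-n=d$ and $n$ large forces the linear independence already in $\Spider(d)$. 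For generic $\beta$ the relations \eqref{eq:119}--\eqref{eq:122} have coefficients in $\C(q)[q^{\pm\beta}]$, so after clearing denominators in a hypothetical linear dependence one may specialise $q^\beta\mapsto q^d$ for all large integers $d$ and invoke the integer case to conclude that every coefficient vanishes identically. The main obstacle is the straightening step in the fullness argument: making the reduction terminate and tracking the powers of $q$ and $q^\beta$ produced when cup--cap configurations are removed via \eqref{eq:119}--\eqref{eq:122}; once that is in place, faithfulness is the dimension count above.
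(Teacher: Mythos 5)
Your construction and well-definedness checks are fine and are essentially the paper's: extend $\calI^+$ by sending cups and caps to the adjunction morphisms of $\Spider$, and verify that \eqref{eq:52}, \eqref{eq:54} are killed using \eqref{eq:120} and the partial-trace computation on $c_{1,1}^{\pm1}$ (the paper points to the same computation as in \eqref{eq:123}). Your faithfulness argument is also the paper's: use fullness together with a dimension count, identify $\calG_{m|n}\circ\calI_d$ with $\RT_{m|n}$, invoke the generalization of Proposition~\ref{prop:11} with $m,n$ large and $m-n=d$, and handle generic $\beta$ by a specialization argument over $\C(q)[q^{\pm\beta}]$.

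The \emph{fullness} step is where you diverge from the paper and where your proposal is incomplete, as you acknowledge yourself. You propose to \emph{rewrite} an arbitrary $\Spider(\beta)$-morphism between $\{1,1^*\}$-sequences into a linear combination of oriented tangle diagrams, using the relations \eqref{eq:119}--\eqref{eq:122} to ``absorb'' cup--cap configurations into scalars or crossings. That viewpoint raises exactly the termination/confluence issues you flag at the end, and in fact the paper's fullness argument does not use \eqref{eq:119}--\eqref{eq:122} at all (it would work equally well in $\Spider$). The paper's strategy is a direct one-pass decomposition, not a rewriting procedure: slice the diagram into generic slices, each a single generator of one of the forms \eqref{eq:70}, \eqref{eq:101}, \eqref{eq:107} tensored with identities; at every slice boundary insert $\id_a=\pi_a\iota_a$ from Lemma~\ref{lem:4} (and its dual) to bring all boundary labels into $\{0,1,1^*\}$; and split every cup or cap labeled $a>1$ into $a$ cups or caps labeled $1$ via the trick \eqref{eq:51}. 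After this normalization each slice either contains only $1$-labeled cups/caps and identities (manifestly in the image of $\calI_\beta$) or contains no cups/caps at all, so upward and downward strands do not interact in that slice, and it lies in the image of $\calI_\beta$ by fullness of $\calI^+$ (Proposition~\ref{prop:4}). This gives fullness with no termination argument, and you should replace your straightening step with this decomposition.
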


\begin{proof}
  Since the oriented Brauer category \(\uBr(\beta)\) is generated by the braiding from \(\catH\) together with the adjunction maps (cups and caps), there is a unique natural way to extend the functor \(\calI^+\) to \(\uBr(\beta)\). We should check that the defining relations of \(\uBr(\beta)\) hold in the image. Relation~\eqref{eq:52} is a direct consequence of \eqref{eq:120}, while relations~\eqref{eq:54} can be showed by the same computation as in \eqref{eq:123}.

 Now, let us first prove that \(\calI_d\) is full. We will mostly deduce this from the fact that \(\calI\colon\catH\mapto \Spider^+\) is fully faithful (proposition~\ref{prop:4}). Consider a diagram element \(\phi \in \Hom_{\Spider(\beta)}(\bolda,\boldb)\) with \(\bolda\) and \(\boldb\) containing only copies of \(1\) and \(1^*\). We slice \(\phi\) into elementary pieces, as \eqref{eq:70}, \eqref{eq:101} and \eqref{eq:107}, tensored with identities. Of course the labels that appear in the slices can be greater than \(1\). We thus replace, at the bottom and the top of each slice, each higher label \(a\) (respectively, \(a^*\)) by \(1^{\otimes a } \) (respectively, \((1^*)^{\otimes a}\)) by inserting \(\pi_a \iota_a\) (respectively, its dual), where \(\iota_a\) and \(\pi_a\) are the injection and the projection from lemma~\ref{lem:4}.
Moreover, we can suppose that all caps and cups are labeled by \(1\) using the following trick:
\begin{equation}\label{eq:51}
\begin{tikzpicture}[smallnodes,baseline=0.25cm,xscale=1]
  \draw[uno] (-1,0) node[below] {$\mathclap{1}\vphantom{{}^*}$}  --  ++(0,.2)   \midarrow ;
  \draw[uno] (0,0) node[below] {$\mathclap{1}\vphantom{{}^*}$}  --  ++(0,.2)   \midarrow ;
  \node at (-.5,.1) {$\cdots$};
  \draw[thin] (-1.1,.2) rectangle (.1,.5);
  \node at (-.5,.35) {$ \pi_a$};
  \draw[int] (-.5,.5) -- ++(0,0.8) .. controls ++(0,.7) and ++(0,.7) .. ++(2,0)  \midarrow -- ++(0,-0.8);
  \draw (.9,.5) rectangle (2.1,.2);
  \node at (1.5,.35) {$ \iota_a^*$};
  \draw[uno] (1,.2)   --  ++(0,-.2)   \midarrow node[below] {$\mathclap{1^*}$};
  \draw[uno] (2,.2)  --  ++(0,-.2)   \midarrow node[below] {$\mathclap{1^*}$};
  \node at (1.5,.1) {$\cdots$};
    \end{tikzpicture}
\;=\;
\begin{tikzpicture}[smallnodes,baseline=0.25cm,xscale=1]
  \draw[uno] (-1,0) node[below] {$\mathclap{1}\vphantom{{}^*}$}  --  ++(0,.2)   \midarrow ;
  \draw[uno] (0,0) node[below] {$\mathclap{1}\vphantom{{}^*}$}  --  ++(0,.2)   \midarrow ;
  \node at (-.5,.1) {$\cdots$};
  \draw[thin] (-1.1,.2) rectangle (.1,.5);
  \node at (-.5,.35) {$ \pi_a$};
  \draw[int] (-.5,.5)  --  ++(0,.2)   \midarrow ;
  \draw[thin] (-1.1,.7) rectangle (.1,1);
  \node at (-.5,.85) {$ \iota_a$};
   \draw[uno] (-1,1)   .. controls ++(0,1.1) and ++(0,1.1) ..  ++(3,0)   \midarrow -- ++(0,-1)\nendarrow node[below] {$\mathclap{1^*}$};
   \draw[uno] (0,1)   .. controls ++(0,.5) and ++(0,.5) ..  ++(1,0)   \midarrow -- ++(0,-1)\nendarrow node[below] {$\mathclap{1^*}$};
   \node[rotate=90] at (.5,1.6) {$\cdots$};
  \node at (1.5,.1) {$\cdots$};
    \end{tikzpicture}
\end{equation}
(and similarly in the case of cups).

Now we have sliced our diagram so that if one slice contains higher labels then it does not contain cups or caps (so if it contains higher labels then there is no interaction between upwards pointing and downwards pointing strands). The slices containing cups or caps lie in the image of \(\calI_\beta\), obviously. The other slices lie also in the image of \(\calI_\beta\), since \(\calI^+\) is full. Hence \(\calI_\beta\) is full.

In order to prove that \(\calI_\beta\) is faithful, consider first the case \(\beta=d\). For any objects \(\bolda,\boldb \in \uBr(d)\) we can choose \(m,n\) big enough so that the Reshetikhin-Turaev functor \(\RT_{m|n}\) induces an isomorphism \(\Hom_{\uBr(d)}(\bolda,\boldb) \mapto \Hom_{\catRep_{m|n}}(\bigwedgeq^\bolda,\bigwedgeq^\boldb)\) (this is an immediate generalization of proposition~\ref{prop:11}). Since this map factors through \(\Hom_{\uBr(d)}(\bolda,\boldb) \mapto \Hom_{\Spider(d)}(\bolda,\boldb)\), and we just proved that this is surjective, it follows that this is injective. The case \(\beta\) generic follows by specialization (formally, one needs to define the category \(\Spider(\beta)\) over \(\C(q)[q^\beta,q^{-\beta}]\), and then the argument before shows that \(\Hom_{\Spider(d)}(\bolda,\boldb)\) is free of the same rank as for \(\beta=d\)).
\end{proof}

By construction, there is an obvious functor \(\sfi \colon \Spider^+ \mapto \Spider\). We will denote also by \(\sfi \colon \Spider^+ \mapto \Spider(\beta)\) its composition with the quotient functor \(\Spider \mapto \Spider(\beta)\).

\begin{prop}
  \label{prop:12}
  The functor \(\sfi \colon \Spider^+ \mapto \Spider(\beta)\) is fully faithful.
\end{prop}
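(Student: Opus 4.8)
The plan is to deduce this formally from Propositions~\ref{prop:4} and~\ref{prop:10}, which already contain the substantive work; the reduction passes through the single identification $\End_{\Spider^+}(1^{\otimes N}) = \End_{\Spider(\beta)}(1^{\otimes N})$. First I would treat the trivial case $\abs{\bolda}\neq\abs{\boldb}$: then $\Hom_{\Spider^+}(\bolda,\boldb)=0$ by construction, and $\Hom_{\Spider(\beta)}(\bolda,\boldb)=0$ as well. For the latter, the morphisms $\iota_\bolda\colon\bolda\into 1^{\otimes\abs{\bolda}}$ and $\pi_\boldb\colon 1^{\otimes\abs{\boldb}}\surto\boldb$ of Lemma~\ref{lem:4} already live in $\Spider^+$, hence in $\Spider(\beta)$ via $\sfi$, and exhibit $\Hom_{\Spider(\beta)}(\bolda,\boldb)$ as a direct summand of $\Hom_{\Spider(\beta)}(1^{\otimes\abs{\bolda}},1^{\otimes\abs{\boldb}})$; by the full faithfulness of $\calI_\beta$ (Proposition~\ref{prop:10}) this last space is $\Hom_{\uBr(\beta)}(1^{\otimes\abs{\bolda}},1^{\otimes\abs{\boldb}})$, which vanishes since an oriented framed tangle between two objects all of whose endpoints are co-oriented can have no cups, caps or turnbacks at the boundary, forcing $\abs{\bolda}=\abs{\boldb}$.

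So assume $\abs{\bolda}=\abs{\boldb}=N$. Lemma~\ref{lem:4} provides $\iota_\bolda,\iota_\boldb,\pi_\bolda,\pi_\boldb$ in $\Spider^+$ with $\pi_\bolda\iota_\bolda=\id_\bolda$ and $\pi_\boldb\iota_\boldb=\id_\boldb$, and these relations persist after applying $\sfi$. Hence, in both $\Spider^+$ and $\Spider(\beta)$, every morphism $f\colon\bolda\to\boldb$ can be written as $f=\pi_\boldb\circ(\iota_\boldb f\pi_\bolda)\circ\iota_\bolda$ with $\iota_\boldb f\pi_\bolda\in\End(1^{\otimes N})$, and this factorisation is compatible with $\sfi$. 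It follows that $\sfi\colon\Hom_{\Spider^+}(\bolda,\boldb)\to\Hom_{\Spider(\beta)}(\bolda,\boldb)$ is injective, respectively surjective, as soon as $\sfi\colon\End_{\Spider^+}(1^{\otimes N})\to\End_{\Spider(\beta)}(1^{\otimes N})$ is; so it suffices to show that this last map is an isomorphism.

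Finally I would compare both endomorphism algebras with the Hecke algebra $\ucalH_N$. By construction $\calI_\beta\colon\uBr(\beta)\mapto\Spider(\beta)$ extends $\calI^+$ in the sense that $\calI_\beta|_{\catH}=\sfi\circ\calI^+$, so $\sfi\circ\calI^+=\calI_\beta$ on $\End_{\catH}(1^{\otimes N})=\ucalH_N$. Now $\calI^+$ induces an isomorphism $\ucalH_N\cong\End_{\Spider^+}(1^{\otimes N})$ (Proposition~\ref{prop:4}), while $\calI_\beta$, being fully faithful and fixing the object $1^{\otimes N}$, induces an isomorphism $\End_{\uBr(\beta)}(1^{\otimes N})\cong\End_{\Spider(\beta)}(1^{\otimes N})$; and $\End_{\uBr(\beta)}(1^{\otimes N})=\Br_{1^{\otimes N}}(\beta)\cong\ucalH_N$. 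Since both legs of this triangle are isomorphisms, so is $\sfi$ on $\End(1^{\otimes N})$, which proves the proposition. The only step needing genuine (if minor) attention is the vanishing of $\Hom_{\Spider(\beta)}$ between objects of distinct total degree invoked in the first paragraph; everything else is bookkeeping around results already established.
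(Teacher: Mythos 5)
Your argument is correct and essentially matches the paper's own: the paper reduces to $\End(1^{\otimes N})$ via the split factorisation $\Hom_{\Spider^+}(\bolda,\boldb)\into\End_{\Spider^+}(1^{\otimes N})\surto\Hom_{\Spider^+}(\bolda,\boldb)$ of Lemma~\ref{lem:4} exactly as you do, then identifies both endomorphism algebras with $\ucalH_N$ using Propositions~\ref{prop:4} and~\ref{prop:10}. The one place you go beyond the paper is the opening paragraph on $\abs{\bolda}\neq\abs{\boldb}$: the paper states it suffices to prove the isomorphism for all pairs $\bolda,\boldb$ but its commutative-diagram reduction to $1^{\otimes N}$ silently presupposes $\abs{\bolda}=\abs{\boldb}$; you explicitly establish the vanishing of $\Hom_{\Spider(\beta)}(\bolda,\boldb)$ in the degree-mismatched case by factoring through the oriented Brauer category. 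This is a genuine (if small) gap-fill; your phrasing that a tangle ``can have no cups, caps or turnbacks at the boundary'' is slightly loose — the cleaner statement is that because all boundary points of both $1^{\otimes M}$ and $1^{\otimes N}$ are co-oriented, each strand entering at the bottom must exit at the top, so $M=N$ — but the conclusion and the method are right.
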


\begin{proof}
  We shall prove that \(\sfi\) induces an isomorphism \(\Hom_{\Spider^+}(\bolda,\boldb) \cong \Hom_{\Spider(\beta)}(\bolda,\boldb)\)  for all objects \(\bolda,\boldb \in \Spider^+\). First, we claim that it is enough to consider the case \(\bolda=\boldb=1^{\otimes N}\). Indeed, using \eqref{eq:81} we have a commutative diagram
\begin{equation}
  \label{eq:125}
  \begin{tikzpicture}[baseline=(current bounding box.center)]
  \matrix (m) [matrix of math nodes, row sep=3em, column
  sep=3.5em, text height=1.5ex, text depth=0.25ex] {
     \Hom_{\Spider^+} (\bolda, \boldb) & \Hom_{\Spider^+}(1^{\otimes N},1^{\otimes N})  & \Hom_{\Spider^+} (\bolda, \boldb) \\
     \Hom_{\Spider(\beta)} (\bolda, \boldb) & \Hom_{\Spider(\beta)}(1^{\otimes N},1^{\otimes N})  & \Hom_{\Spider(\beta)} (\bolda, \boldb) \\};
  \path[right hook->] (m-1-1) edge node[left] {} (m-1-2);
  \path[->>] (m-1-2) edge node[right] {} (m-1-3);
  \path[right hook->] (m-2-1) edge node[left] {} (m-2-2);
  \path[->>] (m-2-2) edge node[right] {} (m-2-3);
  \path[->] (m-1-1) edge node[left] {} (m-2-1);
  \path[->] (m-1-2) edge node[left] {} (m-2-2);
  \path[->] (m-1-3) edge node[left] {} (m-2-3);
\end{tikzpicture}
\end{equation}
Then supposing that the middle vertical map is an isomorphism, the left square gives the injectivity and the right square gives the surjectivity of the map \(\Hom_{\Spider^+}(\bolda,\boldb) \mapto \Hom_{\Spider(\beta)}(\bolda,\boldb)\), which is hence an isomorphism.

Se we can restrict to the case of \(1^{\otimes N}\). Since the functors \(\calI^+\) and \(\calI_\beta\) are fully faithful, we have isomorphisms \(\Hom_{\Spider^+}(1^{\otimes N},1^{\otimes N}) \cong \ucalH_N \cong \End_{\uBr(\beta)}(1^{\otimes N}) \cong \Hom_{\Spider(\beta)}(1^{\otimes N},1^{\otimes N})\), and we are done.
\end{proof}

As a corollary of proposition~\ref{prop:10} or alternatively of proposition~\ref{prop:12} we have the following important fact:

\begin{corollary}\label{cor:1}
  In \(\Spider(\beta)\) we have \(\End_{\Spider(\beta)}(a) = \C_q\) for all \(a \in \Z_{\geq 0}\).
\end{corollary}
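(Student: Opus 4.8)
The plan is to transport the computation of $\End_{\Spider(\beta)}(a)$ back along the fully faithful functors constructed just above, landing in a setting where the answer is already known.

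First I would invoke Proposition~\ref{prop:12}: since $\sfi\colon \Spider^+ \mapto \Spider(\beta)$ is fully faithful and $a$ is an object of $\Spider^+$, it induces an algebra isomorphism $\End_{\Spider^+}(a) \cong \End_{\Spider(\beta)}(a)$. So it suffices to show $\End_{\Spider^+}(a) = \C_q$. Next, fix an integer $m \geq a$ and take $n = 0$. Because $\abs{a} = a \leq m$, Lemma~\ref{lem:3} gives an isomorphism $\End_{\Spider^+}(a) \cong \End_{\Spider^+_{\leq m}}(a)$ induced by the quotient functor, and Theorem~\ref{thm:3} identifies this with $\End_{\catRep^+_m}(\bigwedgeq^a \C_q^m)$ via the equivalence $\calG^+_{m|0}$. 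Finally, by \eqref{eq:39}, for $a \leq m$ the representation $\bigwedgeq^a \C_q^m$ is the irreducible highest weight module $L_q(\epsilon_1+\dotsb+\epsilon_a)$, whose highest weight space is one-dimensional; hence every $U_q(\glm)$--endomorphism acts on it by a scalar and, being determined by its value on a generating highest weight vector, is a scalar. Thus $\End_{\catRep^+_m}(\bigwedgeq^a \C_q^m) = \C(q) = \C_q$, and reading the isomorphisms backwards gives the claim.

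There is essentially no obstacle here, since all ingredients are already in place; the only mild point to watch is that one needs \emph{both} Lemma~\ref{lem:3} (to make the auxiliary choice of $m$ harmless) and the irreducibility statement \eqref{eq:39} (valid precisely in the range $a \leq m$). Alternatively, I could run the argument through Proposition~\ref{prop:10} instead: by Lemma~\ref{lem:4} the object $a$ is a retract of $1^{\otimes a}$ in $\Spider^+$, hence in $\Spider(\beta)$, and the corresponding idempotent in $\End_{\uBr(\beta)}(1^{\otimes a}) \cong \ucalH_a$ is a scalar multiple of the $q$--antisymmetriser, whose corner algebra in the Hecke algebra is one-dimensional; this again forces $\End_{\Spider(\beta)}(a) = \C_q$.
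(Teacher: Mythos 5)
Your proposal is correct and matches the paper's intended argument: the paper states the corollary as a consequence of Proposition~\ref{prop:12} (fullness and faithfulness of $\sfi\colon \Spider^+\mapto\Spider(\beta)$) ``or alternatively'' of Proposition~\ref{prop:10}, and you have simply filled in the routine details of both routes — reducing to $\End_{\Spider^+}(a)$, then to $\End_{\catRep^+_m}(\bigwedgeq^a\C_q^m)$ via Lemma~\ref{lem:3} and Theorem~\ref{thm:3}, and finally using irreducibility of $\bigwedgeq^a\C_q^m$; your Hecke-algebra alternative via $\calI_\beta$ and the $q$-antisymmetrizer is likewise the paper's second suggested route.
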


\subsection{Ribbon structure}
\label{sec:ribbon-structure}

In \(\Spider(\beta)\) let us define 
\begingroup
\tikzset{every picture/.style={yscale=0.8}}
\begin{align}
  \label{eq:143}
 c_{1^*,1} & =\; 
\;.\notag
\end{align}
\endgroup
Putting all together, the claim follows since
\begin{equation}
  \label{eq:150}
  [\beta]-[2][\beta-1]+[\beta-2]=-[\beta-2]+[\beta-2]=0
\end{equation}
and
\begin{equation}
  \label{eq:151}
[2-\beta][2][\beta-1]+[\beta-2][\beta-2]+[\beta-1][\beta-1]=[2-\beta][\beta]+[\beta-1][\beta-1]=1
\end{equation}
(using the formulas from section~\ref{sec:overview}).
\end{proof}

We define more generally candidate braiding elements \(c_{a,b^*}\), \(c_{a,b^*}^{-1}\), \(c_{a^*,b}\) and \(c_{a^*,b}^{-1}\) exactly as we did for \(a=b=1\). It follows from lemma~\ref{lem:14} and from the proof of lemma~\ref{lem:8}, and in particularly from \eqref{eq:90}, that they are inverse to each other.  We define moreover elements \(c_{a^*,b^*}\) and \(c_{a^*,b^*}^{-1}\) by taking the duals of the elements \(c_{a,b}\) and \(c_{a,b}^{-1}\), respectively. 

We define then recursively
\begin{align}
  \label{eq:152}
  c_{\bolda,\boldb \otimes \boldb'}& = (\id_\bolda \otimes c_{\bolda,\boldb'})(c_{\bolda,\boldb} \otimes \id_\boldb),\\
  c_{\bolda \otimes \bolda',\boldb}& = (c_{\bolda,\boldb} \otimes \id_{\bolda'})(\id_\bolda \otimes c_{\bolda',\boldb}).\label{eq:153}
\end{align}
Notice that this is well-defined, i.e.\ when defining \(c_{\bolda\otimes \bolda',\boldb \otimes \boldb'}\) it does not matter in which order we use \eqref{eq:152} and \eqref{eq:153} above.

\begin{lemma} \label{lem:15}
  The family of maps \(\{c_{\bolda,\boldb} \suchthat \bolda,\boldb \in \Spider(\beta)\}\) is natural.
\end{lemma}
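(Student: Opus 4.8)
The strategy I would follow mirrors the alternative proof of Proposition~\ref{prop:5}: reduce the naturality square to a short, explicit list of diagram identities on few strands and check those by hand.

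\emph{Reduction to tensor words in $\{1,1^*\}$.} Every object of $\Spider(\beta)$ is a retract of a tensor word in $\{1,1^*\}$: for a factor $a>0$ use $\iota_a,\pi_a$ from Lemma~\ref{lem:4}, for a factor $a^*$ use their duals, then tensor, obtaining $\iota_\bolda\colon\bolda\into\boldsymbol{u}$ and $\pi_\bolda\colon\boldsymbol{u}\surto\bolda$ with $\pi_\bolda\iota_\bolda=\id_\bolda$, where $\boldsymbol{u}$ is a word in $\{1,1^*\}$. As in \eqref{eq:89}--\eqref{eq:90}, I would verify the factorization $c_{\bolda,\boldb}=(\pi_\boldb\otimes\pi_\bolda)\,c_{\boldsymbol{u},\boldsymbol{v}}\,(\iota_\bolda\otimes\iota_\boldb)$ directly from the recursive definitions \eqref{eq:152}, \eqref{eq:153} together with the analogous factorization of the single-label braidings $c_{a,b},c_{a,b^*},c_{a^*,b},c_{a^*,b^*}$ (the case $c_{a,b}$ being \eqref{eq:90} inside $\Spider^+$, and the mixed cases following from it by applying the duality morphisms, since these braidings were \emph{defined} by such rotations). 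Then, given $f\colon\bolda\to\bolda'$ and $g\colon\boldb\to\boldb'$, setting $\tilde f=\iota_{\bolda'}f\pi_\bolda$ and $\tilde g=\iota_{\boldb'}g\pi_\boldb$, naturality of $c$ with respect to $\tilde f,\tilde g$ forces it with respect to $f,g$. So it suffices to prove naturality on tensor words in $\{1,1^*\}$.

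\emph{Reduction to generators.} Writing $f\otimes g=(f\otimes\id)(\id\otimes g)$, full naturality follows from naturality in each tensor slot separately; and a morphism between tensor words in $\{1,1^*\}$ is a composite of morphisms $\id\otimes h\otimes\id$. Using \eqref{eq:152} and \eqref{eq:153}, naturality in a fixed slot is stable under composition and under tensoring with identities, so it is enough to check the naturality square for $h$ in a generating set. By Proposition~\ref{prop:10} the functor $\calI_\beta$ identifies the $\Hom$-spaces between tensor words in $\{1,1^*\}$ with those of $\uBr(\beta)$, which is monoidally generated by $c_{1,1}^{\pm1}$ together with the four adjunction morphisms \eqref{eq:101} for the label $1$; moreover, reducing the object in the opposite slot by the same retract trick, I may take it to be $1$ or $1^*$. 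This leaves finitely many identities among diagrams on at most three strands labelled by $1$ or $1^*$. Those involving only $h=c_{1,1}^{\pm1}$ are the braid/hexagon relations for $c_{1,1}$, which hold already in $\Spider^+$ by Proposition~\ref{prop:12}; the remaining ones express that $c_{1,1^*},c_{1^*,1},c_{1^*,1^*}$ are the correct mates of $c_{1,1}^{\pm1}$ under the cups and caps, and are checked exactly as in Lemma~\ref{lem:14}, using the explicit rotations \eqref{eq:143} and \eqref{eq:145}, the duality relations \eqref{eq:102}, and the defining relations \eqref{eq:84} (together with scalar identities of the type \eqref{eq:150}, \eqref{eq:151}).

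The main obstacle is exactly this last batch of mixed identities: one must verify that sliding a cup or a cap past each of the rotated braidings $c_{1,1^*},c_{1^*,1},c_{1^*,1^*}$ is consistent, i.e.\ that their definition by partial transposition interacts correctly with \eqref{eq:102} and \eqref{eq:84}. Lemma~\ref{lem:14} is the prototype of these computations, and the remaining ones are entirely analogous Reidemeister-type manipulations; everything else in the argument is formal bookkeeping with \eqref{eq:152}--\eqref{eq:153} and the retracts of Lemma~\ref{lem:4}.
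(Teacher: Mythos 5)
Your argument is correct in outline but takes a genuinely different route from the paper's, which is very short: it reduces directly to the monoidal generators of $\Spider(\beta)$, namely the $\Spider^+$ morphisms $\sfE^{(r)},\sfF^{(r)}$ (where naturality is inherited from Proposition~\ref{prop:5}) and the adjunction morphisms (where naturality follows from $c_{\bolda,\boldb}^{-1}$ being the inverse of $c_{\bolda,\boldb}$, which is the content of Lemma~\ref{lem:14}). You instead first compress objects to words in $\{1,1^*\}$ via the retracts of Lemma~\ref{lem:4} and their duals, then invoke Proposition~\ref{prop:10} to identify the remaining hom-spaces with quantized walled Brauer algebras, and reduce to the Brauer generators $c_{1,1}^{\pm1}$ together with the single-label cups and caps. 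This buys a reduction to finitely many three-strand pictures, but the extra preliminary step is not free: the factorisation $c_{\bolda,\boldb}=(\pi_\boldb\otimes\pi_\bolda)\,c_{\boldsymbol{u},\boldsymbol{v}}\,(\iota_\bolda\otimes\iota_\boldb)$ is itself an instance of the naturality you want (with $f=\iota_\bolda$, etc.), and verifying it for the mixed single-label braidings $c_{a,b^*},c_{a^*,b}$ is essentially the computation the paper performs head-on when it checks naturality with respect to $\sfE^{(r)},\sfF^{(r)}$ and the cups and caps. One small imprecision in your sorting of the final identities: when the fixed strand in the opposite slot is $1^*$, the Yang--Baxter relation with $h=c_{1,1}^{\pm1}$ involves the rotated crossing $c_{1,1^*}$ and is therefore not an identity in $\Spider^+$; it belongs with your ``remaining'' cases handled as in Lemma~\ref{lem:14}, not with those covered by Proposition~\ref{prop:12}. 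With that adjustment your proposal reaches the same verifications as the paper, by a more elaborate path.
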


\begin{proof}
  Let \(f\colon \bolda \mapto \bolda'\), \(g \colon \boldb \mapto \boldb'\). We need to show that \((g \otimes f) c_{\bolda,\boldb} = c_{\bolda',\boldb'}(f \otimes g)\). Since morphisms in \(\Spider(\beta)\) are generated by the morphisms \(\sfE^{(r)}\) and \(\sfF^{(r)}\) of \(\Spider^+\) and the adjunction morphisms \eqref{eq:101}, it suffices to consider such morphisms. For \(\sfE^{(r)}\) and \(\sfF^{(r)}\) the statement follows by the naturality of the braiding of \(\Spider^+\) (see proposition~\ref{prop:5}). Hence we only need to consider the adjunction morphisms. For them, the claim follows immediately from the fact that \(c_{\bolda,\boldb}^{-1}\) is the inverse of \(c_{\bolda,\boldb}\). 
\end{proof}

We will now define    a family of twists \(\{\theta_\bolda\}_{\bolda \in \Spider(\beta)}\) compatible with the braiding \(\{c_{\bolda,\boldb}\}_{\bolda,\boldb \in \Spider(\beta)}\). For \(a \in \Z_{\geq 0}\) set \(\theta_a \in \End_{\Spider(\beta)}(a)\) to be multiplication by \(q^{-a\beta+a(a-1)}\). In pictures
\begingroup
\tikzset{every picture/.style={yscale=0.8,xscale=0.5}}
\begin{equation}
  \label{eq:154}
\begin{tikzpicture}[smallnodes,anchorbase,xscale=1.3]
   \draw[int] (0,0) node[below] {\(a\)} -- (0,1.4) -- ++(0,0.1) node[above] {\(a\)} \nendarrow;
   \node[rectangle,draw,fill=white] at (0,0.75) {$\theta_{a}$};
 \end{tikzpicture}\; =  q^{-a\beta+a(a-1)} \;
\begin{tikzpicture}[smallnodes,anchorbase,xscale=1.3]
   \draw[int] (0,0) node[below] {\(\mathclap{a}\)} -- (0,1.5) node[above] {\(\mathclap{a}\)} \midarrow;
 \end{tikzpicture}\;.
\end{equation}
The choice of the scalar is made in such a way that the functor \(\calG_{m|n}\) sends \(\theta_a\) to the twist of \(\catRep_{m|n}\), cf.\ \eqref{eq:43}. Moreover we let \(\theta_{a^*}\) be the dual of \(\theta_a\), that is
\begin{equation}
  \label{eq:155}
\begin{tikzpicture}[smallnodes,anchorbase,xscale=1.3]
   \draw[int] (0,0) node[below] {\(a^*\)} -- ++(0,0.1) \nstartarrowrev -- ++(0,1.4)  node[above] {\(a^*\)}  ;
   \node[rectangle,draw,fill=white] at (0,0.75) {$\theta_{a^*}$};
 \end{tikzpicture}\; = \;
  \begin{tikzpicture}[smallnodes,anchorbase]
   \draw[int] (-1,1.5) node[above] {\(\mathclap{a^*}\)}  -- ++(0,-1) \midarrow .. controls ++(0,-0.5) and ++(0,-0.5) .. ++(1,0) \midarrow -- ++(0,0.5) \midarrow  .. controls ++(0,0.5) and ++(0,0.5) .. ++(1,0) \midarrow -- ++(0,-1) \midarrow node[below] {\(\mathclap{a^*}\)} ;
   \node[rectangle,draw,fill=white] at (0,0.75) {$\theta_{a}$};
 \end{tikzpicture}\; = \;
 \begin{tikzpicture}[smallnodes,anchorbase]
   \draw[int] (3,1.5) node[above] {\(\mathclap{a^*}\)}  -- ++(0,-1) \midarrow .. controls ++(0,-0.5) and ++(0,-0.5) .. ++(-1,0) \midarrow -- ++(0,0.5) \midarrow  .. controls ++(0,0.5) and ++(0,0.5) .. ++(-1,0) \midarrow -- ++(0,-1) \midarrow node[below] {\(\mathclap{a^*}\)};
   \node[rectangle,draw,fill=white] at (2,0.75) {$\theta_{a}$};
 \end{tikzpicture} \; = q^{-a\beta+a(a-1)}\;
\begin{tikzpicture}[smallnodes,anchorbase,xscale=1.3]
   \draw[int] (0,0) node[below] {\(\mathclap{a^*}\)} -- (0,1.5) node[above] {\(\mathclap{a^*}\)} \midarrowrev;
 \end{tikzpicture}\;.
\end{equation}
\endgroup
More generally for \(\bolda \in \Spider(\beta)\) define \(\theta_\bolda\) recursively:
\begin{equation}
  \label{eq:156}
  \theta_{\bolda \otimes \bolda'} = c_{\bolda',\bolda} c_{\bolda,\bolda'} (\theta_\bolda \otimes \theta_{\bolda'}) = (\theta_{\bolda'} \otimes \theta_{\bolda})c_{\bolda',\bolda} c_{\bolda,\bolda'}.
\end{equation}
The equality between the two expressions follows by the naturality of the braiding. Again using the naturality of the braiding it is easy to show that this gives a definition of \(\theta_\bolda\) independent of the order in which we perform the recursion.

\begin{lemma}
  The family of maps \(\{\theta_\bolda \suchthat \bolda \in \Spider(\beta)\}\) is natural.
\end{lemma}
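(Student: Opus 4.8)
The plan is to reduce naturality to a short list of generating morphisms, and to dispatch those using the naturality of the braiding (Lemma~\ref{lem:15}) together with the two structures already at hand: the (balanced) braided structure of $\Spider^+$ for the ``positive'' generators, and the curl relations \eqref{eq:84} for the adjunction morphisms.

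\textbf{Reduction to generators.} First I would check that the class of morphisms $f$ with $\theta_{\boldb}\circ f=f\circ\theta_{\bolda}$ is closed under composition (immediate) and under tensor product. For the latter, if $f_i\colon\bolda_i\mapto\boldb_i$ are natural, then using the recursion \eqref{eq:156} and the interchange law,
\begin{align*}
  \theta_{\boldb_1\otimes\boldb_2}(f_1\otimes f_2)
   &= c_{\boldb_2,\boldb_1}c_{\boldb_1,\boldb_2}(\theta_{\boldb_1}f_1\otimes\theta_{\boldb_2}f_2)
    = c_{\boldb_2,\boldb_1}c_{\boldb_1,\boldb_2}(f_1\theta_{\bolda_1}\otimes f_2\theta_{\bolda_2})\\
   &= (f_1\otimes f_2)\,c_{\bolda_2,\bolda_1}c_{\bolda_1,\bolda_2}(\theta_{\bolda_1}\otimes\theta_{\bolda_2})
    = (f_1\otimes f_2)\,\theta_{\bolda_1\otimes\bolda_2},
\end{align*}
the third equality being the naturality of $c$ from Lemma~\ref{lem:15}. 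Since identities are trivially natural and every morphism of $\Spider(\beta)$ is a composite of tensor products of identities with generators, it suffices to check naturality on a generating set. Using \eqref{eq:76} to write $\sfE^{(r)},\sfF^{(r)}$ as (scalar multiples of) powers of $\sfE=\sfE^{(1)},\sfF=\sfF^{(1)}$, and using \eqref{eq:106}, \eqref{eq:107}, \eqref{eq:116}, \eqref{eq:118} to express the starred variants through the unstarred ones and the adjunction maps \eqref{eq:101}, one is left with (i) the morphisms $\sfE\,\idem_{a\otimes b}$ and $\sfF\,\idem_{a\otimes b}$ of $\Spider^+$, and (ii) the four adjunction morphisms \eqref{eq:101}.

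\textbf{Case (i).} These lie in the image of the fully faithful functor $\sfi\colon\Spider^+\mapto\Spider(\beta)$ (Proposition~\ref{prop:12}). I would argue that $\Spider^+$, base-changed to $\C(q)[q^\beta,q^{-\beta}]$, is balanced with twist $\theta_a$. Indeed, given $f\colon\bolda\mapto\boldb$ in $\Spider^+$, choose $m\geq\abs{\bolda}$; under $\Spider^+\mapto\Spider^+_{\leq m}$, which is an isomorphism on $\Hom$ by Lemma~\ref{lem:3}, composed with the equivalence $\Spider^+_{\leq m}\simeq\catRep^+_m$ of Theorem~\ref{thm:3}, the braiding goes to the braiding, and by \eqref{eq:35}, \eqref{eq:39} (equivalently \eqref{eq:43}) the scalar $q^{-am+a(a-1)}$ is the twist of $\bigwedge^a$, so the recursion \eqref{eq:156} matches the natural twist of the ribbon category $\catRep^+_m$; hence $\theta_{\boldb}f=f\theta_{\bolda}$ holds after the specialization $q^\beta\mapsto q^m$. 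Since $\theta_{\boldb}f-f\theta_{\bolda}$ has coordinates, in any $\C(q)$-basis of the finite-dimensional space $\Hom_{\Spider^+}(\bolda,\boldb)$, that are Laurent polynomials in $q^\beta$ vanishing at the infinitely many points $q^\beta=q^m$ ($m\geq\abs{\bolda}$), it vanishes identically, and naturality for (i) follows in $\Spider(\beta)$.

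\textbf{Case (ii) and the main obstacle.} Since $\theta_0=\id$, $\theta_1=\theta_{1^*}=q^{-\beta}\,\id$, and $\theta_{1\otimes 1^*}=q^{-2\beta}\,c_{1^*,1}c_{1,1^*}$ by \eqref{eq:156}, naturality of $\theta$ against $\coev_1$, $\ev_1$ (and the two opposite-orientation adjunctions) amounts to the ``double-braiding curl'' identities $c_{1^*,1}c_{1,1^*}\,\coev_1=q^{2\beta}\,\coev_1$ and its three siblings. Each of these is a composite of $c_{1,1^*}^{\pm1}$, $c_{1^*,1}^{\pm1}$ and adjunction maps on $1,1^*$, hence lies in the image of the fully faithful functor $\calI_\beta\colon\uBr(\beta)\mapto\Spider(\beta)$ (Proposition~\ref{prop:10}); I would verify it in $\uBr(\beta)$, where it is a direct consequence of the curl relations \eqref{eq:54} and \eqref{eq:52} together with a zig-zag move, or, alternatively, derive it inside $\Spider(\beta)$ from \eqref{eq:121}, \eqref{eq:122} and the duality relations \eqref{eq:102}, in the same spirit as the computation \eqref{eq:147}--\eqref{eq:151} proving Lemma~\ref{lem:14}. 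Naturality against $\coev_a$, $\ev_a$ for general $a$ then follows from the tensor/composition closure above, writing these maps through nested copies of $\coev_1$, $\ev_1$ and the $\Spider^+$-morphisms $\iota_a$, $\pi_a$ of \eqref{eq:202}, \eqref{eq:114}, which are already covered by (i). The step I expect to be delicate is precisely this case (ii): getting all four orientations and the exponents of $q^\beta$ right in the curl identities is exactly where a sign or a shift by $1$ or $2$ can slip in; a secondary technical point is to keep (i) honest by working over $\C(q)[q^\beta,q^{-\beta}]$ rather than the field $\fieldk$ so that the specialization $q^\beta\mapsto q^m$ is legitimate, exactly as flagged in the proof of Proposition~\ref{prop:10}.
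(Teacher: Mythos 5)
Your proof is correct, and the overall skeleton — reduce to generators, then treat the positive generators and the adjunction maps separately, using the naturality of the braiding (Lemma~\ref{lem:15}) to justify closure under tensor product — matches the paper's. What differs is how you dispatch the positive generators $\sfE,\sfF$. The paper proves this directly, by an explicit diagrammatic computation in $\Spider^+$ that invokes Lusztig's formulas~\eqref{eq:EqW1}--\eqref{eq:EqW2} to slide the trivalent vertex through the double braiding, and reads off the scalar $q^{-a\beta+a(a-1)}q^{-b\beta+b(b-1)}=q^{-(a\pm1)\beta+(a\pm1)(a\pm1-1)}q^{-(b\mp1)\beta+(b\mp1)(b\mp1-1)}\cdot q^{\pm 2(a-b)\pm2}$ directly from those formulas. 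You instead argue by specialization: $\Spider^+$ and its braiding are independent of $q^\beta$, $\theta_\bolda$ differs from a braiding composite only by a global Laurent monomial $q^{-N\beta+\ast}$ (where $N=\abs{\bolda}$), so after the equivalence $\Spider^+_{\leq m}\simeq\catRep^+_m$ of Theorem~\ref{thm:3} the identity $\theta_\boldb f=f\theta_\bolda$ holds at $q^\beta=q^m$ because $\catRep^+_m$ is ribbon with the matching twist~\eqref{eq:43}, whence it holds identically. This is a perfectly sound alternative (in fact, since the $q^\beta$-dependence factors out globally, a single specialization already suffices, and you don't even need the infinite family). The trade-off is that the paper's argument is self-contained inside $\Spider^+$, while yours detours through $\catRep^+_m$ but avoids the bookkeeping of~\eqref{eq:EqW1}--\eqref{eq:EqW2}. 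For the adjunction maps your route coincides with the paper's: reduce to $a=1$ (using naturality against $\pi_a,\iota_a$, already covered by the positive case) and then verify the ``full twist on a cup'' identity, which is~\eqref{eq:175}--\eqref{eq:176} in the paper; your proposal to read it off in $\uBr(\beta)$ via the curl relations~\eqref{eq:54} and push it through $\calI_\beta$ (Proposition~\ref{prop:10}) is equivalent and legitimate, since the faithful well-definedness of $\calI_\beta$ is established independently of the lemma at hand. You are also somewhat more explicit than the paper about the tensor-closure step and about dispatching the mixed-orientation generators~\eqref{eq:106}, \eqref{eq:116}, \eqref{eq:118} through~\eqref{eq:101}, which the paper leaves implicit; that is a welcome clarification, not a disagreement.
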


\begin{proof}
  Let \(\phi\colon \bolda \mapto \bolda'\). We need to show that \(\phi \theta_\bolda = \theta_{\bolda'} \phi\). Again, it is sufficient to consider generating morphisms of \(\Spider(\beta)\). First, consider the case \(\phi = \sfE^{(r)} \colon a \otimes b \mapsto a+r \otimes b-r\) for \(a,b \in \Z_{\geq 0}\). By~\eqref{eq:76} it is enough to consider the case \(r=1\).
We have:
\begingroup
\tikzset{every picture/.style={yscale=0.8,xscale=0.5}}
\allowdisplaybreaks
\begin{multline}
  \begin{tikzpicture}[smallnodes,anchorbase]
    \draw[int] (0,0) node[below] {\(\vphantom{b}\mathclap{ a}\)}  -- ++ (0,2) --  ++(0,1) node[above] {\(\mathclap{a{+}1}\)} \nendarrow;
    \draw[int] (1.5,0) node[below] {\(\mathclap{ b}\)}  -- ++(0,2) -- ++ (0,1)  node[above] {\(\mathclap{ b{-}1}\)} \nendarrow;
    \draw (1.5,1.8) -- ++ (-1.5,.4) \midarrow;
   \node[rectangle,draw,fill=white,minimum width=1.2cm] at (.75,1) {$\theta_{a\otimes b}$};
 \end{tikzpicture}\; = \;
  \begin{tikzpicture}[smallnodes,anchorbase]
    \draw[int] (1.5,0) node[below] {\(\mathclap{ b}\)} -- ++(0,0.1) .. controls ++(0,0.2) and ++(0,-0.2) .. ++(-1.5,.4) -- ++(0,0.1)  \nendarrow;
    \draw[int,cross line] (0,0) node[below] {\(\vphantom{b}\mathclap{ a}\)} -- ++(0,0.1)  .. controls ++(0,0.2) and ++(0,-0.2) .. ++(1.5,.4) -- ++(0,0.1)  \nendarrow;
    \draw[int] (1.5,.6) node[below] {} -- ++(0,0.1) .. controls ++(0,0.2) and ++(0,-0.2) .. ++(-1.5,.4) -- ++(0,0.1)  \nendarrow;
    \draw[int,cross line] (0,.6) node[below] {} -- ++(0,0.1)  .. controls ++(0,0.2) and ++(0,-0.2) .. ++(1.5,.4) -- ++(0,0.1)  \nendarrow;
    \draw[int] (0,1.2) node[below] {}  -- ++ (0,.8) --  ++(0,1) node[above] {\(\mathclap{ a{+}1}\)} \nendarrow;
    \draw[int] (1.5,1.2) node[below] {}  -- ++(0,.8) -- ++ (0,1)  node[above] {\(\mathclap{ b{-}1}\)} \nendarrow;
    \draw (1.5,2.2) -- ++ (-1.5,.4) \midarrow;
   \node[rectangle,draw,fill=white] at (0,1.5) {$\theta_{a}$};
   \node[rectangle,draw,fill=white] at (1.5,1.5) {$\theta_{b}$};
 \end{tikzpicture}\; = q^{-(a+b)\beta+a(a-1) +b(b{-}1)}\;
  \begin{tikzpicture}[smallnodes,anchorbase]
    \draw[int] (1.5,0) node[below] {\(\mathclap{ b}\)} -- ++(0,0.1) .. controls ++(0,0.3) and ++(0,-0.3) .. ++(-1.5,.6) -- ++(0,0.1)  \nendarrow;
    \draw[int,cross line] (0,0) node[below] {\(\vphantom{b}\mathclap{ a}\)} -- ++(0,0.1)  .. controls ++(0,0.3) and ++(0,-0.3) .. ++(1.5,.6) -- ++(0,0.1)  \nendarrow;
    \draw[int] (1.5,.8) node[below] {} -- ++(0,0.1) .. controls ++(0,0.3) and ++(0,-0.3) .. ++(-1.5,.6) -- ++(0,0.1)  \nendarrow;
    \draw[int,cross line] (0,.8) node[below] {} -- ++(0,0.1)  .. controls ++(0,0.3) and ++(0,-0.3) .. ++(1.5,.6) -- ++(0,0.1)  \nendarrow;
    \draw[int] (0,1.6) node[below] {}  -- ++ (0,.4) --  ++(0,1) node[above] {\(\mathclap{ a{+}1}\)} \nendarrow;
    \draw[int] (1.5,1.6) node[below] {}  -- ++(0,.4) -- ++ (0,1)  node[above] {\(\mathclap{ b{-}1}\)} \nendarrow;
    \draw (1.5,2.2) -- ++ (-1.5,.4) \midarrow;
 \end{tikzpicture} \\
=q^{-(a+b)\beta+a(a-1) +b(b-1)+2(a-b)+2}\;
  \begin{tikzpicture}[smallnodes,anchorbase]
    \draw[int] (1.5,1.4) node[below] {} -- ++(0,0.1) .. controls ++(0,0.3) and ++(0,-0.3) .. ++(-1.5,.6) -- ++(0,0.1) \nendarrow;
   \draw[int,cross line] (0,1.4) node[below] {} -- ++(0,0.1)  .. controls ++(0,0.3) and ++(0,-0.3) .. ++(1.5,.6)  -- ++(0,0.1) \nendarrow;
    \draw[int] (1.5,2.2) node[below] {} -- ++(0,0.1) .. controls ++(0,0.3) and ++(0,-0.3) .. ++(-1.5,.6) \nendarrow -- ++(0,0.1)   node[above] {\(\mathclap{ a{+}1}\)};
    \draw[int,cross line] (0,2.2) node[below] {} -- ++(0,0.1)  .. controls ++(0,0.3) and ++(0,-0.3) .. ++(1.5,.6) \nendarrow -- ++(0,0.1)  node[above] {\(\mathclap{ b{-}1}\)};
    \draw[int] (0,0) node[below] {\(\vphantom{b}\mathclap{ a}\)}  -- ++ (0,1) \nstartarrow --  ++(0,.4) node[above] {};
    \draw[int] (1.5,0) node[below] {\(\mathclap{ b}\)}  -- ++(0,1) \nstartarrow-- ++ (0,.4)  node[above] {};
    \draw (1.5,.8) -- ++ (-1.5,.4) \midarrow;
 \end{tikzpicture} \;=\;
  \begin{tikzpicture}[smallnodes,anchorbase,xscale=1.2]
    \draw[int] (1.5,.8) node[below] {} -- ++(0,0.1) .. controls ++(0,0.2) and ++(0,-0.2) .. ++(-1.5,.4) -- ++(0,0.1)  \nendarrow ;
   \draw[int,cross line] (0,.8) node[below] {} -- ++(0,0.1)  .. controls ++(0,0.2) and ++(0,-0.2) .. ++(1.5,.4) -- ++(0,0.1)  \nendarrow ;
    \draw[int] (1.5,1.4) node[below] {} -- ++(0,0.1) .. controls ++(0,0.2) and ++(0,-0.2) .. ++(-1.5,.4) -- ++(0,0.1) -- ++(0,.8) \nendarrow node[above] {\(\mathclap{ a{+}1}\)};
    \draw[int,cross line] (0,1.4) node[below] {} -- ++(0,0.1)  .. controls ++(0,0.2) and ++(0,-0.2) .. ++(1.5,.4) -- ++(0,0.1)  -- ++(0,.8) \nendarrow node[above] {\(\mathclap{ b{-}1}\)};
    \draw[int] (0,0) node[below] {\(\vphantom{b}\mathclap{ a}\)}  -- ++ (0,.8) \nendarrow  node[above] {};
    \draw[int] (1.5,0) node[below] {\(\mathclap{ b}\)}  -- ++(0,.8) \nendarrow  node[above] {};
    \draw[uno] (1.5,.15) -- ++ (-1.5,.4) \midarrow;
   \node[rectangle,draw,fill=white] at (0,2.3) {$\theta_{a{+}1}$};
   \node[rectangle,draw,fill=white] at (1.5,2.3) {$\theta_{b{-}1}$};
 \end{tikzpicture}\;=\;
  \begin{tikzpicture}[smallnodes,anchorbase]
    \draw[int] (0,0) node[below] {\(\vphantom{b}\mathclap{ a}\)}  -- ++ (0,2) --  ++(0,1) node[above] {\(\mathclap{ a{+}1}\)} \nendarrow;
    \draw[int] (1.5,0) node[below] {\(\mathclap{ b}\)}  -- ++(0,2) -- ++ (0,1)  node[above] {\(\mathclap{ b{-}1}\)} \nendarrow;
    \draw[uno] (1.5,.8) -- ++ (-1.5,.4) \midarrow;
   \node[rectangle,draw,fill=white,minimum width=1.5cm] at (.75,2) {$\theta_{a{+}1\otimes b{-}1}$};
 \end{tikzpicture}\;.
\end{multline}
\endgroup
Here, the first equality is a consequence of the definition \eqref{eq:156} of the twist on a tensor product, the second one is the definition \eqref{eq:155} of the twist on a single strand, the third one is a consequence of \eqref{eq:EqW1} and \eqref{eq:EqW2}, and the last two equalities are symmetric to the two first ones. 
A similar computation yields the same result for \(\sfF^{(r)}\colon a \otimes b \mapto a-r \otimes b-r\) for \(a,b \in \Z_{\geq 0}\).

Second, we need to consider the case in which \(\phi \colon \varnothing \mapto a \otimes a^*\) is the duality map. By writing \(\id_a = \pi_a \otimes \iota_a\) (see lemma~\ref{lem:4}), since we already showed that the twist commutes with \(\pi_a\) and \(\iota_a\), it is enough to consider the case \(a=1\). We have
\begingroup
\tikzset{every picture/.style={yscale=0.8,xscale=0.8}}
\begin{equation}\label{eq:175}
  \begin{tikzpicture}[smallnodes,anchorbase]
    \draw[uno] (0,2.5) node[above] {\(\mathclap{ 1}\)}  -- ++ (0,-2) \nstartarrowrev arc(180:360:0.5) \midarrowrev -- ++(0,2) \nendarrowrev node[above] {\(\mathclap{1^*}\)} ;
   \node[rectangle,draw,fill=white,minimum width=1cm] at (.5,1) {$\theta_{1\otimes 1^*}$};
 \end{tikzpicture}\; = \;
  \begin{tikzpicture}[smallnodes,anchorbase]
    \draw[uno] (0,0) arc(180:360:0.5) \midarrowrev -- ++(0,0.1) .. controls ++(0,0.2) and ++(0,-0.2) .. ++(-1,.4) -- ++(0,0.1)  ;
    \draw[uno,cross line] (0,0)  -- ++(0,0.1)  .. controls ++(0,0.2) and ++(0,-0.2) .. ++(1,.4) -- ++(0,0.1)  ;
    \draw[uno] (1,.6)  -- ++(0,0.1) .. controls ++(0,0.2) and ++(0,-0.2) .. ++(-1,.4) -- ++(0,0.1)  ;
    \draw[uno,cross line] (0,.6)  -- ++(0,0.1)  .. controls ++(0,0.2) and ++(0,-0.2) .. ++(1,.4) -- ++(0,0.1)  ;
    \draw[uno] (0,1.2)   --  ++(0,0.9) node[above] {\(\mathclap{1}\)} \nendarrow;
    \draw[uno] (1,1.2)   -- ++ (0,0.9)  node[above] {\(\mathclap{ 1^*}\)} \nendarrowrev;
   \node[rectangle,draw,fill=white] at (0,1.5) {$\theta_{1}$};
   \node[rectangle,draw,fill=white] at (1,1.5) {$\theta_{1^*}$};
 \end{tikzpicture}
\end{equation}
\endgroup
and the claim follows since 
\begingroup
\tikzset{every picture/.style={yscale=0.7}}
  \begin{equation}
    \label{eq:176}
    \begin{tikzpicture}[smallnodes,anchorbase]
   \draw[uno, cross line] (0.8,1)  .. controls ++(0,0.3) and ++(0.3,0) .. ++(-0.3,0.6) \startarrowrev .. controls ++(-0.5,0) and ++(0,0.7) .. ++(-0.5,-1.6) \nendarrowrev node[below] {$\mathclap{1}$};
   \draw[uno, cross line] (0.8,1)  .. controls ++(0,-0.3) and ++(0.3,0) .. ++(-0.3,-0.6)  .. controls ++(-0.5,0) and ++(0,-0.7) .. ++(-0.5,1.6) \nendarrow node[above] {$\mathclap{1}$} ;
    \end{tikzpicture} \; = q^{\beta} \;
    \begin{tikzpicture}[smallnodes,anchorbase]
      \draw[uno] (0,0) node[below] {$\mathclap{1}$} -- ++(0,2) \midarrow node[above] {$\mathclap{1}$};
    \end{tikzpicture} \; = \;
    \begin{tikzpicture}[smallnodes,anchorbase]
      \draw[uno] (0,0) node[below] {$\mathclap{1}$} -- ++(0,2) \nendarrow node[above] {$\mathclap{1}$};
   \node[rectangle,draw,fill=white] at (0,1) {$\theta_{1}^{-1}$};
    \end{tikzpicture}\;.\qedhere
  \end{equation}
\endgroup
\end{proof}

Altogether we get:

\begin{theorem}
\label{thm:9}
  The category \(\Spider(\beta)\) is ribbon.
\end{theorem}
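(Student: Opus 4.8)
The plan is to verify the axioms of a ribbon category, most of which are already in place by the end of this subsection. Recall (see \cite[Chapter~I]{MR1292673}) that a ribbon category is a rigid braided monoidal category together with a natural family of twists $\theta=\{\theta_X\}$ satisfying $\theta_{X\otimes Y}=c_{Y,X}c_{X,Y}(\theta_X\otimes\theta_Y)$ and $(\theta_X)^*=\theta_{X^*}$. Now $\Spider(\beta)$ is strict monoidal and rigid by construction (subsection~\ref{sec:category-spider}), and there the left and right duals of every morphism coincide. The family $\{c_{\bolda,\boldb}\}$ is a braiding: it is natural by Lemma~\ref{lem:15}; every $c_{\bolda,\boldb}$ is invertible --- for $c_{1,1}$ (and its dual $c_{1^*,1^*}$) this follows from $\Spider^+$ being braided (Proposition~\ref{prop:5}), for $c_{1,1^*}$ and $c_{1^*,1}$ it is Lemma~\ref{lem:14}, and for general objects the inverse is obtained through the same recursion using the composition--compatibility~\eqref{eq:90} from the proof of Lemma~\ref{lem:8}; and the two hexagon identities hold by the very shape of the recursive definitions~\eqref{eq:152}--\eqref{eq:153}, whose consistency was observed there. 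Finally, $\{\theta_\bolda\}$ is natural by the lemma proved just above, the first ribbon identity is precisely the defining equation~\eqref{eq:156}, and $\theta_\varnothing=\theta_0$ is multiplication by $q^{0}=1$. Hence the only axiom not yet on record is $(\theta_\bolda)^*=\theta_{\bolda^*}$ for arbitrary $\bolda$, whereas so far it has been imposed only on the atomic objects $a$ and $a^*$.

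I would prove $(\theta_\bolda)^*=\theta_{\bolda^*}$ by induction on the number of tensor factors of $\bolda$. For $\bolda=a$ one reduces via Lemma~\ref{lem:4} to the case $a=1$ (writing $a=1^{\otimes a}$ and using, as in the proof that the twists are natural, that $\theta$ commutes with $\iota_a$ and $\pi_a$), and then the identity is~\eqref{eq:155}; the case $\bolda=a^*$ is also~\eqref{eq:155}. For the inductive step, write $(\bolda\otimes\bolda')^*=(\bolda')^*\otimes\bolda^*$, apply~\eqref{eq:156} to both $\theta_{\bolda\otimes\bolda'}$ and $\theta_{(\bolda')^*\otimes\bolda^*}$, substitute the inductive hypotheses $(\theta_\bolda)^*=\theta_{\bolda^*}$ and $(\theta_{\bolda'})^*=\theta_{(\bolda')^*}$, and use that dualising carries the double braiding $c_{\bolda',\bolda}c_{\bolda,\bolda'}$ to $c_{(\bolda')^*,\bolda^*}c_{\bolda^*,(\bolda')^*}$; this last identity is a formal consequence of the naturality of the braiding (Lemma~\ref{lem:15}) and of the way the mixed and starred braidings were produced by ``rotating'' $c_{1,1}$ through the adjunction morphisms~\eqref{eq:143}--\eqref{eq:145}. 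Collecting everything shows that $\Spider(\beta)$ satisfies the ribbon axioms.

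The only genuine work lies, on one hand, in this last duality bookkeeping for the twist and, on the other hand, in making precise the consistency of the braiding recursion underlying the hexagon identities. The latter reduces, via Lemma~\ref{lem:4} and the factorisation~\eqref{eq:81} together with the naturality of $c$, to the braid (Yang--Baxter) relation among the four atomic braidings $c_{1,1},\,c_{1,1^*},\,c_{1^*,1},\,c_{1^*,1^*}$, which in turn follows from the Yang--Baxter relation for $c_{1,1}$ (available from Proposition~\ref{prop:4}) together with the zig-zag and duality relations~\eqref{eq:102} and~\eqref{eq:104}--\eqref{eq:105}; alternatively, since the oriented Brauer category $\uBr(\beta)$ is itself ribbon (being a quotient of the ribbon category of oriented framed tangles), all these relations may simply be transported along the fully faithful functor $\calI_\beta\colon\uBr(\beta)\mapto\Spider(\beta)$ of Proposition~\ref{prop:10}, which sends the braiding and the cups and caps of $\uBr(\beta)$ to $c_{1,1}^{\pm1}$ and to the adjunction morphisms of $\Spider(\beta)$. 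None of this requires a new idea: subsection~\ref{sec:ribbon-structure} was arranged precisely so that the theorem becomes an assembly of facts established earlier, the one delicate point being not to mishandle a sign or an orientation.
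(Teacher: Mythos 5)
Your proposal matches the paper's approach: the theorem is proved there simply by assembling the preceding results of the subsection (the braiding from Proposition~\ref{prop:5} and Lemmas~\ref{lem:14}--\ref{lem:15}, the twist naturality lemma, and the compatibility with duality built into the definitions~\eqref{eq:155}--\eqref{eq:156}), and you usefully flesh this out, in particular by making explicit the inductive verification of $(\theta_\bolda)^*=\theta_{\bolda^*}$ for non-atomic objects, which the paper leaves implicit. One composition is transposed, though: dualising $c_{\bolda',\bolda}c_{\bolda,\bolda'}$ should give the double braiding $c_{\bolda^*,(\bolda')^*}c_{(\bolda')^*,\bolda^*}$ on $(\bolda')^*\otimes\bolda^*$, not $c_{(\bolda')^*,\bolda^*}c_{\bolda^*,(\bolda')^*}$ (an endomorphism of $\bolda^*\otimes(\bolda')^*$, which does not even have the right source and target for the comparison you want).
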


We collect now a technical computation:

\begin{lemma}\label{lem:19}
We have:
\begingroup
\tikzset{every picture/.style={yscale=0.7}}
\begin{align}\label{eq:177}
\begin{tikzpicture}[smallnodes,anchorbase]
  \draw[int] (0.8,1)  .. controls ++(0,-0.3) and ++(0.3,0) .. ++(-0.3,-0.6)  .. controls ++(-0.5,0) and ++(0,-0.7) .. ++(-0.5,1.6) \nendarrow node[above] {$\mathclap{a}$} ;
  \draw[int, cross line] (0.8,1)  .. controls ++(0,0.3) and ++(0.3,0) .. ++(-0.3,0.6) \startarrowrev .. controls ++(-0.5,0) and ++(0,0.7) .. ++(-0.5,-1.6) \nendarrowrev node[below] {$\mathclap{a}$};
\end{tikzpicture} \; & =\;
q^{-a\beta+a(a-1)}\;
\begin{tikzpicture}[smallnodes,anchorbase]
  \draw[int] (0,-1) node[below] {$\mathclap{a}$} -- (0,1) \nendarrow node[above] {$\mathclap{a}$} ;
\end{tikzpicture} \; = \;
\begin{tikzpicture}[smallnodes,anchorbase]
  \draw[int] (0,-1) node[below] {$\mathclap{a}$} -- (0,1) \nendarrow node[above] {$\mathclap{a}$} ;
   \node[rectangle,draw,fill=white] at (0,0) {$\theta_{a}$};
\end{tikzpicture}\;,\\
\begin{tikzpicture}[smallnodes,anchorbase]
  \draw[int] (0.8,1)  .. controls ++(0,0.3) and ++(0.3,0) .. ++(-0.3,0.6) \startarrowrev .. controls ++(-0.5,0) and ++(0,0.7) .. ++(-0.5,-1.6) \nendarrowrev node[below] {$\mathclap{a}$};
  \draw[int,cross line] (0.8,1)  .. controls ++(0,-0.3) and ++(0.3,0) .. ++(-0.3,-0.6)  .. controls ++(-0.5,0) and ++(0,-0.7) .. ++(-0.5,1.6) \nendarrow node[above] {$\mathclap{a}$} ;
\end{tikzpicture} \;& =\;
q^{a\beta-a(a-1)}\;
\begin{tikzpicture}[smallnodes,anchorbase]
  \draw[int] (0,-1) node[below] {$\mathclap{a}$} -- (0,1) \nendarrow node[above] {$\mathclap{a}$} ;
\end{tikzpicture} \; = \;
\begin{tikzpicture}[smallnodes,anchorbase]
  \draw[int] (0,-1) node[below] {$\mathclap{a}$} -- (0,1) \nendarrow node[above] {$\mathclap{a}$} ;
   \node[rectangle,draw,fill=white] at (0,0) {$\theta_{a}^{-1}$};
\end{tikzpicture}\;.
\end{align}
\endgroup
\end{lemma}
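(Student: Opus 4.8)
The plan is to reduce the statement to Corollary~\ref{cor:1} plus one scalar computation. Since the left-hand side of each identity in \eqref{eq:177} is an endomorphism of $a$, and $\End_{\Spider(\beta)}(a)=\C_q$ by Corollary~\ref{cor:1}, both left-hand sides are scalar multiples of $\id_a$; moreover the two kinks are mutually inverse, by a Reidemeister-I straightening which is legitimate in $\Spider(\beta)$ because $c_{a,a}$ is invertible and $a^{*}$ is simultaneously a left and right dual of $a$. Hence it suffices to prove the first identity, i.e.\ that the positive kink on $a$ equals $q^{-a\beta+a(a-1)}\id_a$, which by \eqref{eq:154} is exactly $\theta_a$ (and the second one then follows by taking inverses, or dually via \eqref{eq:155}).

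First I would treat the case in which $\beta=d$ is an integer. Choose $m,n\ge 0$ with $m-n=d$ and $m\ge a$ (so $n=m-d$ can be taken $\ge 0$), so that $\bigwedgeq^{a}\C^{m|n}_q\neq 0$ by \eqref{eq:39}. The functor $\calG_{m|n}\colon\Spider(d)\mapto\catRep_{m|n}$ from Theorem~\ref{thm:6} is a functor of ribbon categories — it carries the braiding to the braiding, the adjunction morphisms to the duality morphisms (Proposition~\ref{prop:6}), and $\theta_a$ to the twist of $\catRep_{m|n}$ by the very choice made in \eqref{eq:154} — so it sends the kink on $a$ to the corresponding kink on $\bigwedgeq^{a}\C^{m|n}_q$, which by \eqref{eq:43} equals $q^{-ad+a(a-1)}\id_{\bigwedgeq^{a}}$. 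On the other hand $\calG_{m|n}(\id_a)=\id_{\bigwedgeq^{a}}\neq 0$, so the map $\End_{\Spider(d)}(a)=\C_q\,\id_a\mapto\End_{\catRep_{m|n}}(\bigwedgeq^{a})$ is injective; comparing, the positive kink on $a$ must equal $q^{-ad+a(a-1)}\id_a=\theta_a$ in $\Spider(d)$.

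For generic $\beta$ I would argue by specialization, exactly as at the end of the proof of Proposition~\ref{prop:10}: working over $\C(q)[q^{\beta},q^{-\beta}]$, the positive kink on $a$ is some scalar $\lambda_a\in\C(q)[q^{\beta},q^{-\beta}]$ times $\id_a$, and the previous paragraph shows that $\lambda_a$ specializes to the value of the Laurent monomial $q^{a(a-1)}(q^{\beta})^{-a}$ at every $q^{\beta}=q^{d}$, $d\in\Z$; since these are infinitely many distinct values, $\lambda_a=q^{-a\beta+a(a-1)}$, as desired. Conceptually the whole lemma is just the standard identity ``a positive (resp.\ negative) kink equals $\theta$ (resp.\ $\theta^{-1}$)'' in the ribbon category $\Spider(\beta)$ (Theorem~\ref{thm:9}), and one may alternatively derive it purely formally from \eqref{eq:154}, \eqref{eq:155} and the definition \eqref{eq:156} of $\theta$ on tensor products, or hands-on by exploding the $a$-strand with $\iota_a,\pi_a$ (Lemma~\ref{lem:4}) to reduce to $a=1$, which is the computation already carried out in \eqref{eq:176}.

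The only genuinely delicate point — and the one I would be most careful about — is matching the over/under crossing and the orientation conventions of the pictures in \eqref{eq:177} with the abstract ``kink $=$ twist'' statement and with the sign of $d$ versus $\beta$ in \eqref{eq:43}; once the $a=1$ instance of the second identity is recognized as \eqref{eq:176} (equal to $q^{\beta}\id_1=\theta_1^{-1}$) this ambiguity is removed, and the rest is the bookkeeping above.
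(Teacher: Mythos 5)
Your approach is correct but genuinely different from the paper's. The paper computes the kink entirely inside \(\Spider(\beta)\): it expands the braiding \(c_{a,a}\) via the explicit Lusztig--symmetry formula (stated after Lemma~\ref{lem:8}), closing up the curl produces the scalar \(q^{-a}\sum_{s=0}^a(-q)^s\qbin{\beta-s}{a}\qbin{a}{s}\), and the quantum--binomial recursion \eqref{eq:curl_keyEq} collapses this to \(q^{-a\beta+a(a-1)}\). Your argument instead pushes the kink forward via \(\calG_{m|n}\), reads off the scalar from \eqref{eq:43} in \(\catRep_{m|n}\), pulls back through the one-dimensional \(\End_{\Spider(\beta)}(a)\) (Corollary~\ref{cor:1}), and specializes from integer \(d\) to generic \(\beta\); this is a valid and conceptually appealing alternative, consistent with how the paper proves other statements such as Theorem~\ref{thm:6}. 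The paper's computation buys a proof entirely internal to \(\Spider(\beta)\); yours buys a short functorial argument at the cost of importing \eqref{eq:43} and the specialization technique.

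Two points, however, need to be handled carefully. First, you need \(\calG_{m|n}\) to carry the \(\Spider\)-kink (a composite of \(\Spider\)-cups/caps and braidings) to the \(\catRep_{m|n}\)-kink. The paper establishes that \(\calG_{m|n}\) preserves adjunction morphisms (Proposition~\ref{prop:6}) but never states that it preserves the braiding; this does follow from \(\calG^+_{m|n}(c_{1,1})=\check R_{V,V}\) (Lemma~\ref{lem:6}) together with the uniqueness of the braiding via \(\iota_a,\pi_a\) (Lemma~\ref{lem:8}), and then the mixed braidings via cups and caps. It is essential to derive the claim this way rather than from Proposition~\ref{prop:9} (\(\calG_{m|n}\circ\calQ_d=\RT_{m|n}\)): that proposition's proof explicitly cites the Lemma~\ref{lem:19} you are proving, so using it would be circular. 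Second, your concluding remark that the lemma is ``just'' the kink-equals-twist identity in the ribbon category \(\Spider(\beta)\) is more delicate than it looks. In \(\Spider(\beta)\) the right duality is a primitive (the adjunctions of \eqref{eq:101}), not the Turaev-derived one; the agreement of that primitive right duality with the one derived from the twist is precisely the content of Lemma~\ref{lem:19}, so invoking the abstract identity risks circularity with the verification that makes \(\Spider(\beta)\) ribbon. Relatedly, your ``explode with \(\iota_a,\pi_a\)'' route does not reduce directly to the \(a=1\) computation \eqref{eq:176}: the \(a\)-cabled kink equals \(\theta_1^{\otimes a}\) composed with the full twist on \(a\) strands, and the eigenvalue of that full twist on the summand \(a\subset 1^{\otimes a}\) is what accounts for the extra \(q^{a(a-1)}\). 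Your primary \(\calG_{m|n}\)-argument is the sound route.
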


\begin{proof}
We check only the first equation, the second being similar. Using the definition of the braiding, it is easy to deduce:
\begin{equation}\label{eq:178}
\begin{tikzpicture}[smallnodes,anchorbase,yscale=0.7]
  \draw[int] (0.8,1)  .. controls ++(0,-0.3) and ++(0.3,0) .. ++(-0.3,-0.6)  .. controls ++(-0.5,0) and ++(0,-0.7) .. ++(-0.5,1.6) \nendarrow node[above] {$\mathclap{a}$} ;
  \draw[int, cross line] (0.8,1)  .. controls ++(0,0.3) and ++(0.3,0) .. ++(-0.3,0.6) \startarrowrev .. controls ++(-0.5,0) and ++(0,0.7) .. ++(-0.5,-1.6) \nendarrowrev node[below] {$\mathclap{a}$};
\end{tikzpicture} \;=\;
q^{-a}\sum_{s=0}^a(-q)^s\qbin{\beta-s}{a}\qbin{a}{s} \; \begin{tikzpicture}[smallnodes,anchorbase,yscale=0.7]
  \draw[int] (0,-1) node[below] {$\mathclap{a}$} -- (0,1) \nendarrow node[above] {$\mathclap{a}$} ;
\end{tikzpicture}\;.
\end{equation}

First, let us notice that for any \(k \in \N\) we have
\begin{multline}\label{eq:curl_keyEq}
\sum_{s=0}^a(-q)^s\qbin{a}{s}\qbin{\beta-s-k}{a} =\sum_{s=0}^a(-q)^s\qbin{\beta-s-k}{a}\left(q^{a-s}\qbin{a-1}{s-1}+q^{-s}\qbin{a-1}{s}\right) \\
= \sum_{s=0}^{a-1}(-1)^s\qbin{a-1}{s}\left(\qbin{\beta-s-k}{a}-q^a\qbin{\beta-s-k-1}{a}\right)\\
= q^{-\beta+k+a}\sum_{s=0}^{a-1}(-q)^s\qbin{a-1}{s}\qbin{\beta-s-k-1}{a-1}.
\end{multline}
Applying this formula recursively we can compute the r.h.s.\ of~\eqref{eq:178}
\begin{equation}
  \begin{aligned}[t]
 & q^{-a}\sum_{s=0}^a(-q)^s\qbin{\beta-s}{a}\qbin{a}{s}=q^{-a}q^{-\beta+a}\sum_{s=0}^{a-1}(-q)^s\qbin{a-1}{s}\qbin{\beta-s-1}{a-1} \\
   &\qquad =q^{-a}q^{-\beta+a}q^{-\beta+(a-1)+1}\cdots q^{-\beta+1+(a-1)}\sum_{s=0}^0(-q)^s\qbin{0}{s}\qbin{\beta-s-a}{0} \\
   &\qquad =q^{-a\beta+a(a-1)},
  \end{aligned}
\end{equation}
whence the claim.
\end{proof}

The category \(\Spider(d)\) allows us to recover the Reshetikhin-Turaev invariant:

\begin{prop}\label{prop:9}
  There is an obvious functor \(\calQ_\beta\colon\catTangles_q^\lab \mapto \Spider(\beta)\) (where the braiding of \(\catTangles_q^\lab\) goes to the braiding of \(\Spider(\beta)\)). For \(\beta=d\), the composition \(\calG_{m|n} \circ \calQ_d\) gives the Reshetikhin-Turaev functor \(\RT_{m|n}\).
\end{prop}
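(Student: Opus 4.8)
The plan is to reduce both assertions to the universal property of the category of framed oriented tangles colored by positive integers: for every $\fieldk$-linear ribbon category $\calC$ and every family of objects $(W_a)_{a\in\Z_{>0}}$ of $\calC$, there is a \emph{unique} $\fieldk$-linear ribbon functor $\catTangles_q^\lab\mapto\calC$ sending the strand colored by $a$ to $W_a$ (this is the colored Reshetikhin--Turaev construction together with Shum's coherence theorem for ribbon tangle categories; see \cite{MR1881401,MR1036112}). First I would apply this to $\calC=\Spider(\beta)$, which is ribbon by theorem~\ref{thm:9}, with $W_a=a$; this produces the functor $\calQ_\beta$. By construction $\calQ_\beta$ sends the positive and negative crossings of two upward $1$-labeled strands to $c_{1,1}$ and $c_{1,1}^{-1}$, the crossings involving higher labels or duals to the braidings $c_{\bolda,\boldb}^{\pm1}$ of Section~\ref{sec:ribbon-structure}, the framing curls to the twists $\theta_\bolda^{\pm1}$ of \eqref{eq:154}--\eqref{eq:156}, and the cups and caps to the adjunction morphisms \eqref{eq:101} --- this is what makes the word ``obvious'' precise.

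Next I would verify that $\calG_{m|n}\colon\Spider(d)\mapto\catRep_{m|n}$ is itself a ribbon functor. It is monoidal, and it sends the adjunction morphisms to the duality morphisms of the ribbon structure of $\catRep_{m|n}$ by construction (proposition~\ref{prop:6}). It intertwines the braidings: on $1\otimes1$, combining the definition \eqref{eq:82} of $c_{1,1}$ with lemma~\ref{lem:6} gives $\calG_{m|n}(c_{1,1})=\checkR_{V,V}$; every other braiding $c_{\bolda,\boldb}$ of $\Spider(d)$ is obtained from $c_{1,1}$ through the monoidal structure, the maps $\iota_a,\pi_a$ of lemma~\ref{lem:4}, the adjunction morphisms, and the recursions \eqref{eq:90}, \eqref{eq:152}--\eqref{eq:153} (see also \eqref{eq:143}, \eqref{eq:145} for the cases involving duals), while the braidings of $\catRep_{m|n}$ satisfy exactly the same identities by naturality and the hexagon axioms; hence $\calG_{m|n}$ carries each $c_{\bolda,\boldb}$ to the braiding of the corresponding representations. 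Finally it respects the twist: $\theta_a$ is multiplication by $q^{-ad+a(a-1)}$, which by \eqref{eq:35} (cf.\ \eqref{eq:43}) is precisely the twist acting on $\bigwedgeq^a V$, and $\theta_\bolda$ for general $\bolda$ is determined from the one-strand case by the braiding via \eqref{eq:156}, exactly as the twist is in $\catRep_{m|n}$.

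Granting this, $\calG_{m|n}\circ\calQ_d$ is a composite of ribbon functors, hence a ribbon functor $\catTangles_q^\lab\mapto\catRep_{m|n}$, and $(\calG_{m|n}\circ\calQ_d)(a)=\calG_{m|n}(a)=\bigwedgeq^a V$, which is precisely the object that $\RT_{m|n}$ assigns to the strand colored by $a$. Since $\RT_{m|n}$ is by definition the ribbon functor determined by that object assignment, the uniqueness clause of the universal property forces $\calG_{m|n}\circ\calQ_d=\RT_{m|n}$, which is the statement.

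The hard part will be the verification that $\calG_{m|n}$ is a ribbon functor, and inside it the matching of the ``mixed'' braidings $c_{a,b^*}$, $c_{a^*,b}$, $c_{a^*,b^*}$ with those of $\catRep_{m|n}$; this is bookkeeping rather than a genuine obstacle, since in both categories these braidings are graphical rotations of $c_{1,1}$, respectively $\checkR_{V,V}$, assembled from the adjunction morphisms that $\calG_{m|n}$ preserves (and their invertibility is lemma~\ref{lem:14}). Alternatively one can shortcut most of it: $\calG_{m|n}$ is full (theorem~\ref{thm:6}), and by lemma~\ref{lem:8} together with its extension to $\Spider(\beta)$ a braiding is determined by its value on $1\otimes1$; so a full monoidal functor sending $c_{1,1}$ to $\checkR_{V,V}$ and $\theta_1$ to the twist of $V$ automatically transports the entire ribbon structure, which again suffices to conclude.
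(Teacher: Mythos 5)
Your argument is correct, and it takes a genuinely different route from the paper's. The paper's proof of Proposition~\ref{prop:9} works from a presentation of $\catTangles^\lab$ by generators and relations (citing the explicit list in Ohtsuki's book) and verifies each relation one at a time in $\Spider(\beta)$: the duality relations are \eqref{eq:102}, the rotation compatibility of $\sfE^{(r)},\sfF^{(r)}$ is \eqref{eq:104}--\eqref{eq:105}, Reidemeister~I is lemma~\ref{lem:19}, Reidemeister~II for mixed orientations is lemma~\ref{lem:14} (i.e.\ \eqref{eq:146}), and Reidemeister~II/III for upward strands come from $\Spider^+$ being braided; the equality $\calG_{m|n}\circ\calQ_d=\RT_{m|n}$ is then declared ``clear'' by matching generators. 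You instead package all of this through Shum's coherence theorem: once theorem~\ref{thm:9} says $\Spider(\beta)$ is ribbon, the universal property of the colored ribbon tangle category produces $\calQ_\beta$ for free, and the second assertion reduces to $\calG_{m|n}$ being a ribbon functor. This is a cleaner way to organize the argument, since it avoids re-deriving tangle relations that a ribbon category satisfies automatically; the cost is that it makes explicit a step the paper also silently assumes, namely that $\calG_{m|n}$ intertwines the braidings and twists, and your verification of that step is the essential content. Two small caveats: your concluding ``shortcut'' (fullness of $\calG_{m|n}$ plus an unproved extension of lemma~\ref{lem:8} to $\Spider(\beta)$ forcing transport of the ribbon structure) is not a valid general principle and should not be leaned on --- fullness alone does not transport braidings; fortunately your main argument via the base case $\calG_{m|n}(c_{1,1})=\check R_{V,V}$, $\calG_{m|n}(\theta_1)=q^{-d}$, preservation of adjunctions, and the parallel naturality/rotation recursions in both categories does not need it. And when invoking the universal property for the \emph{linearized} category $\catTangles_q^\lab$ rather than $\catTangles^\lab$, one should note (as you implicitly do) that the $\fieldk$-linear additive closure inherits the property formally.
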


\begin{proof}
  One has to check that this functor is well-defined, i.e.\ that the relations of \(\catTangles\) are satisfied in the image of \(\calQ_\beta\). We refer to \cite{MR1881401} for the explicit relations of \(\catTangles^\lab\), which we now check. Relations \cite[(3.2) and (3.3)]{MR1881401} are clear, \cite[(3.10)]{MR1881401} is \eqref{eq:102} and \cite[(3.11)]{MR1881401} is implied by \eqref{eq:104} and \eqref{eq:105}. Relation \cite[(3.16)]{MR1881401} is implied by lemma~\ref{lem:19}.
Relations \cite[(3.13) and (3.15)]{MR1881401} follow since \(\Spider^+\) is braided, and relation \cite[(3.14)]{MR1881401} is \eqref{eq:146}. The last claim  is clear.
\end{proof}

\subsection{Equivalence of categories}
\label{sec:equiv-categ}

We constructed a full functor \(\calG \colon \Spider(d) \mapto \catRep_{m|n}\). A natural problem is to ask for additional relations on \(\Spider(d)\) such that this functor becomes an equivalence of categories. The following result tells us that it is enough to look for relations on \(\Spider^+\):

\begin{prop}
  \label{lem:16}
  Suppose that \(S\) is a set of relations on \(\Spider^+\) such that the functor \(\calG^+_{m|n}\) descends to an equivalence of categories \(\Spider^+/S \mapto \catRep^+_{m|n}\). Then the functor \(\calG_{m|n}\) restricts to an equivalence of categories \(\Spider(d)/S \mapto \catRep_{m|n}\). 
\end{prop}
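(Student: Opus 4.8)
The plan is to show that $\calG_{m|n}$ descends to a functor $\Spider(d)/S \to \catRep_{m|n}$ (the quotient being by the monoidal ideal generated by the image of $S$) which is essentially surjective, full and faithful. Essential surjectivity is clear: by definition~\ref{def:1} every object of $\catRep_{m|n}$ is a direct summand of a tensor product of exterior powers $\bigwedgeq^a V$ and their duals, and all of these lie in the image of $\calG_{m|n}$. Fullness is inherited from the fullness of $\calG_{m|n}\colon\Spider(d)\to\catRep_{m|n}$ (theorem~\ref{thm:6}) upon passing to the quotient. So the only thing to prove is faithfulness.

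First I would handle morphisms between two objects $\boldc,\boldd$ of $\Spider^+$, regarded in $\Spider(d)$ via $\sfi$ and in $\Spider(d)/S$ via its quotient. Since $\sfi\colon\Spider^+\to\Spider(d)$ is fully faithful (proposition~\ref{prop:12}), every morphism $\boldc\to\boldd$ in $\Spider(d)/S$ is the image of a morphism in $\Spider^+$, so the natural map
\[
\alpha\colon \Hom_{\Spider^+/S}(\boldc,\boldd)\longrightarrow\Hom_{\Spider(d)/S}(\boldc,\boldd)
\]
is surjective. On the other hand, the composite $\calG_{m|n}\circ\alpha$ is, by the hypothesis on $S$ together with the fact that $\catRep^+_{m|n}$ is a full subcategory of $\catRep_{m|n}$, precisely the isomorphism $\Hom_{\Spider^+/S}(\boldc,\boldd)\cong\Hom_{\catRep^+_{m|n}}(\bigwedgeq^\boldc,\bigwedgeq^\boldd)=\Hom_{\catRep_{m|n}}(\bigwedgeq^\boldc,\bigwedgeq^\boldd)$. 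A surjection whose composite with another map is bijective is itself bijective, and then that second map is bijective too; thus $\calG_{m|n}$ induces a bijection $\Hom_{\Spider(d)/S}(\boldc,\boldd)\cong\Hom_{\catRep_{m|n}}(\bigwedgeq^\boldc,\bigwedgeq^\boldd)$ whenever $\boldc,\boldd$ are objects of $\Spider^+$.

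The general case reduces to this by the rotation argument of the proof of proposition~\ref{prop:7}. Given arbitrary objects $\bolda,\boldb\in\Spider(d)$, I would first conjugate by suitable braiding isomorphisms of the ribbon category $\Spider(d)$ (theorem~\ref{thm:9}); these descend to $\Spider(d)/S$, and their images under $\calG_{m|n}$ are the corresponding braiding isomorphisms of $\catRep_{m|n}$ (since $\calG_{m|n}$ respects the ribbon structure, cf.\ proposition~\ref{prop:9} and lemma~\ref{lem:6}), so this does not affect faithfulness and allows one to assume that $\bolda,\boldb$ satisfy condition $(*)$ from the proof of proposition~\ref{prop:7}. For such $\bolda,\boldb$, composing with the adjunction morphisms exactly as in \eqref{eq:111}--\eqref{eq:112} gives mutually inverse isomorphisms $\Hom_{\Spider(d)/S}(\bolda,\boldb)\cong\Hom_{\Spider(d)/S}(\boldc,\boldd)$ with $\boldc,\boldd\in\Spider^+$ — the zig-zag relations \eqref{eq:102} hold in $\Spider(d)$, hence in its quotient — and the analogous isomorphism in $\catRep_{m|n}$ is intertwined with it by $\calG_{m|n}$, because $\calG_{m|n}$ sends the generating adjunction morphisms of $\Spider(d)$ to the duality morphisms of $\catRep_{m|n}$. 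Since the version of $\calG_{m|n}$ on $\Hom_{\Spider(d)/S}(\boldc,\boldd)$ is an isomorphism by the previous paragraph, so is the one on $\Hom_{\Spider(d)/S}(\bolda,\boldb)$. This yields faithfulness and completes the proof.

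The main obstacle I anticipate is purely bookkeeping: one must check that the rotation and braiding isomorphisms, which are produced separately on the $\Spider(d)$ side (from adjunction morphisms and the ribbon structure) and on the $\catRep_{m|n}$ side (from the duality and braiding of the representation category), genuinely commute with $\calG_{m|n}$. This is however forced, step by step, by the construction of $\calG_{m|n}$ as a functor of ribbon categories; everything else is formal diagram chasing.
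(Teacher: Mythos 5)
Your proposal is correct and follows essentially the same route as the paper's proof: reduce faithfulness to the case of objects coming from \(\Spider^+\) via braiding and adjunction, then handle that base case by combining fullness with the hypothesis on \(S\) and the relationship between \(\Spider^+\) and \(\Spider(d)\) (proposition~\ref{prop:12}). The paper organizes the reduction as an induction on the total number of duals with a dimension count, whereas you perform it in a single step via the isomorphisms \eqref{eq:111}--\eqref{eq:112}; the underlying ideas coincide.
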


\begin{proof}
  We already know that the functor \(\calG_{m|n}\) is essentially surjective and full. Hence it is enough to show that it is faithful. This follows by comparing dimensions of homomorphism spaces. Let \(\bolda , \boldb \in \Spider(d)/S\) be two objects: we want to show that \(\dim \Hom_{\Spider(d)/S}(\bolda,\boldb) = \dim \Hom_{\catRep_{m|n}}(\bigwedge^\bolda, \bigwedge^\boldb)\). We proceed by induction on the total number \(D\) of duals in the tensor product expansion of \(\bolda\) and \(\boldb\). If \(D=0\) then the claim holds by hypothesis, since \(\bolda,\boldb\) are objects of \(\Spider^+\). Hence suppose \(D>0\). Without loss of generality, suppose that \(\bolda\) contains a dual object. Since both the categories \(\Spider^+(d)/S\) and \(\catRep_{m|n}\) are braided, we can suppose without loss of generality that the dual object appears at the last place in the tensor product decomposition of \(\bolda\), that is \(\bolda = \bolda' \otimes a_r^*\). Then we have
  \begin{multline}
    \label{eq:157}
   \dim \Hom_{\Spider(d)/S} (\bolda' \otimes a_r^* ,\boldb) =    \dim \Hom_{\Spider(d)/S} (\bolda',\boldb \otimes a_r) \\= \dim \Hom_{\catRep_{m|n}}\big(\bigwedgeq^{\bolda'}, \bigwedgeq^{\boldb} \otimes \bigwedgeq^{a_r}\big) =   \dim \Hom_{\catRep_{m|n}}\big(\bigwedgeq^{\bolda'} \otimes \bigwedgeq^{{a_r}^*} , \bigwedgeq^{\boldb} \big),
  \end{multline}
  where the middle equality follows by induction, and we are done.
\end{proof}

In particular, let us define the category \(\Spider(d)_{\leq m}\) to be the quotient of the category \(\Spider(d)\) modulo objects \(a > m\) (compare with definition~\ref{def:6}). Then we have the following corollary:

\begin{corollary}
  \label{cor:2}
  The functor \(\calG_{m|0}\) descends to an equivalence of categories
  \begin{equation}
\calG_{m|0} \colon \Spider(d)_{\leq m } \xrightarrow{\;\sim\;} \catRep_{m|0}.\label{eq:198}
\end{equation}
\end{corollary}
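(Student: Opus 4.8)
The plan is to deduce the statement directly from Proposition~\ref{lem:16}, feeding in Theorem~\ref{thm:3}. Since $n=0$ here we have $d=m$. First I would recall that by Theorem~\ref{thm:3} the functor $\calG^+_{m|0}$ descends to an equivalence $\calG^+_{m|0}\colon\Spider^+_{\leq m}\xrightarrow{\;\sim\;}\catRep^+_{m|0}$, and that by Definition~\ref{def:6} the category $\Spider^+_{\leq m}$ is precisely the quotient $\Spider^+/\calI_{m|0}$ of $\Spider^+$ by the monoidal ideal $\calI_{m|0}$ generated by the identity morphisms $\id_a$ with $a>m$. Taking $S=\calI_{m|0}$ and $n=0$ in Proposition~\ref{lem:16} then immediately yields that $\calG_{m|0}$ restricts to an equivalence $\Spider(d)/\calI_{m|0}\xrightarrow{\;\sim\;}\catRep_{m|0}$.

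It then remains to identify $\Spider(d)/\calI_{m|0}$ with $\Spider(d)_{\leq m}$. By the conventions of Proposition~\ref{lem:16}, $\Spider(d)/\calI_{m|0}$ is the quotient of $\Spider(d)$ by the monoidal ideal generated by the image of $\calI_{m|0}$ under the (fully faithful, by Proposition~\ref{prop:12}) functor $\sfi\colon\Spider^+\mapto\Spider(d)$, so it is obtained by imposing $\id_a=0$ for all $a>m$. The point I would check is that this automatically forces $\id_{a^*}=0$ whenever $a>m$ as well: indeed, by the duality relations~\eqref{eq:102} the morphism $\id_{a^*}$ factors through $\id_a$ composed with the adjunction cups and caps, so it lies in the ideal generated by $\id_a$. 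Consequently the monoidal ideal generated by the image of $\calI_{m|0}$ in $\Spider(d)$ is exactly the ideal spanned by all morphisms factoring through a tensor product with at least one tensor factor of absolute value greater than $m$, which is by definition $\Spider(d)_{\leq m}$. Composing the two identifications gives the equivalence $\calG_{m|0}\colon\Spider(d)_{\leq m}\xrightarrow{\;\sim\;}\catRep_{m|0}$.

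I expect the only genuine, though mild, obstacle to be this last bookkeeping step: one has to be careful that first imposing the relations $S$ coming from $\Spider^+$ inside $\Spider(d)$ and then restricting to objects of absolute value $\leq m$ produces the same category as $\Spider(d)_{\leq m}$, which as explained reduces to the remark that duality forces the objects $a$ and $a^*$ (for $a>m$) to be killed simultaneously. Everything substantive—essential surjectivity, fullness, and in particular faithfulness of the resulting functor—is already supplied by Proposition~\ref{lem:16}, whose proof in turn rests on the dimension identity~\eqref{eq:157} together with an induction on the total number of duals.
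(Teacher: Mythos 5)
Your proof is correct and follows essentially the paper's route: the paper treats Corollary~\ref{cor:2} as an immediate consequence of Proposition~\ref{lem:16} combined with Theorem~\ref{thm:3} and gives no further details. The bookkeeping you supply—that killing $\id_a$ for $a>m$ automatically kills $\id_{a^*}$ by the zigzag identities \eqref{eq:102}, so that the monoidal ideal generated by the image of $\calI_{m|0}$ under $\sfi$ really is $\Spider(d)_{\leq m}$—is a correct observation that the paper leaves implicit in its definition of $\Spider(d)_{\leq m}$.
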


Similarly, combining proposition~\ref{lem:16} with the main result from \cite{Grant}, we obtain:
\begin{corollary}
The category \(\catRep_{1|1}\) is equivalent to the quotient of \(\Spider(0)\) modulo the relation
\begingroup
\tikzset{every picture/.style={yscale=0.6,xscale=0.6}}
\begin{equation}
  \label{eq:158}
[a+1][a][b][b-1] \;
\begin{tikzpicture}[smallnodes,anchorbase,int]
\draw (0,0) node [below] {$\vphantom{b}a$} -- ++(0,1.9) -- ++(0,0.1) \nendarrow;
\draw (1,0) node [below] {$b$} -- ++(0,1.9) -- ++(0,0.1) \nendarrow;
\end{tikzpicture}
\;-\;
[2][a+1][b-1]\;
\begin{tikzpicture}[smallnodes,anchorbase,int]
\draw (0,0) node [below] {$\vphantom{b}a$} -- ++(0,1.9) -- ++(0,0.1) \nendarrow;
\draw (1,0) node [below] {$b$} -- ++(0,1.9) -- ++(0,0.1) \nendarrow;
\draw [uno] (1,.4) -- node[below] { $1$} ++(-1,.4) \midarrow;
\draw [uno] (0,1.2) -- node[above] { $1$} ++(1,.4) \midarrow;
\end{tikzpicture}
+[2]^2\;
\begin{tikzpicture}[smallnodes,anchorbase,int]
\draw (0,0) node [below] {$\vphantom{b}a$} -- ++(0,1.9) -- ++(0,0.1) \nendarrow;
\draw (1,0) node [below] {$b$} -- ++(0,1.9) -- ++(0,0.1) \nendarrow;
\draw  (1,.4) -- node[below] { $2$} ++(-1,.4) \midarrow;
\draw  (0,1.2) -- node[above] { $2$} ++(1,.4) \midarrow;
\end{tikzpicture}
\;=0
\end{equation}
\endgroup
for all \(a,b \in \Z_{\geq 0}\).
\end{corollary}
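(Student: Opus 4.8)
The plan is to deduce the statement directly from Proposition~\ref{lem:16}, specialised to $m=n=1$, so that $d=m-n=0$ and $\Spider(d)=\Spider(0)$. According to that proposition it is enough to produce a set $S$ of relations on $\Spider^+$ for which $\calG^+_{1|1}$ descends to an equivalence of categories $\Spider^+/S \xrightarrow{\;\sim\;} \catRep^+_{1|1}$; Proposition~\ref{lem:16} then upgrades this automatically to an equivalence $\Spider(0)/S \xrightarrow{\;\sim\;} \catRep_{1|1}$, which is exactly the assertion. Since the relation \eqref{eq:158} is stated entirely in terms of upward-oriented, positively labelled strands, it is a relation in $\Spider^+$ and hence a legitimate candidate for (a generator of) $S$; note also that its coefficients $[a+1][a][b][b-1]$, $[2][a+1][b-1]$, $[2]^2$ involve only quantum integers in $q$, so it makes sense over $\C(q)$ independently of the parameter $\beta$.

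The content of the proof is thus the identification of $S$, and this is precisely what the main result of \cite{Grant} provides: a generators-and-relations presentation of $\catRep^+_{1|1}$. Translating Grant's web calculus into the $\Spider^+$-language of Section~\ref{sec:graphical-version} — matching the generators $\sfE^{(r)}$, $\sfF^{(r)}$ with the corresponding web morphisms and using that the defining relations of $\Spider^+$ already encode quantized super skew Howe duality for $\gl_{1|1}$ via Theorem~\ref{thm:2} — one finds that the monoidal ideal $\calI_{1|1}\subset\Spider^+$ (the ideal of the introduction, for which $\Spider^+/\calI_{1|1}\xrightarrow{\;\sim\;}\catRep^+_{1|1}$) is generated by the single family \eqref{eq:158}, indexed by $a,b\in\Z_{\geq 0}$. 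Taking $S=\calI_{1|1}$ and invoking Proposition~\ref{lem:16} then finishes the argument, with no further representation-theoretic input required.

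The main obstacle is precisely this translation step. The reference \cite{Grant} works in a somewhat different graphical calculus, and possibly with different orientation and normalisation conventions, so one must set up a careful dictionary and then check, in both directions, that Grant's list of relations is equivalent — modulo the relations of $\Spider^+$ — to the family \eqref{eq:158}: each of Grant's relations must be shown to follow from the $\Spider^+$-relations together with \eqref{eq:158}, and conversely \eqref{eq:158} must be recovered from Grant's presentation. Once the dictionary is fixed this is a finite, if tedious, bookkeeping verification; everything else in the proof is a formal application of Proposition~\ref{lem:16}.
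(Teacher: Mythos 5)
Your proposal is correct and matches the paper's argument exactly: the paper proves this corollary with the single sentence "combining proposition~\ref{lem:16} with the main result from \cite{Grant}, we obtain:" and does not itself carry out the translation of Grant's presentation of $\catRep^+_{1|1}$ into the $\Spider^+$ conventions. You have identified the same two ingredients (Proposition~\ref{lem:16} at $m=n=1$, $d=0$, and Grant's generators-and-relations presentation giving the ideal $\calI_{1|1}$ generated by \eqref{eq:158}) and correctly flagged the convention-matching as the only nontrivial bookkeeping, which the paper likewise leaves implicit.
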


\begin{remark}\label{rem:8}
  In principle, it is possible to describe  a full set of relations for any \(m,n\). Namely, it follows by Schur-Weyl duality (proposition~\ref{prop:1}) that the kernel of the action of the Hecke algebra is generated by the Young symmetrizer \(Y_{\mathrm{b}}\) corresponding to the partition \(\mathrm{b}\) given by one box of size \((n+1)\times (m+1)\). A standard argument (see the proof of  \cite[Theorem~3.3.12]{miophd2}) shows that \(Y_{\mathrm{b}}\), considered as a morphism in \(\Spider^+\), generates the kernel of the functor \(\calG^+_{m|n}\). By proposition~\ref{lem:16} it follows that \(Y_{\mathrm{b}}\) generates the kernel of \(\calG_{m|n} \colon \Spider(d) \mapto \catRep_{m|n}\). Anyway, although explicit, the relation \(Y_{\mathrm{b}}\) is not handy (already in the case of \(\gl_{1|1}\) it involves four strands, while relation~\eqref{eq:158} above only involves two), and one would like to find nicer relations, involving a minimal number of strands.
\end{remark}

\subsection{More relations in \texorpdfstring{$\Spider(\beta)$}{Sp(beta)}}
\label{sec:more-relat-spid}

We compute in this subsection some further relations which are implied from the defining relations of the categories \(\Spider\) and \(\Spider(\beta)\).

\begin{lemma}
  \label{lem:9}
  In \(\Spider\) we have
  \begin{equation}
    \label{eq:127}
\;.
  \end{equation}
  Hence \eqref{eq:138} follows from the Serre relation \eqref{eq:190}.
\end{proof}

\begin{remark}\label{rem:4}
  By swapping all orientations of the uprights, we obtain (with analogous
  proofs) six more relations from
  lemmas~\ref{lem:10}, \ref{lem:11}, and \ref{lem:12}. Moreover, by replacing all
  morphisms \(\sfE_i\)
  with \(\sfF_i\)
  we obtain, again with analogous proofs, other twelve
  relations. For  space reasons, we do not write all of them.
\end{remark}

\begin{lemma}
  \label{lem:13}
  In \(\Spider(\beta)\) we have
\begingroup
\tikzset{every picture/.style={yscale=0.8}}
  \begin{equation}
    \label{eq:140}
    \begin{tikzpicture}[int,smallnodes,anchorbase]
   \draw (0,0) node[below] {$\mathclap{\vphantom{b^*}a}$} -- ++(0,2) \midarrow  node[above] {$\mathclap{a}$};
   \draw (1,0) node[below] {$\mathclap{b^*}$} -- ++(0,2) \midarrowrev node[above] {$\mathclap{b^*}$};
   \draw[uno] (0,1.5)  .. controls ++(0.5,-0.2) and ++(-0.5,-0.2)  .. ++(1,0) node[above,midway] {$1$} \midarrowrev ;
   \draw[uno] (0,0.5) .. controls ++(0.5,0.2) and ++(-0.5,0.2)  .. ++(1,0) node[below,midway] {$1$} \midarrow;
    \end{tikzpicture} \;-  \;
    \begin{tikzpicture}[int,smallnodes,anchorbase]
   \draw (0,0) node[below] {$\mathclap{\vphantom{b^*}a}$} -- ++(0,2) \midarrow  node[above] {$\mathclap{a}$};
   \draw (1,0) node[below] {$\mathclap{b^*}$} -- ++(0,2) \midarrowrev node[above] {$\mathclap{b^*}$};
   \draw[uno] (0,0.5)  .. controls ++(0.5,-0.2) and ++(-0.5,-0.2)  .. ++(1,0) node[below,midway] {$1$} \midarrowrev ;
   \draw[uno] (0,1.5) .. controls ++(0.5,0.2) and ++(-0.5,0.2)  .. ++(1,0) node[above,midway] {$1$} \midarrow;
    \end{tikzpicture} \;=  [a+b-\beta] \;
    \begin{tikzpicture}[int,smallnodes,anchorbase]
   \draw (0,0) node[below] {$\mathclap{\vphantom{b^*}a}$} -- ++(0,2) \midarrow  node[above] {$\mathclap{a}$};
   \draw (1,0) node[below] {$\mathclap{b^*}$} -- ++(0,2) \midarrowrev node[above] {$\mathclap{b^*}$};
    \end{tikzpicture}
  \end{equation}
\endgroup
and
\begingroup
\tikzset{every picture/.style={yscale=0.8}}
  \begin{equation}
    \label{eq:191}
    \begin{tikzpicture}[int,smallnodes,anchorbase]
   \draw (0,0) node[below] {$\mathclap{a^*}$} -- ++(0,2) \midarrowrev  node[above] {$\mathclap{a^*}$};
   \draw (1,0) node[below] {$\mathclap{\vphantom{a^*}b}$} -- ++(0,2) \midarrow node[above] {$\mathclap{b}$};
   \draw[uno] (0,1.5)  .. controls ++(0.5,-0.2) and ++(-0.5,-0.2)  .. ++(1,0) node[above,midway] {$1$} \midarrowrev ;
   \draw[uno] (0,0.5) .. controls ++(0.5,0.2) and ++(-0.5,0.2)  .. ++(1,0) node[below,midway] {$1$} \midarrow;
    \end{tikzpicture} \;-  \;
    \begin{tikzpicture}[int,smallnodes,anchorbase]
   \draw (0,0) node[below] {$\mathclap{a^*}$} -- ++(0,2) \midarrowrev  node[above] {$\mathclap{a^*}$};
   \draw (1,0) node[below] {$\mathclap{\vphantom{a^*}b}$} -- ++(0,2) \midarrow node[above] {$\mathclap{b}$};
   \draw[uno] (0,0.5)  .. controls ++(0.5,-0.2) and ++(-0.5,-0.2)  .. ++(1,0) node[below,midway] {$1$} \midarrowrev ;
   \draw[uno] (0,1.5) .. controls ++(0.5,0.2) and ++(-0.5,0.2)  .. ++(1,0) node[above,midway] {$1$} \midarrow;
    \end{tikzpicture} \;=  [\beta-a-b] \;
    \begin{tikzpicture}[int,smallnodes,anchorbase]
   \draw (0,0) node[below] {$\mathclap{\vphantom{b}a^*}$} -- ++(0,2) \midarrowrev  node[above] {$\mathclap{a^*}$};
   \draw (1,0) node[below] {$\vphantom{a^*}\mathclap{b}$} -- ++(0,2) \midarrow node[above] {$\mathclap{b}$};
    \end{tikzpicture}\;.
  \end{equation}
\endgroup
\end{lemma}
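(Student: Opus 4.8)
The plan is to deduce both identities from their thickness‑$(1,1)$ specializations, which are exactly the defining relations \eqref{eq:121} and \eqref{eq:122} of $\Spider(\beta)$: relation \eqref{eq:140} with $a=b=1$ is \eqref{eq:121}, and \eqref{eq:191} with $a=b=1$ is \eqref{eq:122} (here $[\beta-a-b]=-[a+b-\beta]$, and the interchange of the two sides reflects the interchange of left and right duals, cf.\ Remark~\ref{rem:4}). Conceptually, the reason the general case holds is the same mechanism that produces \eqref{eq:42} and \eqref{eq:44} inside $\catRep_{m|n}$: as far as the $\mathfrak{sl}_2$‑style commutator of an $\sfE$‑rung with an $\sfF$‑rung is concerned, a bent $b^*$‑strand behaves like an ordinary upward strand whose weight datum has been reflected through $\beta$, so that the basic relation \eqref{eq:77} — which on $a\otimes b'$ reads $\sfF^{(1)}\sfE^{(1)}-\sfE^{(1)}\sfF^{(1)}=[b'-a]$ — yields the bracket $[(\beta-b)-a]=-[a+b-\beta]$ rather than $[b-a]$.

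To make this precise I would first recall that, by construction \eqref{eq:116}--\eqref{eq:118}, the mixed rungs $\sfE^{(1)}\idem_{a\otimes b^*}$ and $\sfF^{(1)}\idem_{a\otimes b^*}$ are obtained from ordinary $\Spider^+$‑rungs by bending the $b^*$‑strand downwards with the adjunction maps \eqref{eq:101}. Applying this to the two composites on the left of \eqref{eq:140} and using the duality relations \eqref{eq:102} to straighten the resulting zig‑zags — precisely the kind of bookkeeping carried out in the proofs of Lemmas~\ref{lem:10}, \ref{lem:11}, \ref{lem:12}, and aided by the sliding relations of Lemma~\ref{lem:14} — rewrites their difference as the image, under ``close the $b$‑strand into a loop'', of the commutator $\bigl(\sfF^{(1)}\sfE^{(1)}-\sfE^{(1)}\sfF^{(1)}\bigr)\idem_{a\otimes b}$ together with a closed $b$‑loop encircling the $a$‑strand. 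Relation \eqref{eq:77} evaluates the commutator to a scalar multiple of the identity, the closed loop is evaluated using the $\Spider(\beta)$‑relations \eqref{eq:119}--\eqref{eq:120} (and, if a thick loop appears, first the divided‑power relation \eqref{eq:76}), the powers of $q$ created by the bending are controlled via the Lusztig formulas \eqref{eq:EqW1}--\eqref{eq:EqW2} and the twist computation of Lemma~\ref{lem:19}, and collapsing everything with the bracket identities \eqref{eq:3}--\eqref{eq:5} of \S\ref{sec:conventions} produces exactly $[a+b-\beta]$. An equivalent route, to be used if the direct rotation gets unwieldy, is induction on $a+b$: reduce the outer strand from $a$ to $1\otimes(a-1)$ via $\id_a=\pi_a\iota_a$ (Lemma~\ref{lem:4}), split each thick mixed rung into thin ones using \eqref{eq:76} and the commutation relations \eqref{eq:72}--\eqref{eq:74}, and feed the resulting thin mixed commutators into the inductive hypothesis with base case \eqref{eq:121}.

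The main obstacle is entirely in the scalar bookkeeping rather than in the structure of the argument: one must track which of the two rungs attaches to the ascending portion of the bent $b$‑strand after unbending, and then compile all the $q$‑powers coming from \eqref{eq:EqW1}--\eqref{eq:EqW2} and from the twist. In particular the computation wants a ``thick curl'' evaluation generalizing \eqref{eq:119} — a thickness‑$1$ loop encircling a thickness‑$a$ strand equals an explicit bracket in $\beta$, specializing to the value $[\beta-1]$ of \eqref{eq:119} when $a=1$ — which is not among the stated relations and which I would derive beforehand by induction on $a$ from \eqref{eq:119} using \eqref{eq:76}. Once this auxiliary identity and the bracket manipulations are in place, both \eqref{eq:140} and \eqref{eq:191} follow, the second being the mirror of the first obtained by reversing all orientations of the uprights and repeating the argument, as in Remark~\ref{rem:4}.
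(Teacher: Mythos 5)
Your outline identifies the correct mechanism — unbend the mixed rungs through the adjunction maps and reduce to the thin-strand defining relations of $\Spider(\beta)$ plus the $\Spider^+$ commutator \eqref{eq:77} — and this is indeed what drives the paper's proof (eqs.~\eqref{eq:141}--\eqref{eq:142}): that proof resolves $\sfE\sfF\idem_{a\otimes b^*}$ diagrammatically into thin cup/cap subdiagrams, applies \eqref{eq:119}, \eqref{eq:121}, \eqref{eq:122}, and finishes with a short $q$-integer computation via the identities of \S\ref{sec:conventions}. Where you diverge is the toolkit: the Lusztig formulas \eqref{eq:EqW1}--\eqref{eq:EqW2}, Lemma~\ref{lem:14}, the twist evaluation of Lemma~\ref{lem:19}, and the separate ``thick curl'' auxiliary lemma you propose are not used in the paper's argument — the needed loop/curl evaluations fall out in-line once the thin-strand relations are applied, so building your route around them would be more roundabout than necessary. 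More consequentially, you defer the scalar bookkeeping, which you yourself flag as the main obstacle; since that bookkeeping is precisely the content of the paper's proof (culminating in the identity $[2-\beta][a][b]-[a-1][1-\beta][b]-[b-1][1-\beta][a]-[a-1][b-1][\beta]=[a+b-\beta]$), your argument is a sound conceptual sketch but stops short of a proof. To complete it you would either carry out that computation directly, or make the inductive route in your second option precise by tracking how the cross-terms from $\id_a=\pi_a\iota_a$ feed into the induction.
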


\begin{proof}
  We check only the first relation, the second one being similar. We have
\begingroup
\tikzset{every picture/.style={yscale=0.8,xscale=0.8,baseline={([yshift=-0.5ex]0,0.8)}}}    
  \begin{equation}
    \label{eq:141}
    \begin{aligned}[t]
      & \begin{tikzpicture}[smallnodes]
   \draw[int] (0,0) node[below] {$\mathclap{\vphantom{b^*}a}$} -- ++(0,2) \midarrow ;
   \draw[int] (1,0) node[below] {$\mathclap{b^*}$} -- ++(0,2) \midarrowrev ;
   \drawmiddlee{0,0.5};
   \drawmiddlef{0,1.5};
    \end{tikzpicture} \;=  \;
    \begin{tikzpicture}[smallnodes]
   \draw[int] (0,0) node[below] {$\mathclap{\vphantom{b^*}a}$} -- ++(0,2) \midarrow ;
   \draw[int] (1,0) node[below] {$\mathclap{b^*}$} -- ++(0,2) \midarrowrev;
     \draw[uno] (0,1.5) ++(0,-0.1) -- ++(0.3,+0.1) \midarrow arc (180:0:0.2) \midarrow -- ++(0.3,-0.1)  \midarrow;
     \draw[uno] (0,0.5) ++(0,+0.1) -- ++(0.3,-0.1)  \midarrowrev arc (180:360:0.2) \midarrowrev -- ++(0.3,+0.1) \midarrowrev;
     \draw [dashed] (0.3,0.5) -- ++(0,+1);
     \draw [dashed] (0.7,0.5) -- ++(0,+1);
    \end{tikzpicture} \\  & =  \;
\begin{tikzpicture}[smallnodes]
   \draw[int] (0,0) node[below] {$\mathclap{\vphantom{b^*}a}$} -- ++(0,2) \midarrow ;
   \draw[int] (1,0) node[below] {$\mathclap{b^*}$} -- ++(0,2) \midarrowrev ;
     \draw (0,1.5) ++(0,-0.1) ++(0.3,-0.1) ++(0,0.2) arc (180:0:0.2) \midarrow -- ++(0,-1) \midarrow arc(360:180:0.2) \midarrow -- ++(0,1) \midarrow;
     \draw (0,1.5) ++(0,-0.1) -- ++(0.3,-0.1) \midarrowrev;
     \draw (0,0.5) ++(0,0.1) -- ++(0.3,0.1) \midarrow;
     \draw (1,1.5) ++(0,-0.1) -- ++(-0.3,-0.1) \midarrow;
     \draw (1,0.5) ++(0,0.1) -- ++(-0.3,0.1) \midarrowrev;
     \draw[int] (0.3,1.3) -- ++ (0,-0.6);
     \draw[int] (0.7,1.3) -- ++ (0,-0.6);
    \end{tikzpicture} \; - [a-1]  \;
    \begin{tikzpicture}[smallnodes]
   \draw[int] (0,0) node[below] {$\mathclap{\vphantom{b^*}a}$} -- ++(0,2) \midarrow;
   \draw[int] (1,0) node[below] {$\mathclap{b^*}$} -- ++(0,2) \midarrowrev;
     \draw (0,1.5) ++(0,-0.1) ++(0.3,-0.1) ++(0,0.2) arc (180:0:0.2) \midarrow -- ++(0,-1) \midarrow arc(360:180:0.2) \midarrow -- ++(0,1) \midarrow;
     \draw (1,1.5) ++(0,-0.1) -- ++(-0.3,-0.1) \midarrow;
     \draw (1,0.5) ++(0,0.1) -- ++(-0.3,0.1) \midarrowrev;
     \draw[int] (0.7,1.3) -- ++ (0,-0.6);
    \end{tikzpicture} \; - [b-1] \;
    \begin{tikzpicture}[smallnodes]
   \draw[int] (0,0) node[below] {$\mathclap{\vphantom{b^*}a}$} -- ++(0,2) \midarrow;
   \draw[int] (1,0) node[below] {$\mathclap{b^*}$} -- ++(0,2) \midarrowrev ;
     \draw (0,1.5) ++(0,-0.1) ++(0.3,-0.1) ++(0,0.2) arc (180:0:0.2) \midarrow -- ++(0,-1) \midarrow arc(360:180:0.2) \midarrow -- ++(0,1) \midarrow;
     \draw (0,1.5) ++(0,-0.1) -- ++(0.3,-0.1) \midarrowrev;
     \draw (0,0.5) ++(0,0.1) -- ++(0.3,0.1) \midarrow;
     \draw[int] (0.3,1.3) -- ++ (0,-0.6);
    \end{tikzpicture} \; + [a-1][b-1]  \;
    \begin{tikzpicture}[smallnodes]
   \draw[int] (0,0) node[below] {$\mathclap{\vphantom{b^*}a}$} -- ++(0,2) \midarrow;
   \draw[int] (1,0) node[below] {$\mathclap{b^*}$} -- ++(0,2) \midarrowrev;
     \draw (0,1.5) ++(0,-0.1) ++(0.3,-0.1) ++(0,0.2) arc (180:0:0.2) \midarrow -- ++(0,-1) \midarrow arc(360:180:0.2) \midarrow -- ++(0,1) \midarrow;
    \end{tikzpicture} 
 \\
  &  =  \;
    \begin{tikzpicture}[smallnodes]
   \draw[int] (0,0) node[below] {$\mathclap{\vphantom{b^*}a}$} -- ++(0,2) \midarrow;
   \draw[int] (1,0) node[below] {$\mathclap{b^*}$} -- ++(0,2) \midarrowrev;
     \draw (0,1.5) ++(0,-0.1) ++(0.3,-0.1) .. controls ++(0.2,0.1) and ++(-0.2,0.1) .. ++(0.4,0) \midarrow -- ++(0,-0.6) \midarrow .. controls ++(-0.2,-0.1) and ++(+0.2,-0.1) .. ++(-0.4,0) \midarrow -- ++(0,0.6) \midarrow;
     \draw (0,1.5) ++(0,+0.1) -- ++(0.3,-0.1)  \midarrowrev -- ++(0,-1) -- ++(-0.3,-0.1) \midarrowrev;
     \draw (1,1.5) ++(0,+0.1) -- ++(-0.3,-0.1) \midarrow -- ++(0,-1) -- ++(0.3,-0.1) \midarrow;
     \draw[int] (0.3,1.3) -- ++ (0,-0.6);
     \draw[int] (0.7,1.3) -- ++ (0,-0.6);
    \end{tikzpicture} \; - [a-1]  \;
    \begin{tikzpicture}[smallnodes]
   \draw[int] (0,0) node[below] {$\mathclap{\vphantom{b^*}a}$} -- ++(0,2) \midarrow;
   \draw[int] (1,0) node[below] {$\mathclap{b^*}$} -- ++(0,2) \midarrowrev;
     \draw (0,1.5) ++(0,-0.1) ++(0.3,-0.1) .. controls ++(0.2,0.1) and ++(-0.2,0.1) .. ++(0.4,0) \midarrow -- ++(0,-0.6) \midarrow .. controls ++(-0.2,-0.1) and ++(+0.2,-0.1) .. ++(-0.4,0) \midarrow -- ++(0,0.6) \midarrow;
     \draw (1,1.5) ++(0,+0.1) -- ++(-0.3,-0.1) \midarrow -- ++(0,-1) -- ++(0.3,-0.1) \midarrow;
     \draw[int] (0.7,1.3) -- ++ (0,-0.6);
    \end{tikzpicture} \; - [b-1] \;
    \begin{tikzpicture}[smallnodes]
   \draw[int] (0,0) node[below] {$\mathclap{\vphantom{b^*}a}$} -- ++(0,2) \midarrow ;
   \draw[int] (1,0) node[below] {$\mathclap{b^*}$} -- ++(0,2) \midarrowrev;
     \draw (0,1.5) ++(0,-0.1) ++(0.3,-0.1) .. controls ++(0.2,0.1) and ++(-0.2,0.1) .. ++(0.4,0) \midarrow -- ++(0,-0.6) \midarrow .. controls ++(-0.2,-0.1) and ++(+0.2,-0.1) .. ++(-0.4,0) \midarrow -- ++(0,0.6) \midarrow;
     \draw (0,1.5) ++(0,+0.1) -- ++(0.3,-0.1)  \midarrowrev -- ++(0,-1) -- ++(-0.3,-0.1) \midarrowrev;
     \draw[int] (0.3,1.3) -- ++ (0,-0.6);
    \end{tikzpicture}  \; + [a-1][b-1][\beta]
    \begin{tikzpicture}[smallnodes]
   \draw[int] (0,0) node[below] {$\mathclap{\vphantom{b^*}a}$} -- ++(0,2) \midarrow;
   \draw[int] (1,0) node[below] {$\mathclap{b^*}$} -- ++(0,2) \midarrowrev;
   \end{tikzpicture} \\
   & \begin{multlined}  = \;
    \begin{tikzpicture}[smallnodes]
   \draw[int] (0,0) node[below] {$\mathclap{\vphantom{b^*}a}$} -- ++(0,2) \midarrow;
   \draw[int] (1,0) node[below] {$\mathclap{b^*}$} -- ++(0,2) \midarrowrev;
     \draw (0,1.5) ++(0,+0.1) -- ++(0.3,-0.1)  \midarrowrev -- ++(0,-0.2) .. controls ++(0.2,-0.1) and ++(-0.2,-0.1) .. ++(0.4,0) \midarrowrev -- ++(0,0.2) -- ++(0.3,+0.1) \midarrowrev; 
     \draw (0,0.5) ++(0,-0.1) -- ++(0.3,+0.1)  \midarrow -- ++(0,+0.2) .. controls ++(0.2,+0.1) and ++(-0.2,+0.1) .. ++(0.4,0) \midarrow -- ++(0,-0.2) -- ++(0.3,-0.1) \midarrow; 
     \draw[dashed] (0.3,1.3) -- ++ (0,-0.6);
     \draw[dashed] (0.7,1.3) -- ++ (0,-0.6);
    \end{tikzpicture} \; - [2-\beta] \;
    \begin{tikzpicture}[smallnodes]
   \draw[int] (0,0) node[below] {$\mathclap{\vphantom{b^*}a}$} -- ++(0,2) \midarrow ;
   \draw[int] (1,0) node[below] {$\mathclap{b^*}$} -- ++(0,2) \midarrowrev ;
     \draw (0,1.5) ++(0,+0.1) -- ++(0.3,-0.1)  \midarrowrev -- ++(0,-1) -- ++(-0.3,-0.1) \midarrowrev;
     \draw (1,1.5) ++(0,+0.1) -- ++(-0.3,-0.1) \midarrow -- ++(0,-1) -- ++(0.3,-0.1) \midarrow;
    \end{tikzpicture}\; + [a-1][1-\beta]  \;
    \begin{tikzpicture}[smallnodes]
   \draw[int] (0,0) node[below] {$\mathclap{\vphantom{b^*}a}$} -- ++(0,2) \midarrow ;
   \draw[int] (1,0) node[below] {$\mathclap{b^*}$} -- ++(0,2) \midarrowrev ;
     \draw (1,1.5) ++(0,+0.1) -- ++(-0.3,-0.1) \midarrow -- ++(0,-1) -- ++(0.3,-0.1) \midarrow;
    \end{tikzpicture} \\ + [b-1][1-\beta] \;
    \begin{tikzpicture}[smallnodes]
   \draw[int] (0,0) node[below] {$\mathclap{\vphantom{b^*}a}$} -- ++(0,2) \midarrow ;
   \draw[int] (1,0) node[below] {$\mathclap{b^*}$} -- ++(0,2) \midarrowrev ;
     \draw (0,1.5) ++(0,+0.1) -- ++(0.3,-0.1)  \midarrowrev -- ++(0,-1) -- ++(-0.3,-0.1) \midarrowrev;
    \end{tikzpicture}  \; + [a-1][b-1][\beta]
    \begin{tikzpicture}[smallnodes]
   \draw[int] (0,0) node[below] {$\mathclap{\vphantom{b^*}a}$} -- ++(0,2) \midarrow ;
   \draw[int] (1,0) node[below] {$\mathclap{b^*}$} -- ++(0,2) \midarrowrev ;
   \end{tikzpicture} \;,
   \end{multlined}
  \end{aligned} 
  \end{equation}
\endgroup
  where we used relations \eqref{eq:119}, \eqref{eq:121} and \eqref{eq:122}.
The claim now follows since
\begin{align}\label{eq:142}
& [2-\beta][a][b]-[a-1][1-\beta][b]-[b-1][1-\beta][a]-[a-1][b-1][\beta] \\
& \qquad = [b] \big( [2-\beta][a]- [1-\beta][a-1] \big)
     +[b-1] \big( [\beta-1][a]-[\beta][a-1] \big)\notag\\
& \qquad   = [b] [1-\beta+a] + [b-1][\beta-a] = [b][-\beta+a+1]-[b-1][-\beta+a] \notag\\
& \qquad       = [a+b-\beta]. \notag\qedhere
  \end{align}
\end{proof}

\section{Mixed skew Howe duality and the idempotented quantum group}
\label{sec:dbleSchur}

We introduce now quotients of the idempotented version of \(U_q(\gl_k)\), depending on the choice of a sequence of signs, which are the natural targets for \(\gl_{m|n}\) link invariants. We constructed a first example of this in \cite{QS}, here we generalize it further.

\subsection{Doubled Schur algebras}

Let us  fix a sequence \(\boldeta=(\eta_1,\dotsc,\eta_k) \in \{\pm 1\}^k \) of signs.
Let \(\lattice_\boldeta\) be the set of sequences \(\lambda=(\lambda_1,\dotsc,\lambda_{k+l})\) with \(\lambda_i \in \Z_{\geq 0}\) if \(\eta_i=1\) and \(\lambda_i \in \beta-\Z_{\geq 0}\) if \(\eta_i =-1\). We let also \(\upalpha_i = (0,\dotsc,0,1,-1,0,\dotsc,0)\), the entry \(1\) being at position \(i\).

\begin{definition} \label{defn_UzklB}
We define \(\U_q(\gl_{\boldeta})_\beta\) to be the additive \(\C(q,q^\beta)\)--linear category with:
\begin{itemize}
\item {\bf objects:} formal direct sums of \(\onell{\lambda}\) for \(\lambda \in \lattice_\boldeta\);
\item {\bf morphisms:} generated by identity endomorphisms \(\onel\) in \(\Hom(\onell{\lambda},\onell{\lambda})\), and morphisms \(E_i\onel=\onell{\lambda+\upalpha_i}E_i \in \Hom(\onel,\onell{\lambda+\upalpha_i})\), \(F_i\onel=\onell{\lambda-\upalpha_i}F_i  \in \Hom(\onel, \onell{\lambda-\upalpha_i})\).
We will often abbreviate by omitting some of the symbols \(\onel\). The morphisms are subject to the following relations:
\begin{enumerate}[label=(DS\arabic*),itemsep=5pt,topsep=5pt]
\item \label{item:4} \([E_i,F_j]\onel=\delta_{i,j}[\lambda_{i}-\lambda_{i+1}]\onel\),
\item \label{item:6} \(\begin{aligned}[t]
E_i^2E_j\onel-(q+q^{-1})E_iE_jE_i\onel+E_jE_i^2\onel& =0  &\text{if } j=i\pm 1,  \\
F_i^2F_j\onel-(q+q^{-1})F_iF_jF_i\onel+F_jF_i^2\onel&=0 &\text{if } j=i\pm 1,
\end{aligned}\)
\item \label{item:2} \(E_iE_j\onel=E_jE_i\onel \text{ and } F_iF_j\onel = F_jF_i\onel\) if \(\abs{i-j}>1\). 
\end{enumerate}
\end{itemize}
\end{definition}

We indifferently use: \(\onell{\lambda+\upalpha_i}E_i\onel=E_i\onel=\onell{\lambda+\upalpha_i}E_i\), since knowing the source or the target of the \(1\)-morphism is enough to determine the other one. We use the following convention: if the symbol \(\onell{\lambda}\) appears and \(\lambda \notin \lattice_\boldeta\), then \(\onell{\lambda}=0\).

For the sequence of signs \(\boldeta_{r,s}\) (see \eqref{eq:166}), the category \(\U_q(\gl_{r+s})_{\boldeta_{r,s}}\) was already introduced in \cite{QS}. In the special case \(s=0\) the category \(\U_q(\gl_r)_{\boldeta_{r,0}}\) was already defined in \cite{CKM}, and  denoted  by \(\U_q^{\geq 0}(\gl_r)\).

Let us define a functor \(\Phi_{\boldeta}\colon \U_q(\gl_{\boldeta})_\beta \mapto \Spider(\beta)\) by setting on the generators
\begin{align}
  \onell{\lambda}  & \longmapsto \bolda, \label{eq:184}\\
  E_i\onell{\lambda} & \longmapsto \big(\id^{\otimes(i-1)} \otimes \sfE \otimes \id^{\otimes(k-i-1)}\big)\idem_{\bolda},\label{eq:185}  \\
  F_i\onell{\lambda} & \longmapsto \big(\id^{\otimes(i-1)} \otimes \sfF \otimes \id^{\otimes(k-i-1)}\big)\idem_{\bolda},\label{eq:186}
\end{align}
where \(\bolda\) is determined by
\begin{equation}
  \label{eq:183}
 a_i =  \begin{cases}
 \lambda_i & \text{if } \eta_i = +1,\\
 (-\lambda_i+\beta)^* & \text{if } \eta_i =-1,
   \end{cases}
\end{equation}
Notice that depending on the index \(i\), the morphisms \(\sfE\) and \(\sfF\) in the equations above can be either \eqref{eq:71}, \eqref{eq:107}, \eqref{eq:116} or \eqref{eq:118}.

\begin{prop}
  \label{prop:14}
  The functor \(\Phi_\boldeta\) is  well-defined.
\end{prop}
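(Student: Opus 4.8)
The plan is to check that each defining relation \ref{item:4}, \ref{item:6}, \ref{item:2} of $\U_q(\gl_{\boldeta})_\beta$ is satisfied by the images of the generators under $\Phi_\boldeta$. For relations \ref{item:2} (far-away commutation) and \ref{item:6} (Serre relations), note that the indices $i,j$ involved are adjacent or at distance $\geq 2$, and in each case the relevant strands are either both upward, both downward, or a mix. When the strands adjacent to the ladder rungs are all of the same orientation (all $\Spider^+$-type), the relations are exactly the defining relations \eqref{eq:72}--\eqref{eq:78} of $\Spider^+$, hence hold by definition. When duals are involved, I would invoke the relations proved in Lemmas~\ref{lem:9}--\ref{lem:12}: these were established precisely so that the higher divided-power, commutation, and Serre relations continue to hold in $\Spider$ when some of the vertical strands are replaced by their duals. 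Remark~\ref{rem:4} supplies the remaining orientation variants. So the Serre and commutation relations are a bookkeeping exercise: match each case of \eqref{eq:183} (which of $\eta_i,\eta_{i+1},\eta_{i+2}$ is $+1$ or $-1$) to the appropriate lemma.

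The substantive check is \ref{item:4}, the $[E_i,F_j]\onell{\lambda}=\delta_{i,j}[\lambda_i-\lambda_{i+1}]\onell{\lambda}$ relation. For $i\neq j$ this reduces, after locality (Proposition~\ref{prop:3} and its analogue for duals), to far-away commutation, already handled. For $i=j$ the computation is genuinely local in two adjacent strands, and splits into four cases according to the signs $(\eta_i,\eta_{i+1})$:
\begin{itemize}
\item $(+,+)$: the two strands are $a\otimes b$ with $a=\lambda_i$, $b=\lambda_{i+1}$, and the relation is exactly \eqref{eq:77}, giving $[a-b]=[\lambda_i-\lambda_{i+1}]$.
\item $(+,-)$: the strands are $a\otimes b^*$ with $a=\lambda_i$ and $b=-\lambda_{i+1}+\beta$; here $\sfE$ and $\sfF$ are the morphisms \eqref{eq:116}, and the relation to use is the first equation of Lemma~\ref{lem:13}, which yields $[a+b-\beta]=[\lambda_i-\lambda_{i+1}]$.
\item $(-,+)$: the strands are $a^*\otimes b$ with $a=-\lambda_i+\beta$, $b=\lambda_{i+1}$; now $\sfE$, $\sfF$ are \eqref{eq:118}, and the second equation of Lemma~\ref{lem:13} gives $[\beta-a-b]=[\lambda_i-\lambda_{i+1}]$.
\item $(-,-)$: the strands are $a^*\otimes b^*$ with $a=-\lambda_i+\beta$, $b=-\lambda_{i+1}+\beta$; the morphisms are \eqref{eq:107}, and taking the dual of \eqref{eq:77} (legitimate since left and right duals of morphisms coincide in $\Spider$) gives $-[a-b]=[\lambda_i-\lambda_{i+1}]$, where the sign and the swap $a\leftrightarrow b$ in the duality combine correctly.
\end{itemize}

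I would organize the write-up as: (1) reduce to the local case using Proposition~\ref{prop:3}; (2) dispatch \ref{item:2} and \ref{item:6} to \eqref{eq:72}--\eqref{eq:78}, Lemmas~\ref{lem:9}--\ref{lem:12}, and Remark~\ref{rem:4}, checking that \eqref{eq:183} lands in the right lemma in each sign pattern; (3) do the four cases of \ref{item:4} as above, the key inputs being \eqref{eq:77} and Lemma~\ref{lem:13}. The main obstacle is purely organizational rather than mathematical: one must verify that the substitution \eqref{eq:183} — which sends $\eta_i=-1$ to the object $(-\lambda_i+\beta)^*$ — converts the $\gl_{\boldeta}$-relations, written uniformly in terms of the weights $\lambda_i$, into precisely the relations available in $\Spider(\beta)$ written in terms of the labels $a_i$; in particular one must track the shift by $\beta$ and the contravariance of duality so that the scalars $[\lambda_i-\lambda_{i+1}]$ come out right in the mixed cases $(+,-)$ and $(-,+)$, where Lemma~\ref{lem:13} contributes the brackets $[a+b-\beta]$ and $[\beta-a-b]$ respectively. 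Once the dictionary \eqref{eq:183} is pinned down, every relation is a direct citation.
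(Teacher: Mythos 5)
Your proposal is correct and takes essentially the same route as the paper's own proof: reduce to local (two- or three-strand) configurations, observe that when the adjacent signs $\eta_i,\eta_{i+1}$ agree the relation holds because it already holds in $\Spider^+$ (or its dual), and when they disagree invoke the lemmas proved specifically for the mixed-orientation case — the Serre relations via Lemmas~\ref{lem:10}--\ref{lem:12} and Remark~\ref{rem:4}, and the $[E_i,F_i]$ relation via Lemma~\ref{lem:13}. Your explicit arithmetic in the $(+,-)$ and $(-,+)$ cases, translating $[a+b-\beta]$ and $[\beta-a-b]$ back into $[\lambda_i-\lambda_{i+1}]$ via \eqref{eq:183}, is exactly right and is a useful amplification of what the paper leaves implicit.

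Two small imprecisions worth noting. First, you lump $[E_i,F_j]$ for $|i-j|=1$ together with the far-away commutation already covered by \ref{item:2}, but this case is not far-away: the paper correctly cites the near-commutation relation \eqref{eq:74} (and one must check its mixed-orientation version, which is a short computation in the style of Lemma~\ref{lem:9}). Second, your citation of Lemma~\ref{lem:9} for the Serre relations is extraneous: Definition~\ref{defn_UzklB} presents $\U_q(\gl_\boldeta)_\beta$ purely in terms of $E_i,F_i$ with no divided-power relation, so that lemma is not among what needs to be checked here. Neither point affects the validity of the argument.
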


\begin{proof}
  We shall check that the relations defining \(\U_q(\gl_{\boldeta})_\beta\) are satisfied by the images of the \(E_i\)'s and \(F_i\)'s.  The commuting relation~\ref{item:2} is straightforward. Also \ref{item:4} is straightforward in case \(\abs{i-j}>1\). Since the other relations are local, we only need to consider the case in which two neighbor indices \(i,i+1\) are involved such that \(\eta_i \neq \eta_{i+1}\) (otherwise the relations hold because they hold in \(\Spider^+\) by definition).

Relation \ref{item:4} for \(i=j \pm 1\) is very easy to check, and follows from relation~\eqref{eq:74}. In the case \(i=j\), the relation \ref{item:4} is proved
 in lemma~\ref{lem:13}. 
The Serre relations~\ref{item:6}
are proven in lemmas~\ref{lem:10}, \ref{lem:11}
and \ref{lem:12} (cf.\ also remark~\ref{rem:4}).
\end{proof}

We denote by \(\Spider(\beta)_\boldeta\) the additive full subcategory of \(\Spider(\beta)\) with objects \((a_1,\dotsc,a_k)\) such that \(a_i \in \Z_{\geq 0}\) if \(\eta_i=1\) and \(a_i=b_i^*\) for some \(b_i \in \Z_{\geq 0}\) if \(\eta_i=-1\).
Then \(\Phi_\boldeta\) has values in \(\Spider(\beta)_\boldeta\) and, indeed, we will prove that this is an equivalence of categories. But first, we want to investigate what happens when we swap two signs of the sequence \(\boldeta\).

 Let also \(1 \leq i \leq k\) be an index with \(\eta_i = 1\), \(\eta_{i+1}=-1\) and let \(\sfs\) be the simple transposition \((i,i+1)\). Denote by \(\boldeta' = \sfs \boldeta\) the swapped sequence. Using the braiding in \(\Spider(\beta)\) it is immediate to show that \(\Spider(\beta)_\boldeta\) and \(\Spider(\beta)_{\boldeta'}\) are equivalent. Pulling back this braiding we can construct an equivalence \(\U_q(\gl_{\boldeta})_\beta \mapto \U_q(\gl_{\boldeta'})_\beta\), which is explicitly given by some analogue of Lusztig's symmetries: 

\begin{lemma}\label{lem:21}
We have an equivalence of categories \(\sfT_i \colon \U_q(\gl_{\boldeta})_\beta \mapto \U_q(\gl_{\boldeta'})_\beta\) given by the map
\begin{equation}\label{eq:117}
\begin{aligned}
\onel & \mapsto \onell{\sfs \lambda}, & & \\
E_{i-1} \onel &\mapsto (q E_{i-1}E_i -E_iE_{i-1})\onell{\sfs \lambda}, & F_{i-1}\onel &\mapsto (q^{-1}F_iF_{i-1}-F_{i-1}F_i)\onell{\sfs \lambda}, \\
E_{i+1}\onel &\mapsto (q E_{i+1}E_i-E_iE_{i+1})\onell{\sfs \lambda}, & F_{i+1}\onel &\mapsto (q^{-1}F_iF_{i+1}-F_{i+1}F_i)\onell{\sfs \lambda}, \\
E_i\onel &\mapsto q^{\lambda_i-\lambda_{i+1}}F_i\onell{\sfs \lambda}, & F_i \onel &\mapsto q^{-\lambda_i+\lambda_{i+1}+2}E_i\onell{\sfs \lambda}.
\end{aligned}
\end{equation}
Moreover, we have a commutative diagram
\begin{equation}\label{eq:174}
  \begin{tikzpicture}[baseline=(current bounding box.center)]
  \matrix (m) [matrix of math nodes, row sep=3em, column
  sep=3em, text height=1.5ex, text depth=0.25ex] {
     \U_q(\gl_{\boldeta})_\beta  & \Spider(\beta)_\boldeta  \\
     \U_q(\gl_{\boldeta'})_\beta  & \Spider(\beta)_{\boldeta'}  \\};
  \path[->] (m-1-1) edge node[above] {$\Phi_\boldeta$} (m-1-2);
  \path[->] (m-2-1) edge node[above] {$\Phi_{\boldeta'}$} (m-2-2);
  \path[->] (m-1-1) edge node[left] {$\sfT_i$} (m-2-1);
  \path[->] (m-1-2) edge node[right] {$\sfT_i$} (m-2-2);
\end{tikzpicture}
\end{equation}
where the right vertical arrow is defined on objects by applying the transposition \(\sfs\) and on morphisms by \(\sfT_i(x) = T_i x T_{i}^{-1}\), where
\begin{equation}
T_i = \id_{(a_1,\dotsc,a_{i-1})} \otimes c_{a_i,a_{i+1}} \otimes \id_{(a_{i+2},\dotsc,a_{k})}.\label{eq:173}
\end{equation}
\end{lemma}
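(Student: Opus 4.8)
The statement has two parts: (1) the assignment \eqref{eq:117} defines an equivalence $\sfT_i\colon \U_q(\gl_{\boldeta})_\beta \mapto \U_q(\gl_{\boldeta'})_\beta$, and (2) the square \eqref{eq:174} commutes, where the right vertical arrow is conjugation by $T_i = \id \otimes c_{a_i,a_{i+1}} \otimes \id$. My strategy is to \emph{first} establish (2) at the level of the graphical categories and then \emph{deduce} (1) from it, using that $\Phi_\boldeta$ and $\Phi_{\boldeta'}$ are (as will be proved right after this lemma) equivalences onto $\Spider(\beta)_\boldeta$ and $\Spider(\beta)_{\boldeta'}$. Concretely: the right-hand vertical map $x \mapsto T_i x T_i^{-1}$ is an honest equivalence $\Spider(\beta)_\boldeta \xrightarrow{\sim} \Spider(\beta)_{\boldeta'}$ since $c_{a_i,a_{i+1}}$ is an isomorphism (lemma~\ref{lem:14} and its consequences, cf.\ the proof of lemma~\ref{lem:8}) that is natural (lemma~\ref{lem:15}); conjugation by a natural family of isomorphisms is always a functor and is invertible, with inverse conjugation by $T_i^{-1}$. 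So once the square commutes, $\sfT_i$ on $\U_q(\gl_\boldeta)_\beta$ is forced to be the composite $\Phi_{\boldeta'}^{-1}\circ(\text{conj. by }T_i)\circ\Phi_\boldeta$ of three equivalences, hence an equivalence; this also automatically shows it is well-defined (respects the relations \ref{item:4}--\ref{item:2}).

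\textbf{Checking commutativity of the square.} This reduces to a finite list of diagrammatic identities in $\Spider(\beta)$: apply both legs of \eqref{eq:174} to each generator $\onell\lambda$, $E_j\onell\lambda$, $F_j\onell\lambda$ of $\U_q(\gl_\boldeta)_\beta$ and compare. On objects this is immediate from \eqref{eq:183} together with the fact that $c_{a_i,a_{i+1}}$ swaps the two tensor factors (a pure left/right slot $a_i$ past a dual slot $a_{i+1}^*$); the sign pattern of $\boldeta$ is replaced by that of $\boldeta'=\sfs\boldeta$. On morphisms $E_j,F_j$ with $|j-i|>1$ or $j=i-1$ or $j=i+1$, commutativity is the statement that $\Phi_{\boldeta'}(\sfT_i(E_j)) = T_i\,\Phi_\boldeta(E_j)\,T_i^{-1}$, i.e.\ that sliding the local generator $\sfE$ (in one of its four mixed-orientation forms \eqref{eq:71},\eqref{eq:107},\eqref{eq:116},\eqref{eq:118}) past the braiding $c_{a_i,a_{i+1}}$ produces exactly the $q$-linear combination $qE_{i-1}E_i - E_iE_{i-1}$ (resp.\ for $E_{i+1}$, $F_{i-1}$, $F_{i+1}$). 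These are precisely the Lusztig-type straightening relations \eqref{eq:EqW1}--\eqref{eq:EqW2} of section~\ref{sec:braided-structure}, combined with the definition of the mixed $\sfE^{(r)},\sfF^{(r)}$ by rotation (adjunction) and the explicit formulas for $c_{a,b}$, $c_{a,b^*}$ etc.; the only new input is bookkeeping of which of the four pictures occurs on each side. For the remaining generator $E_i,F_i$ (the one crossing the two slots that get swapped), the identity to verify is $\Phi_{\boldeta'}(q^{\lambda_i-\lambda_{i+1}}F_i) = T_i\,\Phi_\boldeta(E_i)\,T_i^{-1}$ and its $F$-analogue. Here $\Phi_\boldeta(E_i)$ is, by \eqref{eq:185} and \eqref{eq:116}, the ``mixed'' $\sfE$ connecting a slot $a_i$ to a dual slot $a_{i+1}^*=(\beta-\lambda_{i+1})^*$; conjugating by the braiding turns the mixed $\sfE$ into a mixed $\sfF$ of the form \eqref{eq:118}, up to the scalar $q^{\lambda_i-\lambda_{i+1}}$. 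This scalar is exactly the kind of power of $q$ that appears in \eqref{eq:EqW1}--\eqref{eq:EqW2} (there the exponents are $a-b-1$, $b-a-1$, etc., matched against $\lambda_i - \lambda_{i+1}$ via \eqref{eq:183}); and the $+2$ shift in the $F_i\mapsto q^{-\lambda_i+\lambda_{i+1}+2}E_i$ formula comes from the difference between the two weights $\lambda$ and $\sfs\lambda$ combined with the $a(a-1)$-type twist corrections, as in lemma~\ref{lem:19}. I would organize this as: (a) reduce to $r=s=1$ local pieces using \eqref{eq:76}; (b) quote \eqref{eq:EqW1}, \eqref{eq:EqW2}; (c) do the sign/exponent bookkeeping via \eqref{eq:183}.

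\textbf{Main obstacle.} The conceptual content is light — everything is forced once one knows $\Phi_\boldeta$ is an equivalence and that $c_{\bolda,\boldb}$ is a natural isomorphism — but the actual work is the exponent and orientation bookkeeping in the four mixed cases \eqref{eq:71},\eqref{eq:107},\eqref{eq:116},\eqref{eq:118}, especially pinning down the scalars $q^{\lambda_i-\lambda_{i+1}}$ and $q^{-\lambda_i+\lambda_{i+1}+2}$ for the $E_i/F_i$ generator and verifying these are consistent (so that $\sfT_i\sfT_i$, applied with the reversed index roles, gives back the identity up to the expected scalars — a useful sanity check). I expect this to be the part requiring the most care: one has to be scrupulous that $c_{a,b^*}$ vs.\ $c_{a,b^*}^{-1}$, and left vs.\ right duals, are used in the right places, since an error there flips a sign of an exponent. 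I would present the $E_i\onel \mapsto q^{\lambda_i-\lambda_{i+1}}F_i\onell{\sfs\lambda}$ case in full and assert the rest ``by entirely analogous computations (cf.\ remark~\ref{rem:4})'', since the other generators follow the same template with the straightening relations \eqref{eq:EqW1}--\eqref{eq:EqW2} and their orientation-reversed variants.
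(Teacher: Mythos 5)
Your plan is logically circular. You propose to deduce well-definedness and invertibility of \(\sfT_i\) from the commutativity of the square \eqref{eq:174} together with the fact that \(\Phi_\boldeta\) and \(\Phi_{\boldeta'}\) are equivalences onto \(\Spider(\beta)_\boldeta\) and \(\Spider(\beta)_{\boldeta'}\), noting explicitly that the latter ``will be proved right after this lemma.'' But look at how those later facts are established: the proof of fullness (proposition~\ref{prop:13}) opens with ``By lemma~\ref{lem:21}, and in particular by \eqref{eq:174}, we can restrict to the case \(\boldeta=\boldeta_{r,s}\),'' and the proof of faithfulness (proposition~\ref{prop:17}) likewise begins by invoking lemma~\ref{lem:21} to reduce to \(\boldeta_{r,s}\). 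So fullness and faithfulness of \(\Phi_\boldeta\) for a general sign sequence \(\boldeta\) are \emph{consequences} of the present lemma, not inputs to it. In particular, your argument that the relations \ref{item:4}--\ref{item:2} hold for the images \eqref{eq:117} — ``apply \(\Phi_{\boldeta'}\), see that both sides agree, conclude by faithfulness'' — cannot be run, because at this point nothing forces \(\Phi_{\boldeta'}\) to be faithful. (Even making sense of the square on the left requires already knowing that \(\sfT_i\) is a functor; you cannot bootstrap it from the square without faithfulness on the other side.)

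The paper avoids this by doing the lemma the brute-force way: one verifies directly, inside \(\U_q(\gl_{\boldeta'})_\beta\), that the assignments \eqref{eq:117} respect the defining relations \ref{item:4}--\ref{item:2} (a finite, if lengthy, computation), one writes down analogous formulas defining \(\sfT_i^{-1}\) and checks they invert \(\sfT_i\) on generators, and one separately verifies the commutativity of the square by the kind of straightening calculation you sketch via \eqref{eq:EqW1}--\eqref{eq:EqW2}. Your diagrammatic bookkeeping for the commutativity part is on the right track and is genuinely where the work is, but it does not let you skip the algebraic verification of well-definedness and of an explicit inverse — those must be done unconditionally, before any fullness or faithfulness of \(\Phi_\boldeta\) is available. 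The remark you make that ``conceptually everything is forced once one knows \(\Phi_\boldeta\) is an equivalence'' is accurate as a heuristic for why the formulas \eqref{eq:117} are correct, but it cannot serve as the proof here precisely because that equivalence is downstream of this lemma in the paper's logical order.
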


\begin{proof} 
The proof is a lengthy but straightforward calculation. One shows explicitly that \eqref{eq:117} give a well-defined map and that the diagram \eqref{eq:174} commutes. Formulas analogous to \eqref{eq:117} define explicitly the inverse of \(\sfT_i\). 
\end{proof}

As a consequence, we get:
\begin{corollary}\label{cor:3}
  If \(\boldeta,\boldeta'\) are two sequences of signs with the same number of \(+1\) and \(-1\) then \(\U_q(\gl_{\boldeta})_\beta\) and \(\U_q(\gl_{\boldeta'})_\beta\) are isomorphic.
\end{corollary}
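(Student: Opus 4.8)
The statement to prove is Corollary~\ref{cor:3}: if $\boldeta$ and $\boldeta'$ are two sign sequences with the same number of $+1$'s and $-1$'s, then $\U_q(\gl_{\boldeta})_\beta$ and $\U_q(\gl_{\boldeta'})_\beta$ are isomorphic.

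The plan is to deduce this directly from Lemma~\ref{lem:21}. The key observation is that any two sign sequences with the same multiset of entries differ by a permutation of $\{1,\dotsc,k\}$, and the symmetric group $S_k$ is generated by the simple transpositions $\sfs_i = (i,i+1)$. So first I would argue that it suffices to handle the case $\boldeta' = \sfs_i \boldeta$ for a single simple transposition $\sfs_i$: given a general permutation $w$ with $w\boldeta = \boldeta'$, pick a reduced (or any) word $w = \sfs_{i_1}\dotsm\sfs_{i_\ell}$, and compose the equivalences provided at each step, noting that at each intermediate stage we still have a sign sequence to which the previous step applies. Then for a single simple transposition there are two subcases according to the pair of signs $(\eta_i,\eta_{i+1})$ being swapped. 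If $\eta_i = \eta_{i+1}$, the transposition does nothing to $\boldeta$, so $\boldeta' = \boldeta$ and the identity functor works. If $\eta_i = 1, \eta_{i+1} = -1$, Lemma~\ref{lem:21} gives the equivalence $\sfT_i$ directly; if $\eta_i = -1, \eta_{i+1} = 1$, one applies the analogous statement (the inverse situation), which follows by the same argument as Lemma~\ref{lem:21} with the roles reversed — or simply by taking the inverse equivalence of the $\boldeta' \to \boldeta$ direction, since $\sfT_i$ is an equivalence.

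Concretely, I would phrase the proof as follows. Let $p$ be the common number of $+1$'s. Since $\boldeta$ and $\boldeta'$ have the same number of $+1$'s and $-1$'s, there is a permutation $w \in S_k$ with $w \boldeta = \boldeta'$ (acting by permuting positions). Write $w$ as a product of simple transpositions $w = \sfs_{i_N} \dotsm \sfs_{i_1}$. Define $\boldeta^{(0)} = \boldeta$ and $\boldeta^{(j)} = \sfs_{i_j} \boldeta^{(j-1)}$, so $\boldeta^{(N)} = \boldeta'$. For each $j$, either $\boldeta^{(j)} = \boldeta^{(j-1)}$ (when the swapped entries are equal), in which case we take the identity functor; or the two differ in positions $i_j, i_j+1$, and Lemma~\ref{lem:21} (applied directly if the $+1$ precedes the $-1$, or to the reverse swap and then inverted if the $-1$ precedes the $+1$) provides an equivalence $\U_q(\gl_{\boldeta^{(j-1)}})_\beta \xrightarrow{\sim} \U_q(\gl_{\boldeta^{(j)}})_\beta$. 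Composing these gives an equivalence $\U_q(\gl_{\boldeta})_\beta \xrightarrow{\sim} \U_q(\gl_{\boldeta'})_\beta$. Since both categories are small (their objects are indexed by the countable sets $\lattice_\boldeta$ and $\lattice_{\boldeta'}$), an equivalence between them is in fact an isomorphism of categories up to choosing a skeleton — but more simply, the functor $\sfT_i$ of Lemma~\ref{lem:21} is bijective on objects (it is literally $\onell{\lambda} \mapsto \onell{\sfs\lambda}$) and, as the formulas for its inverse are given explicitly, it is an isomorphism of categories, not merely an equivalence; composing isomorphisms yields an isomorphism.

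I do not expect a serious obstacle here: the corollary is essentially a formal consequence of the single-transposition case established in Lemma~\ref{lem:21}, whose proof (the "lengthy but straightforward calculation") is the real content. The only mild point to be careful about is making sure the reduction to simple transpositions is phrased so that at each step Lemma~\ref{lem:21} genuinely applies — i.e.\ tracking that the intermediate sequences $\boldeta^{(j)}$ are honest sign sequences and that the simple transposition being applied at step $j$ swaps adjacent positions of $\boldeta^{(j-1)}$, which is immediate from the construction. One should also remark that $\sfT_i$ from Lemma~\ref{lem:21}, being bijective on objects with an explicitly invertible action on morphisms, is an isomorphism of categories, so the conclusion "isomorphic" (rather than merely "equivalent") is justified.

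\begin{proof}
Since $\boldeta$ and $\boldeta'$ contain the same number of $+1$'s (say $p$) and of $-1$'s, there exists a permutation $w \in S_k$ with $w\boldeta = \boldeta'$, where $S_k$ acts by permuting the positions of the sequence. Write $w$ as a product of simple transpositions $w = \sfs_{i_N}\dotsm\sfs_{i_1}$, and set $\boldeta^{(0)}=\boldeta$, $\boldeta^{(j)}=\sfs_{i_j}\boldeta^{(j-1)}$ for $j=1,\dotsc,N$, so that $\boldeta^{(N)}=\boldeta'$. It suffices to construct, for each $j$, an isomorphism of categories $\U_q(\gl_{\boldeta^{(j-1)}})_\beta \xrightarrow{\;\sim\;} \U_q(\gl_{\boldeta^{(j)}})_\beta$ and compose them.

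Fix $j$ and let $i=i_j$. If the entries in positions $i$ and $i+1$ of $\boldeta^{(j-1)}$ are equal, then $\boldeta^{(j)}=\boldeta^{(j-1)}$ and we take the identity functor. Otherwise, up to relabeling assume the entry in position $i$ is $+1$ and the one in position $i+1$ is $-1$ (if the opposite holds, apply the following to the reverse swap $\boldeta^{(j)}\mapsto \boldeta^{(j-1)}$ and invert). By lemma~\ref{lem:21} the assignment \eqref{eq:117} defines a functor $\sfT_i \colon \U_q(\gl_{\boldeta^{(j-1)}})_\beta \mapto \U_q(\gl_{\boldeta^{(j)}})_\beta$ which is an equivalence; since on objects it is the bijection $\onell{\lambda}\mapsto \onell{\sfs\lambda}$ and its inverse is given by formulas analogous to \eqref{eq:117}, it is in fact an isomorphism of categories.

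Composing the isomorphisms obtained at each step $j=1,\dotsc,N$ yields an isomorphism of categories $\U_q(\gl_{\boldeta})_\beta \xrightarrow{\;\sim\;} \U_q(\gl_{\boldeta'})_\beta$, as claimed.
\end{proof}
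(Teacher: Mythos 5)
Your proposal is correct and takes essentially the same route as the paper, which states the corollary as an immediate consequence of lemma~\ref{lem:21} without spelling out the reduction to simple transpositions. Your extra remark that \(\sfT_i\) is bijective on objects (and hence an isomorphism, not merely an equivalence) is a sensible way to close the small terminological gap between the lemma's ``equivalence'' and the corollary's ``isomorphic''.
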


We can now prove:

\begin{prop}\label{prop:13}
The functor \(\Phi_\boldeta\colon \U_q(\gl_{\boldeta})_\beta \mapto \Spider(\beta)\) is full.
\end{prop}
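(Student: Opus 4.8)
The plan is to reduce the fullness of $\Phi_\boldeta$ to the fullness of the functor $\calI^+$ (or equivalently to the already-established fullness of $\calI_\beta$ from proposition~\ref{prop:10}), by the same kind of ``slicing'' argument used in the proof of that proposition. First I would observe, using corollary~\ref{cor:3} and lemma~\ref{lem:21}, that it suffices to treat a single convenient sign sequence $\boldeta$: the equivalences $\sfT_i$ fit into the commutative square~\eqref{eq:174}, so if $\Phi_\boldeta$ is full for one $\boldeta$ then it is full for every sign sequence with the same number of $+1$'s and $-1$'s, and every $\boldeta$ with $r$ plus-signs and $s$ minus-signs can be brought to $\boldeta_{r,s}$ (see \eqref{eq:166}) by a sequence of adjacent transpositions. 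Hence we may assume $\boldeta = \boldeta_{r,s}$.

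Next I would fix objects $\lambda, \mu \in \lattice_\boldeta$ and a morphism $\phi \in \Hom_{\Spider(\beta)}(\bolda, \boldb)$, where $\bolda, \boldb$ are the objects of $\Spider(\beta)_\boldeta$ attached to $\lambda, \mu$ via \eqref{eq:183}; the goal is to write $\phi$ as a composite of the images under $\Phi_\boldeta$ of the generators $E_i, F_i, \onel$. The key point is that these images are exactly the ``local'' divided-power morphisms $\sfE, \sfF$ of $\Spider(\beta)$ acting between adjacent strands, in each of the four flavours \eqref{eq:71}, \eqref{eq:107}, \eqref{eq:116}, \eqref{eq:118}, together with the reorientation bookkeeping. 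By proposition~\ref{prop:12} we already know $\sfi \colon \Spider^+ \mapto \Spider(\beta)$ is fully faithful, so the subalgebra generated by the ``all-upward'' local generators is the image of $\Spider^+$; and by proposition~\ref{prop:10} the functor $\calI_\beta \colon \uBr(\beta) \mapto \Spider(\beta)$ is full, so in particular every morphism built from cups, caps and braidings between strands labelled $1$ lies in the image. The strategy is then: slice $\phi$ horizontally into elementary layers, each layer being one generating morphism of $\Spider(\beta)$ (an $\sfE^{(r)}$, $\sfF^{(r)}$ in some orientation, or an adjunction map \eqref{eq:101}) tensored with identities; replace every higher label $a$ (resp. $a^*$) at the top and bottom of each layer by $1^{\otimes a}$ (resp. $(1^*)^{\otimes a}$) by inserting $\pi_a \iota_a$ (resp. its dual), using lemma~\ref{lem:4}; and use the trick \eqref{eq:51} so that all cups and caps are labelled by $1$. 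After this, each layer either (i) contains only cups/caps and braidings among $1$- and $1^*$-strands, hence lies in the image of $\calI_\beta$ and therefore (expanding the braiding via \eqref{eq:82}--\eqref{eq:193} and its dual/mixed variants) is built from $\sfE, \sfF$ generators; or (ii) contains only upward- or only downward-pointing strands with no interaction between the two, hence lies in the image of $\sfi(\Spider^+)$ together with its dual. In either case the layer is a composite of images of $E_i$'s and $F_i$'s under $\Phi_\boldeta$, and stacking the layers exhibits $\phi$ in the image.

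There is one genuine subtlety, which I expect to be the main obstacle: after the slicing one must check that the \emph{orientations and labels} match up so that what one obtains really is a word in the $\Phi_\boldeta(E_i), \Phi_\boldeta(F_i)$ \emph{for the fixed sequence} $\boldeta_{r,s}$, rather than some word involving strands whose orientation pattern has drifted. Concretely, a generator $\sfE^{(r)} \idem_{a \otimes b^*}$ as in \eqref{eq:116} has one upward and one downward leg, and expressing it through the local $\sfE$'s of $\Spider(\beta)$ passes through intermediate objects whose sign pattern is no longer $\boldeta_{r,s}$; one must then conjugate back by the braiding isomorphisms of lemma~\ref{lem:21}, exactly as in the reduction step, to land inside $\U_q(\gl_{\boldeta})_\beta$. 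This is the same phenomenon handled in the proof of proposition~\ref{prop:10} via \eqref{eq:51}, and the cleanest way to organize it is: prove fullness first for the all-cups-then-all-caps normal form (where the objects are $(1^*)^{\otimes s'} \otimes 1^{\otimes r'}$ type, the ``(*)'' configuration of proposition~\ref{prop:7}), using that $\calI_\beta$ is full and that $\Spider^+ \hookrightarrow \Spider(\beta)$ is full; then deduce the general case by the braiding conjugation. Since the braiding morphisms themselves are, by Lusztig's symmetry formula and its mixed analogues, words in the $\sfE^{(k)} \sfF^{(l)} \sfE^{(m)}$'s, and divided powers reduce to products of the degree-one generators by \eqref{eq:76}, everything ultimately lies in the image of $\Phi_\boldeta$, and we are done.
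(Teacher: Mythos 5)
Your slicing strategy is the right general idea and matches the spirit of the paper, but there is a genuine gap: after replacing each higher label $a$ (resp.\ $a^*$) by $1^{\otimes a}$ (resp.\ $(1^*)^{\otimes a}$) using $\pi_a\iota_a$, the intermediate objects have \emph{more strands} than $r+s$. The maps $\iota_{\bolda}$ and $\pi_{\boldb}$ themselves, as well as the sliced layers, are then morphisms between length-$(r'+s')$ objects for some $r'\geq r$, $s'\geq s$ (padding with $0$'s), so what your argument shows is that $\phi$ lies in the image of $\Phi_{\boldeta_{r',s'}}$ --- not of $\Phi_{\boldeta_{r,s}}$, which is what fullness requires. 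The subtlety you flag (orientation drift requiring conjugation by the braidings of lemma~\ref{lem:21}) is not the real obstruction: the generators $\sfE^{(r)}\idem_{a\otimes b^*}$ etc.\ are \emph{already} morphisms between length-$k$ objects of the fixed sign pattern, so no orientation correction is needed at the level of the sliced layers; the strand-count growth is the actual issue, and lemma~\ref{lem:21} does nothing to reduce it.

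The paper closes this gap with an explicit reduction: having shown $\phi=\Phi_{\boldeta_{r',s'}}(x)$ for $r',s'$ large, it chooses $r',s'$ minimal with this property and shows by a PBW normal-form argument that $r'=r$, $s'=s$. Concretely, writing $x$ as a sum of monomials $x_h$ with all $F$'s to the right of all $E$'s, each monomial that actually uses $E_1$ or $F_1$ must hit a weight with a negative entry (since the extra slot is zero at source and target) and hence is killed by $\Phi_{\boldeta_{r',s'}}$; therefore $E_1,F_1$ can be dropped, contradicting minimality. Your proof needs this step --- or some replacement for it --- to land in the image of the correct idempotented quantum group. Also note that the paper's first step does not actually slice the walled Brauer algebra into layers; it instead cites a known generating set for $\Br_{\boldeta}(\beta)$ (upward/downward crossings and one cup-cap) and writes those generators directly as images of words in the $E_i,F_i$, which is a bit more efficient than re-running the full proposition~\ref{prop:10} slicing in that special case.
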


\begin{proof}
By lemma~\ref{lem:21}, and in particular by \eqref{eq:174}, we can restrict to the case \(k=r+s\) and \(\boldeta=\boldeta_{r,s}\).

  Let \(\lambda=(1,\dotsc,1,\beta-1,\dotsc,\beta-1)\) and \(\bolda=\Phi_\boldeta(\onell{\lambda}) = 1^{\otimes r} \otimes (1^*)^{\otimes s}\),
and let us prove that the image of \(\Phi_\boldeta\) contains \(\End_{\Spider(\beta)}(\bolda)\). Indeed, \(\End_{\Spider(\beta)}(\bolda)\) is just  the walled Brauer algebra \(\Br_{\boldeta}(\beta)\), which is generated by upward or downward pointing crossings and cup-caps (cf.\ for example \cite{MR2955190}). 
Hence \(\End_{\Spider(\beta)}(\bolda)\) is generated by 
\begin{align}
\id^{\otimes i} \otimes c_{1,1} \otimes \id^{\otimes (r+s-i-2)}  & \qquad \text{for } i=0,\dotsc,r-2,\\
\id^{\otimes i} \otimes c_{1^*,1^*} \otimes \id^{\otimes (r+s-i-2)} &\qquad \text{for } i=r+1,\dotsc,r+s-2,\\
\id^{\otimes (r-1)}\otimes (\sfE \sfF \idem_{1 \otimes 1^*})\otimes \id^{\otimes (s-1)}.
\end{align}
 These elements are clearly in the image of \(\U_q(\gl_{\boldeta_{r,s}})_\beta\). Similarly, we can show that \(\Phi_{\boldeta_{r,s}} \colon \Hom_{\U_q(\gl_{\boldeta_{r,s}})_\beta}(\onell{\lambda},\onell{\mu}) \mapto \Hom_{\Spider(\beta)}(\Phi_{\boldeta_{r,s}}(\onell{\lambda}),\Phi_{\boldeta_{r,s}}(\onell{\mu}))\) is surjective in the case \(\onell{\lambda} = \bolda' \otimes \bolda''\) and \(\onell{\mu} = \boldb' \otimes \boldb''\), where \(\bolda',\boldb'\) are tensor products of \(1\)'s and \(0\)'s and \(\bolda'',\boldb''\) are tensor products of \(1^*\)'s and \(0\)'s. 

  Let us now consider the  case of two general elements \(\onell{\lambda},\onell{\mu} \in \lattice_{\boldeta_{r,s}}\), so that  \(\Phi_{\boldeta_{r,s}}(\onell{\lambda})=\bolda=\bolda' \otimes \bolda''\) and \(\Phi_{\boldeta_{r,s}}(\onell{\mu})=\boldb=\boldb' \otimes \boldb''\) with \(\bolda' ,\boldb' \in \Spider^+\) and \(\bolda'',\boldb''\) duals of objects from \(\Spider^+\), and let us show that  \(\Hom_{\Spider(\beta)}(\bolda,\boldb)\) is in the image of \(\Phi_{\boldeta}\). The strategy of the proof is the following: first, we prove that this is true up to enlarging \(r\) and \(s\) (pictorially, we allow more horizontal space for more strands) and then we show that this enlarging was not necessary.

Let also \(r'\geq r, s' \geq s\) and let \(j\colon \U_q(\gl_{\boldeta_{r,s}})_\beta \into \U_q(\gl_{\boldeta_{r',s'}})_\beta\) be the inclusion which maps \(X_i \mapsto X_{i+r'-r}\) for \(X_i=E_i,F_i\). Notice that the following diagram commutes by construction
  \begin{equation}
    \label{eq:167}
      \begin{tikzpicture}[baseline=(current bounding box.center)]
  \matrix (m) [matrix of math nodes, row sep=3em, column
  sep=4.5em, text height=1.5ex, text depth=0.25ex] {
    \U_q(\gl_{\boldeta_{r+s}})_\beta &  \Spider(\beta)\\
    \U_q(\gl_{\boldeta_{r'+s'}})_\beta &  \Spider(\beta)\\};
  \path[right hook->] (m-1-1) edge node[left] {$j$} (m-2-1);
  \path[right hook->] (m-1-2) edge node[left] {} (m-2-2);
  \path[->] (m-1-1) edge node[above] {$\Phi_{\boldeta_{r,s}}$} (m-1-2);
  \path[->] (m-2-1) edge node[above] {$\Phi_{\boldeta_{r',s'}}$} (m-2-2);
    \end{tikzpicture}
  \end{equation}
where the vertical map on the right is given by tensoring with \(0^{\otimes (r'-r)}\) and \(0^{\otimes(s'-s)}\) on the left and on the right, respectively.
We claim that there are some \(r'\) and \(s'\) such that \(\Hom_{\Spider(d)}(\bolda,\boldb)\) is in the image of \(\Phi_{\eta_{r',s'}}\). 
Indeed, we can write any morphism \(\varphi \colon \bolda \mapto \boldb\) as \((\pi_{\boldb'} \otimes \pi_{\boldb''}) \circ \varphi' \circ (\iota_{\bolda'} \otimes \iota_{\bolda''})\) for \(\varphi' \in \Hom(1^{\otimes \abs{\bolda'}} \otimes 1^{* \otimes \abs{\bolda''}}, 1^{\otimes \abs{\boldb'}} \otimes 1^{* \otimes \abs{\boldb''}})\). Choose \(r' = \max\{\abs{\bolda'},\abs{\boldb'}\}\) and \(s' = \max\{\abs{\bolda''},\abs{\boldb''}\}\). Then, by the first paragraph of the proof, \(\varphi'\)  is in the image of \(\Phi_{\boldeta_{r',s'}}\). Since \(\pi_{\boldb'} \otimes \pi_{\boldb''}\) and \(\iota_{\bolda'} \otimes \iota_{\bolda''}\) are also in the image of \(\Phi_{\boldeta_{r',s'}}\), the morphism \(\varphi\) is, too.

Choose now \(r'\geq r\) and \(s'\geq s\) minimal such that \(\Hom_{\Spider(\beta)}(\bolda,\boldb)\) is in the image of \(\Phi_{\eta_{r',s'}}\). We want to show by contradiction that \(r'=r\) and \(s'=s\). Suppose, on the contrary, that \(r'>r\) (the case \(s'>s\) being analogous). Choose \(\varphi \in \Hom_{\Spider(\beta)}(\bolda,\boldb)\) and pick \(x \in \U_q(\gl_{\boldeta_{r',s'}})_\beta\) with \(\varphi=\Phi_{\boldeta_{r',s'}}(x)\). 
It follows from the PBW Theorem that we can write \(x\) as linear combination \(x = \sum_h \gamma_h x_h\) of monomials \(x_h\) in the generators \(E_i,F_i\) such that in each \(x_h\) the generators \(F_i\) appear to the right of the \(E_i\)'s. For each \(x_h\) we have now three possibilities:
\begin{enumerate}[(i)]
\item The generator \(F_1\) appears at least once in \(x_h\), and we can assume that there are no \(E_1\)'s on the right of this \(F_1\). Then \(\Phi_{\boldeta_{r',s'}}(x_h)=0\), since this \(F_1\) is sent to some \((\sfF \otimes \id^{r'+s'-2})\idem_{\boldc}\), where \(c_1=0\). Hence we can remove the term \(\gamma_h x_h\) from \(x\) without changing the value of \(\Phi_{\boldeta_{r',s'}}(x)\).
\item The generator \(E_1\) appears at least once in \(x_h\), and there are no \(F_1\)'s. Similarly as before, by looking at the first entry of the sequences (which are zero both at the bottom and at the top, since \(r'>r\)) we conclude that this monomial \(x_h\) acts by zero. Hence we can remove \(\gamma_hx_h\) from \(x\). 
\item The generators \(E_1\) and \(F_1\) do not appear in \(x_h\).
\end{enumerate}
So we proved that we do not need \(E_1\) and \(F_1\), so \(r'\) was not minimal, and this is a contradiction.
\end{proof}

\subsection{Mixed skew Howe duality}

Let us now set \(\beta=d=m-n\) and  fix  an integer \(N\). 
Let 
\begin{equation}
  \label{eq:161}
  \Xi_{\boldeta,N} = \{a_1^{\eta_1}\otimes \dotsb \otimes a_k^{\eta_k} \suchthat a_i \in \Z_{\geq 0},\, \eta_1 a_1+\dotsb+\eta_k a_k - d p_\boldeta  = N\}
\end{equation}
where \(p_\boldeta = \# \{i \suchthat \eta_i = -1\}\), and we use the convention \(a^{-1} = a^*\) for \(a \in \Z_{\geq 0}\). 

Consider the space
\begin{equation}
  \label{eq:165}
  W = \bigoplus_{\bolda \in \Xi_{\boldeta,N}} \bigwedgeq^{\bolda} \C_q^{m|n}.
\end{equation}
Composing the map \(\Phi_\boldeta\) and the functor \(\calG_{m|n}\) we have then:

\begin{prop}
\label{prop:15}
  The assignment   \( x \mapsto \calG_{m|n} \circ \Phi_\boldeta (x)\) defines a map 
  \begin{equation}
\Psi_{\boldeta,N} \colon \U_q(\gl_{\boldeta})_\beta \mapto \End_{U_g(\gl_{m|n})}(W).\label{eq:201}
\end{equation}
\end{prop}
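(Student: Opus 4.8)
The plan is to simply compose the two functors already at our disposal and check that the target makes sense. Recall that Proposition~\ref{prop:14} gives a well-defined functor \(\Phi_\boldeta \colon \U_q(\gl_{\boldeta})_\beta \mapto \Spider(\beta)\), and Theorem~\ref{thm:6} gives a full functor \(\calG_{m|n} \colon \Spider(d) \mapto \catRep_{m|n}\) whenever \(\beta = d = m-n\). Since we are assuming \(\beta = d\), the composition \(\calG_{m|n} \circ \Phi_\boldeta\) is a well-defined functor from \(\U_q(\gl_{\boldeta})_d\) to \(\catRep_{m|n}\).

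The only point that requires a word is that this composition lands inside \(\End_{U_q(\gl_{m|n})}(W)\) rather than merely producing morphisms between various objects of \(\catRep_{m|n}\). First I would observe that by \eqref{eq:183}, on an object \(\onell{\lambda}\) with \(\lambda \in \lattice_\boldeta\) and \(\eta_1\lambda_1 + \dotsb + \eta_k\lambda_k\) suitably normalized, the functor \(\Phi_\boldeta\) produces a sequence \(\bolda\) whose entries are exactly of the form \(a_i\) or \(a_i^*\) according to the sign \(\eta_i\); comparing with \eqref{eq:161}, the objects \(\onell{\lambda}\) for which \(\Phi_\boldeta(\onell{\lambda}) \in \Xi_{\boldeta,N}\) are precisely those lying in a single "\(N\)-graded" piece, and \(\calG_{m|n}\) sends such an object to \(\bigwedgeq^\bolda \C_q^{m|n}\), one of the summands of \(W\). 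Next I would note that the generating morphisms \(E_i\onell{\lambda}\) and \(F_i\onell{\lambda}\) of \(\U_q(\gl_{\boldeta})_\beta\) preserve the quantity \(\eta_1\lambda_1 + \dotsb + \eta_k\lambda_k\) (the vector \(\upalpha_i\) has coordinate sum zero on the two relevant slots, with signs that cancel precisely because of the defining constraint on \(\lattice_\boldeta\)), hence all morphisms of \(\U_q(\gl_{\boldeta})_\beta\) restrict to endomorphisms of the \(N\)-graded block, which under \(\calG_{m|n} \circ \Phi_\boldeta\) maps into \(\End_{\catRep_{m|n}}(W)\). Finally, since every morphism in \(\catRep_{m|n}\) is by definition \(U_q(\gl_{m|n})\)-equivariant, this is the same as \(\End_{U_q(\gl_{m|n})}(W)\).

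Putting this together, setting \(\Psi_{\boldeta,N} = \calG_{m|n} \circ \Phi_\boldeta\) restricted to the \(N\)-graded block of \(\U_q(\gl_{\boldeta})_d\) yields the asserted map \eqref{eq:201}, and it is a functor (in particular an algebra homomorphism on endomorphism spaces) because it is a composition of functors. There is no real obstacle here: the statement is essentially a formal consequence of Proposition~\ref{prop:14} and Theorem~\ref{thm:6}, the only content being the bookkeeping that identifies the image of the \(N\)-graded block of \(\U_q(\gl_{\boldeta})_d\) with endomorphisms of the specific module \(W\) of \eqref{eq:165}. The genuinely substantial statements — that \(\Psi_{\boldeta,N}\) is surjective and that it generates the full centralizer of \(U_q(\gl_{m|n})\) — are deferred to the subsequent results (building on Proposition~\ref{prop:13}), so the present proposition is just the construction of the map.
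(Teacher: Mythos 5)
Your overall approach — compose $\Phi_\boldeta$ (Proposition~\ref{prop:14}) with $\calG_{m|n}$ (Theorem~\ref{thm:6}) and note that the image lands in the fixed $N$-block — is exactly what the paper does; indeed the paper gives no written proof and treats the proposition as an immediate consequence of the two preceding results.

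There is, however, one concrete slip in the bookkeeping. You claim that $E_i$ and $F_i$ preserve $\eta_1\lambda_1 + \dotsb + \eta_k\lambda_k$, with a parenthetical appeal to a sign cancellation. That claim is false when $\eta_i \neq \eta_{i+1}$: under $\lambda \mapsto \lambda + \upalpha_i$ the quantity $\sum_j \eta_j \lambda_j$ changes by $\eta_i - \eta_{i+1} = \pm 2$. The quantity that is conserved is the \emph{unsigned} sum $\lambda_1 + \dotsb + \lambda_k$, simply because $\upalpha_i$ has coordinate sum zero (no constraint on $\lattice_\boldeta$ is needed). This is enough to finish the argument: rewriting the constraint $\eta_1 a_1 + \dotsb + \eta_k a_k - d p_\boldeta = N$ from \eqref{eq:161} in $\lambda$-coordinates via \eqref{eq:183} and $\beta = d$ gives
\[
\sum_{i:\eta_i=+1} \lambda_i \;-\; \sum_{i:\eta_i=-1}(\beta - \lambda_i) \;-\; d\,p_\boldeta \;=\; \sum_i \lambda_i \;-\; 2d\,p_\boldeta,
\]
so the set of objects $\onell{\lambda}$ mapped by $\Phi_\boldeta$ into $\Xi_{\boldeta,N}$ is precisely a level set of $\sum_i\lambda_i$, and this level set is preserved by all morphisms of $\U_q(\gl_{\boldeta})_\beta$. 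Replace your parenthetical with this observation and the proof is complete. Everything else — the identification of $\End_{\catRep_{m|n}}$ with $\End_{U_q(\gl_{m|n})}$ and the deferral of surjectivity to the later results — is correct.
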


As a direct consequence of proposition~\ref{prop:13} we obtain:

\begin{theorem}[Mixed skew Howe duality]
  \label{thm:7}
  For all \(m,n\geq 0\), \(N \in \Z\) and for all sequences of signs \(\boldeta\) the map \(\Psi_{\boldeta,N}\) is surjective. In particular, \(\U_q(\gl_\boldeta)_d\) acts \(U_q(\gl_{m|n})\)--equivariantly on the \(U_q(\gl_{m|n})\)-module \(W\)
   and generates the full centralizer.
\end{theorem}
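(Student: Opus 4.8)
The plan is to deduce Theorem~\ref{thm:7} (Mixed skew Howe duality) as a fairly direct consequence of the surjectivity of $\Phi_\boldeta$ established in Proposition~\ref{prop:13}, together with the fullness of $\calG_{m|n}$ from Theorem~\ref{thm:6}. First I would observe that the space $W$ in \eqref{eq:165} is precisely the direct sum of the objects $\bigwedgeq^\bolda \C_q^{m|n}$ over $\bolda \in \Xi_{\boldeta,N}$, and that by construction the full subcategory $\Spider(d)_\boldeta$ of $\Spider(d)$ maps, under $\calG_{m|n}$, onto the full subcategory of $\catRep_{m|n}$ spanned by these exterior powers (and their duals, repackaged via \eqref{eq:183}). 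Concretely, $\End_{U_q(\gl_{m|n})}(W) = \bigoplus_{\bolda,\boldb \in \Xi_{\boldeta,N}} \Hom_{\catRep_{m|n}}(\bigwedgeq^\bolda, \bigwedgeq^\boldb)$, where the Hom-spaces are taken between the representations attached to $\bolda$ and $\boldb$ by the recipe of \eqref{eq:183}.

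The key step is then the factorization $\Psi_{\boldeta,N} = \calG_{m|n} \circ \Phi_\boldeta$ recorded in Proposition~\ref{prop:15}. Proposition~\ref{prop:13} tells us that $\Phi_\boldeta$ is full, hence in particular surjective onto $\End_{\Spider(d)}$ restricted to objects in $\Spider(d)_\boldeta$ lying in the appropriate weight component $\Xi_{\boldeta,N}$. Theorem~\ref{thm:6} tells us that $\calG_{m|n}\colon \Spider(d) \mapto \catRep_{m|n}$ is full. Composing two full functors gives a full functor, and restricting to the relevant objects gives precisely that the induced map on endomorphism algebras $\U_q(\gl_\boldeta)_d \mapto \End_{U_q(\gl_{m|n})}(W)$ is surjective. (One should be a little careful that $\Xi_{\boldeta,N}$ is exactly the set of objects $\bolda$ in $\Spider(d)_\boldeta$ that can appear, i.e.\ that $\Hom_{\Spider(d)}(\bolda,\boldb) \neq 0$ forces $\bolda,\boldb$ to lie in the same $\Xi_{\boldeta,N}$; this follows from the grading considerations already used for $\Spider^+$, adapted to account for the shift $-dp_\boldeta$ coming from the dual strands via \eqref{eq:183}.) The second sentence of the theorem — that $\U_q(\gl_\boldeta)_d$ acts $U_q(\gl_{m|n})$-equivariantly on $W$ and generates the full centralizer — is just a restatement: equivariance holds because the image of $\Phi_\boldeta$ composed with $\calG_{m|n}$ consists of $U_q(\gl_{m|n})$-module maps by construction of $\calG_{m|n}$, and ``generates the full centralizer'' is exactly the surjectivity of $\Psi_{\boldeta,N}$.

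I do not expect any genuine obstacle in this argument, since all the real work has been done: the hard parts are Proposition~\ref{prop:13} (the fullness of $\Phi_\boldeta$, whose proof required the Lusztig-type symmetries of Lemma~\ref{lem:21} and the higher Serre relations of Lemmas~\ref{lem:10}--\ref{lem:13}) and Theorem~\ref{thm:6}. The only point requiring a little care in the write-up is bookkeeping: making sure the identification of objects in $\Spider(d)_\boldeta$ with the summands of $W$ via \eqref{eq:183} is compatible on both the source and target of morphisms, so that the factorization $\Psi_{\boldeta,N} = \calG_{m|n}\circ\Phi_\boldeta$ really is an equality of maps of algebras and not merely up to some re-indexing. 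Once that is pinned down, the proof is a one-line deduction.

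\begin{proof}
  By Proposition~\ref{prop:15} we have $\Psi_{\boldeta,N} = \calG_{m|n} \circ \Phi_\boldeta$. The functor $\Phi_\boldeta$ is full by Proposition~\ref{prop:13}, and the functor $\calG_{m|n}\colon \Spider(d) \mapto \catRep_{m|n}$ is full by Theorem~\ref{thm:6}. Hence the composition is full, and in particular, restricting to the objects $\bolda \in \Xi_{\boldeta,N}$ (which under \eqref{eq:183} correspond to the summands of $W$, and which exhaust the objects of $\Spider(\beta)_\boldeta$ having a nonzero morphism to one another, by the grading argument for $\Spider^+$ shifted by $-dp_\boldeta$), the induced map
  \begin{equation*}
    \Psi_{\boldeta,N}\colon \U_q(\gl_\boldeta)_d \longrightarrow \bigoplus_{\bolda,\boldb \in \Xi_{\boldeta,N}} \Hom_{\catRep_{m|n}}\big(\bigwedgeq^\bolda, \bigwedgeq^\boldb\big) = \End_{U_q(\gl_{m|n})}(W)
  \end{equation*}
  is surjective. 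By construction the image consists of $U_q(\gl_{m|n})$-module homomorphisms, so $\U_q(\gl_\boldeta)_d$ acts $U_q(\gl_{m|n})$-equivariantly on $W$; surjectivity of $\Psi_{\boldeta,N}$ is precisely the statement that this action generates the full centralizer of the $U_q(\gl_{m|n})$-action.
\end{proof}
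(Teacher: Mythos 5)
Your proof is correct and matches the paper's own approach: the paper literally introduces the theorem with ``As a direct consequence of proposition~\ref{prop:13} we obtain,'' and the argument is exactly the composition-of-full-functors observation $\Psi_{\boldeta,N} = \calG_{m|n}\circ\Phi_\boldeta$, with Theorem~\ref{thm:6} supplying fullness of $\calG_{m|n}$. Your bookkeeping remarks about $\Xi_{\boldeta,N}$ and the identification of objects via \eqref{eq:183} are appropriate care that the paper leaves implicit.
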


We expect a double centralizing property to hold, i.e. we expect the action of \(U_q(\gl_{m|n})\) to generate the full centralizer of the \(\U_q(\gl_{\boldeta_{r,s}})_d\)--action. 

\begin{remark}\label{rem:5}
  By the theorem, the space \(W\)
  inherits an action of \(\U_q(\gl_{\boldeta})_\beta\).
  Since the latter is an idempotented version of
  \(U_q(\gl_k)\),
  this induces an action of \(U_q(\gl_k)\)
  on \(W\). In particular, we have commuting actions
  \begin{equation}
    \label{eq:187}
    U_q(\gl_k) \acts \bigoplus_{\bolda \in \Xi_{\boldeta,N}} \bigwedgeq^{\bolda} \C_q^{m|n} \actsb U_q(\gl_{m|n}).
  \end{equation}
  Anyway, since there are infinitely many \(\gl_k\)
  weights involved, the projections onto the
  \(\gl_k\)--weight
  spaces are not in the image of the map
  \(U_q(\gl_k) \mapto \End_{U_q(\gl_{m|n})}(W)\),
  which is therefore not surjective. But the image of \(U_q(\gl_k)\) together with the projections onto the weight spaces generate all intertwiners.
\end{remark}

\begin{remark}\label{rem:3}
  Consider the special case \(\boldeta=\boldeta_{r,s}\). Then we have commuting actions of \(U_q(\gl_{r+s})\) and \(U_q(\gl_{m|n})\) on
\begin{equation}
     \label{eq:168}
 \bigoplus_{\mathclap{\qquad\substack{a_1+\dotsb+a_r-a_{r+1}\\-\dotsb-a_{r+s}-(m-n)s = N}}} \qquad \bigwedgeq^{a_1} \C_q^{m|n} \otimes \dotsb \otimes \bigwedgeq^{a_r}\C_q^{m|n} \otimes \bigwedgeq^{a_{r+1}} \big(\C_q^{m|n}\big)^* \otimes \dotsb \otimes \bigwedgeq^{a_{r+s}} \big(\C_q^{m|n}\big)^*.
\end{equation}
Notice that for \(s=0\) this gives back the usual skew Howe duality.  As a \(U_q(\gl_{r+s})\)--module, \eqref{eq:168} is a weight module and is \(U_q(\gl_{r})\otimes U_q(\gl_s)\)--finite. Moreover, the action of the lower triangular part \(U_q(\mathfrak{n}^-)\) is locally finite. In general, however, it is not finitely generated. This is true in the case \(n=1\) (since the dimension of the exterior powers of \(\C^{m|1}\) is bounded) and then we can conclude that \eqref{eq:168} is in the parabolic category \(\catO^{\mathfrak{p}}(U_q(\gl_{r+s}))\) (however with respect to the negative Borel) corresponding to the parabolic subalgebra \(\mathfrak{p} \subseteq \gl_{r+s}\) with Levi subalgebra \(\gl_r \oplus \gl_s\).
\end{remark}

\begin{example}\label{ex:1}
  Let us illustrate the example \(m=n=1\), \(r=s=1\) and \(N=0\). Then we have two commuting actions
  \begin{equation}
    \label{eq:169}
  U_q(\gl_{1|1}) \acts  \bigoplus_{a=0}^{\infty} \bigwedgeq^a \C^{1|1}_q \otimes \bigwedgeq^a \big(\C^{1|1}_q\big)^* \actsb U_q(\gl_2).
  \end{equation}
  Notice that \(\bigwedgeq^0 \C^{1|1}_q \otimes \bigwedgeq^0 \big(\C^{1|1}_q\big)^* \cong \C_q\) (the trivial representation), while all other \(\bigwedgeq^a \C^{1|1}_q \otimes \bigwedgeq^a \big(\C^{1|1}_q\big)^*\) for \(a \geq 1\) are four-dimensional, indecomposable and isomorphic to each other (see \cite{SarAlexander}). The two commuting actions \eqref{eq:169} can be pictured as in figure~\ref{fig:exampleMixedHowe}. In the picture, each dot corresponds to a basis vector. The vertical arrows denote the action of the generators \(E\) and \(F\) of \(U_q(\gl_{1|1})\), while the horizontal double arrows denote the action of the generators \(E\) (solid) and \(F\) (dashed, although not the whole action is shown) of \(U_q(\gl_2)\). Of course, the actions have coefficients which are not indicated. From the picture, one sees that \eqref{eq:169}, as an \(U_q(\gl_2)\)--module, decomposes as \(M^-(1,-1) \oplus M^-(1,-1) \oplus P^-(1,-1)\), where \(M^-(1,-1)\)  denotes the (opposite) Verma module with lowest weight \((1,-1)\) and \(P^-(1,-1)\) denotes the (opposite) indecomposable projective module with head isomorphic to \(M^-(1,-1)\). In particular, one can see explicitly that the action of \(\U_q(\gl_2)\), together with the projections onto the weight spaces, gives the full centralizer of the \(U_q(\gl_{1|1})\)--action.
  \begin{figure}
    \centering
    \begin{tikzpicture}[xscale=1.8,every node/.style={font=\tiny}]
      \node[circle, fill=black,minimum size=5pt,inner sep=0,outer sep=2pt] (A52) at (5,2) {};
      \foreach \i in {1,...,4} {
        \foreach \j in {1,2,3} {
          \node[circle, fill=black,minimum size=5pt,inner sep=0,outer sep=2pt] (A\i\j) at (\i,\j) {};
        }
          \node[circle, fill=black,minimum size=5pt,inner sep=0,outer sep=2pt] (B\i) at ([xshift=0.3cm,yshift=0.3cm]\i,2) {};
          \draw[bend right=15,->] (A\i3) to node[right] {$E$} (A\i2);
          \draw[bend left=15,->] (A\i1) to node[right] {$F$} (A\i2);
          \draw[bend right=15,->] (B\i) to node[right] {$F$} (A\i3);
          \draw[bend left=15,->] (B\i) to node[right] {$E$} (A\i1);
      }
      \foreach \i in {1,...,3} {
        \pgfmathtruncatemacro{\iplusone}{\i + 1}
        \foreach \j in {1,2,3} {
          \draw[bend left=5,->,double,cross line] (A\iplusone\j) to (A\i\j);
        }
        \draw[bend left=5,->,double,cross line] (B\iplusone) to (B\i);
      }
      \draw[bend left=5,->,double,cross line] (A52) to (A42);
      \draw[bend left=5,->,double,cross line] (A12) to ++(-0.8,0);
      \draw[bend left=5,->,double,cross line] (A13) to ++(-0.8,0);
      \draw[bend left=5,->,double,cross line] (A11) to ++(-0.8,0);
      \draw[bend left=5,->,double,cross line] (B1) to ++(-0.8,0);
      \node at (0,1) {$\cdots$};
      \node at (0,2) {$\cdots$};
      \node at (0,3) {$\cdots$};
      \node at (0.3,2.3) {$\cdots$};
      \draw[bend left=5,->,double,dashed,cross line] (B4) to (A52);

      \begin{pgfonlayer}{background}
        \foreach \i in {1,...,4} {
          \draw[xshift=\i cm,gray] (-0.2,0.7) rectangle (0.5,3.3);
          \pgfmathtruncatemacro{\iminusone}{5 -\i}
          \node[font=\normalsize] at ([xshift=\i cm]0.15,0.3) {$\bigwedgeq^\iminusone  \otimes \bigwedgeq^{-\iminusone}$};
        }
          \draw[xshift=5 cm,gray] (-0.2,0.7) rectangle (0.2,3.3);
          \node[font=\normalsize] at (5,0.3) {$\C_q$};
        \draw[decorate,decoration={brace}] (5.3,3.3) -- node[right,font=\normalsize] {$M^-(1,-1)$} ++(0,-0.6);
        \node[font=\normalsize] at (5.6,2.5) {$\oplus$};
        \draw[decorate,decoration={brace}] (5.3,1.3) -- node[right,font=\normalsize] {$M^-(1,-1)$} ++(0,-0.6);
        \node[font=\normalsize] at (5.6,1.5) {$\oplus$};
        \draw[decorate,decoration={brace}] (5.3,2.3) -- node[right,font=\normalsize] {$P^-(1,-1)$} ++(0,-0.6);
      \end{pgfonlayer}
    \end{tikzpicture}
    \caption{The two commuting actions of \eqref{eq:169}.}
    \label{fig:exampleMixedHowe}
  \end{figure}
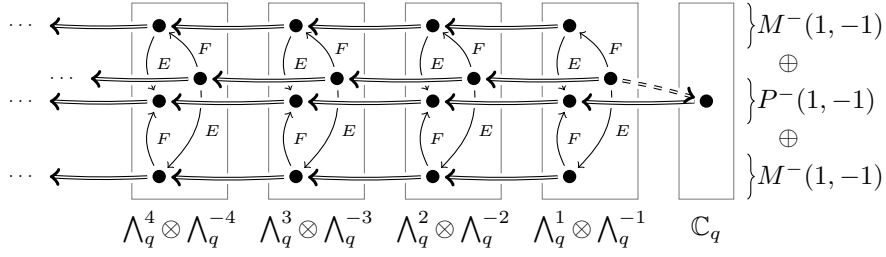
\end{example}

\subsection{Equivalence of categories}

We conclude this section  proving that the functors \(\Phi_{\boldeta_{r,s}}\) glue together giving an equivalence of categories between the direct limit of \(\U_q(\gl_{\boldeta_{r,s}})_\beta\) for \(r,s \rightarrow \infty\) and \(\Spider(\beta)\).

\begin{prop}\label{prop:17}
The functor \(\Phi_\boldeta\colon \U_q(\gl_{\boldeta})_\beta \mapto \Spider(\beta)\) is faithful.
\end{prop}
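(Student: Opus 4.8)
The plan is to establish faithfulness of $\Phi_\boldeta$ by reducing to a dimension count against the diagram categories we already control. First I would invoke Corollary~\ref{cor:3}: since $\Phi_\boldeta$ commutes (up to the equivalences $\sfT_i$ of Lemma~\ref{lem:21}) with the swapping of signs, it suffices to treat the case $\boldeta = \boldeta_{r,s}$, i.e.\ all $+1$'s first and all $-1$'s last. In this case the target of $\Phi_{\boldeta_{r,s}}$ is the subcategory $\Spider(\beta)_{\boldeta_{r,s}}$, and we already proved in Proposition~\ref{prop:13} that the functor is full; so the missing ingredient is precisely that the Hom-spaces on the source side are not ``too big'', i.e.\ that $\dim \Hom_{\U_q(\gl_{\boldeta_{r,s}})_\beta}(\onell{\lambda},\onell{\mu}) \leq \dim \Hom_{\Spider(\beta)}(\Phi(\onell\lambda),\Phi(\onell\mu))$ for all weights $\lambda,\mu$.

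The key step is to produce a spanning set for each $\Hom_{\U_q(\gl_{\boldeta_{r,s}})_\beta}(\onell\lambda,\onell\mu)$ whose cardinality matches the known dimension of the corresponding space in $\Spider(\beta)$. For the latter, recall from Proposition~\ref{prop:10} (fully-faithfulness of $\calI_d$) together with Proposition~\ref{prop:12} that for $\bolda = 1^{\otimes r}\otimes(1^*)^{\otimes s}$ one has $\End_{\Spider(\beta)}(\bolda) \cong \Br_{\boldeta_{r,s}}(\beta)$, which is free of rank $(r+s)!$; and via the injections/projections $\iota_\bolda,\pi_\bolda$ of Lemma~\ref{lem:4} every $\Hom_{\Spider(\beta)}(\bolda,\boldb)$ is a subquotient of this. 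On the quantum group side, a PBW-type argument — exactly the one already used in the proof of Proposition~\ref{prop:13}, writing each morphism as a linear combination of monomials with all $F_i$'s to the right of all $E_i$'s, and then discarding monomials that act by zero because of the truncation at $0$ and at $\beta$ in the allowed weights $\lattice_{\boldeta_{r,s}}$ — shows that $\Hom_{\U_q(\gl_{\boldeta_{r,s}})_\beta}(\onell\lambda,\onell\mu)$ is spanned by the images of a set of monomials in bijection with a basis of the appropriate space of walled Brauer diagrams. Since $\Phi_{\boldeta_{r,s}}$ is full, these spanning monomials map onto a spanning set of $\Hom_{\Spider(\beta)}(\Phi(\onell\lambda),\Phi(\onell\mu))$; a spanning set of cardinality equal to the dimension of the space it spans must be a basis and must be mapped bijectively, so $\Phi_{\boldeta_{r,s}}$ is injective on Hom-spaces, hence faithful. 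Combined with Lemma~\ref{lem:21} this gives faithfulness of $\Phi_\boldeta$ for arbitrary $\boldeta$.

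I expect the main obstacle to be the bookkeeping in the PBW reduction: one must check carefully that, after moving all $F$'s to the right and all $E$'s to the left, the monomials that survive (those not killed by hitting a weight outside $\lattice_{\boldeta_{r,s}}$, in particular a zero entry being lowered or a $\beta$-entry being raised) are indeed indexed by a set no larger than a basis of walled Brauer diagrams connecting $\lambda$ to $\mu$. The case $\beta$ generic is then handled by the usual specialization argument (define the category over $\C(q)[q^\beta,q^{-\beta}]$ and note the ranks are independent of the specialization), exactly as at the end of the proof of Proposition~\ref{prop:10}. Alternatively, if a direct spanning-set count proves awkward, one can instead argue: for $\beta = d$, choose $m,n$ large enough that $\RT_{m|n}$ is faithful on $\uBr(d)$ (Proposition~\ref{prop:11}) and that $\calG_{m|n}$ is an equivalence on the relevant truncated subcategory, so that $\Psi_{\boldeta,N} = \calG_{m|n}\circ\Phi_\boldeta$ is injective on the Hom-spaces in question, forcing $\Phi_\boldeta$ to be injective there too; this is cleaner but needs one to know the centralizer of $U_q(\gl_{m|n})$ on $W$ has the right dimension, so the diagram-theoretic count is the more self-contained route.
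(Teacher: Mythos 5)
Your reduction to the $\boldeta_{r,s}$ case via Lemma~\ref{lem:21} matches the paper, but from there the two proofs diverge. You propose a dimension count: produce a PBW-type spanning set for $\Hom_{\U_q(\gl_{\boldeta_{r,s}})_\beta}(\onell{\lambda},\onell{\mu})$, show its cardinality does not exceed $\dim\Hom_{\Spider(\beta)}(\Phi(\onell\lambda),\Phi(\onell\mu))$, and conclude injectivity from fullness. The paper instead uses a structural ``strand-bending'' trick: given $\phi$ with $\Phi(\phi)=0$, it rotates the $s$ downward strands around to build an auxiliary morphism $\phi'$ living between purely upward-labelled objects in $\U_q(\gl_{\boldeta_{r',s}})_\beta$ for $r'$ large, shows $\phi'$ lies in the image of $\U_q^{\geq 0}(\gl_{r'})=\U_q(\gl_{\boldeta_{r',0}})_\beta$ (by a fullness argument as in Proposition~\ref{prop:13}), invokes the already-known faithfulness of $\Phi$ in the positive case (from \cite{CKM}) to conclude $\phi'=0$, and then inverts the bending construction to get $\phi=0$. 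This avoids any dimension count entirely.

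The step you flag as ``mainly bookkeeping'' is in fact the crux of your approach and is a genuine gap. A PBW basis of $\dot{\U}_q(\gl_k)$ has infinitely many monomials between two fixed weights; passing to $\U_q(\gl_{\boldeta_{r,s}})_\beta$ kills those monomials that factor through a weight outside $\lattice_{\boldeta_{r,s}}$, but establishing that the surviving monomials are linearly independent and are in bijection with a basis of the relevant $\Spider(\beta)$ Hom-space (say, of walled Brauer diagrams after truncation via $\iota,\pi$) is essentially proving a basis theorem for the doubled Schur algebra. That is real work, not routine bookkeeping, and is not available off-the-shelf; indeed the coefficients in relation (DS1) depend on $\beta$ across the ``wall'', so the combinatorics is not a direct port of the ordinary Schur algebra case. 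A secondary slip: Lemma~\ref{lem:4} supplies $\iota_\bolda,\pi_\bolda$ only for $\bolda\in\Spider^+$; for mixed objects $1^{\otimes r}\otimes(1^*)^{\otimes s}$ you need the dual/rotated version (cf.\ the proofs of Propositions~\ref{prop:7} and~\ref{prop:10}), which is true but should be cited correctly. Your alternative route via $\RT_{m|n}$ and $\calG_{m|n}$ has the circularity you yourself identify, since $\calG_{m|n}$ is only proven to be an equivalence for $n=0$, and for mixed objects faithfulness of $\Psi_{\boldeta,N}$ is not independently established in the paper.
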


\begin{proof}
We will only sketch the idea of the proof, leaving the details, which are anyway straightforward, to the interested reader. As in the previous proof, by lemma~\ref{lem:21} we can restrict to the case 
\(\boldeta=\boldeta_{r,s}\).
 In the case \(s=0\) faithfulness can be proven as usual, by reducing to the Hecke algebra case, see \cite{CKM}. For \(s > 0\), if the target categories $U_q(\gl_{r+s})_{\boldeta_{r,s}}$ were ribbon, a standard argument would allow to reduce to the upwards case. Although this is not exactly the case, one can adapt the argument. 

Let $\phi\in \Hom_{U_q(\gl_{\boldeta_{r,s}})_\beta} (\onell{\lambda},\onell{\mu})$ and suppose \(\Phi_{\boldeta_{r,s}}(\phi)=0\).
One can define a  map \(\U_q(\gl_{r+s})_{\boldeta_{r,s}} \mapto \U_q(\gl_{r'+s})_{\boldeta_{r',s}}\) for \(r'\) big enough, given graphically by
\begin{equation}\label{eq:182}
\begin{tikzpicture}[scale=.75,smallnodes,anchorbase]
\draw (0,0) rectangle (3,2);
\node at (1.5,1) {$\phi$};
\draw[int,directed=.5] (.25,-1) -- (.25,0);
\path (.75,-1) -- node{$\cdots$} (.75,0);
\draw[int,directed=.5] (1.25,-1) -- (1.25,0);
\draw[int,<-] (1.75,-1) -- (1.75,0);
\path[int] (2.25,-1) -- node{$\cdots$} (2.25,0);
\draw[int,<-] (2.75,-1) -- (2.75,0);
\draw[int,->] (.25,2) -- (.25,3);
\path[int] (.75,2) -- node{$\cdots$} (.75,3);
\draw[int,->] (1.25,2) -- (1.25,3);
\draw[int,rdirected=.5] (1.75,2) -- (1.75,3);
\path (2.25,2) -- node {$\cdots$} (2.25,3);
\draw[int,rdirected=.5] (2.75,2) -- (2.75,3);
\end{tikzpicture}
\;\longmapsto{\phi'}=\;
\begin{tikzpicture}[xscale=.75,yscale=.5,smallnodes,anchorbase]
\draw [int,->] (-.25,0) .. controls (-.25,.5) and (-1.75,1.5) .. (-1.75,4) -- (-1.75,5);
\path [int,->] (-.75,0) node {$\cdots$} .. controls (-.75,.5) and (-2.25,1.5) .. (-2.25,4) node {$\cdots$} -- (-2.25,5) ;
\draw [int,->] (-1.25,0) .. controls (-1.25,.5) and (-2.75,1.5) .. (-2.75,4) -- (-2.75,5);
\draw [int, cross line] (-1.75,-3) -- (-1.75,-2) .. controls (-1.75,.5) and (-.25,1.5) .. (-.25,2);
\path [int]  (-2.25,-3)  --  (-2.25,-2) node {$\cdots$} .. controls (-2.25,.5) and (-.75,1.5) .. (-.75,2) node {$\cdots$};
\draw [int, cross line]  (-2.75,-3) -- (-2.75,-2) .. controls (-2.75,.5) and (-1.25,1.5) .. (-1.25,2);
\draw [int, directed=.9] (-.25,2) .. controls (-.25,2.5) and (1.25,2.5) .. (1.25,3) .. controls (1.25,3.5) and (1.75,3.5) .. (1.75,3) -- (1.75,2);
\path [int] (-.75,2) .. controls (-.75,2.75) and (1.25,2.75) .. (1.25,3.5) .. controls (1.25,4) and (2.25,4) .. (2.25,3.5) -- node{$\cdots$} (2.25,2);
\draw [int, directed=.9] (-1.25,2) .. controls (-1.25,3) and (1.25,3) .. (1.25,4) .. controls (1.25,4.5) and (2.75,4.5) .. (2.75,4) -- (2.75,2);
\draw [int] (-.25,0) .. controls (-.25,-.5) and (1.25,-.5) .. (1.25,-1) .. controls (1.25,-1.5) and (1.75,-1.5) .. (1.75,-1) -- (1.75,0);
\path [int] (-.75,0) .. controls (-.75,-.75) and (1.25,-.75) .. (1.25,-1.5) .. controls (1.25,-2) and (2.25,-2) .. (2.25,-1.5) -- node{$\cdots$} (2.25,0);
\draw [int] (-1.25,0) .. controls (-1.25,-1) and (1.25,-1) .. (1.25,-2) .. controls (1.25,-2.5) and (2.75,-2.5) .. (2.75,-2) -- (2.75,0);
\draw [int,cross line] (.25,2) -- (.25,4.75);
\draw[int,->] (.25,4.75) -- (.25,5);
\path[int,->] (.75,4) -- node {$\cdots$} (.75,5);
\draw [int, cross line] (1.25,2) -- (1.25,4.75);
\draw[int,->] (1.25,4.75) -- (1.25,5);
\draw [int, cross line] (.25,-3) -- (.25,0);
\path (.75,-3) -- node {$\cdots$} (.75,-2);
\draw [int, cross line] (1.25,-3) -- (1.25,0);
\draw (0,0) rectangle (3,2);
\node at (1.5,1) {$\phi$};
\end{tikzpicture}\;.
\end{equation}
Using an argument similar to the proof of proposition~\ref{prop:13} (cf.\ also
 \cite[Lemma 4.1 and Remark 4.2]{QS}), it follows that \({\phi'}\) is in the image of the obvious map \(\U_q(\gl_{\boldeta_{r',0}})_\beta \mapto \U_q(\gl_{\boldeta_{r',s}})_\beta\), hence it has a preimage \(\tilde{\phi}' \in \U_q(\gl_{\boldeta_{r',0}})_\beta = \U_q^{\geq 0}(\gl_{r'})\). Since its image in \(\Spider(\beta)\) is zero, we have \(\tilde\phi'=0\), hence \(\phi'=0\). Since the construction~\eqref{eq:182} can be inverted (by a similar picture) it follows that also \(\phi=0\).
\end{proof}

In particular, we can define an infinite version \(\U_q(\gl_{\boldeta_{\infty,\infty}})_\beta\) as the direct limit of \(\U_q(\gl_{\boldeta_{r,s}})_\beta\) for \(r,s \rightarrow \infty\). Propositions~\ref{prop:13} and \ref{prop:17} give then:
\begin{corollary}\label{cor:4}
  The functors \(\Phi_{\boldeta_{r,s}}\) induce  an equivalence of categories between \(\U_q(\gl_{\boldeta_{\infty,\infty}})_\beta\) and
  \(\Spider(\beta)\).
\end{corollary}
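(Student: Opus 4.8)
The plan is to deduce Corollary~\ref{cor:4} directly from the two preceding propositions, so the proof is quite short and mostly formal. Recall that $\U_q(\gl_{\boldeta_{\infty,\infty}})_\beta$ is defined as the direct (colimit) limit of the categories $\U_q(\gl_{\boldeta_{r,s}})_\beta$ along the inclusion functors $j\colon \U_q(\gl_{\boldeta_{r,s}})_\beta \hookrightarrow \U_q(\gl_{\boldeta_{r',s'}})_\beta$ (for $r'\geq r$, $s'\geq s$) that send $E_i \mapsto E_{i+r'-r}$, $F_i\mapsto F_{i+r'-r}$. First I would record that the diagram~\eqref{eq:167} commutes, so that the family $\{\Phi_{\boldeta_{r,s}}\}_{r,s}$ is compatible with the transition functors $j$ and with the corresponding ``pad by zeros'' functors on the target $\Spider(\beta)$; by the universal property of the colimit this yields a well-defined functor $\Phi_\infty \colon \U_q(\gl_{\boldeta_{\infty,\infty}})_\beta \mapto \Spider(\beta)$.

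Next I would check that $\Phi_\infty$ is essentially surjective. Any object of $\Spider(\beta)$ is (a direct sum of) a sequence $(a_1,\dotsc,a_k)$ of integers, some positive and some of the form $b_i^*$; after reordering by the braiding (which is an isomorphism in $\Spider(\beta)$ since the category is ribbon by theorem~\ref{thm:9}) we may assume all the unstarred entries come first, so the object is isomorphic to one in $\Spider(\beta)_{\boldeta_{r,s}}$ for suitable $r,s$; padding with $0$'s (using the identification $0\cong\varnothing$) puts it in the image of $\Phi_{\boldeta_{r,s}}$, by formula~\eqref{eq:183}. Hence $\Phi_\infty$ hits every object up to isomorphism.

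Then fullness and faithfulness are immediate from what has been proved. For fullness: given objects $\bolda,\boldb$ of $\Spider(\beta)$, choose $r,s$ large enough that both $\bolda$ and $\boldb$ lie (up to the reordering above) in $\Spider(\beta)_{\boldeta_{r,s}}$; then every morphism $\bolda\mapto\boldb$ in $\Spider(\beta)$ is, by proposition~\ref{prop:13}, in the image of $\Phi_{\boldeta_{r,s}}$, hence of $\Phi_\infty$. (One must also invoke lemma~\ref{lem:21}, in particular the commuting square~\eqref{eq:174}, to pass between the chosen sequence of signs and $\boldeta_{r,s}$, and the naturality of the braiding to absorb the reordering isomorphisms $\Psi_1,\Psi_2$ of proposition~\ref{prop:7}.) For faithfulness: if $\phi$ is a morphism in $\U_q(\gl_{\boldeta_{\infty,\infty}})_\beta$ with $\Phi_\infty(\phi)=0$, then $\phi$ is the image of some $\phi_0$ in $\U_q(\gl_{\boldeta_{r,s}})_\beta$ for finite $r,s$ (colimit morphisms are represented at a finite stage), and $\Phi_{\boldeta_{r,s}}(\phi_0)=0$; since $\Phi_{\boldeta_{r,s}}$ is faithful by proposition~\ref{prop:17}, we get $\phi_0=0$, and since the transition functors $j$ are faithful (they are inclusions of full subcategories of an idempotented quantum group, as remarked after definition~\ref{defn_UzklB}) it follows that $\phi=0$ already in the colimit. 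Combining essential surjectivity, fullness and faithfulness gives the equivalence.

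The only genuinely delicate point — and the one I would spend a sentence or two on rather than brush aside — is the bookkeeping needed to make ``$\Phi_\infty$ is full'' precise: one has to be careful that the reordering isomorphisms used to bring an arbitrary object of $\Spider(\beta)$ into the form handled by $\Phi_{\boldeta_{r,s}}$ are themselves in the image of the functor, which is exactly the content of lemma~\ref{lem:21} and of the fact that the braiding $c_{a,b^*}$, $c_{a^*,b}$ etc.\ are constructed inside $\Spider(\beta)$; once this is granted, the argument is the standard ``colimit of fully faithful essentially surjective functors'' argument and everything else is routine. All the hard analytic/combinatorial work has already been done in propositions~\ref{prop:13} and~\ref{prop:17}.
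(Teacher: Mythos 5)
Your proof is correct and takes the same route as the paper, which deduces the corollary directly from propositions~\ref{prop:13} and~\ref{prop:17} (the paper just states ``Propositions~\ref{prop:13} and~\ref{prop:17} give then'' without spelling out the colimit bookkeeping that you make explicit). One small slip: the paper contains no remark after definition~\ref{defn_UzklB} that the transition functors $j$ are inclusions of full subcategories, and fullness of $j$ is not obvious; but you only need \emph{faithfulness} of $j$ at that point, which follows at once since $\Phi_{\boldeta_{r',s'}}\circ j$ equals (padding by $0$'s)$\,\circ\,\Phi_{\boldeta_{r,s}}$, the padding is essentially the identity, and $\Phi_{\boldeta_{r,s}}$ is faithful by proposition~\ref{prop:17}.
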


\section{Link invariants}
\label{sec:link-invariants}

In this final section, we explain how colored link invariants of type A can be interpreted inside the categories \(\Spider(\beta)\) or \(\U_g(\gl_{\boldeta_{\infty,\infty}})_\beta\).

\subsection{The $\gl_{m|n}$ link invariant and the HOMFLY-PT polynomial}
\label{sec:gl_mn-link-invariant}

By proposition~\ref{prop:9} we have an invariant of tangles \(\calQ_\beta \colon \catTangles_q^\lab \mapto \Spider(\beta) \) with values in \(\Spider(\beta)\). Moreover, for \(\beta=d=m-n\) the diagram
 \begin{equation}\label{eq:194}
   \begin{tikzpicture}[baseline=(current bounding box.center)]
     \matrix (m) [matrix of math nodes, row sep=2.5em, column
       sep=5em, text height=1.5ex, text depth=0.25ex] {
       & \Spider(d) \\
      \catTangles^\lab & \catRep_{m|n} \\};
    \path[->] (m-2-1.north) edge node[midway,above] {$\calQ_d$} (m-1-2);
    \path[->] (m-2-1) edge node[midway,below] {$\RT_{m|n}$} (m-2-2);
    \path[->] (m-1-2) edge node[midway,right] {$\calG_{m|n}$} (m-2-2);
   \end{tikzpicture}
 \end{equation}
commutes. Hence the invariant \(\calQ_d\) is universal among the \(\gl_{m|n}\)--invariants. 

Let \(T \in \Hom_{\catTangles^\lab}(\bolda,\boldb)\) be a labeled tangle. Then the lifted invariant \(\calQ_d\) assigns to it an element of \(\Hom_{\Spider(d)}(\bolda,\boldb)\). Since this space is in general bigger than \(\Hom_{\catRep_{m|n}}(\bigwedgeq^\bolda \C_q^{m|n},\bigwedgeq^\boldb \C_q^{m|n})\), the lifted invariant \(\calQ_d\) is finer than the Reshetikhin-Turaev invariant \(\RT_{m|n}\).

We are mostly interested in the special case of a link \(L\) (hence \(L \in \End_{\catTangles_q^\lab}(\varnothing)\), where \(\varnothing\) is the empty sequence). We will denote by \(P_\beta(L)\) the Reshetikhin-Turaev invariant of links \(\calQ_\beta(L)\). By corollary~\ref{cor:1}, the endomorphism space \(\End_{\Spider(\beta)}(\varnothing)\) is one-dimensional, and if \(\beta=d=m-n\) then this endomorphism space is canonically isomorphic  to \(\End_{\catRep_{m|n}}(\C_q)\). Hence in the case of links the lifted invariant \(\calQ_d\) gives the same link polynomial as the classical Reshetikhin-Turaev invariant. Since \(\calQ_d\) does not depend on \(m\) and \(n\), but only on their difference \(d=m-n\),
we deduce the following well-known observation:

\begin{prop}\label{thm:10}
  The Reshetikhin-Turaev invariant of links labeled by exterior powers of the vector representation of  \(\gl_{m|n}\) only depends on the difference \(m-n\).
\end{prop}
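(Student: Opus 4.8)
The plan is to read off the statement directly from the commutativity of diagram~\eqref{eq:194} together with the universality of $\calQ_d$ that has just been established. The key point is that $\calQ_d \colon \catTangles^\lab_q \mapto \Spider(d)$ is defined purely combinatorially: by construction (see proposition~\ref{prop:9}) it sends a crossing to the braiding of $\Spider(d)$, a cup/cap to the adjunction morphisms, and the definition of $\Spider(\beta)$ --- and in particular of its ribbon structure (theorem~\ref{thm:9}) --- involves only the formal parameter $q^\beta$, specialized here to $q^d$. Thus $\calQ_d$ literally depends only on the integer $d$, and not on any individual choice of $m$ and $n$ realizing $d=m-n$.

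The steps I would carry out are as follows. First, recall that for a link $L \in \End_{\catTangles^\lab_q}(\emptyset)$ the Reshetikhin--Turaev invariant is, by definition, $\RT_{m|n}(L) \in \End_{\catRep_{m|n}}(\C_q) = \C_q$. Second, invoke the commutativity of \eqref{eq:194}, which follows from the second statement of proposition~\ref{prop:9}, to write $\RT_{m|n}(L) = \calG_{m|n}(\calQ_d(L))$. Third, observe that $\calQ_d(L) \in \End_{\Spider(d)}(\emptyset)$, and that by corollary~\ref{cor:1} this endomorphism space is one-dimensional over $\C_q$, spanned by the identity $\id_\emptyset$; moreover the functor $\calG_{m|n}$ sends $\id_\emptyset$ to $\id_{\C_q}$, so $\calG_{m|n}$ restricts to the canonical isomorphism $\End_{\Spider(d)}(\emptyset) \xrightarrow{\sim} \End_{\catRep_{m|n}}(\C_q)$. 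Hence $P_d(L) := \calQ_d(L)$, viewed as a scalar under this canonical identification, equals $\RT_{m|n}(L)$. Fourth, note that the left-hand side $\calQ_d(L)$ was computed entirely inside $\Spider(d)$, whose definition references $m,n$ only through $d$; therefore $\RT_{m|n}(L)$ depends on $m,n$ only through $d=m-n$, which is the claim.

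I do not expect any serious obstacle here: all the ingredients --- the functor $\calQ_\beta$ and its well-definedness, the commutativity of \eqref{eq:194}, and the one-dimensionality of $\End_{\Spider(\beta)}(\emptyset)$ --- have already been proved. The only mild subtlety is to be careful that the identification $\End_{\Spider(d)}(\emptyset) \cong \End_{\catRep_{m|n}}(\C_q)$ provided by $\calG_{m|n}$ is independent of $m,n$; but this is immediate since in both categories the relevant endomorphism space is canonically $\C_q \cdot \id$ of the monoidal unit, and a monoidal functor always sends unit to unit and $\id$ to $\id$. So the proof is essentially a one-line deduction: $\RT_{m|n}(L) = \calG_{m|n}(\calQ_d(L))$, the right side depends on $(m,n)$ only via $d$, done.

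\begin{proof}
  Let $L \in \End_{\catTangles^\lab_q}(\emptyset)$ be a link labeled by positive integers. By proposition~\ref{prop:9} the diagram~\eqref{eq:194} commutes, so $\RT_{m|n}(L) = \calG_{m|n}\big(\calQ_d(L)\big)$ as elements of $\End_{\catRep_{m|n}}(\C_q)$. Now $\calQ_d(L) \in \End_{\Spider(d)}(\emptyset)$, and by corollary~\ref{cor:1} we have $\End_{\Spider(d)}(\emptyset) = \C_q \cdot \id_\emptyset$. Since $\calG_{m|n}$ is a monoidal functor it sends the monoidal unit $\emptyset$ of $\Spider(d)$ to the monoidal unit $\C_q$ of $\catRep_{m|n}$ and $\id_\emptyset$ to $\id_{\C_q}$; hence it induces the canonical isomorphism $\End_{\Spider(d)}(\emptyset) \xrightarrow{\sim} \End_{\catRep_{m|n}}(\C_q)$, which identifies $\calQ_d(L)$ with the scalar $P_d(L)$. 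Therefore $\RT_{m|n}(L) = P_d(L)$, and the right-hand side is computed inside the category $\Spider(d)$, whose definition depends on $m$ and $n$ only through the integer $d = m-n$. Consequently $\RT_{m|n}(L)$ depends only on $m-n$, as claimed.
\end{proof}
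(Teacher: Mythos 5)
Your proof is correct and is essentially the argument the paper gives in the discussion immediately preceding the proposition: compose $\RT_{m|n} = \calG_{m|n}\circ\calQ_d$, use corollary~\ref{cor:1} to see $\End_{\Spider(d)}(\varnothing)$ is one-dimensional, and observe that $\calQ_d$ and the canonical identification of this line with $\End_{\catRep_{m|n}}(\C_q)$ via the monoidal unit depend only on $d$. Your write-up merely makes explicit the small point, left implicit in the paper, that a monoidal functor sends $\id_\varnothing$ to $\id_{\C_q}$, so the scalar is literally preserved.
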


\begin{remark}
  This is more generally true for links labeled by partitions, see for example \cite{RemarksOnLinkInvariants}.
\end{remark}

Notice that also in the case \(\beta\) generic \(P_\beta(L)\) defines an invariant of oriented framed links, which is a polynomial in two variables \(q\) and \(q^\beta\). Then \(P_\beta(L)\) is the colored HOMFLY-PT polynomial of the link \(L\) (see \cite{QS,RemarksOnLinkInvariants}).

\subsection{Cutting strands and the Alexander polynomial}
\label{sec:cutt-strands-alex}

As usual (cf.\ for example \cite{SarAlexander}), \(P_0(L)\) is trivial and in order to obtain an interesting invariant for \(\beta=0\) one has to cut open one strand.
We hence define for \(a \in \Z_{\geq 0}\) the link invariant
\begin{equation}
  \label{eq:188}
  \tilde P_{\beta,a} \colon \Links^\lab \longrightarrow \C_q
\end{equation}
given by assigning to  \(L \in \End_{\catTangles^\lab}(\varnothing)\) the element \(\calQ_{\beta} (\tilde L) \in \End_{\Spider(\beta)}(a)\), where \(\tilde L\) is obtained by \(L\) by cutting one strand labeled by \(a\). By the following proposition~\ref{prop:18}, this does not depend on the strand cut and is therefore a well-defined link invariant.

In the case $\beta$ generic, one obtains the reduced HOMFLY-PT polynomial. The two specializations $\beta=0$ and $\beta=m$ yield respectively the Alexander and the reduced $\slm$ Reshetikhin-Turaev polynomial (see \cite{QS,RemarksOnLinkInvariants} for our normalizations of these invariants and for details).

\begin{prop}\label{prop:18}
Let \(a \in \Z_{\geq 0}\) and let \(\tau \in \End_{\Spider(\beta)}( a \otimes a)\). Then  
\begin{equation}
\begin{tikzpicture}[smallnodes,anchorbase,xscale=0.5]
\draw [int] (0,0) -- ++(0,2) node[below left,inner sep=1.5pt] {$a$} \nstartarrow \nendarrow;
\draw [int] (1,0.5) -- ++(0,1) node [above left,inner sep=1.5pt] {$a$} .. controls ++(0,0.5) and ++(0,0.5) ..   ++ (1,0) \midarrow  -- ++(0,-1) .. controls ++(0,-0.5) and ++(0,-0.5) ..   ++(-1,0) \midarrow;
\draw [fill=white]  (-.3,0.7)  rectangle (1.3,1.3);
\node at (.5,1) {\({\tau}\)};
\end{tikzpicture}\;
=\;
\begin{tikzpicture}[smallnodes,anchorbase,xscale=0.5]
\draw [int] (0,0) -- ++(0,0.1) .. controls ++(0,0.25) and ++(0,-0.25) .. ++(1,0.5) -- ++(0,0.8) node[yshift=0.1cm,right,inner sep=1.5pt] {$a$} .. controls ++(0,0.25) and ++(0,-0.25) .. ++(-1,0.5)  -- ++(0,0.1)  \nendarrow;
\draw [int, cross line] (1,0.1)         ++(-1,0.5) --
          ++(0,0.8) node [yshift=0.1cm,left,inner sep=1.5pt] {$a$} .. controls ++(0,0.5) and ++(0,0.4) ..  
          ++ (2,0.1)  \midarrow  -- 
          ++(0,-1) .. controls ++(0,-0.4) and ++(0,-0.5) ..   ++(-2,0.1) \midarrow;
\draw [fill=white]  (-.3,0.7)  rectangle (1.3,1.3);
\node at (.5,1) {\({\tau}\)};
\end{tikzpicture}\;.
\end{equation}
\end{prop}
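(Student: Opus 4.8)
The statement we need to prove, Proposition~\ref{prop:18}, is a "sliding around the closure" or partial-trace relation: for $\tau \in \End_{\Spider(\beta)}(a \otimes a)$, closing off one of the two strands (say the right one) into a loop going to the right produces the same morphism whether one cuts the left strand before or after conveying the other strand all the way around. This is the key move needed to see that $\tilde P_{\beta,a}$ is well-defined, i.e.\ independent of the choice of strand that is cut open.

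The plan is to reduce everything to the case $a=1$. First I would use Lemma~\ref{lem:4} (equivalently the decomposition \eqref{eq:81}) to write $\id_a = \pi_a \circ \iota_a$ with $\iota_a \colon a \into 1^{\otimes a}$, $\pi_a \colon 1^{\otimes a} \surto a$, and $\pi_a \iota_a = \id_a$. Conjugating $\tau$ by $\iota_a \otimes \iota_a$ and $\pi_a \otimes \pi_a$ turns it into a morphism $\tilde\tau \in \End_{\Spider(\beta)}(1^{\otimes a} \otimes 1^{\otimes a})$; since the $\iota$'s and $\pi$'s slide freely along strands (they are morphisms of $\Spider^+$, hence commute with the braiding by naturality, Lemma~\ref{lem:15}) the relation for $\tau$ follows from the corresponding relation for $\tilde\tau$. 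Then, by the same token, it suffices to treat a single strand labeled by $1$ being carried around the closure of a box whose other legs are all labeled by $1$. So the essential content is a statement purely about the ribbon category $\Spider(\beta)$ once we know that $\End_{\Spider(\beta)}(1^{\otimes N})$ is spanned by the braidings $c_{1,1}$ (Lemma~\ref{lem:7}) together with the cup-cap morphism $\sfE\sfF\idem_{1\otimes 1^*}$ — equivalently, once we know (Proposition~\ref{prop:10}) that $\Spider(\beta)$ restricted to objects built from $1$ and $1^*$ receives a fully faithful functor from the oriented Brauer category $\uBr(\beta)$.

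The cleanest route from there: since $\Spider(\beta)$ is ribbon (Theorem~\ref{thm:9}), the relation to prove is a general identity valid in \emph{any} ribbon category. Namely, for any object $X$ and any endomorphism $\tau$ of $X \otimes X$ in a ribbon category, the right partial trace $(\id_X \otimes \ev'_X)(\tau \otimes \id_{X^*})(\id_X \otimes \coev'_X) \in \End(X)$ equals the morphism obtained by first conjugating by the braiding $c_{X,X}$ and then taking the right partial trace — this is precisely the "the trace doesn't see where you cut" statement, which is standard (see \cite{MR1881401,MR1292673}). In diagrammatic terms it is the isotopy that slides the box $\tau$ around the annulus. So I would invoke the graphical calculus for ribbon categories to get the identity for free at the level of diagrams, and then note that the functor $\calQ_\beta \colon \catTangles^\lab_q \mapto \Spider(\beta)$ of Proposition~\ref{prop:9} respects exactly these isotopies, so the claim holds in $\Spider(\beta)$.

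The one genuine subtlety — and the step I expect to be the main obstacle — is that the box $\tau$ in the statement is \emph{not} in the image of $\calQ_\beta$; it is an arbitrary morphism of $\Spider(\beta)$, not the image of a tangle. So the naive "it's just an isotopy of tangles" argument does not directly apply. To fix this I would argue as follows: after the reduction to $a=1$ above, $\tau$ lives in $\End_{\Spider(\beta)}(1 \otimes 1)$, which by Proposition~\ref{prop:10} (or Lemma~\ref{lem:7}) is spanned by $\id_{1\otimes 1}$ and $\sfF\sfE\idem_{1\otimes 1}$, equivalently by $\id$ and $c_{1,1}$; both of these \emph{are} images under $\calQ_\beta$ of tangle diagrams (the identity braid and the positive crossing). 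By bilinearity it therefore suffices to check the relation when $\tau = \calQ_\beta(T)$ for $T$ one of these two elementary tangles, and then the relation is a genuine isotopy of framed oriented tangles, which holds in $\catTangles^\lab$ and is carried to $\Spider(\beta)$ by the functor $\calQ_\beta$. (One must be mildly careful that the framing contributions match — but for the identity braid and the single crossing the relevant Reidemeister-type move introduces no framing change, so there is nothing to correct.) For general $a$ one runs the same argument after the $\iota_a/\pi_a$ reduction, using that $\End_{\Spider(\beta)}(1^{\otimes 2a})$ is generated by braidings, all of which come from $\calQ_\beta$.
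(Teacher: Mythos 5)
Your approach is correct, but it is genuinely different from the one in the paper. The paper avoids the reduction to label-$1$ strands and the tangle functor entirely: it observes that $\End_{\Spider(\beta)}(a\otimes a)\cong\End_{\Spider^+}(a\otimes a)$ (by Proposition~\ref{prop:12}), picks out the explicit basis $\{E^{(k)}F^{(k)}\idem_{a\otimes a}\mid 0\le k\le a\}$ of that $(a+1)$-dimensional space, and verifies the sliding relation for each basis element directly by a short chain of diagram manipulations: the first equality uses the Lusztig-symmetry relations \eqref{eq:EqW1}--\eqref{eq:EqW2} to slide the two partial webs past the crossings, and the last uses an iterated application of \eqref{eq:77} to pull the thin strand back across. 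Your route instead decomposes $\tau$ through $\iota_a,\pi_a$ into $\End(1^{\otimes 2a})$, then uses Lemma~\ref{lem:7} (or Proposition~\ref{prop:10}) to express that space as a span of images $\calQ_\beta(b)$ of braids, for each of which the identity becomes a genuine tangle isotopy carried to $\Spider(\beta)$ by Proposition~\ref{prop:9}. Both arguments are sound: yours is more conceptual and reduces to topology; the paper's is a short and self-contained calculation that does not need to invoke the functor $\calQ_\beta$ or the naturality reductions. Two small remarks: (i) your first instinct --- that this is a general pivotal/ribbon-category identity, valid for an \emph{arbitrary} morphism-labelled coupon and hence for arbitrary $\tau$, not only tangle images --- is in fact correct, so the worry you flag and the subsequent fallback to tangle images is unnecessary, though it does give a perfectly valid alternative route; (ii) when you mention cup-cap generators for $\End_{\Spider(\beta)}(1^{\otimes N})$, note that for all strands oriented upward this space is just the Hecke algebra and is generated by braidings alone, which is all you need here after the $\iota_a/\pi_a$ reduction.
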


\begin{proof}
 It is enough to check the statement for \(\tau\) a basis element. Since \(\End_{\Spider(\beta)}(a \otimes a) \cong \End_{\Spider^+}(a \otimes a)\),
it is easy to check that the elements \(\{E^{(k)}F^{(k)} \suchthat 0\leq k\leq a\}\) give a basis. We have
\begin{equation}
\begin{tikzpicture}[smallnodes,anchorbase,xscale=0.5]
\draw [int] (0,0) -- ++(0,0.1) .. controls ++(0,0.25) and ++(0,-0.25) .. ++(1,0.5) -- ++(0,0.8) node[yshift=0.1cm,right,inner sep=1.5pt] {$a$} .. controls ++(0,0.25) and ++(0,-0.25) .. ++(-1,0.5)  -- ++(0,0.1)  \nendarrow;
\draw [int, cross line] (1,0.1)         ++(-1,0.5) --
          ++(0,0.8) node [yshift=0.1cm,left,inner sep=1.5pt] {$a$} .. controls ++(0,0.5) and ++(0,0.4) ..  
          ++ (2,0.1)  \midarrow  -- 
          ++(0,-1) .. controls ++(0,-0.4) and ++(0,-0.5) ..   ++(-2,0.1) \midarrow;
\draw [int] (0,1.1) -- node[above] {$k$} ++(1,0.2) \midarrow;
\draw [int] (1,0.7) -- node[below] {$k$} ++(-1,0.2) \midarrow;
\end{tikzpicture}
\;=\;
\begin{tikzpicture}[smallnodes,anchorbase,xscale=0.5]
\draw [int] (0,0) -- ++(0,0.5) .. controls ++(0,0.25) and ++(0,-0.25) .. ++(1,0.5) -- ++(0,0)  .. controls ++(0,0.25) and ++(0,-0.25) .. ++(-1,0.5)  -- ++(0,0.4) -- ++(0,0.1)  \nendarrow;
\draw [int, cross line] (1,0.1)  ++(-1,0.5) ++(0,0.4) --   ++(0,0)  .. controls ++(0,0.25) and ++(0,-0.25) .. ++ (1,0.5) .. controls ++(0,0.5) and ++(0,0.5) .. ++(1,0) -- ++(0,-1) \midarrow .. controls ++(0,-0.5) and ++(0,-0.5) .. ++(-1,0)  .. controls ++(0,+0.25) and ++(0,-0.25) ..   ++(-1,0.5);
\draw [int] (0,0.4) -- node[below] {$k$} ++(1,0.2) \midarrow;
\draw [int] (1,1.6) -- node[above] {$k$} ++(-1,0.2) \midarrow;
\end{tikzpicture}
\;=\;
\begin{tikzpicture}[smallnodes,anchorbase,xscale=0.5]
\draw [int] (0,0) -- ++(0,0.5) -- ++(0,0.5) -- ++(0,0)  -- ++(0,0.5)  -- ++(0,0.4) -- ++(0,0.1)  \nendarrow;
\draw [int, cross line] (1,0.1)  ++(0,0.5) ++(0,0.4) --   ++(0,0)  --  ++ (0,0.5) .. controls ++(0,0.5) and ++(0,0.5) .. ++(1,0) -- ++(0,-1) \midarrow .. controls ++(0,-0.5) and ++(0,-0.5) .. ++(-1,0)  --   ++(0,0.5);
\draw [int] (0,0.4) -- node[below] {$k$} ++(1,0.2) \midarrow;
\draw [int] (1,1.6) -- node[above] {$k$} ++(-1,0.2) \midarrow;
\end{tikzpicture} 
\;=\;
\begin{tikzpicture}[smallnodes,anchorbase,xscale=0.5]
\draw [int] (0,0) -- ++(0,0.5) -- ++(0,0.5) -- ++(0,0)  -- ++(0,0.5)  -- ++(0,0.4) -- ++(0,0.1)  \nendarrow;
\draw [int, cross line] (1,0.1)  ++(0,0.5) ++(0,0.4) --   ++(0,0)  --  ++ (0,0.5) .. controls ++(0,0.5) and ++(0,0.5) .. ++(1,0) -- ++(0,-1) \midarrow .. controls ++(0,-0.5) and ++(0,-0.5) .. ++(-1,0)  --   ++(0,0.5);
\draw [int] (0,1.4) -- node[above] {$k$} ++(1,0.2) \midarrow;
\draw [int] (1,0.6) -- node[below] {$k$} ++(-1,0.2) \midarrow;
\end{tikzpicture}\;,
\end{equation}
\noindent where the first equality is a consequence of relations \eqref{eq:EqW1} and \eqref{eq:EqW2}
and the third one follows by an iterated use of \eqref{eq:77}.
\end{proof}

In light of the inclusion of categories \(\Phi_{\boldeta}\), it is possible to regard all these invariants of tangles also as having values in \(\U_q(\gl_{\boldeta})_\beta\) for some sequence of signs \(\boldeta\) long enough. For a more detailed description of link invariants with values in \(\U_q(\gl_{\boldeta})_\beta\) we refer to \cite{QS}. We point out that working in \(\U_q(\gl_{\boldeta})_\beta\)  is particularly handy for computation purposes, since \(\U_q(\gl_{\boldeta})_\beta\) has a more rigid structure than \(\Spider(\beta)\). Moreover, we believe this can give more insight for developing a categorification.

\begin{remark}\label{rem:6}
  In this paper, we studied \emph{skew} Howe duality, generalizing \cite{CKM}, hence we obtain the category \(\Spider(d)\) which describes intertwining operators between exterior powers of the natural representation of \(U_q(\gl_{m|n})\). In this setting, the easiest case where we can explicitly describe all relations is when \(n=0\) and we consider the category \(\catRep_{m|0}\) (see corollary~\ref{cor:2}).

However, one could also be interested in replacing \emph{skew} Howe duality by \emph{symmetric} Howe duality, and define analogous (and similar) categories \(\sSpider^+(\beta)\) of ``symmetric'' spiders, which would describe intertwining operators between symmetric powers of \(\C_q^{m|n}\) (see also \cite{RTub}).

It is interesting to notice that symmetric powers  actually appear already in our picture: indeed, \(\bigwedgeq^a(\C_q^{m|n})\) is isomorphic to \((\bigsymmq^a \C_q^{n|m})\langle a\rangle\), where \(\langle a\rangle\) denotes a shift by \(a\) in the \(\Z/2\Z\)--degree. 
This suggests that there is a strong similarity between the ``skew categories'' \(\Spider^+\), \(\Spider(\beta)\) and the ``symmetric categories'' \(\sSpider^+\), \(\sSpider(\beta)\). Indeed, one can fix the conventions so that \(\Spider^+\) and \(\sSpider^+\) are the same. However, the parity shift above introduces some sign differences  between \(\Spider(\beta)\) and \(\sSpider(\beta)\) which prevent an isomorphism on the nose. 
\end{remark}

\newcommand{\etalchar}[1]{$^{#1}$}
\providecommand{\bysame}{\leavevmode\hbox to3em{\hrulefill}\thinspace}
\providecommand{\MR}{\relax\ifhmode\unskip\space\fi MR }
\providecommand{\MRhref}[2]{%
  \href{http://www.ams.org/mathscinet-getitem?mr=#1}{#2}
}
\providecommand{\href}[2]{#2}

\end{document}